\let\old@tocline\@tocline
\let\section@tocline\@tocline
\newcommand{\subsection@dotsep}{4.5}
\newcommand{\subsubsection@dotsep}{4.5}
     \leaders\hbox{$\m@th
        \mkern \subsection@dotsep mu\hbox{.}\mkern \subsection@dotsep mu$}\hfill
\let\subsection@tocline\@tocline
\let\@tocline\old@tocline
     \leaders\hbox{$\m@th
        \mkern \subsubsection@dotsep mu\hbox{.}\mkern \subsubsection@dotsep mu$}\hfill
\let\subsubsection@tocline\@tocline
\let\@tocline\old@tocline
\let\old@l@subsection\l@subsection
\let\old@l@subsubsection\l@subsubsection
\def\@tocwriteb#1#2#3{%
  \begingroup
    \@xp\def\csname #2@tocline\endcsname##1##2##3##4##5##6{%
      \ifnum##1>\c@tocdepth
      \else \sbox\z@{##5\let\indentlabel\@tochangmeasure##6}\fi}%
    \csname l@#2\endcsname{#1{\csname#2name\endcsname}{\@secnumber}{}}%
  \endgroup
  \addcontentsline{toc}{#2}%
    {\protect#1{\csname#2name\endcsname}{\@secnumber}{#3}}}%
\newlength{\@tocsectionindent}
\newlength{\@tocsubsectionindent}
\newlength{\@tocsubsubsectionindent}
\newlength{\@tocsectionnumwidth}
\newlength{\@tocsubsectionnumwidth}
\newlength{\@tocsubsubsectionnumwidth}
\newcommand{\settocsectionnumwidth}[1]{\setlength{\@tocsectionnumwidth}{#1}}
\newcommand{\settocsubsectionnumwidth}[1]{\setlength{\@tocsubsectionnumwidth}{#1}}
\newcommand{\settocsubsubsectionnumwidth}[1]{\setlength{\@tocsubsubsectionnumwidth}{#1}}
\newcommand{\settocsectionindent}[1]{\setlength{\@tocsectionindent}{#1}}
\newcommand{\settocsubsectionindent}[1]{\setlength{\@tocsubsectionindent}{#1}}
\newcommand{\settocsubsubsectionindent}[1]{\setlength{\@tocsubsubsectionindent}{#1}}
\renewcommand{\l@section}{\section@tocline{1}{\@tocsectionvskip}{\@tocsectionindent}{\@tocsectionnumwidth}{\@tocsectionformat}}%
\renewcommand{\l@subsection}{\subsection@tocline{1}{\@tocsubsectionvskip}{\@tocsubsectionindent}{\@tocsubsectionnumwidth}{\@tocsubsectionformat}}%
\renewcommand{\l@subsubsection}{\subsubsection@tocline{1}{\@tocsubsubsectionvskip}{\@tocsubsubsectionindent}{\@tocsubsubsectionnumwidth}{\@tocsubsubsectionformat}}%
\newcommand{\@tocsectionformat}{}
\newcommand{\@tocsubsectionformat}{}
\newcommand{\@tocsubsubsectionformat}{}
\def\csname toc@1format\endcsname{\@tocsectionformat}
\def\csname toc@2format\endcsname{\@tocsubsectionformat}
\def\csname toc@3format\endcsname{\@tocsubsubsectionformat}
\newcommand{\settocsectionformat}[1]{\renewcommand{\@tocsectionformat}{#1}}
\newcommand{\settocsubsectionformat}[1]{\renewcommand{\@tocsubsectionformat}{#1}}
\newcommand{\settocsubsubsectionformat}[1]{\renewcommand{\@tocsubsubsectionformat}{#1}}
\newlength{\@tocsectionvskip}
\newcommand{\settocsectionvskip}[1]{\setlength{\@tocsectionvskip}{#1}}
\newlength{\@tocsubsectionvskip}
\newcommand{\settocsubsectionvskip}[1]{\setlength{\@tocsubsectionvskip}{#1}}
\newlength{\@tocsubsubsectionvskip}
\newcommand{\settocsubsubsectionvskip}[1]{\setlength{\@tocsubsubsectionvskip}{#1}}
\patchcmd{\tocsection}{\indentlabel}{\makebox[\@tocsectionnumwidth][l]}{}{}
\patchcmd{\tocsubsection}{\indentlabel}{\makebox[\@tocsubsectionnumwidth][l]}{}{}
\patchcmd{\tocsubsubsection}{\indentlabel}{\makebox[\@tocsubsubsectionnumwidth][l]}{}{}
\newcommand{\@sectypepnumformat}{}
\renewcommand{\contentsline}[1]{%
  \expandafter\let\expandafter\@sectypepnumformat\csname @toc#1pnumformat\endcsname%
  \csname l@#1\endcsname}
\newcommand{\@tocsectionpnumformat}{}
\newcommand{\@tocsubsectionpnumformat}{}
\newcommand{\@tocsubsubsectionpnumformat}{}
\newcommand{\setsectionpnumformat}[1]{\renewcommand{\@tocsectionpnumformat}{#1}}
\newcommand{\setsubsectionpnumformat}[1]{\renewcommand{\@tocsubsectionpnumformat}{#1}}
\newcommand{\setsubsubsectionpnumformat}[1]{\renewcommand{\@tocsubsubsectionpnumformat}{#1}}
\renewcommand{\@tocpagenum}[1]{%
  \hfill {\mdseries\@sectypepnumformat #1}}
\let\oldappendix\appendix
\renewcommand{\appendix}{%
  \leavevmode\oldappendix%
  \addtocontents{toc}{%
    \protect\settowidth{\protect\@tocsectionnumwidth}{\protect\@tocsectionformat\sectionname\space}%
    \protect\addtolength{\protect\@tocsectionnumwidth}{2em}}%
}
\let\oldtableofcontents\tableofcontents
\renewcommand{\tableofcontents}{%
  \vspace*{-5\linespacing}
  \oldtableofcontents}
\setlist[enumerate]{label={\normalfont (\roman*)}, itemsep=1ex}
\colorlet{darkishRed}{red!60!black}
\colorlet{darkishBlue}{blue!60!black}
\colorlet{darkishGreen}{green!60!black}
\colorlet{darkblue}{blue!70!black}
\colorlet{darkishViolet}{violet}
\crefname{equation}{}{}
\crefname{mainresult}{Theorem}{Theorems}
\let\setminus=\smallsetminus
\newcommand{\se}{\subseteq}
\newcommand{\sm}{\setminus}
\renewcommand{\leq}{\leqslant}
\renewcommand{\geq}{\geqslant}
\renewcommand{\ge}{\geq}
\renewcommand{\le}{\leq}
\DeclareMathOperator{\Aut}{Aut}
\newtheorem{theorem}{Theorem}[section] 
\newtheorem{proposition}[theorem]{Proposition}
\newtheorem{corollary}[theorem]{Corollary}
\newtheorem{lemma}[theorem]{Lemma}
\newtheorem{corlemma}[theorem]{Correspondence}
\crefname{corlemma}{Correspondence}{Correspondences}
\newtheorem{liftlemma}[theorem]{Lift}
\crefname{liftlemma}{Lift}{Lifts}
\newtheorem{projlemma}[theorem]{Projection}
\crefname{projlemma}{Projection}{Projections}
\newtheorem{observation}[theorem]{Observation}
\newtheorem{mainresult}{Theorem}
\newtheorem{mainproblem}{Problem}
\newenvironment{customthm}[1]
  {\innercustomthm}
  {\endinnercustomthm}
\theoremstyle{definition}
\newtheorem{example}[theorem]{Example}
\newtheorem{construction}[theorem]{Construction}
\theoremstyle{remark}
\newtheorem*{claim*}{Claim}
\crefname{claim}{Claim}{Claims}
\crefname{enumi}{}{}
\newcommand{\COMMENT}[1]{{}}
\definecolor{cMaroon}{HTML}{93152a}
\newcommand{\defnMath}[1]{{\color{darkishRed}{#1}}}
\newcommand{\defn}[1]{{\color{darkishRed}{\emph{#1}}}}
\newcommand{\casen}[1]{{\color{darkishViolet}{\emph{#1}}}}
\let\rho=\varrho
\let\phi=\varphi
\let\succeq=\succcurlyeq
\let\preceq=\preccurlyeq
\def\N{\mathbb N}
\def\Z{\mathbb Z}
\def\calCommandfactory#1{%
  \expandafter\def\csname c#1\endcsname{\mathcal{#1}}}
\def\frakCommandfactory#1{%
  \expandafter\def\csname frak#1\endcsname{\mathfrak{#1}}}
\newcounter{ctr}
  \edef\X{\@Alph\c@ctr}
\leaders\hbox{\,.\,}\hfil}
\newcounter{mylabelcounter}
\newcommand{\labelText}[2]{%
#1\refstepcounter{mylabelcounter}%
\immediate\write\@auxout{%
  \string\newlabel{#2}{{1}{\thepage}{{\unexpanded{#1}}}{mylabelcounter.\number\value{mylabelcounter}}{}}%
}%
}
\newcommand{\arXivOrNot}[2]{\ifbool{arXiv}{{#1}}{{#2}}}
\newcommand{\lek}{({\le}\,k)}
\newcommand{\mindist}{\Delta}
\newcommand{\vx}{\text{\textnormal{vx}}}
\newcommand{\apart}{\text{\textnormal{part}}}
\newcommand{\td}{tree-decom\-pos\-ition}
\newcommand{\gd}{graph-decom\-pos\-ition}
\newcommand{\inv}{^{-1}}
\DeclareMathOperator{\orient}{ori}
\DeclareMathOperator{\id}{id}
\DeclareMathOperator{\interior}{int}
\newcommand{\orientN}[1]{\orient(#1)}
\newcommand{\tstar}{tri-star}
\newcommand{\ASSUMPTION}{\!\!\!\textsuperscript{\ref{ASSUMPTION}}}
\newtheoremstyle{mytheoremstyle}  
  {8pt}                     
  {8pt}                     
  {\itshape}               
  {}                        
  {\bfseries}              
  {.}                       
  { }                       
  {\thmname{#1} \thmnumber{#2}\,\textsuperscript{\ref{ASSUMPTION}}\ifthenelse{\equal{#3}{}}{}{\normalfont{ (#3)}}} 
\theoremstyle{mytheoremstyle}
\newtheorem{theoremASS}[theorem]{Theorem}
\crefname{theoremASS}{Theorem}{Theorems}
\newtheorem{propositionASS}[theorem]{Proposition}
\newtheorem{corollaryASS}[theorem]{Corollary}
\newtheorem{lemmaASS}[theorem]{Lemma}
\crefname{corlemmaASS}{Correspondence}{Correspondences}
\newtheorem{liftlemmaASS}[theorem]{Lift}
\crefname{liftlemmaASS}{Lift}{Lifts}
\newtheorem{projlemmaASS}[theorem]{Projection}
\crefname{projlemmaASS}{Projection}{Projections}
\title[Canonical graph decompositions via local separations]{Canonical graph decompositions via local separations}
\author[Raphael W. Jacobs]{Raphael~W.\ Jacobs$^\dagger$}%
\author[Paul Knappe]{Paul Knappe$^\ddagger$}%
\author[Jan Kurkofka]{Jan Kurkofka${}^{\includegraphics[height=.7\baselineskip]{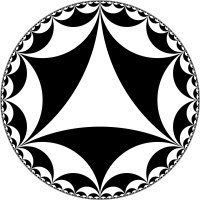}}$}
\thanks{${}^\dagger$ Supported by doctoral scholarships of the Studienstiftung des deutschen Volkes and the Cusanuswerk -- Bisch\"{o}fliche Studienf\"{o}rderung.}
\thanks{${}^\ddagger$ Supported by a doctoral scholarship of the Studienstiftung des deutschen Volkes.}
\thanks{$\includegraphics[height=.7\baselineskip]{Figures/MiniFarey.png}$
funded by the Deutsche Forschungsgemeinschaft (DFG, German
Research Foundation) -- 566118291.}
\keywords{Graph-decomposition, local separator, local separation, local connectivity, local covering, tree-decomposition, tangle, bottleneck, canonical}
\subjclass[2020]{05C83, 68R10, 57M15, 05C25, 05C40, 05C63, 05C38.}
\begin{document}
\thispagestyle{empty}

\begin{abstract}
    Every finite graph $G$ can be decomposed in a canonical way that displays its local connectivity-structure~\cite{canonicalGraphDec}.
    These decompositions are defined via a suitable more tree-like covering of $G$, whose tangle-tree structure is projected down to $G$.
    
    The covering graphs needed here are almost always infinite, and their tangle-tree structure is defined in terms of their (global) low-order separations. 
    The canonical decompositions they induce on $G$ are therefore not computable following their definition.

    We reconstruct these decompositions of $G$ from finite information in $G$ itself that is sufficiently local to be reflected in the cover.
    This involves the reconstruction of canonical tangle structure in terms of a new theory of local separations in finite graphs, which we develop for this purpose.
    
    As an application, we find that the canonical graph-decompositions from \cite{canonicalGraphDec} are computable.
\end{abstract}
\topskip0pt
\vspace*{\fill}
\maketitle

\vspace*{\fill}

\thispagestyle{empty}

\newpage
\thispagestyle{empty}
\tableofcontents

\newpage
\setcounter{page}{1}
\section{Introduction}

\subsection{From tree-decompositions to graph-decompositions} 
Tree-decompositions are a well-established concept to display the connectivity-structure of graphs when this resembles that of a tree.
Roughly speaking, a tree-decomposition (\cref{fig:TDCintro}) is a blueprint which shows how a graph $G$ (black) can be built from smaller graphs (grey), called \emph{parts}, by gluing them together in the shape of a tree (blue), called the \emph{decomposition tree}.
Each edge of the decomposition tree determines a \emph{separator} of~$G$ (circled in red), 
and more specifically induces a \emph{separation} which describes $G$ as a union of two halves that overlap precisely in that separator.

\begin{figure}[ht]
    \centering
    \includegraphics[height=5\baselineskip]{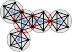}
    \caption{A tree-decomposition of a graph (black) into $K_5$'s (grey).}
    \label{fig:TDCintro}
\end{figure}

Tree-decompositions are a key concept in Robertson and Seymour's proof of their Graph Minor Theorem~\cite{GMXX}.
In computer science, many algorithmic problems that are NP-hard in general can be decided in linear time for classes of graphs admitting tree-decompositions into parts of bounded size, by Courcelle's Theorem~\cite{Courcelle}.
In group theory, and in Bass-Serre theory more specifically, tree-decompositions are used to detect product-structure in Cayley graphs such as amalgamated free products or HNN-extensions, for example in modern proofs of Stallings' Theorem~\cites{kron2010cutting,StallingsQuasi}.

One of the first applications of tree-decompositions sits at the interface of cluster analysis and graph theory.
Even though clusters are fuzzy in nature, there exist precise notions that describe clusters in graphs.
A well-known example is the abstract notion of a tangle, introduced by Robertson and Seymour alongside tree-decompositions~\cite{GMX}.
Their \emph{tangle-tree-decomposition theorem} states that every finite graph $G$ has a tree-decomposition such that every two tangles in~$G$ are minimally separated by some separation of the tree-decomposition.
Such tree-decompositions are referred to as tangle tree-decompositions.
An example is depicted in \cref{fig:TDCintro}, where each $K_5$-subgraph corresponds to a tangle.
The following recent work studies or constructs tangle tree-decompositions \cites{GroheTangles3,FiniteSplinters,RefiningToT,StructuralASS,AlbrechtsenRefiningToTinASS,AlbrechtsenRefiningTDCdisplayKblocks,AlbrechtsenOptimalToT,CarmesinToTshort,CDHH13CanonicalAlg,CDHH13CanonicalParts,confing,CG14:isolatingblocks,ProfilesNew,infinitesplinter,jacobs2023efficiently,entanglements,Kblocks} or even more specific decompositions \cites{Ugo,Tridecomp,TriAiC,Tetradecomp,Flo}.
Algorithmic applications include \cites{elbracht2020tangles,ComputingWithTangles,korhonen2024}, see \cite{TangleBook} for an overview.

Intuitively, tangle tree-decompositions capture the connectivity structure of graphs, as their separations represent the `bottlenecks' of the graph along which they decompose it into `highly connected parts', which in turn fit together as described by the decomposition tree.
However, there are many graphs for which their tangle tree-decompositions fail to detect much of their interesting connectivity structure. 
Indeed, when we perceive a graph to be formed from regions that are `locally highly connected' by arranging these regions in a `graph-like' way rather than a `tree-like' one, then trees of tangles fail to recover this arrangement.

\begin{enumerate}[label=(\Alph*)]
    \item\label{Intro:SparseNetworks} Large sparse networks form a wide class of examples, including networks from applications in Data Science such as infrastructure networks or social networks.
    For example, the orange regions in \cref{fig:RoadNetworkUK} are locally highly connected, and they are arranged in a `grid-like' way.
    \item\label{Intro:CayleyFinite} Cayley graphs of finite groups yield examples.
    The locally highly connected regions of the Cayley graph in \cref{fig:GroupDecomp} are copies of $K_4$ arranged in a `cycle-like' way.
    \item\label{Intro:LocallyChordal} Just as a chordal graph is obtained from cliques by gluing them together in the shape of a tree, a \emph{locally chordal} graph $G$ consists of cliques arranged in a `high-girth-like' way~\cite{locallychordal,localGlobalChordal,CanTDChordalGraphs}, see~\cref{fig:LocallyChordal}.
\end{enumerate}

\begin{figure}[ht]
    \centering
    \begin{subfigure}[b]{0.34\textwidth}
        \centering
        \includegraphics[height=10\baselineskip]{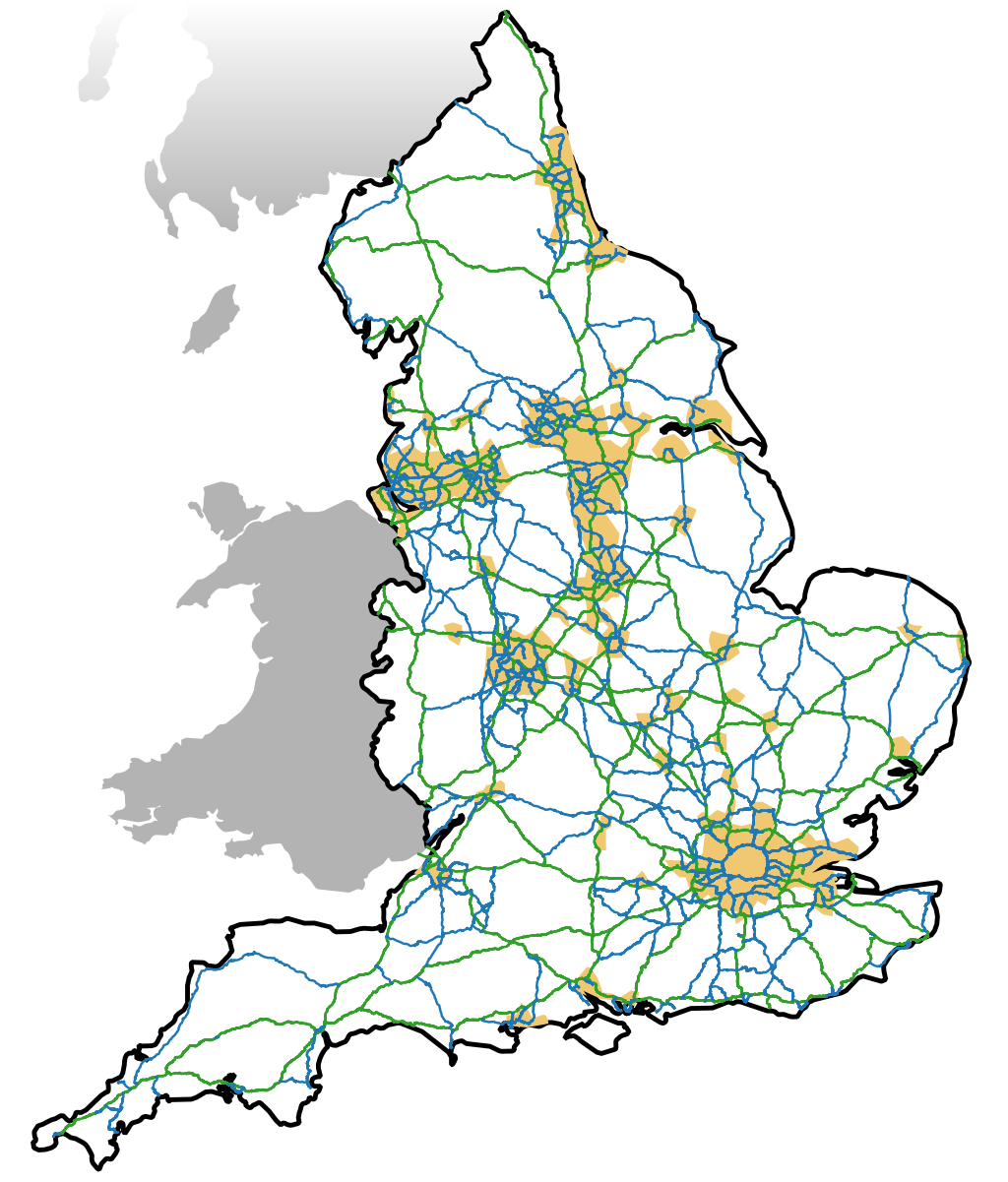}
        \caption{The main road network of the UK}
        \label{fig:RoadNetworkUK}
    \end{subfigure}
    \hfill
    \begin{subfigure}[b]{0.3\textwidth}
         \centering
         \raisebox{1.5\baselineskip}{\includegraphics[height=7\baselineskip]{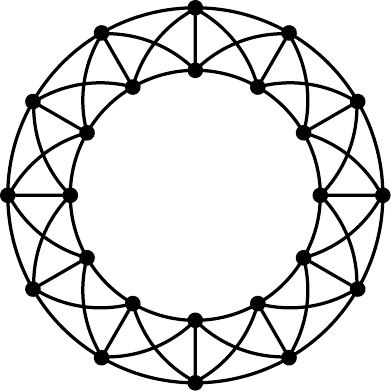}}
         \caption{A Cayley graph of $\Z_{12}\times\Z_2$}
         \label{fig:GroupDecomp}
    \end{subfigure}
    \hfill
    \begin{subfigure}[b]{0.34\textwidth}
        \centering
        \includegraphics[height=9\baselineskip]{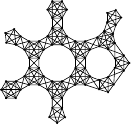}
        \caption{A graph that resembles two of the authors' favourite molecule}
        \label{fig:LocallyChordal}
    \end{subfigure}
    \caption{Graphs whose connectivity-structure eludes tangle tree-decompositions}
    \label{fig:IntroDecompExamples}
\end{figure}

A recent development in graph minor theory, initiated by Diestel and Kühn \cite{hierarchies} and followed up in \cite{canonicalGraphDec,StructuralDualityPathdecomposition,NoFatK4} and differently in \cite{Local2sep}, is the study of \emph{graph-decompositions}, which generalise tree-decompositions in that the decomposition tree may take the form of an arbitrary graph; see \cref{fig:IntroGraphDecompExamples}.
Diestel, Jacobs, Knappe and Kurkofka~\cite{canonicalGraphDec} have developed a `graph-decomposition version' of the tangle tree-decomposition theorem.

We briefly recall the construction; see \cref{sec:Terminology} for the precise definitions. 
Suppose we are given a connected graph~$G$.
Let $r >0$ be an integer parameter that will quantify the meaning of `local' in `locally highly connected'.
We construct a graph-decomposition $\cD_r(G)$ of~$G$ in three steps.

\begin{figure}[ht]
    \begin{subfigure}[b]{0.45\textwidth}
         \centering
         \includegraphics[height=8\baselineskip]{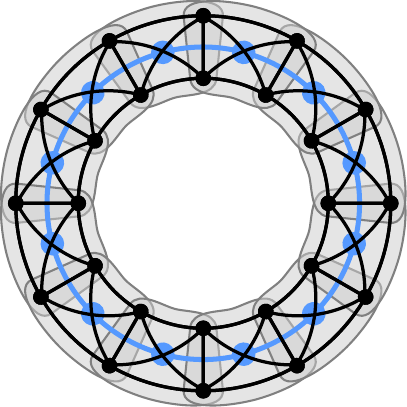}
    \end{subfigure}
    \hfill
    \begin{subfigure}[b]{0.45\textwidth}
        \centering
        \includegraphics[height=9\baselineskip]{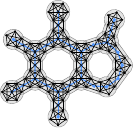}
    \end{subfigure}
    \caption{Graph-decompositions of black graphs into smaller parts (grey). The decomposition graphs are blue.}
    \label{fig:IntroGraphDecompExamples}
\end{figure}

First, we take the \emph{$r$-local covering} $p_r\colon G_r\to G$ of $G$ introduced in \cite{canonicalGraphDec}, which is the unique covering of~$G$ that preserves all $(r/2)$-balls of~$G$ but unfolds all other structure in $G$ as much as possible.
The graph $G_r$ is usually infinite, but it is highly symmetric; see \cref{fig:HGr}.
Intuitively, the $r$-locally highly connected regions in $G$ correspond to highly connected regions in~$G_r$, and in $G_r$ they are arranged in a `tree-like' way due to the nature of coverings -- so this arrangement is captured with a tangle tree-decomposition of~$G_r$.
In the second step, we apply a tangle-tree-decomposition theorem for infinite graphs~\cite{entanglements} to the covering $G_r$ to obtain a tangle tree-decomposition of~$G_r$ that is even canonical (i.e.\ invariant under graph isomorphisms such as the symmetries of~$G_r$).
In the third and final step, the tangle tree-decomposition of~$G_r$ is `folded back' via~$p_r$ to yield the graph-decomposition $\cD_r(G)$ of~$G$; see \cref{construction:orbittreedecompositionisgraphdecomposition}.
Following the above intuition, $\cD_r(G)$ captures the arrangement of the $r$-locally highly connected regions in~$G$.

\begin{figure}[ht]
    \centering
    \includegraphics[height=12\baselineskip]{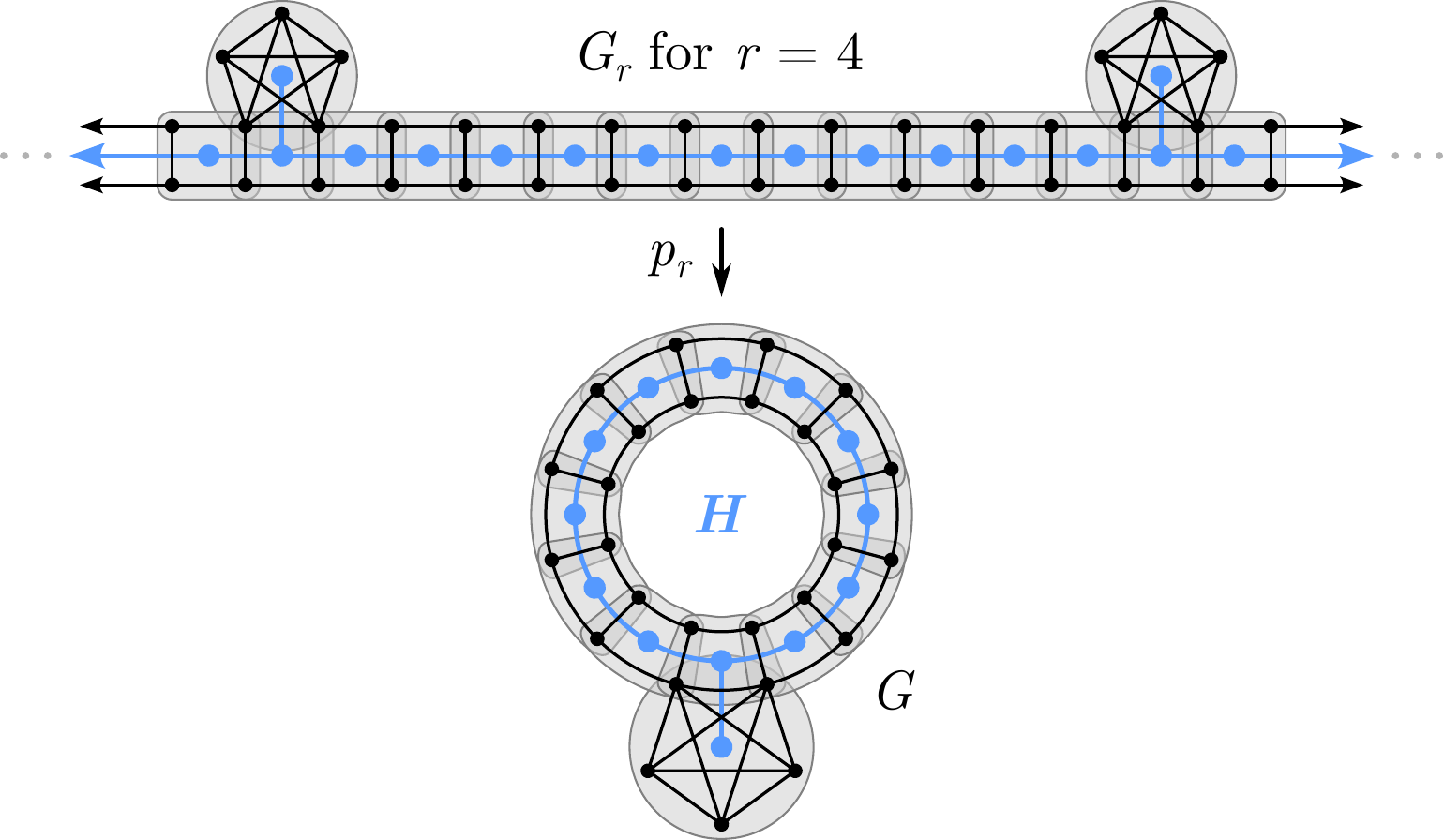}
         \caption{An $H$-decomposition of $G$ (black) into 4-cycles and a $K_5$ (grey) over~$H$ (blue), obtained from a tree-decomposition of the $r$-local cover~$G_r$ for $r=4$.}
    \label{fig:HGr}
\end{figure}

The graph-decomposition $\cD_r(G)$ is canonical, and unique given $G$ and~$r$.
In the case of $r=0$, the covering $G_r$ is the universal one, which is a tree: then $\cD_r(G)$ decomposes $G$ into its edges and vertices, and the decomposition graph is the $1$-subdivision of~$G$.
In the case of $r\ge |G|$, we have $G_r=G$, and so $\cD_r(G)$ defaults to a tangle tree-decomposition of~$G$.

The above construction of $\cD_r(G)$ involves topology and infinite graphs.
To compute such graph-decompositions, and thereby make them accessible to algorithms, we thus need to solve the following problem:
\enlargethispage{\baselineskip}
\begin{mainproblem}\label{Intro:questionDescription}
    Is there a finite combinatorial description of graph-decompositions like $\cD_r(G)$?
\end{mainproblem}

\subsection{From separations to local separations}

In a graph modelling a road network, small separators between large clusters form choke points that often get congested by traffic.
But not all such choke points come from small separators:
Consider for example the two red vertices in \cref{fig:BhamNham} that block the two shortest routes between Birmingham~(B) and Nottingham~(N).
Clearly, the two red vertices together form a choke point in between the two cities, as every detour is significantly longer.
However, the two red vertices do not separate the entire road network, but only do so locally (which is enough to make them a choke point):
they form an \emph{$r$-local separator}, that is, a vertex-set that separates roughly a ball of given radius $r/2>0$ around it.
In this paper, we develop a formal way to study such choke points via local separators.

\begin{figure}[ht]
    \centering
    \begin{subfigure}[b]{0.32\textwidth}
        \centering
        \includegraphics[height=6\baselineskip]{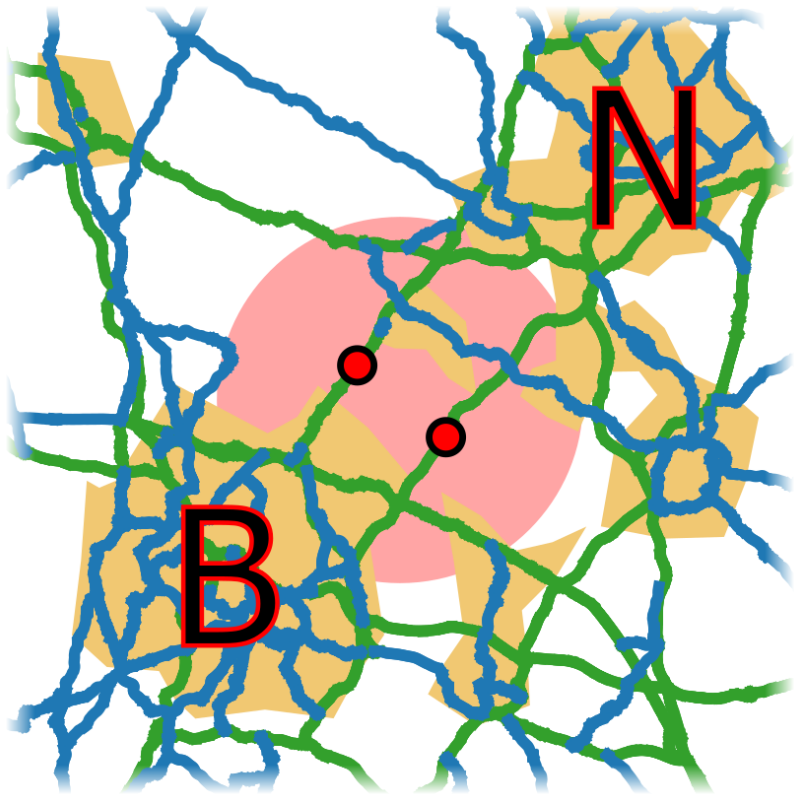}
        \caption{A portion of the main road network of the UK.}
        \label{fig:BhamNham}
    \end{subfigure}
    \hfill
    \begin{subfigure}[b]{0.32\textwidth}
         \centering
         \includegraphics[height=6\baselineskip]{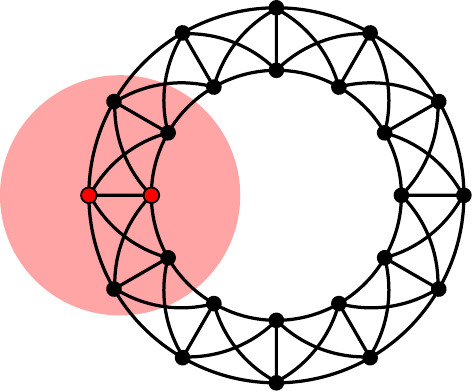}
         \caption{The red $r$-local 2-separator marks a factor of a direct product.}
         \label{fig:GroupLocalSeparator}
    \end{subfigure}
    \hfill
    \begin{subfigure}[b]{0.32\textwidth}
         \centering
         \includegraphics[height=9\baselineskip]{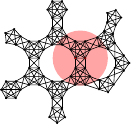}
         \caption{}
         \label{fig:LocallyChordalSeparator}
    \end{subfigure}
    \caption{Local separators (red) that separate the reddish balls around them.}
    \label{fig:IntroLocalSeparatorsExamples}
\end{figure}

Carmesin~\cite{Local2sep} initiated the study of $r$-local $1$- and $2$-separators, generalising the tree-like block-cutvertex decomposition and Tutte's 2-separator theorem~\cite{TutteGrTh} (which SPQR-trees are bases on) to the $r$-local setting.
Those early theorems provide canonical sets of pairwise non-crossing separations of order one and two, respectively, where the \emph{order} of a separation is the size of its separator.
The $r$-local $(\le 2)$-separations from~\cite{Local2sep} have since found applications to sparse networks and finite groups:
\begin{enumerate}[labelindent=0pt, labelwidth=\widthof{ad \cref{Intro:SparseNetworks}:}, leftmargin=!]
    \item[ad \cref{Intro:SparseNetworks}:] 
    Carmesin and Frenkel~\cite{Sarah} have developed an efficient algorithm for computing the canonical non-crossing $r$-local $(\le 2)$-separations and tested it on road networks (\cref{fig:BhamNham}).
    Their algorithm can use parallel computing, which is made possible by canonicity.
    \item[ad \cref{Intro:CayleyFinite}:]
    They were used in~\cite{StallingsNilpotent} to prove a low-order Stallings-type theorem for finite nilpotent groups (\cref{fig:GroupLocalSeparator}).
\end{enumerate}

\noindent Carmesin \cite[\S 10]{Local2sep} asked if his $r$-local $(\le 2)$-separations have analogues of orders $k > 2$:

\begin{mainproblem}\label{Intro:theoryLocalSeparators}
    Is there a decomposition theory for graphs based on $r$-local $k$-separations?
\end{mainproblem}

\subsection{Graph-decompositions and local separations}

Our aim in this paper is to present one unified solution to both \cref{Intro:questionDescription} and \cref{Intro:theoryLocalSeparators}:
We develop a finite combinatorial theory of local separations, and we use this theory to come up with a finite combinatorial description of graph-decompositions like~$\cD_r(G)$.
Our work is fully compatible with both the existing perspectives of $r$-local coverings~\cite{canonicalGraphDec} and of $r$-local $(\le 2)$-separations~\cite{Local2sep}.\footnote{We show this for $r$-local coverings in this paper, whereas for $r$-local $(\le 2)$-separations this will be proved in~\cite{Blueprints}.}
This answers \cref{Intro:questionDescription} and~\cref{Intro:theoryLocalSeparators}, up to the following caveat.
While the concepts that we develop as the basis for our theory of $r$-local $k$-separations are defined for all~$k$, the rest of the theory is only guaranteed to work for $k<K(G,r)$, where $K(G,r)$ is a graph invariant which we introduce next.

Given a graph $G$ and an integer $r>0$, we define the \emph{displacement} $\mindist_r(G)$ to be the least distance between any two distinct vertices in $G_r$ that $p_r$ maps to the same vertex in~$G$ (setting $\mindist_r(G):=\infty$ if $G_r=G$).\footnote{Both $G_r$ and $p_r$ will be formally defined in \cref{sec:Terminology}. Equivalent definitions for $K(G,r)$ are discussed in \cref{lem:midistKeyLemma}.}
We guarantee that our theory works for $r$-local separations of order $<K(G,r)$ where $K(G,r):=\frac{\mindist_r(G)}{r}+1$ (setting $K(G,r):=2$ if $r=0$).
The value of $K(G,r)$ is quite large in many graphs, and we provide a construction scheme for examples supporting this claim, see \cref{ex:BlocksCoincideGraphClass}.
Even in the worst case, the theory works for $r$-local $(\le 2)$-separations, see \cref{mindistGeR}.

The definition of the displacement $\mindist_r(G)$ involves the $r$-local covering, which is usually infinite.
Perhaps surprisingly, the displacement is computable \cite{AlgorithmLocalCover}.
We only use the displacement to guarantee that our theory works for $r$-local $k$-separations if $k < K(G,r)$; examples suggest that the theory often works beyond this guarantee.

\subsection{From trees of tangles to graphs of local bottlenecks}

The work \cites{FiniteSplinters,infinitesplinter} initiated a shift of perspective from tangles to the minimum-order separations that separate them.
Recent research~\cite{entanglements} has developed this perspective further by axiomatising these minimum-order separations through the new concept of \emph{entanglements}, and then used entanglements to come up with a new canonical construction of tangle tree-decompositions.\!\footnote{For ends instead of tangles, a similar construction but without entanglements had also been found by~\cite{Tim}.}
As this construction works without referencing tangles, we refer to it as an \emph{entanglement tree-decomposition}.
The characteristic property of an entanglement tree-decomposition of~$G$ is that it \emph{represents} all entanglements in~$G$, that is, for every entanglement in $G$ the decomposition tree has an edge that corresponds to some separation in that entanglement.

Hence, to find a finite combinatorial description of graph-decompositions like~$\cD_r(G)$, which are obtained from tangle tree-decompositions of the $r$-local covering~$G_r$ by folding them to graph-decompositions of~$G$, one might try to come up with a notion of `$r$-local entanglements' based on $r$-local separations, and then try to find a canonical construction of `$r$-local entanglement graph-decompositions'.
Roughly speaking, an `$r$-local entanglement graph-decomposition' of $G$ would be a graph-decomposition of $G$ that represents the $r$-local entanglements in~$G$.

Entanglements, however, present a challenge from an algorithmic standpoint. 
To determine whether a given set of $k$-separations constitutes an entanglement, one must compare these $k$-separations with all separations of $G$ across all orders, including those of order much larger than~$k$. 
This requirement complicates iterative algorithmic processes, which typically seek to build an entanglement tree-decomposition layer by layer: by starting with examining all entanglements formed by $\lek$-separations for $k = 1$, before progressively considering higher values of~$k$.
This requirement also causes a theoretical problem, as we can only guarantee that our finite combinatorial theory works as intended for $r$-local separations of order $<K(G,r)$.

To overcome the challenge associated with entanglements, we develop a new axiomatisation of the minimum-order separations between any two given tangles, which we call \emph{bottlenecks}.
All separations within a bottleneck share the same order, which is the \emph{order} of the bottleneck.
The main difficulty in conceptualising bottlenecks was to strike a delicate balance:
On the one hand, bottlenecks need to be specific enough to make it possible for us to find an $r$-local version of them. 
In particular, whether a given set of $k$-separations forms a bottleneck of order~$k$ must only depend on their relationships with the $\lek$-separations of~$G$.
On the other hand, bottlenecks need to be general enough to ensure that all minimum-order separations distinguishing any pair of tangles form a bottleneck.

As a main result, we show that every finite connected graph $G$ has for every $r$ and $k<K(G,r)$ an \emph{$r$-local bottleneck graph-decomposition} $\cH_r^{\le k}(G)$.
That is, $\cH_r^{\le k}(G)$ is a graph-decomposition of~$G$ that represents all the $r$-local bottlenecks $\beta$ in~$G$ of order~$\le k$: for every $\beta$ there is an edge of the decomposition graph $H$ of $\cH_r^{\le k}(G)$ that corresponds to an $r$-local separation in~$\beta$.
The parameter $k<K(G,r)$ allows us to set the maximum order for all $r$-local separations and all $r$-local bottlenecks considered in the construction of $\cH_r^{\le k}(G)$.
For fixed $G$ and~$r$, the decompositions $\cH_r^{\le 0}(G)$, $\cH_r^{\le 1}(G),\ldots$ then refine their predecessors, and hence allow for iterative algorithmic constructions.
The constructions may even continue for $k\ge K(G,r)$ if additional checks for the results are in place. 
\begin{enumerate}[labelindent=0pt, labelwidth=\widthof{ad \cref{Intro:LocallyChordal}:}, leftmargin=!]
    \item[ad \cref{Intro:LocallyChordal}:] The graph-decomposition $\cH_r^{\le k}(G)$ has been applied for values of $k$ beyond the guarantee $K(G,r)$ to structurally characterise the $r$-locally chordal graphs~$G$~\cite{CanTDChordalGraphs}.
\end{enumerate}
The graph-decompositions $\cH_r^{\le k}(G)$ are unique given $G,r,k$, and they are canonical.

If $r\ge |G|$, and more generally if $G$ itself is an $r$-local covering of some graph, then $\cH_r^{\le k}(G)$ defaults to a tangle tree-decomposition of~$G$.
For all~$r\in\N$, the graph-decomposition $\cH_r^{\le k}(G)$ coincides with the graph-decomposition obtained from the tangle tree-decomposition $\cH_r^{\le k}(G_r)$ of the $r$-local covering~$G_r$ by folding via~$p_r$.
In this sense, $\cH_r^{\le k}(G)$ exhibits the same qualities as the graph-decomposition $\cD_r(G)$ that is defined via $r$-local coverings, up to the parameter~$k$ that bounds the order of the $r$-local separations considered in the construction of~$\cH_r^{\le k}(G)$.

\begin{mainresult}\label{MainIntro}
    Let $G$ be a finite connected graph and $r,k \in \N$ with $k<K(G,r)$.
    \begin{enumerate}
        \item $\cH_r^{\le k}(G)$ is a canonical graph-decomposition of~$G$ that represents all the $r$-local bottlenecks in~$G$ of order~$\le k$.
        \item $\cH_r^{\le k}(G_r)$ is a canonical tangle tree-decomposition of scope~$k$ of the $r$-local covering~$G_r$.
        \item $\cH_r^{\le k}(G)$ is canonically isomorphic to the graph-decomposition of $G$ that is obtained from the canonical tree-decomposition $\cH_r^{\le k}(G_r)$ of the $r$-local covering~$G_r$ by folding via~$p_r$.
        \item $\cH_r^{\le k}(G)$ refines $\cH_r^{\le \ell}(G)$ for all $\ell\in\N$ with $\ell\le k$.
    \end{enumerate}
\end{mainresult}

\noindent See \cref{sec:Bottlenecks} and \cref{sec:MainProof} for definitions.

\begin{mainresult}\label{MainAlgo}
There is an algorithm that determines $\cH_r^{\le k}(G)$ given $G,r,k$ as in \cref{MainIntro}.
\end{mainresult}

\cref{MainIntro} and \cref{MainAlgo} together constitute one main result of this paper.
However, we hope that the finite combinatorial theory of $r$-local $k$-separations will be of independent interest.
\begin{enumerate}
    \item[ad \cref{Intro:CayleyFinite}:]
    Our work has been used just recently by Köhl and Salarian~\cite{Reza} to establish combinatorial characterisations of virtually torsion-free and virtually free groups.
\end{enumerate}

\subsection{Organisation of the paper}
\cref{sec:Terminology} recalls terminology about graph-decompositions.
In~\cref{sec:Bottlenecks}, we introduce bottlenecks and generalise a canonical version of the tangle-tree-decomposition theorem from~\cite{infinitesplinter} to bottlenecks.
In~\cref{sec:LocalSeparations}, we introduce the notion of local separations and study their interplay with local coverings.
In~\cref{sec:Displacement}, we introduce the notion of displacement and study how large displacement can be harnessed.
In~\cref{sec:LinksCrossing}, we expand the crossing relation from the usual separations to local separations and study its interplay with local coverings.
In~\cref{sec:LocalBottlenecks}, we introduce a local version of bottlenecks, study its interplay with local coverings, and prove a local-separations version of the tangle-tree-decomposition theorem.
In~\cref{sec:constructinggddirectly}, we show how graph-decompositions can be built from sets of pairwise non-crossing local separations.
In~\cref{sec:MainProof}, we prove \cref{MainIntro,MainAlgo}.
In~\cref{secCorners}, we discuss corners of local separations.

\subsection{Acknowledgements}

We thank Reinhard Diestel for reading the entire paper very carefully and sharing his detailed feedback, which has improved the paper across the board.
We are especially grateful for his suggestion of a more elegant definition of bottlenecks.
We are grateful to Nathan Bowler for pointing out an error in an early version.
We thank Johannes Carmesin for his inspiration in the early stages of this project.

\section{Review of graph-decompositions and coverings}\label{sec:Terminology}

Our notations and definitions follow \cite{bibel}.
Throughout this paper, let $G$ be a (simple and not necessarily finite) graph and $r \in \N$ an integer.
When $r$ is clear from context, which it usually will be, we refer to the cycles in $G$ of length~$\le r$ as \defn{short} cycles.
Let $X,Y,A$ be sets of vertices in~$G$.
An $X$--$Y$ path $P$ in $G$ is called a path \defn{through} $A$ if all internal vertices of $P$ are contained in $A$ and $P$ has at least one internal vertex.
Recall that a \defn{near-partition} of a set $M$ is a family $(M_i:i\in I)$ of subsets~$M_i$, possibly empty, that are disjoint and satisfy $M=\bigcup_{i\in I}M_i$.
If $|I|=2$, we speak of a \defn{near-bipartition}.

We remark that most of what follows is a recap of definitions and concepts from \cite{canonicalGraphDec} relevant to the present paper; in~\cite{canonicalGraphDec} you will find proofs or reference to proofs.

\subsection{Local coverings}\label{subsec:localcov}

A \defn{walk} (of \defn{length $k$}) in a graph $G$ is a non-empty alternating sequence $W=v_0 e_0 v_1\ldots v_{k-1} e_{k-1} v_k$ of vertices and edges in $G$ such that each $e_i$ joins $v_i$ and $v_{i+1}$.
If $v_0=v_k$ we call $W$ \defn{closed} and say that $W$ is a closed walk \defn{at}~$v_0$.
The subsequences of the form $v_i e_i\ldots v_j$ are called \defn{subwalks} of~$W$.
The walk that traverses $W$ backwards is denoted by~$W^-$.
We denote the set $\{v_0, \dots, v_k\}$ of vertices on the walk $W$ by \defn{$V(W)$}.
A walk $W$ is an \defn{$X$--$Y$ walk} for vertex sets $X,Y\se V(G)$ if $W$ has precisely its first vertex in $X$ and precisely its last vertex in~$Y$.
An \defn{$X$-walk} is a walk that has precisely its first and last vertex in~$X$.

We say that a walk $W$ is \defn{reduced} if it contains no subwalk of the form $ueveu$.
The \defn{reduction} of $W$ is the unique walk obtained by iteratively replacing its subwalks of the form $ueveu$ with the trivial walk~$u$.
Two walks are \defn{homotopic} if their reductions are equal.
Let \defn{$\sim$} denote the equivalence relation this defines on the set of all walks in~$G$.

For a vertex $x_0$ of $G$, we denote by \defn{$\cW(G,x_0)$} the set of all closed walks in $G$ at~$x_0$.
Then $\defnMath{\pi_1(G,x_0)}:=(\cW(G,x_0)/{\sim},\cdot)$ is the (\defn{combinatorial}) \defn{fundamental group} of $G$ at~$x_0$, where $[W_1]\cdot [W_2]:=[W_1 W_2]$ and $W_1 W_2$ is the concatenation of the two walks $W_1$ and $W_2$.
We often do not distinguish between $W$ and $[W]$ when it is clear from context what we mean,

A closed walk $W$ in $G$ at~$x_0$ \defn{stems} from a closed walk $Q$ if $W$ can be written as $W=W_0 Q W_0^-$.
A closed walk \defn{once around} a cycle $O$ is a closed walk in $O$ that traverses every edge of $O$ exactly once.
We say that $W$ \defn{stems} from a cycle $O$ if $W$ stems from a closed walk $Q$ once around~$O$.
The \defn{$r$-local subgroup} \defnMath{$\pi_1^r(G,x_0)$} of $\pi_1(G,x_0)$ is the subgroup generated by the closed walks at $x_0$ that stem from short cycles in~$G$ (those of length~$\le r$).
The $r$-local subgroup is normal.

A \defn{covering} of a graph $G$ is an epimorphism $p\colon C\to G$ for some graph~$C$ which exhibits the following property: for every vertex $v\in C$, the map $p$ bijectively sends the edges incident to $v$ to the edges incident to $p(v)$ in~$G$.
The vertices in $p\inv(v)$ are the \defn{lifts} of~$v$. 
Lifts of edges are defined similarly.
A \defn{lift} of a set $X$ of vertices of~$G$ to~$C$ is a vertex set $\hat X\se V(C)$ that contains for each $x\in X$ exactly one lift $\hat x$ of~$x$.
Equivalently, $p$ restricts to a bijection $\hat X\to X$.
A \defn{lift} of a walk $W$ in~$G$ is a walk in~$C$ which $p$ projects to~$W$.

There exists a covering $p_r\colon G_r\to G$ that induces an isomorphism from the fundamental group $\pi_1(G_r,\hat x_0)$ of $G_r$ to the $r$-local subgroup $\pi_1^r(G,x_0)$, where $\hat x_0$ is any lift of~$x_0$.
The covering $p_r\colon G_r\to G$ is unique up to isomorphism of coverings given $G$ and~$r$, and it is independent of the choice of~$x_0$.
In particular, the graph $G_r$ is determined by just $G$ and $r$.  
We call $p_r\colon G_r\to G$ the \defn{$r$-local covering} of~$G$.
Recall that the $0$-local, $1$-local and $2$-local covering of a simple graph $G$ are all its universal covering.

The group of \defn{deck transformations} of $G_r$, those automorphisms of $G_r$ whose action commutes with~$p_r$, is denoted by~\defnMath{$\Gamma_r(G)$}.
Since $\pi_1^r(G, x_0)$ is a normal subgroup of~$\pi_1(G, x_0)$, this group~$\Gamma_r(G)$ acts transitively on every fibre $p_r^{-1}(x)$ of the covering map~$p_r$ \cite[\S 1.3]{hatcher}.

\begin{lemma}\label{shortCyclesGenerate}
    {\normalfont\cite[Lemma~4.6]{canonicalGraphDec}}
    The binary cycle space of~$G_r$ is generated by the short cycles.
\end{lemma}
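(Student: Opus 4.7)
The plan is to relate the binary cycle space $Z_1(G_r;\F_2)$ to the fundamental group $\pi_1(G_r,\hat x_0)$, and to transport a generating set from $\pi_1^r(G,x_0)$ along the isomorphism $(p_r)_*$ induced by the $r$-local covering. The key tool is the standard surjection $\pi_1(H,y_0)\to Z_1(H;\F_2)$, valid for any connected graph $H$ with basepoint $y_0$, that sends $[W]$ to the $\F_2$-sum of the edges of $H$, each counted mod $2$ by the number of traversals by $W$. This map is well-defined (reducing a subwalk $ueveu$ removes two traversals of $e$) and surjective (every element of the cycle space is the image of a closed walk travelling once around each of its constituent cycles). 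Consequently, any generating set of $\pi_1(H,y_0)$ maps to a generating set of $Z_1(H;\F_2)$.

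Applying this to $H=G_r$ with $y_0=\hat x_0$, I would use the following generating set. By the defining property of the $r$-local covering, $(p_r)_*$ is an isomorphism onto $\pi_1^r(G,x_0)$, which by definition is generated by the closed walks at $x_0$ of the form $W=W_0 Q W_0^-$ where $Q$ is once around a short cycle $O$ of $G$. Pulling these back via unique path-lifting yields closed walks $\hat W=\hat W_0\hat Q\hat W_0^-$ at $\hat x_0$ in $G_r$ that generate $\pi_1(G_r,\hat x_0)$. The image of such $\hat W$ in $Z_1(G_r;\F_2)$ is computed directly: each edge traversed by $\hat W_0$ is also traversed by $\hat W_0^-$, so these contributions cancel modulo $2$, leaving only the sum of the edges of $\hat Q$.

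It then remains to identify this leftover contribution as the class of a short cycle of $G_r$. Since $O$ has length $\ell\le r$, any two of its vertices lie at distance at most $\lfloor \ell/2\rfloor\le r/2$ in $G$, so $O$ is contained in an $(r/2)$-ball of $G$ around any of its vertices. Because $p_r$ is $(r/2)$-ball-preserving, the cycle $O$ lifts to a pairwise isomorphic cycle $\hat O$ of length $\ell\le r$ in $G_r$, and $\hat Q$ is a closed walk once around $\hat O$. Hence the image of $\hat W$ in $Z_1(G_r;\F_2)$ is exactly the class $[\hat O]$ of a short cycle of $G_r$, and the short cycles generate the cycle space of $G_r$, as claimed.

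The step I expect to require the most care is the lifting of short cycles: one must verify that the $(r/2)$-ball-preserving property of $p_r$, as formulated in the paper, really implies that a short cycle of $G$ lifts to a genuine cycle of the same length in $G_r$, rather than to a closed walk that winds non-trivially around some longer loop. This is essentially where the choice of the constant $r/2$ in the ball-preservation is designed to match the $r$ in ``short''. Once this point is pinned down, the rest reduces to the standard translation between the fundamental group and the $\F_2$-cycle space.
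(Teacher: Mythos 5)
Your proof is correct. The paper states this lemma only by citation to the earlier work~\cite{canonicalGraphDec}, so there is no in-paper proof to compare against; but the argument you give is the natural one in the framework set up here: the map $\pi_1(G_r,\hat x_0)\to Z_1(G_r;\F_2)$ that counts edge-traversals mod~$2$ is a well-defined surjective group homomorphism, and it carries the $(p_r)_*$-preimages of the defining generators of $\pi_1^r(G,x_0)$ to classes of short cycles of~$G_r$.

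On the step you flagged: ball preservation does close the gap (in the paper's presentation it is established via \cref{mindistGeR} and \cref{lem:midistKeyLemma}), but an even more elementary argument avoids appealing to it, and also handles a point your write-up takes slightly for granted, namely that the middle segment $\hat Q$ of $\hat W$ is a \emph{closed} walk. Since $\hat W$ is closed and lifts of walks are unique, the terminal segment of $\hat W$ --- the lift of $W_0^-$ starting at the end of $\hat Q$ --- must be precisely the reversal of $\hat W_0$; otherwise its reversal would be a second lift of $W_0$ from $\hat x_0$. Hence $\hat Q$ is a closed walk of length $|O|\le r$. Moreover, because $Q$ repeats only its base vertex while $p_r$ is locally injective, $\hat Q$ also repeats only its base vertex; so $\hat Q$ is a walk once around a cycle $\hat O$ of $G_r$ of the same length as~$O$, hence a short cycle. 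Either route identifies the image of the generator in $Z_1(G_r;\F_2)$ with a short cycle, which is all that is needed.
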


\noindent We will often use this lemma implicitly when working with~$G_r$.

Let $\rho$ be a non-negative integer.
The \defn{(combinatorial) $(\rho/2)$-ball} $\defnMath{B_G(v,\rho/2)}$ around a vertex $v$ in $G$ is formed by all vertices and edges traversed by closed walks in $G$ of length at most $\rho$ based at~$v$.
We remark that if~$\rho$ is odd, then the $(\rho/2)$-ball around $v$ is precisely the subgraph induced on all vertices of distance at most $\rho/2$ from $v$, while for even~$\rho$ any edges between vertices of distance $\rho/2$ from~$v$ will be missing.
A covering $p \colon C \to G$ of $G$ \defn{preserves $(\rho/2)$-balls} if $p$ restricts to an isomorphism $B_C(v, \rho/2) \to B_G(p(v),\rho/2)$ for every vertex $v$ of~$C$.
Note that the surjectivity of these restrictions always holds and the definition requires additional injectivity.\footnote{A notion that could be regarded as a precursor of the preservation of $(\rho/2)$-balls has been studied by Georgakopoulos~\cite{georgakopoulos2017covers}.}
By \cite[Lemma~4.3 and~4.4]{canonicalGraphDec}, the $r$-local covering $p_r$ may equivalently be described as the covering that is universal among the coverings $q \colon \hat G \to G$ preserving the $(r/2)$-balls.
Thus, the $r$-local covering $p_r$ maps every short cycle in $G_r$ of length at most $r$ isomorphically to $G$ \cite[Lemma~4.3]{canonicalGraphDec}.
Vice versa, any short cycle in $G$ lifts to a short cycle in $G_r$.

\subsection{Separations}

Let $G$ be any graph.
For $X \subseteq V(G)$, a component $K$ of $G-X$ is \defn{tight} at $X$ if $N_G(K) = X$.
A \defn{separation} of~$G$ is a set $\{A,B\}$ such that $A\cup B=V(G)$ and $G$ contains no edge between $A\sm B$ and $B\sm A$.
We refer to $A$ and $B$ as the \defn{sides} of~$\{A,B\}$, and we call $A\cap B$ the \defn{separator} of~$\{A,B\}$.
The size $\vert A\cap B\vert$ of the separator is the \defn{order} of~$\{A,B\}$.
Also we call a separation with separator $X$ a \defn{$k$-separation} if $k = |X|$.
A~separation~$\{A,B\}$ is \defn{proper} if $A\sm B$ and $B\sm A$ are non-empty.
If there are tight components $C_A, C_B$ of $G - (A \cap B)$ whose vertex sets are contained in $A$ and $B$, respectively, then the separation $\{A,B\}$ is \defn{tight}.

Every separation $\{A,B\}$ has two \defn{orientations} $(A,B)$ and $(B,A)$, and we refer to $(A,B)$ and $(B,A)$ as \defn{oriented separations}.
We denote the \defn{inverse} $(B,A)$ of an oriented separation $(A,B)$ by \defnMath{$(A,B)^*$}.
Given a set $S$ of (unoriented) separations, we denote the set of all orientations of separations in $S$ by \defn{$\orientN{S}$.}
The \defn{order} of $(A,B)$ and $(B,A)$ is the order of~$\{A,B\}$.
A partial ordering $\geq$ is defined on the ordered separations of $G$ by letting\footnote{We remark that we use the partial order as defined in the 6th but not the 5th edition of Diestel's book \cite{bibel}. That is, we write $\geq$ where readers familiar with previous papers would expect $\leq$.}
\[
    \defnMath{(A,B)\ge (C,D)}\quad :\Longleftrightarrow \quad A\se C\quad\text{and}\quad B\supseteq D
\]

Two separations $\{A,B\}$ and $\{C,D\}$ of~$G$ are \defn{nested}~if they have $\geq$-comparable orientations.
Then also every orientation of $\{A,B\}$ is \defn{nested} with every orientation of $\{C,D\}$.
If $\{A,B\}$ and $\{C,D\}$ are not nested, then they are said to \defn{cross}, and then also every orientation of $\{A,B\}$ \defn{crosses} every orientation of $\{C,D\}$.
A set of separations of~$G$ is \defn{nested} if its elements are pairwise nested.

The automorphism group $\Aut(G)$ of~$G$ acts on the oriented separations of~$G$ by~$(A, B) \mapsto (\phi(A), \phi(B))$ for~$\phi \in \Aut(G)$.
This action is compatible with~$\ge$.
Thus, the respective action on the (unoriented) separations of~$G$ also preserves the notions of nested and crossing.
We say that a set~$S$ of separations of~$G$ is~\defn{$\Gamma$-canonical} for some subgroup~$\Gamma$ of~$\Aut(G)$ if $S$ is invariant under the action of~$\Gamma$ on the separations of~$G$.
In the case of~$\Gamma = \Aut(G)$, we just say that~$S$ is \defn{canonical}.

The following definitions are illustrated in \cref{fig:LinksAndCorners}.
Let $\{A_1,A_2\}$ and $\{C_1,C_2\}$ be two separations of a graph~$G$ with separators $X$ and~$Y$, respectively.
The \defn{$X$-link for~$C_i$} is the vertex set $X\sm C_{3-i}$ (which is included in~$C_i$).
Similarly, the \defn{$Y$-link for~$A_i$} is the vertex set $Y\sm A_{3-i}\se A_i$.
The two $X$-links are said to be \defn{opposite} of each other, and similarly the $Y$-links are \defn{opposite} of each other.
The \defn{centre} of~$\{A_1, A_2\}$ and~$\{C_1, C_2\}$ is the intersection $X\cap Y$.
The separator $X$ is near-partitioned into its two $X$-links and the centre.

\begin{figure}[ht]
    \centering
    \includegraphics[height=6\baselineskip]{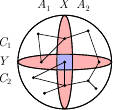}
    \caption{Four red links and a blue centre}
    \label{fig:LinksAndCorners}
\end{figure}

\begin{lemma}\label{char:NestedViaLinks}
    Let $G$ be a connected graph, and let $\{A_1,A_2\}$ and $\{C_1,C_2\}$ be separations of $G$ with separators $X$ and~$Y$, respectively.
    The following assertions are equivalent:
    \begin{enumerate}
        \item $\{A_1,A_2\}$ and $\{C_1,C_2\}$ cross;
        \item for every pair of indices $i,j\in [2]$, at least one of the following three sets is non-empty:\\
                the $Y$-link for $A_i$, the $X$-link for $C_j$, and $(A_i\sm A_{3-i})\cap (C_j\sm C_{3-j})$.
        \item \label{item:crossviacornersedges} for every pair of indices $i,j\in [2]$, at least one of the following three sets is non-empty:\\
                the $Y$-link for $A_i$, the $X$-link for $C_j$, and $E(X\cap Y,(A_i\sm A_{3-i})\cap (C_j\sm C_{3-j}))$.
    \end{enumerate}
    Moreover, \emph{(ii)} and \emph{(iii)} are violated for a pair of indices $i,j \in [2]$ if and only if $(A_i, A_{3-i}) \geq (C_{3-j}, C_j)$.
\end{lemma}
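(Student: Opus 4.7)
The plan is to derive all three equivalences from the ``moreover'' clause. Observe first that $\{A_1,A_2\}$ and $\{C_1,C_2\}$ are nested precisely when some pair of their orientations is $\geq$-comparable, i.e.\ precisely when there exist $i,j \in [2]$ with $(A_i,A_{3-i}) \geq (C_{3-j},C_j)$. Hence once the ``moreover'' is established for (ii), the equivalence (i)$\Leftrightarrow$(ii) will follow immediately, as ``crossing'' is by definition the negation of ``nested''.

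The geometric input I will use throughout is a simple \emph{neighbourhood observation}: for any vertex $v$ in the ``corner'' $D_{ij} := (A_i \sm A_{3-i}) \cap (C_j \sm C_{3-j})$, every neighbour $w$ of $v$ in $G$ lies in $A_i \cap C_j$ (since no edge crosses either separation), and partitioning $A_i \cap C_j$ according to how each factor meets its separator writes this as the disjoint union of $D_{ij}$, the $Y$-link for $A_i$, the $X$-link for $C_j$, and the centre $X \cap Y$. Thus outside $D_{ij}$, the only possible neighbours of $D_{ij}$-vertices lie in these three sets.

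To prove the ``moreover'' for (ii), I verify both directions. If $A_i \se C_{3-j}$ and $C_j \se A_{3-i}$, then $X \se A_i \se C_{3-j}$ and $Y \se C_j \se A_{3-i}$ kill both link-sets, and $D_{ij} \se C_{3-j} \cap (C_j \sm C_{3-j}) = \emptyset$. Conversely, assume all three sets of (ii) are empty. Given $v \in A_i$, write $v \in (A_i \sm A_{3-i}) \cup X$; the emptiness of the $X$-link for $C_j$ gives $X \se C_{3-j}$, while $D_{ij} = \emptyset$ combined with $V(G) = C_j \cup C_{3-j}$ forces $A_i \sm A_{3-i} \se C_{3-j}$. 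So $A_i \se C_{3-j}$, and symmetrically $C_j \se A_{3-i}$. This proves (i)$\Leftrightarrow$(ii) together with its ``moreover''.

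It remains to handle (iii). The implication (iii)$\Rightarrow$(ii) is immediate from containment of edge-incidence in membership, so the task is to show that if all three sets in (iii) are empty at some pair $(i,j)$, then already $D_{ij} = \emptyset$; the preceding paragraph then yields nestedness in the desired orientation. This is the unique step that genuinely uses the hypothesis that $G$ is connected. By the neighbourhood observation together with the emptiness of both link-sets, every edge leaving $D_{ij}$ must land in $X \cap Y$; but the assumed emptiness of $E(X \cap Y, D_{ij})$ rules this out. Hence $D_{ij}$ is a union of components of $G$, so by connectedness either $D_{ij} = \emptyset$ (the desired conclusion) or $D_{ij} = V(G)$. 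The latter forces $A_{3-i} = \emptyset$, which makes $\{A_1,A_2\}$ trivially nested with $\{C_1,C_2\}$ and lets us pick a $\geq$-comparable orientation by inspection. The only real subtlety is packaging this degenerate case so that all four statements of the lemma line up; once that is done, the equivalence of (i), (ii), (iii) and the uniform form of the ``moreover'' drop out simultaneously.
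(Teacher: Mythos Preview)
Your proof is correct and somewhat more self-contained than the paper's. The paper cites (i)$\leftrightarrow$(ii) from an external reference and then proves (ii)$\to$(iii) by a path argument: assuming (ii) holds globally, fix $(i,j)$ with both links empty, pick $v\in D_{ij}$ (using (ii) at $(i,j)$) and $u\notin D_{ij}$ (using (ii) at $(3-i,3-j)$), and note that a $u$--$v$ path must cross the corner-separator, which equals $X\cap Y$ since both links are empty. You instead prove (i)$\leftrightarrow$(ii) directly from the ``moreover'' (which you establish cleanly from the definition of~$\geq$), and for the connectedness step you observe that when all three (iii)-sets vanish, $D_{ij}$ has no neighbours outside itself and is therefore $\emptyset$ or $V(G)$. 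The two connectedness arguments are interchangeable; yours is arguably cleaner and makes the role of connectedness more transparent.

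One remark on your final paragraph: the ``packaging'' of the degenerate case $D_{ij}=V(G)$ cannot fully rescue the pairwise form of the moreover for~(iii). In that case $A_{3-i}=C_{3-j}=\emptyset$, so (iii) is violated at $(i,j)$ while $(A_i,A_{3-i})\geq(C_{3-j},C_j)$ would require $V(G)\subseteq\emptyset$. The separations \emph{are} nested, but via a different orientation, so the equivalences (i)$\leftrightarrow$(ii)$\leftrightarrow$(iii) survive unscathed; only the per-pair moreover for~(iii) fails. The paper's proof (``the moreover-part follows immediately from the definition of $\geq$ and~(ii)'') does not address this either, so you are not missing anything the paper supplies---you are simply more honest about an edge case that both arguments leave implicit.
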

\begin{proof}
    (i)$\leftrightarrow$(ii) is \cite[Statement~(6)]{confing}.
    (iii)$\to$(ii) is trivial.
    It remains (ii)$\to$(iii).
    
    For this, let $i,j\in [2]$ and assume that the $Y$-link for~$A_i$ and the $X$-link for~$C_j$ are empty.
    By~(ii), there is a vertex $v$ in $(A_i\sm A_{3-i})\cap (C_j\sm C_{3-j})$.
    There is a vertex $u$ of $G$ outside of $(A_i\sm A_{3-i})\cap (C_j\sm C_{3-j})$, since otherwise (ii) is violated for $i':=3-i$ and $j':=3-j$.
    Since $G$ is connected, $G$ contains a $u$--$v$ path~$P$.
    As $\{A_i\cap C_j,A_{3-i}\cup C_{3-j}\}$ is a separation, the path $P$ must intersect its separator, which is equal to $X\cap Y$ by the assumption that the $Y$-link for~$A_i$ and the $X$-link for~$C_j$ are empty.
    Hence, the edge on $P$ before its first vertex in $X\cap Y$ lies in $E(X\cap Y,(A_i\sm A_{3-i})\cap (C_j\sm C_{3-j}))$.

    The moreover-part follows immediately from the definition of $\geq$ and (ii).
\end{proof}

\begin{figure}[ht]
    \centering
    \includegraphics[height=6\baselineskip]{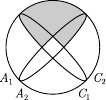}
    \caption{$(A_1,A_2)\ge (C_1,C_2)$ if and only if the grey area is empty.
    The grey area corresponds to $i=1$ and $j=2$ in \cref{char:NestedViaLinks}~(ii)}
    \label{fig:NestedViaLinks}
\end{figure}

\begin{figure}[ht]
    \centering
    \includegraphics[height=6\baselineskip]{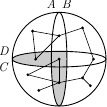}
    \caption{$\{B\cup D, A\cap C\}$ is one of the four corners of $\{A,B\}$ and~$\{C,D\}$}
    \label{fig:CrossingDiagram}
\end{figure}

For a depiction of the setting for the next definitions, see \autoref{fig:CrossingDiagram}.
If $\{A,B\}$ and $\{C,D\}$ cross, then their four \defn{corners} are the separations $\{B\cup D,A\cap C\}$, $\{B\cup C,A\cap D\}$, $\{A\cup C,B\cap D\}$ and $\{A\cup D,B\cap C\}$.
The corners $\{B\cup D,A\cap C\}$ and $\{A\cup C,B\cap D\}$ are \defn{opposite}, and so are the corners $\{B\cup C,A\cap D\}$ and~$\{A\cup D,B\cap C\}$.
Any two corners that are not opposite are \defn{adjacent}.
The two adjacent corners $\{B\cup D,A\cap C\}$ and $\{B\cup C,A\cap D\}$ are said to \defn{lie on the same side} of~$\{A,B\}$, namely \defn{on the $A$-side}.
Similarly, the other two adjacent corners $\{A\cup C,B\cap D\}$ and $\{A\cup D,B\cap C\}$ are said to \defn{lie on the same side} of~$\{A,B\}$, namely \defn{on the~$B$-side}.

The separations of $G$ satisfy the following condition~\cref{Fish}, see for example \cite[Lemma~12.5.5]{bibel}.
\begin{enumerate}[label=\rm{(F)}]
\item\label{Fish} For every two crossing separations $s$ and $s'$, every two opposite corners $c$ and $d$ of $s$ and $s'$, and every separation $t$, the following three assertions hold:
\begin{enumerate}[label=\rm{(F\arabic*)}]
        \item \label{item:F1} If $t$ crosses at least one of $c$ and $d$, then $t$ crosses at least one of $s$ and $s'$.
        \item \label{item:F2} If $t$ crosses both $c$ and $d$, then $t$ crosses both $s$ and $s'$.
        \item \label{item:F3} Neither $s$ nor $s'$ crosses $c$ or~$d$.
\end{enumerate}
\end{enumerate}

The order-function $\{A,B\}\mapsto |A\cap B|$ is \defn{subdmodular}\footnote{This is compatible with how the term `submodular' is used in the literature, as the set $\orientN{N}$ equipped with $\geq$ forms a lattice in which the infimum and supremum of two oriented separations are given by orientations of opposite corners.} on the separations of $G$ if it satisfies
\begin{enumerate}[label={\rm{(}$\dagger$\rm{)}}]
    \item\label{submod} If two separations $s$ and $s'$ cross and if $c$ and $d$ are opposite corners, then $|c|+|d|\le |s|+|s'|$.
\end{enumerate}

A set $\sigma$ of oriented separations of $G$ is a \defn{star} if $(A,B)\ge (D,C)$ for every distinct two $(A,B),(C,D) \in \sigma$, and its \defn{interior $\interior(\sigma)$} is $\bigcap_{(A,B) \in \sigma} B$.
Let $N$ be a nested set of proper separations of~$G$.
A \defn{splitting star} of~$N$ is a star $\sigma$ such that $\{A,B\}\in N$ and $(B,A)\notin\sigma$ for every $(A,B)\in\sigma$ and such that every $\{C,D\}\in N$ has an orientation $(C,D)$ with $(C,D)\ge (A,B)$ for some $(A,B)\in\sigma$.
This $(A,B) \in \sigma$ is clearly unique.

Note that every $(A,B) \in \orientN{N}$ lies in at most one splitting star.
Moreover, if $G$ is a finite graph, then the splitting stars of $N$ form a partition of $\orientN{N}$ \cite[\S 3]{TreeSets}.
Kneip and Gollin~\cite[Corollary~3.3]{infinitetreesets} showed that this also holds for infinite graphs, if additionally $\orientN{N}$ has no $(\omega+1)$-chain.

\subsection{Decompositions}
In this section, we mostly follow~\cite{canonicalGraphDec}.
Let $G$ be a graph, and $H$ a multigraph.
Let $\cV = (V_h : h \in V(H))$ be a family of sets $V_h$ of vertices of~$G$ indexed by the nodes of~$H$. 
The pair $(H, \cV)$ is an \defn{$H$-decomposition} or \defn{\gd} of $G$ if
\begin{enumerate}[label=\rm{(H\arabic*)}]
    \item\label{H1} $G = \bigcup_{h \in V(H)} G[V_h]$, and
    \item\label{H2} for every $v \in V(G)$, the subgraph $H[W_v]$ of $H$ induced by $W_v := \{h \in V(H) : v \in V_h \}$ is connected.
\end{enumerate}

\noindent We refer to the $V_h$ as its \defn{bags} and to the $W_v$ as its \defn{co-bags}, and we call $H$ its \defn{decomposition graph}.
If $H$ is a tree, then a \gd\ $(H, \cV)$ is a \defn{\td}.

Note the duality $v \in V_h \iff h \in W_v$.
We may thus reconstruct the families $\cV = (V_h : h \in V(H))$ and $\cW = (W_v : v \in V(G))$ from each other.

We often flesh out the bags and co-bags by choosing subgraphs~$G_h$ of $G$ on $V_h$ and $H_v$ of $H$ on $W_v$ in such a way that \cref{H1,H2} still hold, as
\begin{enumerate}[label=\rm{(H\arabic*')}]
    \item\label{H1'} $G = \bigcup_{h \in V(H)} G_h$, and
    \item\label{H2'} for every $v \in V(G)$, the subgraph $H_v$ of $H$ is connected.
\end{enumerate}

\noindent These~$G_h$, then, are our chosen \defn{parts} of the decomposition $(H, \cV)$, and the $H_v$ our chosen \defn{co-parts}.
We may then refer to the decomposition with these choices simply by citing the families $\cG = (G_h)_{h\in H}$ and $\cH = (H_v)_{v\in G}$ together with $G$ and~$H$.
Note that $\cV$ can be reconstructed from either of these families, and that it satisfies \cref{H1} and \cref{H2} if $\cG$ and~$\cH$ satisfy \cref{H1'} and~\cref{H2'}.
We often introduce graph-decompositions (with parts) as a pair $(H,\cV)$ (or~$(H, \cG)$) and tacitly assume that $\cV=(V_h:h\in V(H))$ (or $\cG = (G_h: h \in V(H))$), respectively.

In the case of \td s~$(T, \cV)$, we generally allow parts~$G_t$ other than the induced subgraphs~$G[V_t]$ on the bags.
However, throughout this paper, parts of a \td\ will always be the induced subgraphs~$G[V_t]$.
This agrees with the traditional definition of parts of \td s, as for example in~\cite{bibel}.

For a category-theoretic perspective on graph-decompositions and tree-decompositions, see~\cites{CategoryDecomp,WillTDC}.\nocite{Planken}

\begin{construction}[Tree-decompositions from nested sets of separations]\label{construction:candidatetreedecomposition}
    Let $G$ be a graph with a nested set $N$ of proper separations.
    We define the \defn{candidate} $\defnMath{\cT(N)} = (T,\cV)$ for a \td\ as follows.
    $T$ is a graph whose nodes are the splitting stars of $N$ and whose edges are the separations in~$N$, where $\{A,B\}\in N$ joins the two unique nodes $t_1,t_2$ of $T$ with $(A,B) \in t_1$ and $(B,A) \in t_2$.
    The bag $V_t$ of a node $t$ of $T$ is $\interior(t)$, with $t$ viewed as a splitting star of~$N$, and we usually flesh out its part as~$G_t := G[V_t]$.
\end{construction}

\noindent If $N$ is finite, then $T$ is a tree; moreover, $(T,\cV)$ is a \td\ of $G$ \cite[\S 6]{TreeSets}.
If $N$ is infinite, elements of $\orientN{N}$ may fail to lie in a splitting star, and thus $T$ may not be a tree.
But even if~$T$ is a tree, then the candidate~$\cT$ need in general not be a \td\ of a connected and locally finite graph~$G$~\cite[Example~4.9]{infinitesplinter}.
However, we have

\begin{lemma}\label{lemma:candidateisTDtightbounded}{\normalfont\cite[Lemmas~7.2 \& 7.5]{canonicalGraphDec}}
    Assume that $G$ is connected and locally finite.
    Let $N$ be a nested set of tight separations of $G$ which all have order $\leq k$ for some $k \in \N$.
    Then $\cT(N)$ is a \td.
    Moreover, if~$N$ is $\Gamma$-canonical for some subgroup $\Gamma$ of $\Aut(G)$, then so is~$\cT(N)$.
\end{lemma}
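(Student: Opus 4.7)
The plan is to verify the three axioms of a \td: that $T$ is a tree, that $G = \bigcup_t V_t$, and that $T_v := T[\{t : v \in V_t\}]$ is connected for every $v \in V(G)$. The parts being induced subgraphs is automatic from the definition of~$\interior(\sigma)$.

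First, I would apply the Gollin--Kneip partition result stated after \cref{construction:candidatetreedecomposition}. Both of its premises need verification. Every separation in $N$ is proper, which is immediate from tightness since each tight component witnesses non-empty $A\sm B$ and $B\sm A$. For the chain-bound, I would argue by contradiction: suppose $(A_\alpha, B_\alpha)_{\alpha \leq \omega}$ is an $(\omega+1)$-chain in $\orientN{N}$. Picking $v_n \in A_{n+1}\sm A_n$ for each $n < \omega$ yields a sequence of distinct vertices in $A_\omega$; since $|A_\omega \cap B_\omega| \leq k$ is finite, cofinitely many of the $v_n$ lie in $A_\omega \sm B_\omega$. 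I would then exploit tightness of each $\{A_n, B_n\}$ to derive a contradiction with local finiteness. Consequently, the splitting stars partition $\orientN{N}$, so $T$ is well-defined with edges in bijection with~$N$. Connectedness of $T$ follows from the partition (any two splitting stars are linked by a $\geq$-chain of oriented separations yielding a walk in $T$), and acyclicity follows because a cycle would produce a cyclic sequence of strictly $\geq$-related oriented separations, contradicting the partial order.

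For the covering axiom, given $v \in V(G)$ I would consider the upward-closed set $O^*_v := \{(A, B) \in \orientN{N} : v \in B\}$, which orients every separation of~$N$. Using the chain-bound above and the partition property, the $\geq$-minimal elements of $O^*_v$ form a splitting star $\sigma_v$, and then $v \in V_{\sigma_v} = \bigcap_{(A,B) \in \sigma_v} B$ by construction.

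For connectedness of $T_v$, let $t_1, t_2 \in T_v$ and consider the unique $t_1$--$t_2$ path $t_1 = s_0 - s_1 - \cdots - s_m = t_2$ in $T$, with edge $s_i s_{i+1}$ corresponding to $\{A_i, B_i\}$ oriented as $(A_i, B_i) \in s_i$ and $(B_i, A_i) \in s_{i+1}$. The star conditions at the intermediate $s_i$'s force the monotone chains $A_0 \supseteq A_1 \supseteq \cdots \supseteq A_{m-1}$ and $B_0 \subseteq B_1 \subseteq \cdots \subseteq B_{m-1}$. From $v \in V_{s_0}$ we obtain $v \in B_0 \subseteq B_i$ for every $i$; from $v \in V_{s_m}$ we obtain $v \in A_{m-1} \subseteq A_i$ for every $i$; and for any other $(A', B') \in s_i$, the star condition with $(A_i, B_i)$ (or $(B_{i-1}, A_{i-1})$) yields $B' \supseteq A_i \ni v$. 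Hence $v \in V_{s_i}$ throughout the path, proving $T_v$ is connected. The main obstacle is the $(\omega+1)$-chain bound, where tightness and local finiteness must be carefully combined to control how the tight components on the growing side relate to the limit separator.
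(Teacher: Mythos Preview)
There is a genuine gap in your covering argument. Ruling out $(\omega+1)$-chains is not enough to ensure that $O^*_v := \{(A,B)\in\orientN{N} : v\in B\}$ has $\geq$-minimal elements: an infinite strictly decreasing $\omega$-sequence $(A_0,B_0)>(A_1,B_1)>\cdots$ with $v\in\bigcap_n B_n$ is perfectly consistent with the absence of $(\omega+1)$-chains, yet it witnesses that some element of $O^*_v$ dominates no minimal one. The paper itself warns (just after \cref{construction:candidatetreedecomposition}) that $\cT(N)$ can fail to be a tree-decomposition even for connected locally finite~$G$, so the Gollin--Kneip partition alone does not suffice. Your connectedness claim for~$T$ (``any two splitting stars are linked by a $\geq$-chain yielding a walk'') has the same defect: without control over $\omega$-chains there is no reason such a chain is finite.

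The right target is the stronger statement that $\bigcap_n B_n=\emptyset$ for \emph{every} strictly decreasing $\omega$-sequence in~$\orientN{N}$. This simultaneously implies the no-$(\omega+1)$-chain condition (so Gollin--Kneip still applies) and forces every $\geq$-chain in each $O^*_v$ to be finite, repairing both your (H1) argument and connectedness of~$T$. The paper does not reprove the lemma here, but the intended route is visible in the proof of \cref{rtomicTDCsufficient}: one establishes $\bigcap_n B_n=\emptyset$ using the auxiliary fact that each vertex lies in the separator of only finitely many separations in~$N$ (this is where tightness, the order bound~$k$, and local finiteness enter), and then invokes \cite[Lemma~2.7]{infinitesplinter} as a black box for the remaining axioms. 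Your sketch of the chain bound is aimed at too weak a conclusion; redirect it toward this stronger one.
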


For a tree~$T$, we denote by \defn{$\orientN{E(T)}$} the set of all orientations $(u,v)$, $(v,u)$ of the edges $e = uv$ of~$T$.
The \defn{tree-order} on $\orientN{E(T)}$ is given by letting $e_1 \ge e_2$ for $e_i = (u_i,v_i) \in \orientN{E(T)}$ and $i \in \{1,2\}$ if $e_1=e_2$ or the (unique) $v_1$--$u_2$ path in $T$ avoids $u_1$ and $v_2$. 

Let $(T,\cV)$ be a \td\ of a graph~$G$.
Each oriented edge $e=(t_1,t_2) \in \orientN{E(T)}$ corresponds to a separation $(A^e_1,A^e_2)$ of $G$ where $A^e_i := \bigcup_{t \in V(T_i)} V_t$ and $T_i$ is the component of $T-t_1t_2$ which contains~$t_i$, see \cite[Lemma~12.3.1]{bibel}.

\begin{lemma}\label{lemma:withthehelpofTD}
    Let $N$ be a nested set of proper separations of a connected graph $G$ such that $\cT(N)=:(T,\cV)$ is a \td\ of $G$.
    Then $e \mapsto (A^e_1,A^e_2)$ is an order-isomorphism from $(\orientN{E(T)},{\ge})$ to $(\orientN{N},{\ge})$.
    The splitting stars of $N$ correspond, under this isomorphism, to the stars of all edges of $T$ at a fixed vertex, oriented towards that vertex.
\end{lemma}

\begin{proof}
    This is well known; 
    see for example \cite[Proof of Lemma 2.7]{infinitesplinter}.
\end{proof}

An \defn{action} of a subgroup $\Gamma$ of $\Aut(G)$ on a \gd\ $(H,\cV)$ of $G$ is an action of $\Gamma$ on the graph $H$ as automorphisms of $H$ that commutes with $h \mapsto V_h$, i.e.\ every $\phi \in \Gamma$ acts on $H$ as an automorphisms of $H$ so that $\phi(V_h) = V_{\phi(h)}$ for all~$h\in V(H)$.
If such an action exists, then $(H, \cV)$ is \defn{$\Gamma$-canonical}.
In the case of $\Gamma = \Aut(G)$, we just say that $(H,\cV)$ is \defn{canonical}.
If the \gd\ $(H, \cV)$ comes with parts~$G_h$, then it is \defn{$\Gamma$-canonical} if the action also respects these parts in that~$\phi(G_h) = G_{\phi(h)}$ for all~$\phi \in \Gamma$ and $h\in V(H)$, and in the case of~$\Gamma = \Aut(G)$, we call it \defn{canonical}.

If $\cT(N)$ is a tree-decomposition and $N$ is $\Gamma$-canonical, then the $\phi \in \Gamma$ act on $\orientN{N}$ as bijections $(A,B) \mapsto (\phi(A),\phi(B))$, so the action is compatible with $\geq$.
Thus, $\cT(N)$ is $\Gamma$-canonical by construction (which immediately yields the `moreover'-part of~\cref{lemma:candidateisTDtightbounded}).
Moreover, every automorphism $\phi$ of $G$ commutes with at most one automorphism $\psi$ of the decomposition tree $T$ in that $\phi(V_t) = V_{\psi(t)}$ \cite[Lemma~3.7]{canonicalGraphDec}.
So if $(T,\cV)$ is  $\Gamma$-canonical, then its $\Gamma$-canonicity is witnessed by a unique action of $\Gamma$ on $T$ as automorphisms of $T$ that commutes with $t \mapsto V_t$.

\begin{construction}[Graph-decompositions via local coverings {\cite[Construction~3.8]{canonicalGraphDec}}]\label{construction:orbittreedecompositionisgraphdecomposition}
    {\,}\\
    Let $G$ be a connected graph and let $r\in\N$.
    Let $\Gamma_r := \Gamma_r(G)$ be the group of deck transformations of the $r$-local covering of~$G$, and let $\cT = (T, \cV)$ be a $\Gamma_r$-canonical \td\ of~$G_r$.
    We define the \gd\ $\cH = (H, \cV)$ of~$G$, as follows.
    The decomposition graph $H$ is the orbit graph of $T$ under the action of $\Gamma_r$ on $(T, \cV)$, that is, the vertex and edge set of $H$ consist of the $\Gamma_r$-orbits of the vertex and edge set of $T$ where the endvertices of the orbit of $e = st$ are the orbits of $s$ and~$t$.
    The bag $V_h$ of a node $h$ of $H$ is defined as $V_h := p_r(V_t)$ for $t \in h$.
    If $(T, \cV)$ comes with parts~$G_t$, then we flesh out the family~$\cG$ of parts~$G_h$ of~$(H, \cV)$ as~$G_h := p_r(G_t)$ for~$t \in h$.
    We say that $(H,\cV)$ and $(H, \cG)$ are \defn{defined} by $(T,\cV)$ (and its parts) \defn{via~$p_r$}, or that $(H,\cV)$ and $(H, \cG)$ are \defn{obtained} from~$(T,\cV)$ (and its parts) by \defn{folding via~$p_r$}.
\end{construction}

\noindent Pairs $(H,\cV)$ from \cref{construction:orbittreedecompositionisgraphdecomposition} are graph-decompositions by \cite[Lemma~3.9]{canonicalGraphDec}.
We remark that even though $G$ is simple, $H$ does not have to be simple.

\section{Bottlenecks}\label{sec:Bottlenecks}

In this section we introduce \emph{bottlenecks}, which will later become one of our main tools to transfer the Tree-of-Tangles Theorem to local separations (see~\cref{sec:LocalBottlenecks}).
Our main result of this section, \cref{BottleneckNestedSet}, asserts that every locally finite and connected graph admits a nested set of separations meeting all bottlenecks.
This implies the existence of trees of tangles, see \cref{BottlenecksImpliesBoundedToT} and \cref{BottlenecksImpliesToT}.

Bottlenecks build on a recent trend to construct trees of tangles by working with the minimum-order separations between the tangles instead of with the tangles themselves.
This shift of perspective was initiated by~\cites{FiniteSplinters,infinitesplinter} and led to a first axiomatisation of such sets of separations through \emph{entanglements}~\cite{entanglements}.
With bottlenecks we here give a second, alternative, axiomatisation, which in contrast to entanglements allows for an iterative construction of a tangle tree-decompositions, see \cref{construction:inductiveNestedSet}.
This in turn makes possible the formulation of an $r$-local version of a tree of tangles that is compatible with the guarantee $K(G,r)$, see \cref{sec:LocalBottlenecks}.
We do not develop a local version of tangles, as the need did not arise in this project.
Still, we think it would be exciting to have a local version of tangles.

\subsection{Bottlenecks}
For $n \in \N$, we denote $\{1, \dots, n\}$ by \defn{$[n]$}.
A \defn{\tstar } of~$G$ is a set $\sigma=\{\,(A_i,B_i):i\in [3]\,\}$ of three distinct oriented separations $(A_i,B_i)$ of~$G$ with separators~$X_i$ such that $B_i = A_j \cup A_k$ whenever $\{i,j,k\} = [3]$.
Note that \tstar s are stars.
The unoriented separations $\{A_i,B_i\}$ are the \defn{constituents} of~$\sigma$.
The \tstar\ $\sigma$ has \defn{order~$\le k$} if all its elements have order~$\le k$.
We remark that $\bigcup_{i \in [3]} X_i$ has size~$\le\lfloor 3k/2 \rfloor$ if $\sigma$ has order~$\leq k$.
If a \tstar\ $\sigma$ is introduced as $\{\,(A_1,B_1),\,(A_2,B_2),\,(A_3,B_3)\,\}$, then we denote by $X_i$ the separator $A_i \cap B_i$ of $\{A_i,B_i\}$, let $X$ be their union $\bigcup_{i \in [3]} X_i$, let $Z$ be the \defn{centre} $\bigcap_{i \in [3]} X_i$ of $\sigma$, and denote the \defn{$ij$-link} $(X_i \cap X_j) \setminus Z = X \setminus X_k$ by \defn{$X_{ij}$} whenever $\{i,j,k\} = [3]$.

These definitions are inspired by their usual occurrence in a corner diagram:

\begin{example}\label{tristarExample}
    This example is depicted in \cref{fig:tristarExample}.
    Let $\{A,B\}$ and $\{C,D\}$ be crossing separations of a graph~$G$ with separators $X$ and~$Y$, respectively.
    Let $(A_1,B_1):=(A,B)$ and let $(A_2,B_2):=(B\cap C,A\cup D)$ and $(A_3,B_3):=(B\cap D,A\cup C)$ be the two corners on the $B$-side.
    Then $\sigma:=\{\,(A_i,B_i):i\in [3]\,\}$ is a \tstar\ of~$G$.
    The $12$-link of~$\sigma$ is equal to the $X$-link for~$C$.
    The $13$-link of~$\sigma$ is equal to the $X$-link for~$D$.
    The $23$-link of~$\sigma$ is equal to the $Y$-link for~$B$.
    The centre of~$\sigma$ is equal to the centre $X\cap Y$ of the two crossing separations.
\end{example}

\begin{figure}[ht]
    \centering
    \includegraphics[height=7\baselineskip]{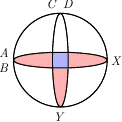}
    \caption{The situation in \cref{tristarExample}: the three $ij$-links are red, the centre of $\sigma$ is blue}
    \label{fig:tristarExample}
\end{figure}

It is immediate from the definition of \tstar\ that
\begin{equation}\label{tristarLinkPartition}
    X_i=X_{ij}\sqcup Z\sqcup X_{ik}\text{ whenever }\{i,j,k\}= [3]
\end{equation}

A \tstar\ $\sigma = \{\,(A_1,B_1),\,(A_2,B_2),\,(A_3,B_3)\,\}$ is \defn{relevant with base} $\{A_1,B_1\}$ if $\{A_1,B_1\}$ is tight and for every $x \in X_{23}$ and every $i \in \{2,3\}$ there exists an $x$--$X_{1i}$ edge in $G$ or there exist $y\in X_{1i}\cup Z$ and a component $K$ of $G[A_i\sm B_i]$ with $x,y\in N(K)$; see \cref{fig:RelevantTristar}.

\begin{figure}[ht]
    \centering
    \includegraphics[height=6\baselineskip]{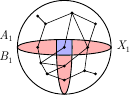}
    \caption{$(A_1,B_1)$ is part of a \tstar\ that is relevant with base $\{A_1,B_1\}$. The links are red and the centre is blue. The red link on the right is the $23$-link}
    \label{fig:RelevantTristar}
\end{figure}

\begin{lemma}\label{crossingTightLinkAnalysis}
    Let $\{A,B\}$ and $\{C,D\}$ be crossing tight separations of a connected graph~$G$, with separators $X$ and~$Y$, respectively.
    Assume that the $X$-link for~$C$ is empty while the $Y$-link for~$B$ is non-empty.
    Then the $Y$-link for~$A$ is empty while $X\cap Y$ is non-empty.
\end{lemma}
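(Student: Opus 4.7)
The plan is to prove the two conclusions in turn, each by contradiction: for the first I exploit the tight component $K_C$ of $G-Y$ lying in $C$, and for the second I invoke the characterisation of crossing from \cref{char:NestedViaLinks}.

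First I would show that the $Y$-link for $A$ is empty. Suppose not, so there exist $y_A \in Y \setminus B \subseteq A\sm B$ and, by hypothesis, $y_B \in Y \setminus A \subseteq B\sm A$. Let $K_C$ be a tight component of $G - Y$ with $V(K_C) \subseteq C$, which exists by tightness of $\{C,D\}$. Since $X \subseteq D$ (the assumption that the $X$-link for $C$ is empty), we have $X \cap C \subseteq C \cap D = Y$, and combined with $V(K_C) \cap Y = \emptyset$ this forces $V(K_C) \cap X = \emptyset$. Thus $V(K_C)$ is a connected vertex set avoiding $X$, so by the separation property of $\{A,B\}$ it lies entirely in $A\setminus B$ or entirely in $B\setminus A$. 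On the other hand, $N(K_C) = Y$ means $y_A$ and $y_B$ both have neighbours in $V(K_C)$; as no edge of $G$ crosses $X$, the neighbour of $y_A$ lies in $A\setminus B$ and the neighbour of $y_B$ in $B\setminus A$, contradicting that $V(K_C)$ is on a single side of $\{A,B\}$.

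Second I would show $X \cap Y \neq \emptyset$. Suppose for contradiction $X \cap Y = \emptyset$. Combined with $X \subseteq D$ and the now-established $Y \subseteq B$, this gives $X \subseteq D \setminus C$ and $Y \subseteq B \setminus A$. Decomposing via the partitions $V(G) = (A\sm B)\sqcup(B\sm A)\sqcup X$ and $V(G)=(C\sm D)\sqcup(D\sm C)\sqcup Y$, I get $A \cap C = (A\setminus B)\cap(C\setminus D)$, since the other three pieces $(A\setminus B)\cap Y$, $X\cap(C\setminus D)$, $X\cap Y$ vanish under the above inclusions. Now any vertex $v \in A\cap C$ has all its $G$-neighbours in $A$ (no edge crosses $X$) and in $C$ (no edge crosses $Y$), hence in $A\cap C$; so the component of $v$ in $G$ is contained in $A\cap C$. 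Since $G$ is connected and $X$ is non-empty (tightness plus connectivity of $G$) but disjoint from $A\cap C$, this forces $(A\setminus B)\cap(C\setminus D)=A\cap C=\emptyset$. Then all three sets in \cref{char:NestedViaLinks}(ii) for $(i,j)=(1,1)$ — the $Y$-link for $A$, the $X$-link for $C$, and $(A\setminus B)\cap(C\setminus D)$ — are empty. By the moreover part of that lemma, $(A,B)\ge(D,C)$, so $\{A,B\}$ and $\{C,D\}$ are nested, contradicting the crossing assumption.

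The main obstacle is the bookkeeping in the first part: one must deduce $V(K_C) \cap X = \emptyset$ via the indirect route $X \cap C \subseteq Y$, and then correctly translate $N(K_C) = Y$ into the two-sidedness obstruction using that edges of $G$ do not cross $X$. Once this is in hand, the second part reduces to checking which region intersections collapse and applying the crossing characterisation at the index pair $(1,1)$.
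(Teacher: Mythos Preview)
Your proof is correct and your first part is essentially the paper's argument, merely phrased by contradiction: the paper also takes the tight component $K\subseteq C\setminus D$, observes it avoids $X$ (since $X\subseteq D$), places it on one side of $\{A,B\}$ via the neighbour in $Y\setminus A$, and reads off $Y=N(K)\subseteq B$.

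For the second part the paper is shorter: it applies \cref{char:NestedViaLinks}(iii) directly at the index pair corresponding to $A,C$. Since the $Y$-link for $A$ and the $X$-link for $C$ are both empty and the separations cross, the third set $E(X\cap Y,(A\setminus B)\cap(C\setminus D))$ must be non-empty, giving $X\cap Y\neq\emptyset$ in one line. Your route via first showing $A\cap C=(A\setminus B)\cap(C\setminus D)=\emptyset$ and then invoking part~(ii) (or its moreover clause) is sound but avoidable.
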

\begin{proof}
    Since $\{C,D\}$ is tight, there is a component $K\se G[C\sm D]$ with $N(K)=Y$.
    Since the $X$-link for~$C$ is empty, and since $K$ has a neighbour in the non-empty $Y$-link for~$B$, we get that $K$ avoids~$A$.
    So $Y=N(K)$ is included in $Y\cap B$, and in particular the $Y$-link for~$A$ is empty.
    Since $\{A,B\}$ and $\{C,D\}$ cross, and since the $X$-link for~$C$ and the $Y$-link for~$A$ are empty, the centre $X\cap Y$ is non-empty by \cref{char:NestedViaLinks}.
\end{proof}

\begin{lemma} \label{CrossingTightSeparationsFormRelevantYStar}
    Let $\{A,B\}$ and $\{C,D\}$ be two tight crossing separations of a connected graph $G$.
    Then their two oriented corners $(B \cap C, A \cup D)$ and $(B \cap D, A \cup C)$ on the $B$-side together with $(A,B)$ form a relevant \tstar\ with base $\{A,B\}$.
\end{lemma}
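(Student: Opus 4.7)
The plan is to verify three things: that $\sigma := \{(A,B),\,(B \cap C, A \cup D),\,(B \cap D, A \cup C)\}$ forms a \tstar\ (axioms \ref{Y1} and \ref{Y2}, plus distinctness of its three elements); that its designated base $\{A,B\}$ is tight (immediate from the hypothesis); and that the combinatorial condition defining relevance holds for every $x \in X_{23}$ and every $i \in \{2,3\}$. First I would set $X_1 := A \cap B$ and $Y := C \cap D$ and read off
\begin{align*}
X_2 &= (X_1 \cap C) \cup (Y \cap B), & X_3 &= (X_1 \cap D) \cup (Y \cap B),
\end{align*}
so that $Z = X_1 \cap Y$, $X_{12} = X_1 \cap (C \setminus D)$, $X_{13} = X_1 \cap (D \setminus C)$, and $X_{23} = (Y \cap B) \setminus A$. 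Then \ref{Y1} reduces to a set-theoretic unfolding of $V(G)$ along the two separations, and \ref{Y2} follows because every vertex of $X_1$ lies in $C$ or $D$ (placing it in $X_2$ or $X_3$), while every vertex of $X_2 \cup X_3$ outside $X_1$ belongs to $Y \cap B$ and hence to both $X_2$ and $X_3$. Distinctness is immediate from tightness and the (implicit) crossing: equality of $(A,B)$ with either corner would force $A \subseteq B$, incompatible with $\{A,B\}$ being proper, and equality of the two corners would force $B \subseteq Y$, obstructing crossing.

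For the relevance condition, fix $x \in X_{23}$; by the symmetry $C \leftrightarrow D$ it suffices to treat $i = 2$. I would use tightness of $\{C,D\}$ to pick a tight component $K_C$ of $G[C \setminus D]$ with $N(K_C) = Y$, so that $x$ has a neighbour $v$ in $K_C$. The $\{A,B\}$-separator forbids edges from $B \setminus A$ to $A \setminus B$, which forces $v \in X_{12} \cup (A_2 \setminus B_2)$. The case $v \in X_{12}$ directly produces the required $x$--$X_{12}$ edge. In the case $v \in A_2 \setminus B_2$, let $K$ be the component of $G[A_2 \setminus B_2]$ containing $v$, so $x \in N(K)$; then either $K_C$ meets $X_{12}$, in which case a walk inside $K_C$ from $v$ to any $w \in K_C \cap X_{12}$ yields a $y \in X_{12} \cap N(K)$ at its first exit from $A_2 \setminus B_2$; or $K_C \cap X_{12} = \emptyset$, in which case connectedness of $K_C$ forces $K_C \subseteq A_2 \setminus B_2$ (it cannot pass from $B \setminus A$ to $A \setminus B$ within $C \setminus D$ without meeting $X_{12}$), so $K = K_C$ and $N(K) = Y \supseteq Z$, yielding any $y \in Z$ provided $Z \neq \emptyset$.

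The main obstacle is the residual sub-case where $K_C \cap X_{12} = \emptyset$ and $Z = \emptyset$, which defeats both alternatives in the relevance condition. Here the plan is to show the sub-case cannot occur under crossing. Since every vertex of $Y$ must have a neighbour in $K_C \subseteq B \setminus A$, the assumption $Z = \emptyset$ forces $Y \subseteq B \setminus A$. Tightness of $\{A,B\}$ then delivers a tight component $K_A \subseteq A \setminus B$ which, being connected and disjoint from $Y$, lies entirely in $(A \setminus B) \cap (C \setminus D)$ or $(A \setminus B) \cap (D \setminus C)$ --- say the former. Then $N(K_A) = X_1$ can meet neither $D \setminus C$ (no edges from $C \setminus D$) nor $Y$ (as $Z = \emptyset$), so $X_{13} = \emptyset$. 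A parallel analysis of any remaining component of $G[A \setminus B]$ in $D \setminus C$, combined with connectedness of $G$, rules out $(A \setminus B) \cap (D \setminus C)$ altogether, giving $A \subseteq C$ and $B \supseteq D$ --- that is, nestedness, which contradicts crossing.
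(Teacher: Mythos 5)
Your proof is correct, and it shares the paper's core strategy — use the tight component $K_C$ of $\{C,D\}$ on the $C$-side as the engine — but it takes a different route to handle the delicate sub-case, so the two proofs are not identical. The paper organises its case analysis by whether $X_{12}$ and the $Y$-link for~$A$ are empty, and invokes the separately proved \cref{crossingTightLinkAnalysis} (which itself rests on \cref{char:NestedViaLinks}) to guarantee $Z\neq\emptyset$ in the problematic case $X_{12}=\emptyset$. You instead split on where the neighbour $v$ of $x$ lands, whether $K_C$ meets $X_{12}$, and whether $Z$ is empty, and dispose of the residual sub-case ($K_C\cap X_{12}=Z=\emptyset$) by a self-contained nestedness argument: you locate a tight component $K_A$ of $\{A,B\}$ inside one side of $\{C,D\}$ and conclude $(A,B)\geq(C,D)$ or $(A,B)\geq(D,C)$, contradicting crossing. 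This version avoids \cref{crossingTightLinkAnalysis} and \cref{char:NestedViaLinks}, at the cost of being more hands-on; the paper's factoring out of the link-analysis lemma is reusable (it appears again in \cref{CrossingTightSeparationsFormRelevantYStar}'s first and third cases), which is why the paper proves it separately.

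Two small observations. First, you are right that crossing is implicit in the hypothesis — it is needed even for the corners to be defined, and for the three constituents of $\sigma$ to be distinct — and it is good that you made this explicit. Second, your justification that equality of the two $B$-side corners ``obstructs crossing'' is slightly misattributed: $B\subseteq Y$ already contradicts tightness of $\{C,D\}$ alone, since the tight component in $C\setminus D$ would then lie in $A\setminus B$ while every vertex of $B\setminus A\subseteq Y$ would need a neighbour in it, producing a forbidden $(A\setminus B)$--$(B\setminus A)$ edge; you do not actually need crossing for that particular step. This does not affect correctness, since both tightness and crossing are available.
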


\begin{proof}
    Let $(A_1,B_1) := (A,B)$, $(A_2,B_2) := (B \cap C, A \cup D)$ and $(A_3,B_3) := (B \cap D, A \cup C)$.
    Then $\sigma := \{\,(A_i,B_i) : i \in [3]\,\}$ is a \tstar\ by \cref{tristarExample}.
    Let $X_i:=A_i\cap B_i$ for all~$i$.
    We will implicitly use the correspondences between the links and centres provided by \cref{tristarExample}.
    
    It remains to show that $\sigma$ is relevant with base~$\{A_1,B_1\}$.
    We may assume that $X_{23}$ is non-empty, as we are done otherwise.
    Let $x \in X_{23}$ and $i\in \{2,3\}$, say $i=2$, as the other case follows by symmetry.
    We have to show that $G$ contains either an $x$--$X_{12}$ edge or that there are a vertex $y\in X_{12}\cup Z$ and a component $K$ of $G[A_2\sm B_2]$ with $x,y\in N(K)$.
    
    Since $\{C,D\}$ is tight, there exists a component $K$ of $G- (C \cap D)$ with $V(K) \subseteq C \setminus D$ and $N(K)=C\cap D$.
    Recall that $Z:=\bigcap_{i\in [3]}X_i$ satisfies $Z=A\cap B\cap C\cap D$ by the definition of~$\sigma$.
    Hence, $N(K) = X_{23} \cup Z \cup Y$, where $Y$ is the link $(C \cap D) \setminus B$.
    We distinguish three cases.

    First, we assume that $X_{12}$ is empty.
    Since $X_{23}$ is non-empty, \cref{crossingTightLinkAnalysis} gives that $Z$ is non-empty while $Y$ is empty.
    Since $K$ is included in $G[C\sm D]$ and has a neighbour $x\in X_{23}$, it must also be included in $G[A_2\sm B_2]$.
    Thus, $K$ is a component of $G[A_2\sm B_2]$ with a neighbour in~$Z$.

    Second, we assume that $Y$ is non-empty.
    Since $x \in X_{23} \subseteq B \setminus A$ and $Y \subseteq A \setminus B$, there is an $x$--$Y$ path $Q$ through $K \subseteq G[C \setminus D]$, which meets $(A \cap B) \setminus D = X_{12}$.
    Let $P$ be the $x$--$X_{12}$ path in~$Q$.
    Then $P$ either is an $x$--$X_{12}$ path through $A_2\sm B_2$ or contains an $x$--$X_{12}$ edge.
    In the former case, the internal vertices of $P$ are contained in a desired component of~$G[A_2\sm B_2]$.

    Finally, we assume that $X_{12}$ is non-empty while $Y$ is empty.
    \cref{crossingTightLinkAnalysis} (applied to $(A,B) := (D,C)$, $X := A \cap B$, $(C,D) := (A,B)$ and $Y := C \cap D$) yields that 
    the centre $Z$ is non-empty.
    Consider an $x$--$Z$ path $Q$ through the component $K \subseteq G[C \setminus D]$.
    Let $P$ be the $x$--$(X_{12}\cup Z)$ subpath of~$Q$.
    If $P$ has no internal vertex, then $P$ ends in $X_{12}$ because $Q$ goes through~$K$.
    Hence, $P$ either contains an $x$--$X_{12}$ edge or is an $x$--$(X_{12}\cup Z)$ path through~$A_2\sm B_2$.
    In the latter case, the internal vertices of $P$ are contained in a desired component of~$G[A_2\sm B_2]$.
\end{proof}

A \defn{bottleneck of order~$k\in\N$} in~$G$ is a non-empty set $\beta$ of tight $k$-separations of $G$ satisfying:
\begin{enumerate}[label={\rm(B)}]
    \item\label{Entangle1}
    Whenever $\beta$ contains the base of a relevant \tstar\ $\sigma$ of $G$ of order~$\le k$, at least one of the other two constituents of $\sigma$ is contained in $\beta$ as well.
\end{enumerate}
A bottleneck of order~$k$ is also called a \defn{$k$-bottleneck} for short.

Recall that a separation \defn{distinguishes} two tangles if the two tangles orient the separation to different sides.
A separation distinguishes two given tangles \defn{efficiently} if it distinguishes them with minimal order.
Two tangles in $G$ are \defn{$\lek$-distinguishable} if there is a separation of $G$ of order~$\le k$ that distinguishes the two tangles.
Two tangles in $G$ are \defn{distinguishable} if some separation distinguishes them.

\begin{example}\label{TanglesInduceBottleneck}
    For every pair of distinguishable tangles $\tau,\tau'$ in a graph~$G$, the set $\beta=\beta(\tau,\tau')$ of all separations efficiently distinguishing $\tau$ and~$\tau'$ forms a bottleneck~in~$G$; see \cite[§12.5]{bibel} for definitions regarding tangles.
\end{example}

\begin{proof}
    As $\tau$ and $\tau'$ are distinguishable, there is a $k$-separation of~$G$ that efficiently distinguishes $\tau$ and~$\tau'$.
    Hence, $\beta$ is non-empty, and every separation in~$\beta$ has the same order~$k$.
    We claim that $\beta$ is $k$-bottleneck.
    By their efficiency, every separation in $\beta$ is tight; compare \cite[Lemma~6.1 and its proof]{infinitesplinter}.
    To show that $\beta$ satisifes~\cref{Entangle1},
    let $\sigma=\{\,(A_i,B_i):i\in [3]\,\}$ be a relevant \tstar\ of~$G$ of order~$\le k$ with base $\{A_1,B_1\}\in\beta$.
    Then $\tau$ and $\tau'$ orient $\{A_1,B_1\}$ differently; we may assume that $\tau$ towards its $A_1$-side, and $\tau'$ towards its $B_1$-side, say.
    Both tangles $\tau$, $\tau'$ also orient the other constituents of~$\sigma$.
    By the tangle property for $\tau'$, there is $j\in\{2,3\}$ such that $\tau'$ orients $\{A_j,B_j\}$ towards its $A_j$-side.
    Moreover, $\tau$ orients $\{A_j,B_j\}$ towards its $B_j$-side by the tangle property for $\tau$.
    Hence, $\{A_j,B_j\}$ distinguishes $\tau$ and~$\tau'$, and it must do so efficiently since $\{A_j,B_j\}$ has order~$\le k$.
    Therefore, $\{A_j,B_j\}\in\beta$.
\end{proof}

An oriented separation $(A,B)$ \defn{points towards} a vertex set $Y$ of $G$ if $Y \subseteq B$.
Let $Y$ be a clique in $G$ of size $k$, i.e.\ a set of vertices which induces a complete subgraph in $G$.
Then precisely one orientation of a given separation $\{A,B\}$ of order $<k$ points towards $Y$.
By \defn{$\tau_Y$}, we denote the set of oriented separations of order $< |Y| = k$ pointing towards $Y$.
We warn the reader that $\tau_Y$ might not be a tangle of order $k$.
A separation $\{A,B\}$ of $G$ \defn{distinguishes} two cliques $Y,Z$ of $G$ if $(A,B) \in \tau_Y$ and $(B,A) \in \tau_Z$.
A separation distinguishes $Y$ and $Z$ \defn{efficiently} if it distinguishes them with minimal order.

\begin{example}\label{CliquesInduceBottlenecks}
    Let $Y$ and $Z$ be two cliques in a graph $G$ which are distinguished by a separation of~$G$ of order $< \min \{\,|Y|,|Z|\,\}$.
    Then $Y$ and $Z$ induce a bottleneck $\beta = \beta(Y,Z)$ in $G$, which consists of the separations of~$G$ that efficiently distinguish $Y$ and $Z$. 
\end{example}

\begin{proof}
    The set $\beta$ is non-empty and consisits of by assumption.
    Denote by $k$ the minimal order of a separation distinguishing $Y$ and $Z$.
    Then all separations in~$\beta$ have order~$k$.
    We claim that every separation in $\beta$ is tight.
    Indeed, let $\{A,B\}$ be a separation in $\beta$.
    We may assume that $(A,B) \in \tau_Y$ and $(B,A) \in \tau_Z$.
    Then each vertex in $Y \cap (A \setminus B)$ from each vertex in $Z \cap (B \setminus A)$ order-minimally.
    Since $k < \min \{|Y|,|Z|\}$ by assumption, both these sets are non-empty.
    Thus, every $\{A,B\}$ in $\beta$ is tight.

    It remains to show that $\beta$ satisfies \cref{Entangle1}.
    For this, let $\sigma=\{\,(A_i,B_i):i\in [3]\,\}$ be a relevant \tstar\ of~$G$ of order~$\le k$ with base $\{A_1,B_1\}\in\beta$.
    As the base $\{A_1,B_1\}$ is in $\beta$, we may assume by symmetry that $Y \subseteq A_1$ and $Z \subseteq B_1$; in particular, $Y \subseteq B_2,B_3$.
    Since the order of $\sigma$ is $< \min \{|Y|,|Z|\}$, the clique $Z$ does not lie in the union of the separators of the $(A_i,B_i)$.
    Thus, $\{A_1,B_1\}\in\beta$ yields that either $A_2$ or $A_3$ contains $Z$, and thus $\{A_2,B_2\}$ or $\{A_3,B_3\}$, respectively, is in $\beta$.
\end{proof}

\subsection{A tree of bottlenecks}

The following construction transfers the construction introduced in the proof of \cite[Theorem~1.2]{infinitesplinter} to bottlenecks:

\begin{construction}[Nested separations from bottlenecks]\label{construction:inductiveNestedSet}
    Let $G$ be a graph, and let $\cB$ be a set of bottlenecks in $G$.
    For every $k \in \N$, let $\defnMath{\cB^k(G)}$ be the set of all $k$-bottlenecks in $\cB$ in $G$ and $S^k:=\bigcup\cB^k(G)$.
    Recursively for all $k\in\N$, define nested sets $N^k(G) \subseteq S^k$ of separations, as follows.
    Let $\defnMath{x^k}\colon S^k\to\N$ assign to each separation $s\in S^k$ the number of separations in $S^k$ that $s$ crosses, assuming for now that this number is finite.
    Write $\defnMath{N^{<k}(G)}:=\bigcup_{j < k} N^j(G)$.
    For each $\beta \in \cB^k(G)$, let \defnMath{$N^k(G,\beta)$} consist of the separations $s\in \beta$ which are nested with $N^{<k}(G)$ and among those have minimal~$x^k(s)$.
    Finally, set $\defnMath{N^k(G)} := \bigcup\,\{\,N^k(G,\beta):\beta\in\cB^k(G)\,\}$.
    We further write $\defnMath{N(G)}:=\bigcup_{k\in\N} N^k(G)$.
\end{construction}

\noindent Whenever we use the notation of $N^k(G)$ or $N(G)$ without reference to any set $\cB$ of bottlenecks, we set $\cB$ to be the set of all bottlenecks in $G$. 

\begin{theorem}\label{BottleneckNestedSet}
    Let $G$ be a locally finite, connected graph. 
    Given any set $\cB$ of bottlenecks in $G$, the sets $N(G)$ and $N^{<k}(G)$ (for $k\in\N$) are well-defined and nested, and $N^k(G,\beta)$ is non-empty for every $k\in\N$ and every $k$-bottleneck $\beta$ in~$G$.
    Moreover, if~$\cB$ is $\Gamma$-invariant for a subgroup~$\Gamma$ of $\Aut(G)$, then~$N(G)$ and $N^{<k}(G)$ (for $k\in\N$) are $\Gamma$-canonical.
\end{theorem}

\noindent Here, a set~$\cB$ of bottlenecks is \defn{$\Gamma$-invariant} for a subgroup~$\Gamma$ of $\Aut(G)$ if the action of~$\Gamma$ on the bottlenecks of~$G$ (which is inherited from the action of~$\Aut(G)$ on the separations of~$G$) is invariant on~$\cB$.

\begin{lemma}\label{CrossingNumberFinite}
    Let $G$ be a locally finite, connected graph and $k\in\N$. 
    Then every tight finite-order separation of $G$ is crossed by only finitely many tight separations of $G$ of order at most~$k$.
\end{lemma}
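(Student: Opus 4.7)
The plan is to identify a finite set $S\se V(G)$, depending only on $\{A,B\}$, such that the separator $Y$ of every tight separation $\{C,D\}$ of order~$\le k$ crossing $\{A,B\}$ must meet~$S$. Once this is established, local finiteness combined with a pigeonhole/induction argument on~$k$ would conclude: any infinite family of crossing tight $\le k$-separations would share a common $v\in S\cap Y$ in an infinite subfamily, reducing after deletion of~$v$ (in which $G-v$ is still locally finite and $K_C,K_D$ remain tight components of $\{C\sm\{v\},D\sm\{v\}\}$ with $N=Y\sm\{v\}$) to the case of separators of size~$\le k-1$; the base case $k=0$ is trivial.

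To build $S$, use tightness of $\{A,B\}$ to fix connected tight components $K_A\se A\sm B$ and $K_B\se B\sm A$ with $N(K_A)=N(K_B)=X$. For each $x\in X$ choose a neighbour $a_x\in V(K_A)$ and a neighbour $b_x\in V(K_B)$ of~$x$ (which exist since $x\in N(K_A)\cap N(K_B)$), and set $S\coloneqq X\cup\{a_x,b_x:x\in X\}$. Then $S$ is finite because $X$ is finite and $G$ is locally finite, and for every $x\in X$ the length-$2$ path $a_xxb_x$ connects $K_A$ to $K_B$ through~$x$; if $Y\cap S=\emptyset$ then all these paths survive in $G-Y$.

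The key claim is then: if $Y\cap S=\emptyset$, then $\{C,D\}$ is nested with $\{A,B\}$, contradicting the crossing hypothesis. Since each $a_xxb_x$ survives in $G-Y$, the chosen vertices $a_x\in V(K_A)$ and $b_x\in V(K_B)$ lie on a common side of~$\{C,D\}$, say the $C$-side; and since $X=N(K_A)=N(K_B)$ is adjacent to these vertices with no edges across the $\{C,D\}$-separation, $X\se C\sm D$ follows. Invoking tightness of $\{C,D\}$ to obtain tight components $K_C\se C\sm D$ and $K_D\se D\sm C$ with $N(K_C)=N(K_D)=Y$, and using the structural arguments of~\cref{crossingTightLinkAnalysis} and~\cref{CrossingTightSeparationsFormRelevantYStar} together with the fact that every other component of $A\sm B$ or $B\sm A$ has neighbourhood contained in~$X\se C$, a case-analysis then forces $B\se C$ and $D\se A$, so $(B,A)\ge(C,D)$ and nested-ness follows.

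The main obstacle is that the preceding argument implicitly requires $Y\cap(V(K_A)\cup V(K_B))=\emptyset$ in order to conclude that the entire connected subgraphs $K_A,K_B$ lie on the $C$-side of $\{C,D\}$; if $Y$ meets $V(K_A)$ or $V(K_B)$, which can happen in a locally finite graph even when $Y\cap S=\emptyset$, then $K_A$ (or $K_B$) could in principle be split by~$Y$ across both sides of $\{C,D\}$. Addressing this requires either enlarging $S$ to include enough of the local structure near~$X$ inside $V(K_A)\cup V(K_B)$ (a finite enlargement by local finiteness, provided one controls distances by~$k$), or a separate sub-argument using tightness of $\{C,D\}$ and the neighbourhoods of the components of $G-Y$ meeting $A\sm B$ and $B\sm A$ to rule out the split. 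In either case the core idea remains that a crossing tight separation must "see" the minimal cut structure around $X$, which by local finiteness lives in a finite region of~$G$.
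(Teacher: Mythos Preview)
The paper itself does not prove this from scratch; it cites a lemma of Halin and refers to the argument in \cite[Proposition~6.2]{infinitesplinter}. Your attempt to supply a direct proof is therefore a reasonable undertaking, and the overall strategy --- show that the separator $Y$ of every crossing tight $(\le k)$-separation meets a fixed finite set $S$, then induct on $k$ by deleting a shared vertex --- is of the right shape.

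There are, however, genuine gaps. The first concerns $S$, but not quite for the reason you identify. You do not need all of $K_A$ and $K_B$ to lie on one side of $\{C,D\}$; you only need $G[S]$ to be connected, so that $Y\cap S=\emptyset$ forces $S$ --- and hence $X$ --- into one side, say $C\setminus D$. Your set $S=X\cup\{a_x,b_x:x\in X\}$ need not induce a connected subgraph: the paths $a_xxb_x$ for distinct $x$ may lie in different components of $G-Y$, so you cannot conclude $X\subseteq C\setminus D$ uniformly. The fix is to take finite \emph{connected} subgraphs $H_A\subseteq K_A$ and $H_B\subseteq K_B$ with $X\subseteq N(H_A)\cap N(H_B)$ and set $S:=X\cup V(H_A)\cup V(H_B)$. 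Once $X\subseteq C\setminus D$, the tight component $K_D\subseteq D\setminus C$ avoids $X$ and hence lies entirely in $A\setminus B$ or $B\setminus A$, whence $Y=N(K_D)\subseteq A$ or $Y\subseteq B$; a short direct argument (every component of $D\setminus C$ avoids $X$ and, if contained in the ``wrong'' side, would have empty neighbourhood) then gives $D\subseteq A$ or $D\subseteq B$, contradicting crossing. The lemmas you invoke (\cref{crossingTightLinkAnalysis}, \cref{CrossingTightSeparationsFormRelevantYStar}) are not the right tools here.

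The second gap is the inductive reduction. Deleting $v$ may disconnect $G$; the restriction of $\{A,B\}$ to $G-v$ need not remain tight (for instance if $v\in H_A$ and its removal disconnects $K_A$ from some vertex of $X$); and you have not verified that the restricted separations $\{C\setminus\{v\},D\setminus\{v\}\}$ still cross the restricted $\{A,B\}$ in $G-v$. Each of these can be repaired with additional care, but as written the induction does not go through.
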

\begin{proof}
    This follows from a lemma of Halin \cite[2.4]{halinlatticescuts} as in the proof of \cite[Proposition~6.2]{infinitesplinter}.
\end{proof}

\begin{lemma}\label{ThreeGreenCorners}
    Let $\beta_i$ be $k_i$-bottlenecks for $i=1,2$ in a locally finite, connected graph~$G$ (possibly $\beta_1=\beta_2$).
    If $s_1\in\beta_1$ crosses $s_2\in\beta_2$, then at least three corners have order~$\le \max\{k_1,k_2\}$.
\end{lemma}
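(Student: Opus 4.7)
The plan is to argue by contradiction. Assume without loss of generality that $k_1 \geq k_2$, so $\max\{k_1,k_2\} = k_1$, and suppose for contradiction that at most two of the four corners of $s_1$ and $s_2$ have order $\leq k_1$; call the corners of order $> k_1$ ``large''. My first step is to apply submodularity \cref{submod} to both opposite pairs of corners: since $|s_1| + |s_2| = k_1 + k_2 \leq 2k_1$, no opposite pair can consist of two large corners. This immediately rules out three or four large corners and reduces the problem to the case of exactly two large corners, which must therefore be \emph{adjacent} in the sense of sharing a side of $s_1$ or of $s_2$.

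Next I would split into two symmetric subcases depending on which of $s_1,s_2$ is shared. In the first subcase, I may assume (by relabelling) that the large corners are $c_1,c_2$ lying on the $A$-side of $s_1 = \{A,B\}$, while $c_3,c_4$ lie on the $B$-side. Applying \cref{submod} to the opposite pairs $(c_1,c_4)$ and $(c_2,c_3)$ gives the strict inequalities $|c_3|,|c_4| < k_2$ (strict because $|c_1|,|c_2| \geq k_1 + 1 \geq k_2 + 1$ as sizes are integers). By \cref{CrossingTightSeparationsFormRelevantYStar}, the set $\{(A,B), c_3, c_4\}$ is a relevant \tstar\ with base $s_1 \in \beta_1$, and its order is $\max\{k_1,|c_3|,|c_4|\} = k_1$. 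The bottleneck property \cref{Entangle1} of $\beta_1$ then forces one of $c_3,c_4$ to lie in $\beta_1$ and hence to have order exactly $k_1$, contradicting $|c_3|,|c_4| < k_2 \leq k_1$.

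The second subcase, where the two large corners are (say) $c_1,c_3$ on the $C$-side of $s_2 = \{C,D\}$, is handled analogously but crucially via $\beta_2$ rather than $\beta_1$: submodularity applied to the two opposite pairs gives $|c_2|,|c_4| < k_2$, and the relevant \tstar\ $\{(C,D), c_2, c_4\}$ with base $s_2 \in \beta_2$ therefore has order exactly $k_2$. The bottleneck property of $\beta_2$ then produces a corner of order $k_2$ among $c_2,c_4$, again contradicting $|c_2|,|c_4| < k_2$.

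The main subtlety I anticipate lies in the asymmetry between $k_1$ and $k_2$: when the two large corners share a side of $s_2$, the natural \tstar\ on the opposite side has order $\leq k_1$, but we are only allowed to invoke $\beta_2$'s bottleneck property on \tstar s of order $\leq k_2$. The \emph{strict} submodular bound $|c| < k_2$ (rather than merely $|c| \leq k_1$) is exactly what keeps the order of the relevant \tstar\ down to $k_2$ and simultaneously contradicts the fact that any element of a $k_2$-bottleneck has order exactly $k_2$.
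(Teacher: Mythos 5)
Your proposal is correct and takes essentially the same approach as the paper's proof: after ruling out opposite pairs of large corners via submodularity, you observe that the two small corners lie on a common side of some $s_i$, invoke \cref{CrossingTightSeparationsFormRelevantYStar} to form a relevant \tstar{} based at $s_i$, and derive the strict bound $|c|<\min\{k_1,k_2\}$ to both keep the \tstar's order at $k_i$ and contradict membership in the $k_i$-bottleneck. The paper merely avoids your explicit two-subcase split by letting $i$ denote the index of whichever $s_i$ has the two small corners on one of its sides, and uses the opposite normalisation $k_1\le k_2$.
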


\begin{proof}
    The short proof is analogue to that of \cite[Sublemma~3.6]{entanglements}, but we include it for convenience.
    Without loss of generality we have $k_1\le k_2$.
    Suppose for a contradiction that at most two corners of $s_1$ and $s_2$ have order~$\le k_2$.
    By submodularity~\cref{submod}, at least one of every two opposite corners has order~$\le k_2$.
    So there are adjacent corners $c,d$ of order~$\le k_2$.
    Let $c'$ and $d'$ be the corners opposite of $c$ and~$d$, respectively.
    By assumption, $c'$ and $d'$ have order~$>k_2$.
    Then $c$ and $d$ have order~$<k_1$ by submodularity~\cref{submod}.
    Let $i$ be the index for which $c$ and $d$ lie on the same side of~$s_i$.
    Then $c,d$ and $s_i$ are the constituents of a relevant \tstar\ with base $s_i$ of order~$\le k_i$ by \cref{CrossingTightSeparationsFormRelevantYStar}.
    Since $s_i\in\beta_i$, \cref{Entangle1} yields that either $c$ or $d$ is contained in~$\beta_i$, say $c\in\beta_i$.
    But then $c$ has order equal to~$k_i$, contradicting that, as shown above, $c$ has order~$<k_1$.
\end{proof}

\begin{lemma}\label{MkGnonempty}
    For every $k$-bottleneck $\beta$ in any given set $\cB$ of bottlenecks in a locally finite, connected graph $G$, the set $N^k(G,\beta)$ is non-empty.
\end{lemma}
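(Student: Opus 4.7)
The plan is to prove \cref{MkGnonempty} in two stages. First I would show that $\beta_0 := \{\,s \in \beta : s \text{ is nested with } N^{<k}(G)\,\}$ is non-empty; the conclusion then follows easily, because $x^k(s)$ is finite for every $s \in X^k$ by \cref{CrossingNumberFinite}, so the restriction of $x^k$ to $\beta_0$ attains its minimum on some separation, which lies in $N^k(G,\beta)$ by definition.

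The hard part is showing $\beta_0 \neq \emptyset$, and I would attack this by a minimality argument on the crossing count $f(s) := \{\,t \in N^{<k}(G) : s \text{ crosses } t\,\}$. This count is finite for every $s \in \beta$ by \cref{CrossingNumberFinite}, since $s$ is tight of order $k$ and every separation in $N^{<k}(G)$ is tight of order $< k$. Supposing $\beta_0 = \emptyset$ for contradiction, I pick $s \in \beta$ minimising $|f(s)| \geq 1$, and any $t \in f(s)$, whose order $k'$ is strictly less than~$k$.

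Next I would combine three earlier ingredients to produce a separation $c \in \beta$ with $|f(c)| < |f(s)|$. By \cref{ThreeGreenCorners} applied to $s$ and $t$ (which lies in some $k'$-bottleneck by the construction of $N^{<k}(G)$), at least three corners of $s$ and $t$ have order $\leq \max(k,k') = k$, so one side of $s$ carries two such corners $c,c'$. Since $s$ and $t$ are tight, \cref{CrossingTightSeparationsFormRelevantYStar} yields that $s$, $c$ and $c'$ form a relevant \tstar\ of order $\leq k$ with base~$s$. The bottleneck axiom \cref{Entangle1}, applied to $s \in \beta$, then forces one of $c, c'$ — say $c$ — into~$\beta$.

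Finally I would contradict the minimal choice of $s$ via the Fish property. Namely, \cref{item:F3} yields $t \notin f(c)$; for any $t' \in f(c)$, \cref{item:F1} forces $t'$ to cross $s$ or $t$, and since $N^{<k}(G)$ is nested by the inductive setup of the construction, no $t' \in N^{<k}(G)$ can cross $t \in N^{<k}(G)$, so $t'$ crosses $s$, i.e.\ $t' \in f(s)$. Hence $f(c) \subseteq f(s) \setminus \{t\}$, contradicting minimality and establishing $\beta_0 \neq \emptyset$. I expect the main obstacle to be the orchestration rather than any single step: one needs \cref{ThreeGreenCorners} to guarantee the good side of $s$, \cref{CrossingTightSeparationsFormRelevantYStar} to cast the two corners there as a relevant \tstar, and the bottleneck axiom to harvest the replacement $c \in \beta$, after which (F1) and (F3) neatly close the loop.
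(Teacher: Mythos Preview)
Your proposal is correct and follows essentially the same approach as the paper: minimise the number of crossings with $N^{<k}(G)$, apply \cref{ThreeGreenCorners} and \cref{CrossingTightSeparationsFormRelevantYStar} to locate a relevant \tstar\ on one side of~$s$, invoke \cref{Entangle1} to extract a corner $c\in\beta$, and conclude via~\cref{Fish}. Your write-up is in fact more explicit than the paper's about which parts of~\cref{Fish} are used and about the reliance on the (inductively established) nestedness of $N^{<k}(G)$ to rule out $t'$ crossing~$t$.
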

\begin{proof}
    It suffices to show for $k>0$ that $\beta$ contains separations that are nested with $N^{<k}(G)$.
    Let $s\in\beta$ cross as few separations in $N^{<k}(G)$ as possible.
    \cref{CrossingNumberFinite} ensures that these are only finitely many.
    Assume for a contradiction that $s$ crosses some $s'\in N^{<k}(G)$.
    Then at least three corners have order~$\le k$ by \cref{ThreeGreenCorners}.
    Thus, two corners on the same side of $s$ have order~$\le k$.
    By \cref{CrossingTightSeparationsFormRelevantYStar}, these two corners and $s$ form a relevant \tstar\ $\sigma$ based at~$s$.
    Hence, \cref{Entangle1} applied to $\sigma$ yields a corner~$c\in\beta$.
    The corner $c$ crosses strictly fewer separations in $N^{<k}(G)$ than $s$ by~\cref{item:F1} and the fact that $N^{<k}(G)$ is nested.
\end{proof}

\begin{lemma}
    Let $G$ be a locally finite, connected graph.
    Let $s,s'$ be two crossing separations in tight $k$-bottlenecks in~$G$, and let $c,d$ be two opposite corners of~$s,s'$ that are contained in $\bigcup\cB^k(G)$.
    Then 
    \begin{equation}\label{xksubmod}
        x^k(c)+x^k(d)<x^k(s)+x^k(s').
    \end{equation}
\end{lemma}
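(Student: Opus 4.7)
My plan is to rewrite both sides as double sums over $X^k$ and then apply the Fish property~\cref{Fish} termwise. For each $t \in X^k$, define $f(t)$ to be the number of members of $\{c,d\}$ that $t$ crosses and $g(t)$ the number of members of $\{s,s'\}$ that $t$ crosses, so that
\[
x^k(c)+x^k(d) = \sum_{t \in X^k} f(t), \qquad x^k(s)+x^k(s') = \sum_{t \in X^k} g(t).
\]
Both sums will be finite because every separation in $X^k$ lies in some $k$-bottleneck and is therefore a tight $k$-separation; by \cref{CrossingNumberFinite} it is crossed by only finitely many tight separations of order $\le k$, in particular by only finitely many members of $X^k$.

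The core of the argument is the termwise bound $f(t) \le g(t)$, which I would extract directly from the Fish property. If $f(t)=2$ then $t$ crosses both $c$ and $d$, so \cref{item:F2} forces $t$ to cross both $s$ and $s'$, giving $g(t)=2$. If $f(t)=1$ then \cref{item:F1} forces $g(t) \ge 1$. The case $f(t)=0$ is trivial. Summing yields the non-strict inequality $x^k(c)+x^k(d) \le x^k(s)+x^k(s')$.

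To upgrade to strict inequality it suffices to exhibit a single $t \in X^k$ with $f(t) < g(t)$, and my plan is to take $t := s$ itself. This is a legitimate choice because $s \in X^k$ (it lies in a $k$-bottleneck by hypothesis). By \cref{item:F3}, $s$ crosses neither $c$ nor $d$, so $f(s)=0$; on the other hand $s$ crosses $s'$, and $s \ne s'$ (since they cross), so $g(s) \ge 1$. Summing $f(t) \le g(t)$ over $t \in X^k$ then produces the strict inequality~\cref{xksubmod}. I do not expect any serious obstacle in this argument: the Fish property is tailored to exactly this kind of termwise counting, and the bottleneck hypothesis enters only through the bookkeeping that $s, s', c, d$ all lie in $X^k$ and are tight so that \cref{CrossingNumberFinite} makes the sums finite.
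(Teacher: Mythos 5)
Your proof is correct and is essentially the argument the paper has in mind: the paper's proof is a one-liner citing the Fish property~\cref{Fish} and an analogous argument from the entanglements paper, and your termwise-counting expansion of $x^k(c)+x^k(d)$ and $x^k(s)+x^k(s')$ via \cref{item:F1}, \cref{item:F2} for the non-strict inequality, together with \cref{item:F3} at $t=s$ (using $s\in X^k$ and the finiteness from \cref{CrossingNumberFinite}) for strictness, is exactly the content being referenced.
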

\begin{proof}
    This follows from \cref{Fish}, as in the proof of \cite[Corollary~3.3]{entanglements} or \cite[Lemma~12.5.6]{bibel}.
\end{proof}

\begin{proof}[Proof of \cref{BottleneckNestedSet}]
    \cref{CrossingNumberFinite} ensures that $x^k$ in \cref{construction:inductiveNestedSet} assigns finite values.
    The sets $N^k(G,\beta)$ are non-empty by \cref{MkGnonempty}.
    To show that $N(G)$ and the sets $N^{<k}(G)$ are nested, it suffices to show that $N^k(G)$ is nested for every~$k\in\N$.
    Suppose for a contradiction that there are two separations $s,s'\in N^k(G)$ that cross.
    Then $s \in N^k(G,\beta)$ and $s' \in N^k(G,\beta')$ for some $k$-bottlenecks $\beta,\beta'$ in~$G$.
    Since $s$ and $s'$ are nested with $N^{<k}(G)$, \cref{item:F1} implies that
    \begin{equation}\label{cornersGood} 
        \textit{all four corners of }s,s'\textit{ are nested with }N^{<k}(G).
    \end{equation}
    
    By \cref{ThreeGreenCorners}, at least three corners have order~$\le k$.
    By symmetry, we may assume without loss of generality that the corners $c,c',d'$ depicted in \cref{fig:CornersToT} have order~$\le k$.

    \begin{figure}[ht]
        \centering
        \includegraphics[height=6\baselineskip]{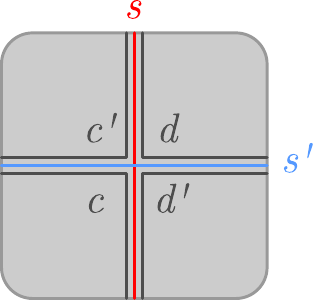}
        \caption{The four corners in the proof of \cref{BottleneckNestedSet}}
        \label{fig:CornersToT}
    \end{figure}

    \casen{Case~1: The corner $d$ has order~$>k$.}
    By \cref{CrossingTightSeparationsFormRelevantYStar} and \cref{Entangle1}, $\beta$ meets $\{c,c'\}$ and $\beta'$ meets $\{c,d'\}$.
    If $c$ is contained in $\beta$ or~$\beta'$, then $c$ has order~$k$, and then $d$ has order~$\le k$ by submodularity~\cref{submod} contradicting our assumption; hence, $c$ lies in neither $\beta$ nor~$\beta'$.
    We therefore get $c'\in\beta$ and $d'\in\beta'$.
    But $x^k(c')+x^k(d')<x^k(s)+x^k(s')$ from \cref{xksubmod} implies that $x^k(c')<x^k(s)$ or $x^k(d')<x^k(s')$, say $x^k(c')<x^k(s)$.
    This together with \cref{cornersGood} shows that $c'$ contradicts the fact that $s$ lies in $N^k(G,\beta)$.

    \casen{Case~2: The corner $d$ has order~$\le k$.}
    Applying \cref{CrossingTightSeparationsFormRelevantYStar} and \cref{Entangle1} to both sides of both $s \in \beta$ and $s' \in \beta'$ yields that $\beta$ meets $\{c,c'\}$ and $\{d,d'\}$ and $\beta'$ meets $\{c,d'\}$ and $\{c',d\}$.
    If for a pair of opposite corners one is in $\beta$ and the other in~$\beta'$, then we obtain a contradiction from \cref{xksubmod} as in Case~1.
    So we may assume that this does not happen.
    By symmetry, we may assume without loss of generality that $c\in\beta$, and hence $d \notin \beta$.
    Since $\beta'$ meets $\{c',d\}$, we have $c'\in\beta'$, which in turn gives $d\in\beta$, and finally $d'\in\beta'$.
    By symmetry, we may assume without loss of generality $x^k(s)\le x^k(s')$.
    Then $x^k(c')+x^k(d')<x^k(s)+x^k(s')$ from \cref{xksubmod} implies that $x^k(c')<x^k(s')$ or $x^k(d')<x^k(s')$.
    This in combination with \cref{cornersGood} shows that one of $c',d'\in\beta'$ contradicts the fact that $s'$ lies in $N^k(G,\beta')$.

    The `moreover'-part is immediate from the canonicity of~\cref{construction:inductiveNestedSet}.
\end{proof}

A \defn{tree of tangles} $N$ of $G$ of \defn{scope~$k$} is a nested set of separations of $G$ which contains, for every pair of $\lek$-distinguishable tangles $\tau_1,\tau_2$ in $G$, a separation that efficiently distinguishes $\tau_1$ and~$\tau_2$.
A \defn{tangle tree-decomposition} $\cT$ of $G$ of \defn{scope~$k$} is a tree-decomposition of~$G$ such that the set of induced separations of~$\cT$ is a tree of tangles of $G$ of scope~$k$.

\begin{theorem}\label{BottlenecksImpliesBoundedToT}
    Let $G$ be a locally finite, connected graph, and let $k\in\N$.
    Then $N^{\le k}(G)$ is a canonical tree of tangles of $G$ of scope~$k$.
    Moreover, $\cT(N^{\le k}(G))$ is a canonical tangle tree-decomposition of $G$ of scope~$k$.
\end{theorem}
\begin{proof}
    Apply \cref{BottleneckNestedSet} with $\cB$ the set of all bottlenecks in $G$.
    Then \cref{TanglesInduceBottleneck} yields that $N^{\le k}(G)$ is a canonical tree of tangles.
    By \cref{lemma:candidateisTDtightbounded}, $\cT(N^{\le k}(G))$ is a canonical \td .
\end{proof}

A tree of tangles that has scope~$k$ for every $k\in\N$ has \defn{infinite} scope.
Likewise, a tangle tree-decomposition that has scope~$k$ for every $k\in\N$ has \defn{infinite} scope.

\begin{theorem} \label{BottlenecksImpliesToT}
    Let $G$ be finite connected graph.
    Then $N(G_r)$ is a canonical tree of tangles of the $r$-local covering~$G_r$ of infinite scope.
    Moreover, $\cT(N(G_r))$ is a canonical tangle tree-decomposition of $G_r$ of infinite scope.
\end{theorem}
\begin{proof}
    Again apply \cref{BottleneckNestedSet} with $\cB$ the set of all bottlenecks in $G$.
    Then \cref{TanglesInduceBottleneck} yields that $N(G_r)$ is a canonical tree of tangles.
    To see that~$\cT(N(G_r))$ is indeed a canonical \td, we combine \cref{lemma:candidateisTDtightbounded} with the following results of \cite{canonicalGraphDec}: Theorem~3, Theorem~6.2, Lemma~6.3 and Lemma~8.1.
\end{proof}

\section{Local separations}\label{sec:LocalSeparations}

In this section, we define \emph{$r$-local separations} formally, along with notions of \emph{$r$-local separators} and \emph{$r$-local components}.
Ideally, we would like the $r$-local notions to be compatible with the perspective of $r$-local coverings in two ways:
On the one hand, each $r$-local notion should generalise its global counterpart so that both notions are equivalent in all $r$-local covers.
On the other hand, each $r$-local notion should get along with every $r$-local covering $p_r\colon G_r\to G$ in that we can move in between the $r$-local separations (say) of $G$ and the $r$-local separations of~$G_r$ by means of lifting or projecting.

Our notions will not satisfy these properties in general, but we shall find conditions under which they do and which cover all relevant cases.
The equivalences between the $r$-local notions and their global counterparts in $r$-local covers are shown in~\cref{prop:tightseporsaretightOsepors,keylemma:correspondenceofcomponents,correspondenceofseparationswithfixedseparator}.
\cref{keylemma:projectionofcomponents,keylemma:liftofcomponents} show how $r$-local components can be lifted and projected, respectively, between graphs their $r$-local covers.
\cref{ProjectionLocalSeparation,keylemma:liftofseparations} achieve this for $r$-local separations, which includes $r$-local separators as a special case.

\subsection{Local separations and their counterparts in the local cover}

Let $G$ be a connected graph and $\emptyset \neq X\se V(G)$.
Recall that $\defnMath{\partial_G(X)}$ or $\defnMath{\partial X}$ denotes the set of all edges in $G$ with precisely one end in~$X$.
The cuts of the form $E_G(C,X)$ for a component $C$ of $G-X$ are the \defn{componental cuts at~$X$} in~$G$.
As $G$ is connected, the components $C$ of $G-X$ partition $\partial X$ into the componental cuts $E_G(C,X)$.
This partition is also defined by the equivalence relation on $\partial X$ where we declare two edges $e,f\in \partial X$ to be equivalent if there exists an $X$-walk (see \cref{subsec:localcov}) in $G$ which starts with $e$ and ends with~$f$.
The notion of \emph{$r$-local components} below is motivated by this equivalence relation.

An \defn{$X$-edge} is an edge with both endvertices in~$X$; note that $X$-edges are not in $\partial X$.
An \defn{$r$-local $X$-walk} in $G$ is an $X$-walk in some short cycle in~$G$, one of length $\leq r$, or an $X$-walk consisting of a single $X$-edge; an $r$-local $X$-walk which is a path is called an \defn{$r$-local $X$-path}.
We remark that a non-trivial reduced $r$-local $X$-walk is either an $r$-local $X$-path or a walk once around a short cycle that meets $X$ in precisely its base vertex.
The \defn{$r$-local components at~$X$} in $G$ are the equivalence classes of the transitive closure of the relation on $\partial X$ defined by letting
\medskip
\begin{center}
    \defn{$e\sim_r f$ at $X$ in $G$} $:\Leftrightarrow$ $e$ and $f$ are the first and last edge on the same $r$-local $X$-walk in~$G$, or $e=f$.
\end{center}
\medskip
The $r$-local components at $X$ thus further partition the componental cuts at~$X$.
Every $r$-local $X$-walk meets every $r$-local component at $X$ in precisely its first and last edge or in none, and thus
\begin{equation}\label{eq:LocalCompsShortCycles} 
    \text{every short cycle $O$ meets every $r$-local component at $X$ in an even number of edges.}
\end{equation}
We remark that, if a short cycle $O$ meets $X$ in two vertices, then $O$ splits into edge-disjoint $r$-local $X$-paths.

We call $X\se V(G)$ an \defn{$r$-local separator} of $G$ if there are at least two $r$-local components at~$X$.
For the right intuition, note that the qualification of being `$r$-local' refers to the way in which $X$ `separates' (namely, only locally), not to the position of $X$ in $G$. 
In particular, every separator of $G$ is also an $r$-local separator.
\cref{fig:OcomponentsExample} shows an example of an $r$-local separator that is not a (global) separator.
The size of an $r$-local separator, which we note cannot be zero, is its \defn{order}.
An $r$-local separator of order~$k$ is also called an \defn{$r$-local $k$-separator.}

\begin{figure}[ht]
    \centering
    \includegraphics[height=6\baselineskip]{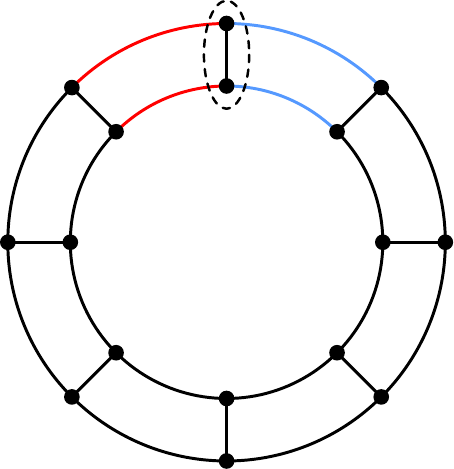}
    \caption{There are two $4$-local components (red and blue) at $X$ (encircled)}
    \label{fig:OcomponentsExample}
\end{figure}

Our aim in introducing local separations was that, for every $r$, the relevant $r$-local $k$-separations of $G$ should lift to (global) $k$-separations of $G_r$.
Before we discuss lifting properties, let us show that the $r$-local separations of $G_r$ (to which we hope our $r$-local separation of $G$ will lift) are indeed (global) separations of $G_r$ -- as long as they are relevant in the following natural sense of being 'locally tight'.

Let us call an $r$-local component $F$ at $X$ \defn{tight} if every vertex in~$X$ is incident with some edge in~$F$, and let us call an $r$-local separator \defn{tight} if there are at least two tight $r$-local components at~$X$.
Note that although every separator is also an $r$-local separator, it can be tight as a separator but not as an $r$-local separator, see \cref{fig:TightGlobalNotLocal}. 
Conversely, an $r$-local separator need not be a separator, but if it is, then the tightness of the $r$-local separator clearly implies the tightness of the separator.
In the $r$-local cover these notions coincide:

\begin{figure}[ht]
    \centering
    \includegraphics[height=6\baselineskip]{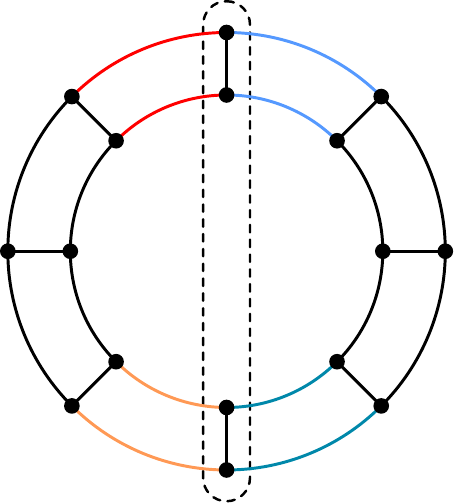}
    \caption{The encircled vertex set $X$ is a tight separator of~$G$. However, there are four $4$-local components at~$X$, none of which is tight. So while $X$ is a $4$-local separator, it is not a tight one.}
    \label{fig:TightGlobalNotLocal}
\end{figure}

\begin{corlemma}[Tight separator]\label{prop:tightseporsaretightOsepors}
    Assume that the binary cycle space of a connected graph $G$ is generated by cycles of length~$\le r$.
    A set $X\se V(G)$ is a tight separator of $G$ if and only if it is a tight $r$-local separator of~$G$.
\end{corlemma}

The proof requires some preparation.
For a vertex set $\emptyset \neq X\se V(G)$ we denote by \defnMath{$\cW_r(X)$} the set of all walks that are concatenations of $r$-local $X$-paths.
The \defn{$r$-local atoms} of~$X$, or \defn{$r$-toms} for short, are the $\subseteq$-maximal subsets $X'\se X$ such that every two vertices in $X'$ are linked by a walk in~$\cW_r(X)$.
Then $X$ is partitioned by its $r$-toms.
We say that $X$ is \defn{$r$-locally atomic}, or \defn{$r$-tomic} for short, if $X$ has only one $r$-local atom.

\begin{lemma}\label{TightLocalSeparatorAreOConnected}
    Every tight $r$-local separator of a connected graph $G$ is $r$-tomic.
\end{lemma}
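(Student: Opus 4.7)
The plan is to show that any two vertices $u, v \in X$ lie in the same $r$-local atom of $X$ by constructing a $u$--$v$ walk in $\cW_r(X)$. The key leverage is the tightness of any $r$-local component $F$ at $X$: for each $x \in X$ there is some edge $e_x \in F$ incident to $x$, so in particular I can pick $e_u, e_v \in F$ incident to $u$ and $v$ respectively. Note that although the hypothesis provides two tight $r$-local components, I will only need one of them.

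Since $F$ is an equivalence class under $\sim_r$ and contains both $e_u$ and $e_v$, I will unroll the transitive closure to obtain a chain $e_u = f_0, f_1, \ldots, f_k = e_v$ in $\partial X$ whose consecutive pairs are the first and last edges of an $r$-local $X$-walk $W_i$. Write $y_i$ for the $X$-endpoint of $f_i$, so that $y_0 = u$ and $y_k = v$. Each $W_i$ is then a $y_i$--$y_{i+1}$ walk lying in a short cycle of $G$, with all internal vertices outside $X$ by the definition of an $X$-walk. Hence $W_i$ is either closed (when $y_i = y_{i+1}$, traversing its short cycle once around at the base vertex $y_i$) or an $r$-local $X$-path from $y_i$ to $y_{i+1}$. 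Here the case of a single $X$-edge does not occur, since such an edge is not in $\partial X$.

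Discarding the closed $W_i$'s and concatenating the remaining ones yields a $u$--$v$ walk $W$ that is a concatenation of $r$-local $X$-paths. If $W$ still repeats a vertex of $X$ at two positions $y_i = y_j$ with $i < j$, I will excise the intermediate segment to obtain a strictly shorter such concatenation with fewer repeats, iterating until the resulting walk lies in $\cW_r(X)$. This shows that $u$ and $v$ are in the same $r$-local atom, and as they were arbitrary in $X$, the set $X$ is $r$-tomic. The only subtle step is correctly extracting the $r$-local $X$-path (or closed-walk) structure from each single $\sim_r$-link; this relies on the remark that a non-trivial reduced $r$-local $X$-walk is either an $r$-local $X$-path or a walk once around a short cycle meeting $X$ only at its base vertex. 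The shortcutting cleanup afterwards is routine.
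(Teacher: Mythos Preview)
Your proof is correct and is precisely the routine argument the paper has in mind; the paper signals this by ending the lemma with an immediate \qed\ and no written proof. Your unpacking of the $\sim_r$-chain into a concatenation of $r$-local $X$-paths (discarding the closed segments and shortcutting repeated $X$-vertices) is exactly how one verifies this from the definitions.
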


\begin{proof}
    This is immediate from the definitions.
\end{proof}

\begin{lemma}\label{TightSeparatorAreOConnected}
    Assume that the binary cycle space of a connected graph $G$ is generated by cycles of length~$\le r$.
    Then every tight separator of $G$ is $r$-tomic.
\end{lemma}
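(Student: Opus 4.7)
The plan is to argue by contradiction via a parity computation in the binary cycle space. Suppose $X$ is a tight separator that is not $r$-tomic, so $X$ decomposes as a disjoint union $A \sqcup B$ of two non-empty sets with $A$ an $r$-atom; pick $u \in A$ and $v \in B$. Tightness of $X$ provides two tight components $C_1, C_2$ of $G-X$ with $N(C_i) = X$ for $i = 1, 2$, and being distinct components they satisfy $V(C_1) \cap V(C_2) = \emptyset$. Using connectedness of each $C_i$ together with $u, v \in N(C_i)$, I would first build a $u$--$v$ path $P_i$ with interior in $V(C_i)$; the union $O := P_1 \cup P_2$ is then a cycle of $G$.

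The main technical tool will be the $\mathbb{F}_2$-linear functional $f(H) := |E(H) \cap E_G(A, V(C_1))| \bmod 2$ on edge sets. A direct computation gives $f(O) = 1$, since the only edges of $O$ crossing $X$ are the four entry/exit edges of $P_1$ and $P_2$, and precisely one of them, the edge from $u \in A$ into $C_1$, lies in $E_G(A, V(C_1))$.

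The crux will be to show $f(Q) = 0$ for every short cycle $Q$. First I would observe that between any two consecutive vertices of $V(Q) \cap X$ on $Q$, the arc is either a single $X$-edge or an $X$-path through one component of $G - X$; being contained in $Q$, which has length $\leq r$, the arc is an $r$-local $X$-path, so $V(Q) \cap X$ lies entirely in a single $r$-atom. If that atom is not $A$ (or $V(Q) \cap X = \emptyset$), then $f(Q) = 0$ for trivial reasons. Otherwise each arc of $Q$ contributes either $0$ (as an $X$-edge or as a traversal of a component $C' \neq C_1$) or $2$ (as a traversal of $C_1$, both of whose crossing edges lie in $E_G(A, V(C_1))$), yielding an even total.

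By the hypothesis on the cycle space, $O$ is an $\mathbb{F}_2$-sum of short cycles $Q_1 \oplus \dots \oplus Q_m$, and $\mathbb{F}_2$-linearity of $f$ then gives $1 = f(O) = \sum_{i=1}^m f(Q_i) = 0$, the desired contradiction. The main obstacle, which explains the detour through the functional $f$, is that the $r$-atom partition of $X$ is defined through walks of unbounded total length, so \emph{a priori} short cycles might fail to reflect it; the structural observation that each short cycle meets $X$ inside a single $r$-atom is precisely what makes $f$ vanish on the generating family and thus forces $r$-tomicity.
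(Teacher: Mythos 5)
Your argument is correct and follows essentially the same approach as the paper's: both proofs construct a cycle $O$ from two $u$--$v$ (resp.\ $x$--$y$) paths through two tight components, both observe that $O$ meets the cut fragment $E_G(A,V(C_1))$ (the paper's $E_G(C,Y)$) exactly once, and both obtain a parity contradiction from the cycle-space hypothesis together with the fact that the $X$-vertices of a short cycle lie in a single $r$-atom. The difference is purely presentational: you package the parity count as an explicit $\mathbb{F}_2$-linear functional $f$ and verify $f\equiv 0$ on short cycles by a case analysis, whereas the paper instead produces a short cycle meeting $E_G(C,Y)$ oddly and, using the even-intersection of cycles with the cut $E_G(C,X)$, deduces that this short cycle also meets $X\setminus Y$, yielding the same contradiction with $Y$ being an $r$-atom.
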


\begin{proof}
    Suppose for a contradiction that some tight separator $X$ of~$G$ is not $r$-tomic.
    Fix some $y$ in an $r$-local atom $Y$ of $X$ and some $x \in X \setminus Y$.
    Since $X$ is a tight separator of $G$, there are two tight components $C$ and $C'$ of $G-X$.
    Let $P$ and $P'$ be $x$--$y$ paths through $C$ and~$C'$, respectively. 
    The path $P$ meets the edge set $E := E_G(C,Y)$ precisely once while $P'$ avoids it.
    Thus, the cycle $P\cup P'$ meets $E$ precisely once; in particular, oddly.
    Since the binary cycle space of $G$ is generated by short cycles, there exists a short cycle $O\se G$ that meets $E$ oddly; in particular, $O$ meets $Y$.
    As the cycle $O$ meets the cut $E_G(C,X)$ evenly and $E_G(C,X)$ is the disjoint union of $E = E_G(C,Y)$ and $E_G(C, X \setminus Y)$, the cycle $O$ has an edge $f \in E_G(C, X \setminus Y)$; in particular, $O$ meets $X \setminus Y$.
    Hence, the cycle $O$ of length~$\le r$ meets both $Y$ and $X \setminus Y$, which contradicts the fact that $Y$ is an $r$-local atom of~$X$. 
\end{proof}

\begin{corlemma}[Local components]\label{keylemma:correspondenceofcomponents}
    Assume that the binary cycle space of a connected graph $G$ is generated by cycles of length~$\le r$.
    The $r$-local components at an $r$-tomic set $X\se V(G)$ are precisely the componental cuts at~$X$.
\end{corlemma}

\begin{proof}
    By definition, every $r$-local component at~$X$ is included in a componental cut at~$X$.
    Conversely, let $C$ be a component of $G-X$ with non-empty componental cut.
    Let $e,f$ be $C$--$X$ edges in $G$ and let $E$ be the $r$-local component at $X$ containing~$e$.
    If $E$ always contains $f$ as well, we are done; so we assume for a contradiction that $f \notin E$.
    Using the component~$C$, we find an $X$-walk $P$ that starts in $X$ and traverses the edge $e$ first, then uses vertices and edges in~$C$, and finally traverses the edge $f$ to end in~$X$; in particular, $P$ passes through $E$ precisely once, and thus oddly often.
    Since $X$ is $r$-tomic, there is a walk $Q$ in $G$ which joins the endvertices of $e,f$ in $X$ and is a concatenation of $r$-local $X$-paths.
    Since $r$-local $X$-paths traverse $r$-local components at~$X$ evenly, so do concatenations of them, such as the walk~$Q$.
    Now concatenating the walks $P$ and $Q$ yields a closed walk $W$ which passes through $E$ oddly often.
    Consider the element $W'$ of the binary cycle space induced by the closed walk $W$: an edge of $G$ maps to $0$ if $W$ passes it evenly often, and otherwise, the edge maps to $1$; in particular, the values of $W'$ on the edges in $E$ sum to an odd number. 
    Since the binary cycle space of $G$ is generated by short cycles, $W'$ is the symmetric difference of the edges sets of short cycles $O_1, \dots, O_n$.
    Hence, some short cycle $O_i$ meets $E$ oddly, and therefore contradicts~\cref{eq:LocalCompsShortCycles}.
\end{proof}

\begin{proof}[Proof of \cref{prop:tightseporsaretightOsepors}]
    Let $X\se V(G)$.
    If $X$ is a tight separator, or if $X$ is a tight $r$-local separator, then $X$ is $r$-tomic by \cref{TightSeparatorAreOConnected} or \cref{TightLocalSeparatorAreOConnected}, respectively.
    Hence, \cref{keylemma:correspondenceofcomponents} completes the proof.
\end{proof}

\begin{lemma}\label{relevantTriStarRtomicInCovering}
    Assume that the binary cycle space of $G$ is generated by cycles of length $\leq r$ and that $G$ is connected.
    Let $\sigma=\{\,(A_i,B_i) : i\in [3]\,\}$ be a relevant \tstar\ in $G$ with base~$\{A_1,B_1\}$.
    Then $X:=\bigcup_{i\in [3]}X_i$ is $r$-tomic.
\end{lemma}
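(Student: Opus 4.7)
Plan: The proof proceeds by establishing that all of $X = X_1 \cup X_{23}$ lies in a single $r$-tom, leveraging the $r$-tomicity of $X_1$ (which follows from \cref{TightSeparatorAreOConnected} applied to the tight base $\{A_1,B_1\}$) together with the relevance of $\sigma$. In spirit, it mirrors the one-step argument of \cref{relevantLocalTriStarRtomic}, but the global topological components $K\subseteq G[A_i\setminus B_i]$ that appear in the definition of relevance must be converted into $r$-local connectivity data, and this conversion is where the hypothesis on the binary cycle space enters.

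First, I would show that $X_1\subseteq X$ lies in a single $r$-tom of $X$. For every pair $u,v \in X_1$, the $r$-tomicity of $X_1$ yields a walk in $\cW_r(X_1)$ from $u$ to $v$, which is a concatenation of $r$-local $X_1$-paths. Each such path either is a single $X_1$-edge (and hence is an $r$-local $X$-path since $X_1\se X$) or lies inside a short cycle $O$. In the latter case, splitting the path at its internal vertices in $X_{23}$ produces a sequence of $r$-local $X$-paths all contained in the same short cycle $O$. Stitching everything together yields a chain of $r$-local $X$-paths linking $u$ to $v$ (and placing any intermediate $X_{23}$-vertices into the same $r$-tom as $X_1$).

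Second, I would connect each $x \in X_{23}$ to $X_1$ through $r$-local $X$-paths using relevance. By the relevance of $\sigma$ with base $\{A_1,B_1\}$ for say $i=2$, either there exists an $x$--$X_{12}$ edge in $G$, which is itself a single-edge $r$-local $X$-path linking $x$ to $X_{12}\se X_1$, or there exist $y\in X_{12}\cup Z\se X_1$ and a component $K$ of $G[A_2\setminus B_2]$ with $x,y\in N(K)$. In the latter case, pick edges $e_x=xu_x$ and $e_y=yu_y$ into $V(K)\se V\setminus X$. The crux is to show $e_x\sim_r e_y$ at $X$ in $G$, since then the short-cycle $X$-walk witnessing the relation provides an $r$-local $X$-path from $x$ to $y\in X_1$. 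For this, observe that each $r$-local component at $X$ is a cut in $G$: short cycles meet such a component evenly, and by the cycle-space hypothesis every cycle does, placing the component in the cocycle space. Combined with the connectivity of $V(K)$ inside $V\setminus X$, this confines any $r$-local component that meets $E_G(V(K),X)$ to a subset aligned with $V(K)$. A parity computation with a closed walk of the form $x\, e_x\, u_x\, P_K\, u_y\, e_y\, y\cdot R\cdot x$, where $P_K$ is a path in $K$ and $R$ is an appropriate walk in $\cW_r(X_1)$ provided by Step 1, then forces the edges $e_x$ and $e_y$ into a common $r$-local component at $X$, as every short-cycle summand of the closed walk in the binary cycle space must meet every $r$-local component evenly.

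The main obstacle is this cycle-space argument: one must build the closing walk $R$ carefully and extract the right parity with respect to the candidate $r$-local component containing $e_y$. What makes it delicate is that $X$ is not yet known to be $r$-tomic, so one may only use walks in $\cW_r(X_1)$ (not $\cW_r(X)$) to close the loop, and one has to verify that these walks behave correctly relative to $r$-local components at $X$, not just at $X_1$. Once $e_x\sim_r e_y$ is established, the two steps combine to place every vertex of $X$ in a single $r$-tom, proving that $X$ is $r$-tomic.
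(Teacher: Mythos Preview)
Your plan has a genuine gap in the second step. The closed walk you propose has the form $x\,e_x\,u_x\,P_K\,u_y\,e_y\,y\cdot R\cdot x$ with $R\in\cW_r(X_1)$ ``provided by Step~1'', but $x\in X_{23}$ and so $x\notin X_1$: there is no walk in $\cW_r(X_1)$ that ends at~$x$. Step~1 only links vertices of $X_1$ to each other inside a single $r$-tom of~$X$; it gives you nothing that reaches~$x$. Consequently the parity argument never gets off the ground, because the loop does not close.

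The fix requires a second connection from $x$ to $X_1$, and this is exactly why the paper uses relevance at \emph{both} $i=2$ and $i=3$: it takes an $x$--$(X_{12}\cup Z)$ path $P_2$ through $A_2\setminus B_2$ (or an $x$--$X_{12}$ edge) and an $x$--$(X_{13}\cup Z)$ path $P_3$ through $A_3\setminus B_3$ (or an $x$--$X_{13}$ edge), and then closes the cycle not via $\cW_r(X_1)$ but via a path through a tight component in $A_1\setminus B_1$, available because the base $\{A_1,B_1\}$ is tight. The resulting cycle meets the edge-set $F:=E(Y',\,X_{12}\cup(A_2\setminus B_2))$ exactly once (where $Y'$ is the part of $X_{23}$ outside the $r$-tom containing $X_1$), so the cycle-space hypothesis yields a short cycle meeting $F$ oddly, and a short case analysis on that short cycle produces the contradiction. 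Your framework of ``$r$-local components at $X$ are cuts'' is correct and would also work once you have the second path $P_3$ to close the loop; but with only one invocation of relevance the argument cannot be completed.
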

\begin{proof}
    By \cref{TightSeparatorAreOConnected}, $X_1$ is $r$-tomic.
    Let $Y$ be the $r$-tom of $X$ which contains~$X_1$.
    Suppose for a contradiction that there is some $x \in Y':= X \setminus Y = X_{23}\sm Y$.
    Consider $F := E(Y', X_{12} \cup (A_2 \setminus B_2))$.
    
    We claim that there is a (not necessarily short) cycle $O'$ in $G$ which meets $F$ precisely once.
    For $i \in \{2,3\}$, let $P_i$ be an $x$--$(X_{1i} \cup Z)$ path through $A_i \sm B_i$ or a single $x$--$X_{1i}$ edge, given by the relevance of $\sigma$. 
    Since $\{A_1,B_1\}$ is tight, we may join the endvertices that these $P_i$ have in $X_1$ by a path through $A_1 \sm B_1$ to obtain the desired cycle $O'$ in~$G$.
    
    As the binary cycle space of $G$ is generated by short cycles, we infer from the existence of $O'$ that there exists a short cycle $O\se G$ which meets $F$ oddly.
    If $O$ meets $F$ in an $X_{23}$--$X_{12}$ edge, then this $X$-edge witnesses that some vertex in $Y'$ is in a common $r$-tom with some vertex in $X_{12} \subseteq Y$, which contradicts the maximality of $Y$ as an $r$-tom.
    Thus, we may assume that $O$ does not meet $F$ in an $X_{23}$--$X_{12}$ edge.
    Hence, $O$ meets $E(Y', A_2 \setminus B_2)$ oddly.
    Since $X_2=(X_2\cap Y)\sqcup Y'$ and $X_2$ is the separator of the separation $(A_2,B_2)$,
    the cut $E(X_2,A_2\sm B_2)$ is bipartitioned by $E(X_2\cap Y,A_2\sm B_2)$ and $E(Y',A_2\sm B_2)$.
    Since $O$ meets the cut $E(X_2,A_2\sm B_2)$ evenly, it must meet $E(X_2\cap Y,A_2\sm B_2)$.
    Thus, $O$ witnesses that a vertex in $X_2 \cap Y$ and a vertex in $X_2 \setminus Y$ are in a common $r$-tom, which contradicts the maximality of $Y$ as an $r$-tom of~$X$.
\end{proof}

A separation could alternatively be defined as a triple $\{U,X,W\}$ such that $U\sqcup X\sqcup W=V(G)$ and every component of $G-X$ has its vertex set included in $U$ or in~$W$.
We use this perspective to introduce local separations.

An \defn{$r$-local separation} of~$G$ is a triple $\{E_1,X,E_2\}$ where $X$ is a non-empty set of vertices of~$G$ and $E_1,E_2$ near-bipartition $\partial X$ such that every $r$-local component at~$X$ is included in $E_1$ or in~$E_2$.
We refer to $X$ as its \defn{separator}, to the two classes $E_1,E_2$ as its \defn{sides} and to $|X|$ as its \defn{order}.
Also we call $\{E_1,X,E_2\}$ an \defn{$r$-local $k$-separation} for $k=|X|$.
The ordered triples $(E_2,X,E_1)$ and $(E_1,X,E_2)$ are the \defn{orientations} of $\{E_1,X,E_2\}$ \defn{towards} $E_1$ and \defn{towards}~$E_2$, respectively.
We also call these \defn{oriented $r$-local separations}, or just \defn{$r$-local separations} by a slight abuse of notation.
The \defn{inverse} $(E_2,X,E_1)$ of an oriented $r$-local separation $(E_1,X,E_2)$ is denoted by \defnMath{$(E_1,X,E_2)^*$}.
An $r$-local separation with separator $X$ is \defn{tight} if each side includes a tight $r$-local component at~$X$.

Every oriented separation $(A_1,A_2)$ of $G$ \defn{induces} the oriented $r$-local separation $(E_1,X,E_2)$ with $X:=A_1\cap A_2$ and $E_i:=E_G(X,A_i\sm X)$.
Then we also say that $\{A_1,A_2\}$ \defn{induces} $\{E_1,X,E_2\}$.
If $(A_1,A_2)$ or $\{A_1,A_2\}$ is introduced as~$s$, then we denote the (oriented) $r$-local separation that $s$ induces by~$\defnMath{s_r}$.
If an $r$-local separation is introduced as $\{E_1,X,E_2\}:=s_r$ when $\{A_1,A_2\}=s$ was previously given, we tacitly assume that $E_i=E_G(X, A_i\sm X)$ and not $E_i=E_G(X,A_{3-i}\sm X)$.
We remark that by definition $s\mapsto s_r$ is an injective map from the (oriented) separations of $G$ to the (oriented) $r$-local separations of~$G$.

\begin{corlemma}[Separations]\label{correspondenceofseparationswithfixedseparator}
    Assume that the binary cycle space of $G$ is generated by cycles of length~$\le r$ and that $G$ is connected.
    The map $s\mapsto s_r$ is a bijection between the (oriented) separations of $G$ with $r$-tomic separator and the (oriented) $r$-local separations of $G$ with $r$-tomic separator.
\end{corlemma}
\begin{proof}
    This is immediate from \cref{keylemma:correspondenceofcomponents}.
\end{proof}

\begin{example}
    The assumption in \cref{correspondenceofseparationswithfixedseparator} that the separators are $r$-tomic is necessary.
    For example, let $G$ be the graph depicted in \cref{fig:LocalFlip} and $r:=4$.
    The binary cycle space of $G$ is generated by cycles of length~$\le r$.
    Let $X$ denote the set of encircled vertices, which is not $r$-tomic.
    Let $E_1,E_2$ consist of the red edges and the blue edges, respectively.
    Then $\{E_1,X,E_2\}$ is an $r$-local separation of~$G$ that is not induced by a separation of~$G$.
\end{example}

\begin{figure}[ht]
    \centering
    \includegraphics[height=6\baselineskip]{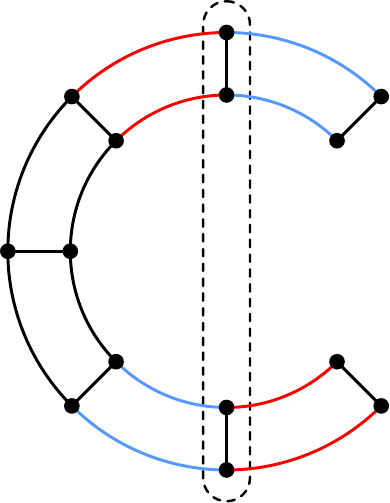}
    \caption{An $r$-local separation that is not induced by a separation, as the sides of the $r$-local separation restricted at the $r$-local atoms of its separator are flipped.}
    \label{fig:LocalFlip}
\end{figure}

\begin{corlemma}[Tight separations]\label{keylemma:correspondenceofseparations}
    Assume that the binary cycle space of $G$ is generated by cycles of length~$\le r$ and that $G$ is connected.
    Then the map $s\mapsto s_r$ restricts to a bijection between the tight (oriented) separations of $G$ and the tight (oriented) $r$-local separations of~$G$.
\end{corlemma}

\begin{proof}
    If $s$ or $s_r$ is tight, then their separator is $r$-tomic by \cref{TightSeparatorAreOConnected} or \cref{TightLocalSeparatorAreOConnected}, so the other of~$s,s_r$ is tight as well by \cref{keylemma:correspondenceofcomponents}.
    Thus, $s \mapsto s_r$ is a bijection from the tight (oriented) separations to the tight (oriented) $r$-local separations by \cref{correspondenceofseparationswithfixedseparator}.
\end{proof}

\subsection{Interplay with the covering map}\label{subsec:interplayWithCov}

Let $X$ be a vertex set of $G$.
If $X$ is $r$-tomic, then the deck transformations of $p_r$ act transitively on the set of $r$-toms of $p_r\inv (X)$ in $G_r$ by $Y \mapsto \gamma(Y)$ for any deck transformation $\gamma$.
In \cref{fig:rtomicExample}, the set $X$ of two vertices of $G$ is $4$-tomic. It's preimage $p_4\inv(X)$ has infinitely many $4$-toms in the $4$-local cover $G_4$ of~$G$.

\begin{figure}[ht]
    \centering
    \includegraphics[width=.9\textwidth]{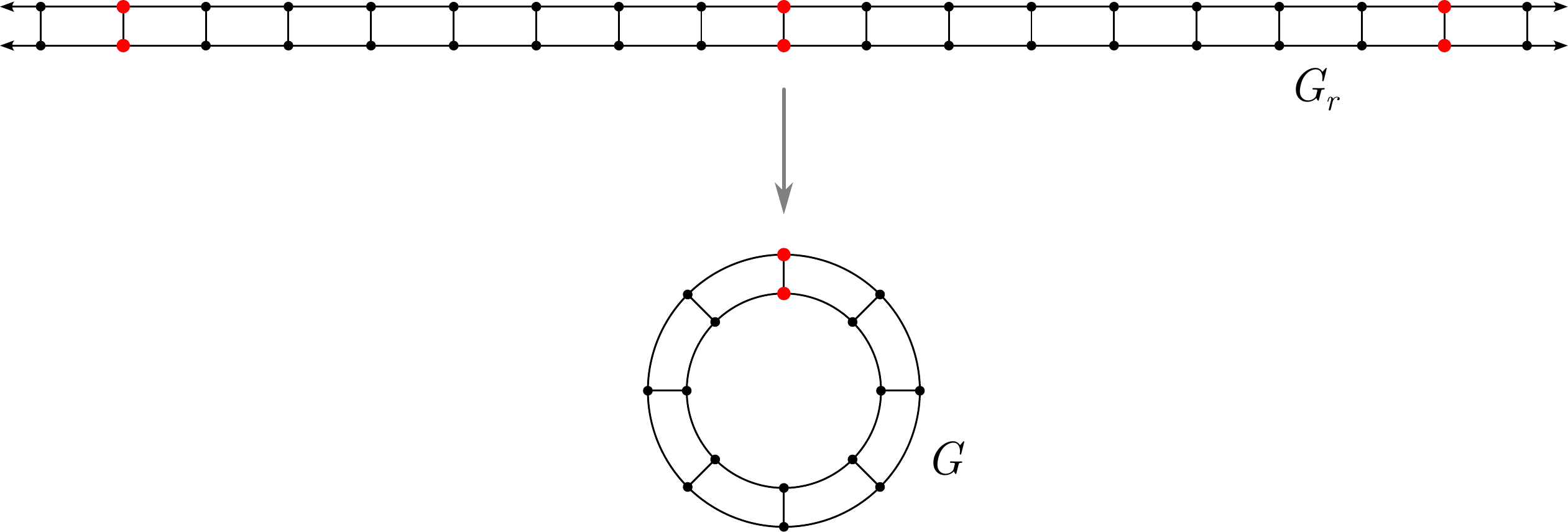}
    \caption{The set $X$ is red in $G$ (bottom) and $p_4\inv (X)$ is red in $G_4$ (top).}
    \label{fig:rtomicExample}
\end{figure}

\begin{projlemma}[Locally atomic]\label{ProjectionRtomic}
    Let $\hat X$ be a vertex set of $G_r$.
    If $\hat X$ is $r$-tomic, then its projection $p_r(\hat X)$ is $r$-tomic.
\end{projlemma}

\begin{proof}
    Let $\hat x$ and $\hat y$ be two arbitrary vertices in $\hat X$ that are linked by an $r$-local $\hat X$-path $\hat W$ in $G_r$.
    Since $p_r$ maps short cycles of $G_r$ isomorphically to short cycles in $G$,
    the projection $W := p_r(\hat W)$ is a concatenation of $r$-local $X$-paths in $G$ that joins $x := p_r(\hat x)$ to $y := p_r(\hat y)$.
    As we have chosen $\hat x,\hat y$ arbitrarily in $\hat X$, it follows immediately that $p_r(\hat X)$ is $r$-tomic.
\end{proof}

\begin{projlemma}[Local atoms]\label{ProjectionLocalAtom}
    Let $X$ be a vertex set of $G$.
    Then $p_r$ maps the $r$-toms of $\hat X := p_r\inv(X)$ to the $r$-toms of $X$.
    In particular, if $X$ is $r$-tomic, then $p_r$ maps the $r$-toms of $\hat X$ onto~$X$.
\end{projlemma}
\begin{proof}
    Let $\hat Y$ be an $r$-tom of $\hat X$.
    By \cref{ProjectionRtomic}, its projection $Y := p_r(\hat Y)$ is $r$-tomic.
    Suppose $Y$ is not an $r$-tom of $X$, i.e.\ there is an $r$-local $X$-path $W$ in $G$ that joins a vertex $y \in Y$ to a vertex $x \in X \setminus Y$.
    Since any short cycle of $G$ lfits to a short cycle of the $r$-local cover $G_r$, the $r$-local $X$-path $W$ lifts to an $r$-local $\hat X$-path $\hat W$, as $\hat X = p_r\inv(X)$.
    But $\hat W$ witnesses that $\hat Y$ is not an $r$-tom of $\hat X$, which is a contradiction.
\end{proof}

Let $\hat X$ be a set of vertices of~$G_r$ and $X:=p_r(\hat X)$.
The vertex set $\hat X$ is \defn{$r$-locally closed} (with respect to~$p_r$) if no $r$-local $p_r\inv(X)$-path links a vertex in $\hat X$ to a vertex in $p_r\inv (X) \setminus \hat X$.
We remark that if $\hat X$ is $r$-locally closed, then $p_r$ maps edges of $\partial \hat X$ to $\partial X$ (rather than to $X$-edges).

\begin{observation}\label{LocalatomsAreClopen}
    For every $X\se V(G)$, the $r$-local atoms of $p_r\inv (X)$ are its $\subseteq$-minimal non-empty $r$-locally closed subsets.
    More generally, $\hat X \subseteq V(G_r)$ is $r$-locally closed if and only if it is a union of $r$-local atoms of $p_r\inv(p_r(\hat X))$.
\end{observation}

\begin{liftlemma}[$e\sim_r f$ at $X$ in $G$]\label{localEdgeRelationLift}
    Given a connected graph $G$, let $\hat X$ be an $r$-locally closed set of vertices of~$G_r$, and let $X:=p_r(\hat X)$.
    The following assertions are equivalent for every two edges $e,f\in\partial X$:
    \begin{enumerate}
        \item $e\sim_r f$ at $X$ in~$G$;
        \item for every lift $\hat e\in \partial \hat X$ of~$e$ there exists a lift $\hat f\in \partial\hat X$ of~$f$ such that $\hat e\sim_r \hat f$ at~$\hat X$ in~$G_r$;
        \item there exist lifts $\hat e,\hat f\in \partial\hat X$ of $e$ and~$f$, respectively, such that $\hat e\sim_r \hat f$ at~$\hat X$ in~$G_r$.
    \end{enumerate}
\end{liftlemma}
\begin{proof}
    (i)$\to$(ii).
    Let $e \sim_r f$ at $X$ in $G$ and let $\hat e\in\partial\hat X$ be a lift of~$e$.
    If $e = f$, then we set $\hat f := \hat e$.
    Otherwise, there is an $r$-local $X$-walk $W$ that traverses $e$ first and $f$ last. 
    Let $O$ be a short cycle in~$G$ that contains~$W$.
    Let $\hat O$ be the lift of $O$ that contains~$\hat e$(which is a short cycle in $G_r$), and let $\hat W$ be the lift of $W$ that starts with~$\hat e$, which is thus a walk in $\hat O$.
    Let $\hat f$ be the lift of $f$ that is the last edge of~$\hat W$.
    As $\hat X$ is $r$-locally closed, $\hat W$ cannot end in $p_r\inv(X) \setminus \hat X$: it is an $r$-local $\hat X$-walk that traverses $\hat e$ first and $\hat f$ last.
    Thus, $\hat e \sim_r \hat f$ at~$\hat X$ in $G_r$.
    
    (ii)$\to$(iii) is straightforward, as every edge $e\in\partial X$ lifts to some edge $\hat e\in\partial \hat X$.
    
    (iii)$\to$(i).
    Since $p_r$ maps short cycles of $G_r$ isomorphically to short cycles of $G$, the $r$-local $\hat X$-walk witnessing $\hat e\sim_r\hat f$ at $\hat X$ in $G_r$ projects to an $r$-local $X$-walk witnessing $e\sim_r f$ at $X$ in~$G$, as $\hat X$ is $r$-locally closed.
\end{proof}

\begin{projlemma}[Local components]\label{keylemma:projectionofcomponents}
    Given a connected graph $G$, let $\hat X$ be an $r$-locally closed set of vertices of~$G_r$.
    Then every $r$-local component at $\hat X$ projects onto an $r$-local component at $X:=p_r(\hat X)$.
\end{projlemma}
\begin{proof}
    Let $\hat C$ be an $r$-local component at $\hat X$ and put $C:=p_r(\hat C)$.
    Since $\hat X$ is $r$-locally closed, all of $C$ is included in~$\partial X$.
    Then $C$ is included in an $r$-local component $C'$ at~$X$ by \cref{localEdgeRelationLift} (iii)$\to$(i).
    Conversely, $C'$ is included in $C$ by \cref{localEdgeRelationLift} (i)$\to$(ii), so $C=C'$ as desired.
\end{proof}

\begin{figure}[ht]
    \centering
    \includegraphics[height=14\baselineskip]{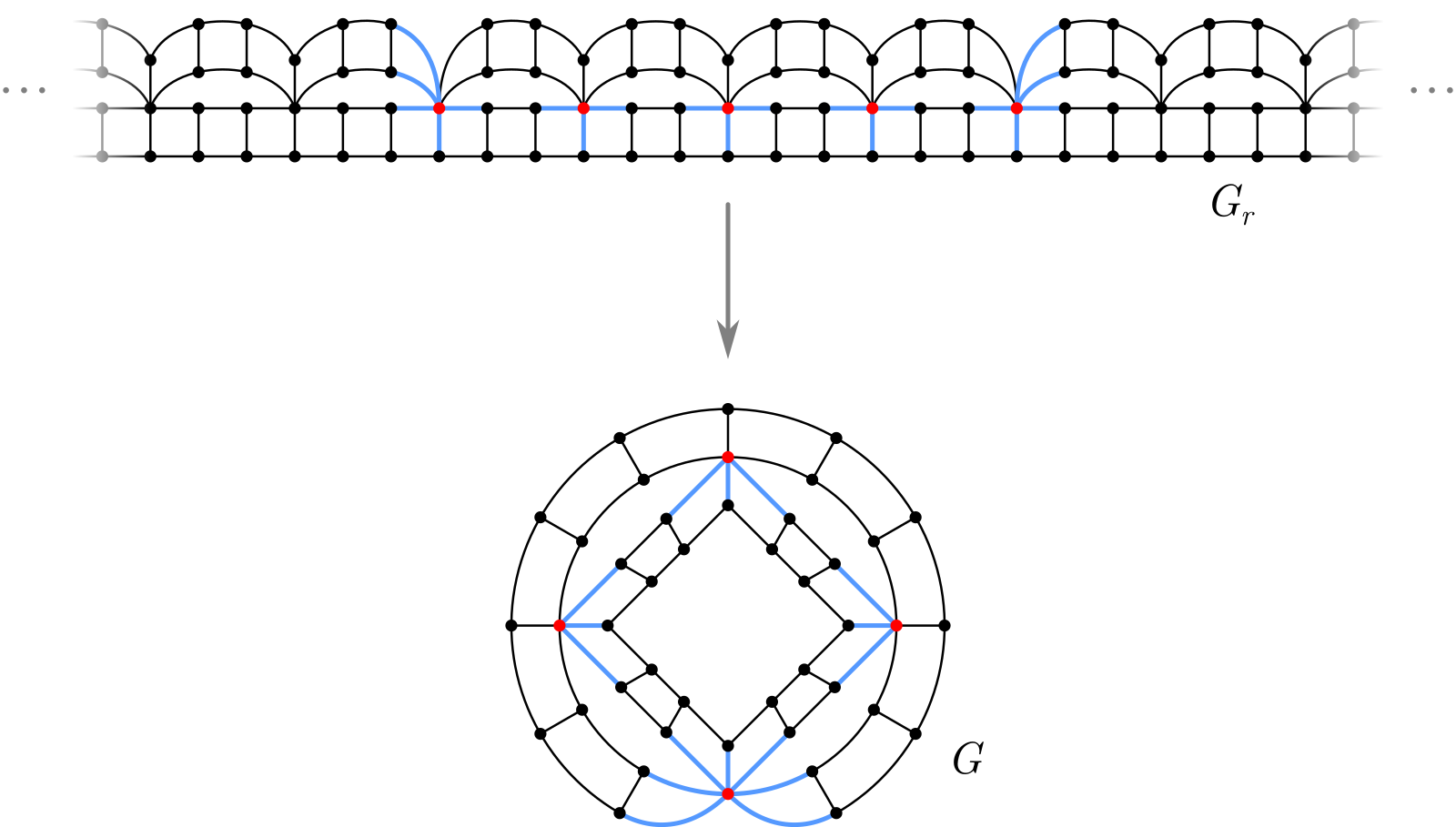}
    \caption{Let $r:=6$. The figure shows an $r$-local component (blue, top) at a non-$r$-locally-closed vertex set $\hat X$ (red) in $G_r$ whose projection to~$G$ (blue, bottom) fails to be an $r$-local component at $p_r(\hat X)$ (red)}
    \label{fig:rlocallyclopennecessary}
\end{figure}

\begin{example}
    The assumption in \cref{keylemma:projectionofcomponents} that $\hat X$ is $r$-locally closed is necessary, see \cref{fig:rlocallyclopennecessary}.
\end{example}

\cref{keylemma:projectionofcomponents} implies that for every $r$-local separation $(E_1,X,E_2)$ of $G$ there is an $r$-local separation $(\hat E_1,\hat X, \hat E_2)$ of $G_r$ such that $p_r(\hat X) = X$ and $p_r(\hat E_i) = E_i$ for $i=1,2$: just take $\hat X := p_r\inv(X)$ and $\hat E_i := p_r\inv(E_i)$ for $i=1,2$.

If $X$ is $r$-tomic, we can find such an $(\hat E_1,\hat X, \hat E_2)$ with $\hat X$ $r$-tomic too: just take as $\hat X$ an $r$-tom of $p_r\inv(X)$ instead of the entire set $p_r\inv(X)$ (cf.\ \cref{ProjectionLocalAtom}), and define $\hat E_i := p_r\inv(E_i) \cap \partial \hat X$ for $i=1,2$, i.e.\ an $r$-local component $\hat C$ at $\hat X$ is in $\hat E_i$ if its projection $p_r(\hat C)$ is in $E_i$.
We have thus shown the following:

\begin{corollary}\label{FirstLiftOfLocSep}
    Every $r$-local separation $(E_1,X,E_2)$ of $G$ is the projection of some $r$-local separation $(\hat E_1,\hat X, \hat E_2)$ of $G_r$.
    If $X$ is $r$-tomic, then $\hat X$ can be chosen as any $r$-tom of $p_r\inv(X)$. \qed
\end{corollary}

Even if the set $X$ in \cref{FirstLiftOfLocSep} is $r$-tomic, the $r$-tomic set $\hat X$ can be much larger than $X$.
\cref{fig:rlocally1sheetednecessary} below shows an example where $\hat X$ is infinite but $X$ is finite.
Also $G_r$ has more $r$-local components at $\hat X$ than $G$ has at $X$ in this example.
Hence, $G_r$ has $r$-local separations with separator $\hat X$ that do not project to $r$-local separations of $G$.

However, it turns out that the latter problem will go away when we solve the former:
if we require that $\hat X$ is no larger than $X$, then $r$-local separations of $G_r$ with separator $\hat X$ do project to $r$-local separations of $G$ (\cref{ProjectionLocalSeparation}).

Recall that a lift of $X\se V(G)$ to $G_r$ is a vertex set $\hat X\se V(G_r)$ such that $p_r$ restricts to a bijection $\hat X\to X$.
The following lemmas consider lifts that are $r$-locally closed.
However, not every $X\se V(G)$ has a lift that is $r$-locally closed.
Sufficient conditions will be discussed in \cref{sec:Displacement}.

\begin{liftlemma}[Local components]\label{keylemma:liftofcomponents}
    Let $X$ be a set of vertices in a connected graph~$G$, and let $\hat X$ be an $r$-locally closed lift of~$X$ to~$G_r$.
    \begin{enumerate}
        \item For every $r$-local component $\hat C$ at $\hat X$ in~$G_r$, the covering $p_r$ restricts to a bijection $\hat C \to C$ to an $r$-local component $C$ at~$X$ in~$G$.
        \item The above map $\hat C\mapsto C$ is a bijection between the set of $r$-local components at~$\hat X$ in~$G_r$ and the set of $r$-local components at~$X$ in~$G$.
        \item $\hat C$ is tight (with regard to~$\hat X$) if and only if $C$ is tight (with regard to~$X$), for all~$\hat C$.
    \end{enumerate}
\end{liftlemma}

\begin{proof}
    (i) and (ii).
    Let $\hat C$ be an $r$-local component at~$\hat X$ in~$G_r$.
    As $\hat X$ is $r$-locally closed, \cref{keylemma:projectionofcomponents} yields that $C := p_r(\hat C)$ is an $r$-local component at~$X$ in~$G$.
    The assumption that $\hat X$ is a lift of~$X$ implies by the definition of coverings that the restriction of $p_r$ to~$\hat C$ is also injective, giving~(i), and that these bijections~$\hat C \to C$ satisfy~(ii).
     
    (iii) follows from (i) and (ii) as $\hat X$ is a lift of $X$.
\end{proof}

\begin{figure}[ht]
    \centering
    \includegraphics[height=14\baselineskip]{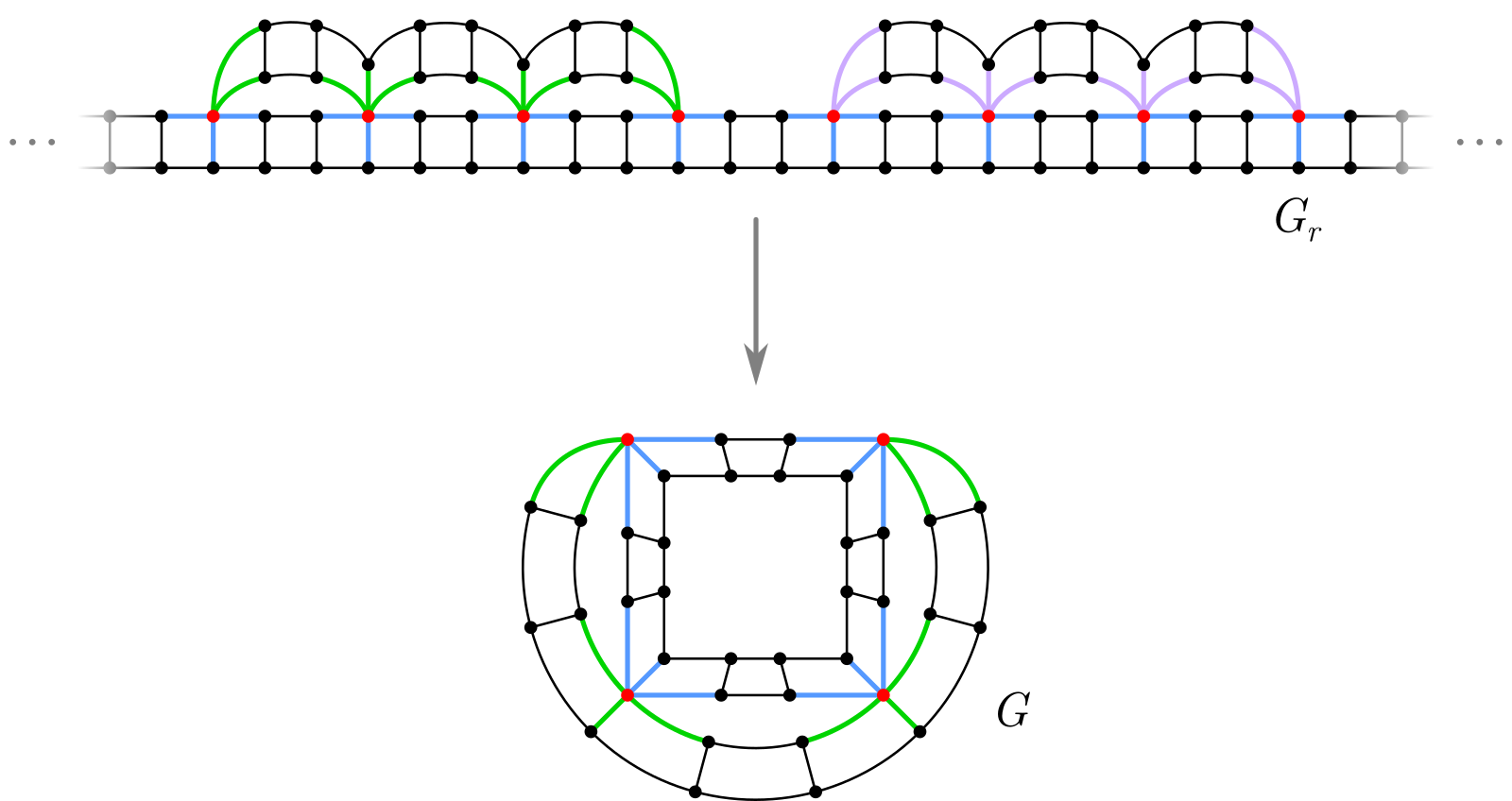}
    \caption{Let $r:=8$.
    The figure shows an $r$-locally closed vertex set $\hat X$ (red, top) in $G_r$ which is not a lift of $X=p_r(\hat X)$ (red, bottom).
    The coloured edge sets in $G_r$ are $r$-local components at~$\hat X$, and the coloured edge sets in $G$ are $r$-local components at~$X$.}
    \label{fig:rlocally1sheetednecessary}
\end{figure}

\begin{example}
    The assumption in \cref{keylemma:liftofcomponents} that $\hat X$ is a lift of $X$ to $G_r$ is necessary.
    For example, let $r:=6$ and consider the graph $G$ in \cref{fig:rlocally1sheetednecessary}.
    Let $X$ be the red set of vertices in~$G$.
    Then $\hat X:=p_r\inv (X)$ is the only non-empty $r$-locally closed subset of $p_r\inv(X)$, but $\hat X$ is not a lift of~$X$.
    The blue $r$-local component at $\hat X$ projects to the blue $r$-local component at~$X$, but not injectively, so (i) of \cref{keylemma:liftofcomponents} fails.
    Both the green and the purple $r$-local component at $\hat X$ project to the green $r$-local component at~$X$, so (ii) fails.
    The green $r$-local component at $\hat X$ is not tight, but its green projection at~$X$ is, so (iii) fails.
\end{example}

\begin{projlemma}[Local separations]\label{ProjectionLocalSeparation}
    Let $X$ be a set of vertices in a connected graph~$G$, and let $\hat X$ be an $r$-locally closed lift of $X$ to~$G_r$.
    Suppose $\hat X$ is the separator of an $r$-local $k$-separation $\{\hat E_1,\hat X,\hat E_2\}$ of $G_r$.
    Then $\{E_1,X,E_2\}$ for $E_i:=p_r(\hat E_i)$ is an $r$-local $k$-separation of~$G$.
    Moreover, $\{E_1,X,E_2\}$ is tight if and only if $\{\hat E_1,\hat X,\hat E_2\}$ is tight.
\end{projlemma}
\begin{proof}
    This follows immediately from \cref{keylemma:liftofcomponents}.
\end{proof}

In the context of \cref{ProjectionLocalSeparation}, when $\{\hat E_1,\hat X,\hat E_2\}$ is introduced as~$\hat s$, we often denote $\{E_1,X,E_2\}$ by~\defnMath{$p_r(\hat s)$} and refer to it as the \defn{projection} of~$\hat s$, and similarly for oriented local separations $(\hat E_1, \hat X, \hat E_2)$.

\begin{liftlemma}[Local separations]\label{keylemma:liftofseparations}
    Let $(E_1,X,E_2)$ be an $r$-local $k$-separation of a connected graph~$G$.
    Let $\hat X$ be an $r$-locally closed lift of $X$ to~$G_r$.
    For $i=1,2$ let $\hat E_i := p_r\inv (E_i)\cap \partial \hat X$.
    Then the following assertions hold:
    \begin{enumerate}
        \item $(\hat E_1, \hat X ,\hat E_2)$ is an $r$-local $k$-separation of~$G_r$.
        \item $p_r$ restricts to a bijection $\hat E_i\to E_i$ for both $i=1,2$.
        \item $(E_1,X,E_2)$ is tight if and only if $(\hat E_1, \hat X, \hat E_2)$ is tight.
    \end{enumerate}
\end{liftlemma}

\begin{proof}
    This follows immediately from \cref{keylemma:liftofcomponents}.
\end{proof}

In the context of \cref{keylemma:liftofseparations}, we call $(\hat E_1, \hat X, \hat E_2)$ the \defn{$\hat X$-lift} of $(E_1, X, E_2)$ to $G_r$.
A \defn{lift} of $(E_1,X,E_2)$ is an $\hat X$-lift of $(E_1,X,E_2)$ for some~$\hat X$ as in~\cref{keylemma:liftofseparations}.
We emphasise that if $\hat X$ is not only $r$-locally closed but even $r$-tomic, then, by \cref{correspondenceofseparationswithfixedseparator}, a lift $\hat s$ of an $r$-local separation $s$ of $G$ is induced by a (global) separation of $G_r$.
Usually, we will later find $\hat X$ as an $r$-tom of $p_r\inv(X)$ via \cref{RtomicGives1sheeted}.

\cref{ProjectionLocalSeparation} and \cref{keylemma:liftofseparations} are inverses of each other in the following sense:
Let $X$ be a set of vertices in a connected graph~$G$ and let~$\hat X$ be an $r$-locally closed lift of~$X$ to~$G_r$.
Then an $r$-local separation of~$G_r$ with separator~$\hat X$ equals the $\hat X$-lift of its projection.
Conversely, an $r$-local separation of~$G$ equals the projection of any of its lifts.

\section{Displacement: a guarantee for the existence of lifts}\label{sec:Displacement}

While~\cref{ProjectionLocalSeparation} and~\cref{keylemma:liftofseparations} explain the interplay of $r$-local separations with the covering map~$p_r$, they both rest on the assumption that the ($r$-local) separator~$\hat X$ of~$G_r$ is an $r$-locally closed lift of the $r$-local separator $X$ of~$G$.
But not all $r$-local separators $X$ admit such $r$-locally closed lifts~$\hat X$.
For example, it is easy to check that every lift of the $r$-local $4$-separator of $G$ shown in~\cref{fig:rlocallyclopennecessary} fails to be $r$-locally closed in~$G_r$.

We now introduce the \emph{displacement~$\mindist_r(G)$ of $p_r$}, a graph invariant with which we formulate a guarantee for the existence of such lifts:
If $X$ is $r$-tomic and $|X| < 2 \mindist_r(G) / r$, then $X$ has an $r$-locally closed lift $\hat X$ (\cref{RtomicGives1sheeted}).
This then yields the desired lifting/projection lemmas for $r$-local separations, \cref{sufficientForProjection} and \cref{sufficientForLifting}.
We also give various characterisations of the displacement (\cref{lem:midistKeyLemma}) as well as a simple combinatorial lower bound (\cref{cor:combinatoriallowerbound}).
Finally, we provide a construction scheme for graphs (\cref{example:locallychordal,ex:BlocksCoincideGraphClass}) that have high displacement, and on which our theory of $r$-local separation is hence guaranteed to capture much of their $r$-local connectivity structure.

\subsection{Displacement}

The \defn{displacement} of $p_r$ is defined as
\[
    \defnMath{\mindist_r(G)}:=\min\,\big\{\,\text{dist}_{G_r}(\hat u,\hat v):\hat u\text{ and }\hat v\text{ are distinct lifts of the same vertex of }G\,\}
\]
where we follow the convention that $\min(\emptyset) =\infty$.
We remark that $\mindist_r(G)=\infty$ if and only if~$G=G_r$.

\begin{lemma}\label{mindistGeR}
    $\mindist_r(G)>r$ for every connected graph $G$ and $r\in\N$.
\end{lemma}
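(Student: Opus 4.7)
My plan is to argue by contradiction. Assume $\hat u \neq \hat v$ are distinct lifts of the same vertex $u \in V(G)$ with $\mathrm{dist}_{G_r}(\hat u, \hat v) \leq r$, and take a shortest $\hat u$--$\hat v$ path $\hat P$ in $G_r$, which has length $\leq r$. Its projection $P := p_r(\hat P)$ is a closed walk at $u$ in $G$ of length~$\le r$. If I can establish the key claim that \emph{every closed walk of length $\leq r$ at any vertex of $G$ represents an element of the $r$-local subgroup}, then $[P] \in \pi_1^r(G, u)$. Since $p_r$ induces an isomorphism $\pi_1(G_r, \hat u) \to \pi_1^r(G, u)$, the walk $P$ then lifts to a closed walk at $\hat u$; by uniqueness of path-lifts with given starting vertex, this closed lift equals~$\hat P$, forcing $\hat v = \hat u$, a contradiction.

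I would prove the key claim by induction on the length of the closed walk~$W$. The base case is trivial. For the inductive step, if $W$ contains a subword of the form $xexex$ then replace it to obtain a homotopic walk of smaller length and apply induction. Otherwise $W = v_0 e_0 \dots v_k$ is reduced with $v_0 = v_k = v$ and $k \leq r$. If $v_0, \ldots, v_{k-1}$ are pairwise distinct, then $W$ traverses a short cycle once and so is itself a generator of $\pi_1^r(G, v)$. Otherwise some revisit $v_i = v_j$ occurs with $0 \leq i < j \leq k$ and $(i, j) \neq (0, k)$. If the revisit happens at the basepoint (so $v_i = v_j = v$), then $W$ splits as $W = W' W''$ into two closed walks at $v$ of strictly smaller length, each in $\pi_1^r(G, v)$ by induction. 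Otherwise $0 < i < j < k$, and $W = W_1 W_2 W_3$ where $W_2$ is closed at $v_i$ and $W_1 W_3$ is closed at $v$, both strictly shorter than~$W$. Then $[W] = [W_1 W_2 W_1^{-}] \cdot [W_1 W_3]$, where $[W_1 W_3] \in \pi_1^r(G, v)$ by induction at $v$, and writing $[W_2]$ as a product of generators $[Q_0 \, O \, Q_0^{-}]$ for short cycles $O$ via induction at $v_i$ exhibits $[W_1 W_2 W_1^{-}]$ as a product of walks $[W_1 Q_0 \, O \, Q_0^{-} W_1^{-}]$ stemming from the same short cycles $O$ but based at $v$, hence in $\pi_1^r(G, v)$.

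The main obstacle is the inductive step in the vertex-revisit case: the side condition $(i, j) \neq (0, k)$ is what guarantees that the decomposition produces genuinely shorter closed walks in both components so that the induction closes. A secondary subtlety is that the induction must be stated uniformly over all basepoints rather than a fixed one, because the inner closed subwalk $W_2$ naturally sits at $v_i$ rather than at~$v$; conjugation by $W_1$ is then what transports the resulting generators from $\pi_1^r(G, v_i)$ back into $\pi_1^r(G, v)$.
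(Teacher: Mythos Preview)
Your argument is correct and complete. The paper does not give its own proof here but defers to \cite[Lemma~4.3]{canonicalGraphDec}; your approach of showing that every closed walk of length at most $r$ lies in $\pi_1^r$ (by induction on length, decomposing at a repeated vertex and conjugating the inner piece back to the basepoint) and then invoking unique path-lifting is the natural direct argument and presumably matches the cited reference.
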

\begin{proof}
    The proof of \cite[Lemma~4.3]{canonicalGraphDec} shows this.
\end{proof}

\begin{lemma}\label{lem:midistKeyLemma}
    Let $G$ be a connected graph, $\rho \geq r \geq 0$ integers, and $x_0$ a vertex of~$G$.
    Then the following assertions are equivalent:
    \begin{enumerate}
        \item \label{Ginvariant:rlocalhasdistance>rho} $\mindist_r(G) > \rho$;
        \item \label{Ginvariant:rlocalisrhohpres} $p_r$ preserves $(\rho/2)$-balls;
        \item \label{Ginvariant:rgeneratesrho} $\pi_1^r(G,x_0) \supseteq \pi_1^{\rho}(G,x_0)$;
        \item \label{Ginvariant:rlocalisrholocal} $p_r$ and $p_\rho$ are isomorphic as coverings of~$G$;
    \end{enumerate}
    In particular, either $\mindist_r(G) = \infty$ and \cref{Ginvariant:rlocalhasdistance>rho} through \cref{Ginvariant:rlocalisrholocal} are satisfied for every integer~$\rho \geq r$, or $\mindist_r(G) - 1$ is the maximum $\rho \geq r$ that satisfies \cref{Ginvariant:rlocalhasdistance>rho} through~\cref{Ginvariant:rlocalisrholocal}.
\end{lemma}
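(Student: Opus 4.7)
The plan is to establish the cycle $(i) \Leftrightarrow (ii)$, $(ii) \Rightarrow (iii) \Rightarrow (iv) \Rightarrow (ii)$, which yields all four equivalences. For $(i) \Leftrightarrow (ii)$, I would first note that any vertex of $B_G(v, \rho/2)$ has distance~$\le \rho/2$ from~$v$ (splitting a closed walk of length $\le \rho$ through it into a $v$--$u$ leg and a $u$--$v$ leg, one of which has length~$\le \rho/2$), so two vertices of the same $(\rho/2)$-ball are within distance~$\rho$. Assuming~(i), no two distinct vertices of $B_{G_r}(\hat v, \rho/2)$ can then share a projection, and combining this with the local edge-bijection of the covering gives injectivity of $p_r$ on the ball. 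For surjectivity onto $B_G(v, \rho/2)$, I would lift every closed walk~$W$ of length~$\le \rho$ at~$v$: its unique lift $\hat W$ at~$\hat v$ has length~$\le \rho$ and ends at a lift of~$v$, which by~(i) must equal~$\hat v$, so $\hat W$ is closed and its trace sits in $B_{G_r}(\hat v, \rho/2)$. Conversely, assuming~(ii), for distinct lifts $\hat u, \hat v$ of a common vertex with $\text{dist}_{G_r}(\hat u, \hat v) \le \rho$, the projection of a shortest $\hat u$--$\hat v$ path is a closed walk at~$u := p_r(\hat u)$ of length~$\le \rho$ lying in $B_G(u, \rho/2)$; the inverse of the ball-isomorphism from~(ii) produces a closed lift at~$\hat u$, which by uniqueness of walk-lifts in coverings must coincide with the original path, forcing $\hat u = \hat v$.

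For $(ii) \Rightarrow (iii)$ it suffices to check that each generator $W_0 Q W_0^{-}$ of $\pi_1^{\rho}(G, x_0)$ -- with $Q$ a walk once around a cycle of length~$\le \rho$ at some vertex~$v$ -- lies in $\pi_1^r(G, x_0)$. I would lift $W_0$ to some $\hat W_0$ from $\hat x_0$ to a lift~$\hat v$ of~$v$; as $Q$ lies in $B_G(v, \rho/2)$, assumption~(ii) lifts $Q$ to a closed walk $\hat Q$ at $\hat v$, so $\hat W_0 \hat Q \hat W_0^{-}$ is closed at~$\hat x_0$ and projects to $W_0 Q W_0^{-}$, witnessing membership in $p_r(\pi_1(G_r, \hat x_0)) = \pi_1^r(G, x_0)$. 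For $(iii) \Rightarrow (iv)$, the reverse inclusion $\pi_1^r(G, x_0) \subseteq \pi_1^{\rho}(G, x_0)$ is immediate from $r \le \rho$, so~(iii) upgrades to equality, and the standard covering-subgroup correspondence yields an isomorphism of coverings $p_r \cong p_\rho$. For $(iv) \Rightarrow (ii)$, the defining property of~$p_\rho$ as the $\rho$-local covering (preservation of $(\rho/2)$-balls) transports along the isomorphism to~$p_r$. The ``in particular'' statement then follows from \cref{mindistGeR}: since~(i) is equivalent to $\rho < \mindist_r(G)$, either $\mindist_r(G) = \infty$ and (i)--(iv) hold for every $\rho \ge r$, or $\mindist_r(G) - 1$ is the largest such~$\rho$, and this is $\ge r$ by \cref{mindistGeR}.

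The step I expect to be the main obstacle is the surjectivity half of $(i) \Rightarrow (ii)$: closing up a lifted closed walk of length~$\le \rho$ is exactly the point where the displacement bound is doing genuine work, and one must argue carefully that the trace of the lifted walk really sits in $B_{G_r}(\hat v, \rho/2)$ (and not merely in a distance-$\rho$ neighbourhood), using the combinatorial definition of balls via closed walks rather than distances.
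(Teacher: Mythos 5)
Your proposal is correct and runs along essentially the same spine as the paper: the chain $(ii) \Rightarrow (iii) \Rightarrow (iv)$ is identical (lift loops, note that both local subgroups coincide, invoke the covering--subgroup correspondence), and the ``in particular'' clause is handled the same way via \cref{mindistGeR}.

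The two small deviations are worth noting. First, for $(i) \leftrightarrow (ii)$ the paper simply cites \cite[Lemma~4.2]{canonicalGraphDec}, whereas you reconstruct the argument from scratch; your reconstruction is sound, including the subtle point you flag at the end, namely that a lifted closed walk of length $\le \rho$ closes up \emph{and} therefore its trace lands in $B_{G_r}(\hat v,\rho/2)$ by the closed-walk definition of balls. Second, you close the cycle via $(iv) \Rightarrow (ii)$ by transporting ball-preservation from $p_\rho$, whereas the paper goes $(iv) \Rightarrow (i)$ by transporting the displacement bound. These routes are equivalent, but your phrase ``the defining property of $p_\rho$'' is slightly loose: in this paper's formalism $p_\rho$ is \emph{defined} via the characteristic subgroup, and its $(\rho/2)$-ball-preservation is a consequence---precisely $\mindist_\rho(G) > \rho$ (\cref{mindistGeR}) combined with the equivalence $(i) \leftrightarrow (ii)$ applied with $\rho$ in both roles. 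Making that dependence explicit would tighten the final step, but the logic is not circular and the proof is correct as a whole.
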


\begin{proof}
    \cref{Ginvariant:rlocalhasdistance>rho}$\leftrightarrow$\cref{Ginvariant:rlocalisrhohpres} is \cite[Lemma~4.2]{canonicalGraphDec}.

    \cref{Ginvariant:rlocalisrhohpres}$\to$\cref{Ginvariant:rgeneratesrho}.
    Since the covering $p_r$ preserves $(\rho/2)$-balls, the cycles in $G$ of length $\leq \rho$ lift to cycles of~$G_r$. Walks at~$x_0$ that stem from such cycles, those that generate $\pi_1^\rho(G, x_0)$, therefore lie in the characteristic subgroup of~$p_r$.
    By definition of~$p_r$, this is the group~$\pi^r_1(G, x_0)$.

    \cref{Ginvariant:rgeneratesrho}$\to$\cref{Ginvariant:rlocalisrholocal}.
    By definition, $\pi^r_1(G, x_0) \subseteq \pi_1^\rho(G, x_0)$ for $r \leq \rho$, so \cref{Ginvariant:rgeneratesrho} implies that these groups coincide.
    This implies \cref{Ginvariant:rlocalisrholocal} by~\cite[Proposition~1.37]{hatcher}. 

    \cref{Ginvariant:rlocalisrholocal}$\to$\cref{Ginvariant:rlocalhasdistance>rho}. By \cref{mindistGeR}, every two lifts of the same vertex of $G$ to $G_\rho$ have distance greater than $\rho$ in~$G_\rho$.
    Since $p_r$ and $p_\rho$ are isomorphic coverings, the same is true for lifts of the same vertex of~$G$ to $G_r$, that is, $\mindist_r(G) > \rho$.
\end{proof}

\begin{lemma}\label{cor:combinatoriallowerbound}
    Let $G$ be a connected graph and $r \geq 0$ an integer. Then $\mindist_r(G)$ is at least the minimum of the lengths of induced cycles in $G$ longer than $r$.
\end{lemma}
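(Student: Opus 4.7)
The plan is to apply \cref{lem:midistKeyLemma} in its equivalence \cref{Ginvariant:rlocalhasdistance>rho}$\leftrightarrow$\cref{Ginvariant:rgeneratesrho}. Write $\ell$ for the minimum length of an induced cycle of $G$ of length $> r$, with $\ell := \infty$ if no such cycle exists. Assuming $\ell < \infty$ (the case $\ell = \infty$ proceeds analogously but yields $G = G_r$), the goal $\mindist_r(G) \geq \ell$ is equivalent to $\mindist_r(G) > \ell - 1$, which by \cref{lem:midistKeyLemma} is in turn equivalent to $\pi_1^r(G,x_0) \supseteq \pi_1^{\ell-1}(G,x_0)$. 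Since $\pi_1^{\ell-1}(G,x_0)$ is generated by closed walks at $x_0$ stemming from cycles of length~$\leq \ell - 1$, it suffices to show that every such closed walk lies in~$\pi_1^r(G,x_0)$.

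I would show this by induction on the length of the underlying cycle~$O$. If $|O| \leq r$, closed walks stemming from $O$ lie in $\pi_1^r(G,x_0)$ by definition. For the inductive step, suppose $r < |O| \leq \ell - 1$. Since $|O| < \ell$, the cycle $O$ is not induced in~$G$, so it admits a chord $e = uv$ joining two non-consecutive vertices of~$O$. This chord splits $O$ into two cycles $O_1, O_2$, both strictly shorter than~$|O|$ and hence of length~$\leq \ell - 2$.

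The key homotopy manipulation is then: writing the two $u$--$v$ paths in $O$ as $P$ and~$P'$, a walk once around $O$ based at~$u$ has the form $P P'^-$. Inserting the trivial subwalk $ee^-$ at~$v$ yields the homotopic walk $(Pe)(e^- P'^-) = (Pe)(P'e)^-$, where $Pe$ and $P'e$ are walks once around $O_1$ and~$O_2$ at~$u$, respectively. Conjugating by a walk $W_0$ from $x_0$ to~$u$, any closed walk $W_0(PP'^-)W_0^-$ at $x_0$ stemming from $O$ is homotopic to the concatenation of $W_0(Pe)W_0^-$ and $(W_0(P'e)W_0^-)^-$, both of which stem from $O_1$, respectively~$O_2$. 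By the inductive hypothesis, each of these lies in $\pi_1^r(G,x_0)$, hence so does the original walk.

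The main obstacle is the homotopy identity $[PP'^-] = [Pe][P'e]^{-1}$ in $\pi_1(G,u)$, but as sketched above, it follows at once from inserting~$ee^-$ and regrouping. The $\ell = \infty$ case is handled by applying the same induction to closed walks stemming from the fundamental cycles of an arbitrary spanning tree of~$G$ (which generate $\pi_1(G,x_0)$): each such cycle is not induced and the chord-splitting argument terminates at cycles of length~$\leq r$, whence $\pi_1^r(G,x_0) = \pi_1(G,x_0)$ and $G_r = G$.
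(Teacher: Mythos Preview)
Your proof is correct and follows essentially the same approach as the paper: reduce via \cref{lem:midistKeyLemma}~\cref{Ginvariant:rgeneratesrho} to showing that closed walks stemming from cycles of length $<\ell$ lie in $\pi_1^r(G,x_0)$, then induct on cycle length using a chord to split a non-induced cycle into two shorter ones. The only cosmetic differences are that the paper keeps the original base point of the walk around $O$ rather than rebasing at a chord endpoint, and treats the case $\ell=\infty$ uniformly rather than via a separate spanning-tree argument.
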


\begin{proof}
    Let $\ell$ be the minimum of all lengths of induced cycles in $G$ longer than~$r$ (letting $\ell:=\infty$ if no such cycle exists), and fix a vertex $x_0$ of~$G$.
    By the implication \cref{Ginvariant:rgeneratesrho} $\to$\cref{Ginvariant:rlocalhasdistance>rho} in~\cref{lem:midistKeyLemma}, it suffices to show that every closed walk $W$ in $G$ based at $x_0$ that stems from a cycle $O$ of length at most $\rho := \ell-1$ is in $\pi_1^r(G,x_0)$.
    We proceed by induction on $|O|$.
    If $O$ has length $\leq r$, the walk $W$ is in $\pi_1^r(G,x_0)$ by definition.
    Thus, we may assume that $|O| > r$.
    Since $W$ stems from $O$, we may write $W = W_0 Q W_0^-$ where $Q = v_0e_1v_1\dots v_n$ is a closed walk once around $O$.
    The definition of $\ell$ implies that $O$ is not induced, i.e.\ there exists a chord $e= v_iv_j$ in $G$ with $i<j$.
    Now $W$ is homotopic to the concatenation of two closed walks $R = W_0 Q_R W_0^-$ and $S = S_0 Q_S  S_0^-$ stemming from shorter cycles where $Q_R := v_0 Q v_i e v_j Q v_n$, $S_0 := W_0 v_n Q^- v_j$ and $Q_S := v_j e v_i Q v_j$.
    By the induction assumption, we have $R,S \in \pi_1^r(G,x_0)$, and thus $W \in \pi_1^r(G,x_0)$.
\end{proof}

\begin{lemma}\label{pathFlipping}
    Assume $r \geq 2$.
    Given a connected graph $G$, let $X\se V(G)$ and let $W\in\cW_r(X)$ use exactly $\ell$ vertices of~$X$.
    Then the endvertices of $W$ have distance~$\le (\ell-1)r/2$ in~$G$.
\end{lemma}
\begin{proof}
    Since $W$ is contained in $\cW_r(X)$, we can write $W$ as a concatenation of $r$-local $X$-paths $P_1,\ldots,P_n$.
    Let $i \in [n]$.
    If $P_i$ has a single edge, set $P_i' := P_i$.
    Otherwise, $P_i$ is contained in a short cycle $O_i$, and we let $P'_i$ be a shortest path in $O_i$ that joins the ends of~$P_i$.
    In both cases, $P'_i$ has length~$\le r/2$ (using the assumption that $r\ge 2$).
    By possible choosing $W \in \cW_r$ such that it connects the same endvertices but $n$ is minimum, we may assume that $W$ repeats no vertices in~$X$ by definition of $\cW_r(X)$, so we have $n\le \ell-1$.
    Hence, the concatenation of all~$P'_i$ is a walk that links the same vertices as $W$ does, and has length~$\le (\ell-1)r/2$.
\end{proof}

Due to the assumption $r \geq 2$ in \cref{pathFlipping}, in statements we will often assume
\begin{equation}\tag{$*$}\label{ASSUMPTION}
    \textit{$r \geq 2$ or $k = 1$}
\end{equation}
by adding \textsuperscript{\ref{ASSUMPTION}} to the respective statement.
We also keep \textsuperscript{\ref{ASSUMPTION}} at each reference to a statement assuming~it.

\begin{lemma}\label{lem:Tight2LocalSepAreSize1}
    Let $G$ be connected and $r\le 2$. Then every tight $r$-local separator of~$G$ has size~$1$.
\end{lemma}

\begin{proof}
    Since $G$ is simple, $G$ contains no cycles of length $\leq 2$. 
    But a tight $r$-local separator of size $\geq 2$ would witness the existence of such a cycle. 
    Thus, tight $r$-local separators have size $1$.
\end{proof}

An $r$-tomic set $X$ is \defn{$r$-locally $1$-covered}
if some (equivalently: every) $r$-local atom $\hat X$ of $p_r\inv (X)$ is a lift of~$X$
(so $p_r$ restricts to a bijection $\hat X\to X$).
Recall from \cref{subsec:interplayWithCov} that the deck transformations $\gamma$ of $p_r$ act on the set of $r$-toms of $p_r\inv(X)$ by $Y \mapsto \gamma(Y)$.
Thus, an $r$-tomic set is $r$-locally $1$-covered if and only if we replace `some' by `every' in the definition of $r$-locally $1$-covered.

\begin{observation}\label{1SheetedFixedPointFree}
    If $X\se V(G)$ is $r$-locally $1$-covered, then the action of the group $\Gamma_r(G)$ of deck transformations of $p_r$ on the $r$-toms of $p_r\inv(X)$ is transitive and fixed-point free, 
    i.e.\ the identity $\id_{G_r}$ is the only deck transformation that maps any $r$-tom on itself.
\end{observation}

\begin{lemmaASS}\label{RtomicGives1sheeted}
    Assume that $kr/2<\mindist_r(G)$ for a connected graph $G$ and $k \in \N$.
    Then every $r$-tomic vertex set $X$ of size at most $k$ is $r$-locally $1$-covered.
    Moreover, the $r$-toms of $p_r\inv(X)$ are precisely the $r$-locally closed lifts of $X$.
\end{lemmaASS}
\begin{proof}
    Let $\hat X$ be an arbitrary $r$-tom of $p_r\inv (X)$.
    As $X$ is $r$-tomic, $p_r$ maps $\hat X$ onto $X$ by \cref{ProjectionLocalAtom}.
    Thus, it remains to show that $p_r$ is injective on~$\hat X$.
    So suppose for a contradiction that there are two distinct vertices $\hat x_1,\hat x_2\in\hat X$ with $p_r(\hat x_1)=p_r(\hat x_2)$.
    Since $\hat X$ is $r$-tomic, there is an $\hat x_1$--$\hat x_2$ walk $\hat W\in\cW_r(\hat X)$.
    We choose the pair of vertices $\hat x_i$ and $\hat W$ so that $\hat W$ has minimum length.
    Then $p_r$ is injective on $(V(\hat W)\cap\hat X) \setminus \{\hat x_2\}$.
    Hence, $|V(\hat W)\cap\hat X|=:\ell\le k+1$ because $X$ has size~$\leq k$.
    If $r \geq 2$, then the $\hat x_i$ have distance $\le (\ell-1)r/2\le kr/2$ by \cref{pathFlipping}.
    This contradicts our assumption that $kr/2<\mindist_r(G)$.
    Otherwise our assumption of \cref{ASSUMPTION} implies $k=1$ and $r \le 1$.
    Then the $\hat x_i$ have distance $1$ but are contained in the same fibre of $p_r$. This contradicts that $G$ has no loops.

    Moreover, since $X$ is $r$-locally $1$-covered, its $r$-toms are $r$-locally closed lifts of $X$. 
    Conversely, every $r$-locally closed lift $\hat X$ of $X$ contains an $r$-tom $\hat Y$ of $p_r\inv(X)$.
    As $X$ is $r$-tomic, $\hat Y$ meets $p_r\inv(x)$ for every $x \in X$ by \cref{ProjectionLocalAtom}, and huts $\hat X = \hat Y$.
\end{proof}

\begin{example}
    \cref{RtomicGives1sheeted} fails if we do not assume \cref{ASSUMPTION}: Consider as $G$ a cycle of length $k$, and the $1$-tomic set $X = V(G)$, then $p_1\inv(X) = V(G_1)$ is $1$-tomic but has infinite size, as $G_1$ is a double~ray.
\end{example}

\begin{projlemmaASS}\label{sufficientForProjection}
    Assume that $G$ is connected.
    Let $\hat s$ be an $r$-local separation of $G_r$ with $r$-tomic separator $\hat X$ of size $\le k$ such that $kr/2<\mindist_r(G)$.
    Then $X:=p_r(\hat X)$ is $r$-tomic and $r$-locally $1$-covered, and $\hat X$ is an $r$-tom of $p_r\inv(X)$. 
    In particular, $\hat X$ is an $r$-locally closed lift of $X$, and so the projection $s:=p_r(\hat s)$ is defined.
\end{projlemmaASS}
\begin{proof}
    The projection $X=p_r(\hat X)$ has size~$\le k$ and is $r$-tomic by \cref{ProjectionRtomic}.
    Hence, $X$ is $r$-locally $1$-covered by \cref{RtomicGives1sheeted}. 
    Since $\hat X$ is $r$-tomic, it is contained in some $r$-tom $\hat Y$ of $p_r\inv(X)$.
    Since $X$ is $r$-locally $1$-covered, $p_r$ restricts to a bijection $\hat Y \to X$.
    As $p_r$ restricted to $\hat X \subseteq \hat Y$ is already onto $X$, we have $\hat X = \hat Y$, so $\hat X$ is an $r$-tom of~$p_r\inv(X)$.
\end{proof}

\begin{liftlemmaASS}\label{sufficientForLifting}
    Assume that $G$ is connected.
    Let $s$ be an $r$-local separation of $G$ with $r$-tomic separator $X$ of size $\le k$ such that $kr/2<\mindist_r(G)$.
    Then $X$ is $r$-locally $1$-covered and $s$ can be lifted.
    In particular, the lifts of $s$ to $G_r$ form a $\Gamma_r$-orbit of the set of $r$-local separations of~$G_r$.
\end{liftlemmaASS}
\begin{proof}
    The separator $X$ is $r$-locally $1$-covered by \cref{RtomicGives1sheeted}.
    Hence, $X$ lifts to an $r$-tom $\hat X$ of $p_r\inv(X)$, which thus is $r$-locally closed. 
    By \cref{keylemma:liftofseparations}, $s$ lifts to an $r$-local separation of $G_r$ with separator $\hat X$.
    The `in particular'-part follows with \cref{1SheetedFixedPointFree}.
\end{proof}

\subsection{A class of examples with high displacement}

Recall that the \defn{girth $g(G)$} of a graph $G$ is the minimal length of a cycle in~$G$ (and $g(G):=\infty$ if $G$ is a forest).
Let $(H,\cG)$ be a \gd\ (with parts~$G_h$) of~$G$.
We associate with every edge $e = h_0h_1$ of $H$ its \defn{adhesion graph} $G_e := G_{h_0} \cap G_{h_1}$.

Assume for now that the adhesion graphs of $(H,\cG)$ are pairwise disjoint and non-empty.
For every edge $e = hh'$ of~$H$ we define $\defnMath{s^e := \{E^e_h,X^e,E^e_{h'}\}}$, where $X^e := V(G_e)$, $E^e_h := \partial_{G_h} X^e$ and $E^e_{h'} := \partial_{G_{h'}} X^e$.
The axioms for \gd s are easily seen to imply that, since our adhesion graphs are disjoint, the edge sets $E^e_h$ and $E^e_{h'}$ form a near-partition of $\partial_G X^e$.

In this context, a \defn{traversal} of $s^e$ is a walk in $G$ whose first edge is in $E^e_h$, whose last edge is in~$E^e_{h'}$, and all whose internal vertices are in~$X^e$.
A closed walk $W = v_1e_1v_2 \dots v_ne_nv_1$ in $G$ \defn{traverses} $s^e$ with $e \in E(H)$ if (a cyclic shift of) $W$ contains a traversal of~$s^e$.
For a closed walk $W$ in $G$ we let $\defnMath{H^W}$ denote the subgraph of $H$ formed by all the edges $e\in E(H)$ (and their endvertices) for which $W$ traverses~$s^e$. 

\begin{example}\label{example:locallychordal}
    Assume that $G$ is a connected graph. Let $(H, \cG)$ be a \gd\ (with parts~$G_h$) of $G$ such that
    \begin{enumerate}
        \item every part $G_h$ is complete, and
        \item the adhesion graphs $G_e$ are disjoint.
    \end{enumerate}
    Then all induced cycles of $G$ of length~$>3$ have length $\geq g(H)$.
    By \cref{cor:combinatoriallowerbound}, therefore, $\mindist_r(G) \geq g(H)$ for every $r \geq 3$.
\end{example}

\begin{proof}
    Let $O$ be an induced cycle in~$G$.
    In what follows, we interpret $O$ also as a closed walk once around~$O$.
    If $H^O$ is empty then, by the definition of \gd s, $O$ is contained in a part~$G_h$; thus, $O$ is a triangle, because the parts are complete by~(i).
    Assume now that $H^O$ is not empty.
    Note that, for every traversal $P$ of $s^e$ for an edge $e\in E(H)$, the two endvertices of $P$ send edges of $G$ to all internal vertices of~$P$.
    Since $O$ is induced and the parts of $(H, \cG)$ are complete, the intersection $O \cap G_e$ thus consists of a single vertex, and for every node $h$ of~$H^O$ the intersection $O \cap G_h$ consists of a single edge joining different adhesion graphs.
    Thus, $H^O$ is a cycle $h_1 \dots h_nh_1$.
    The unique edges of $O$ in $G_{h_i}$ are distinct for different $i$, by~(ii).
    Hence, $O$ has at least~$n\geq g(H)$ edges.
\end{proof}

Let $(H,\cG)$ be a \gd\ (with parts~$G_h$) of~$G$, and assume that its adhesion graphs are pairwise disjoint.
We say that a closed walk $W$ traverses $s^e$ \defn{$k$ times} (where $e\in E(H)$) if the number of cyclic shifts of $W$ which contain a traversal of~$s^e$ as an initial segment equals $k$.

For every node $h$ of~$H$, let $\mathring G_h$ denote the graph obtained from $G_h$ by deleting all edges that are contained adhesion graphs.
Let $W = v_1e_1v_2 \dots v_ne_nv_1$ be a closed walk in~$G$.
If $v_i e_i \ldots e_k v_{k+1}$ is a subwalk of (a cyclic shift of) $W$ that is a traversal of $s^f$ for some $f\in E(H)$, then we refer to the indices $j$ with $i<j<k$ as \defn{bad indices} of the walk~$W$.
All indices of $W$ that are not bad are \defn{good}.
We denote by $\defnMath{f(W)}$ the number of all good indices of~$W$.
We then let $\defnMath{\ell (H, \cG)}$ be the minimum~$f(W)$ among all closed walks~$W$ in~$G$ which traverse at least one~$s^e$ and which traverse~$s^e$ at most once for each edge~$e \in H$.
If no such walk exists, we let $\ell(H,\cG):=\infty$.
Further, we write $\defnMath{d(H,\cG)}$ for the minimum distance $d_{G_h}(G_e,G_f)$ over all nodes $h$ of $H$ and distinct edges $e,f$ of $H$ at~$h$.
If no such distinct edges $e,f$ exist, we let $d(H,\cG):=\infty$.

\begin{corollary}\label{cor:HW}
    Let $(H,\cG)$ be a \gd\ (with parts~$G_h$) of a finite connected graph $G$ with pairwise disjoint adhesion graphs.
    Then $\ell (H,\cG) \geq g(H) \cdot d(H,\cG)$.
\end{corollary}
\begin{proof}
    If $\ell(H,\cG)=\infty$, we are done; therefore, we can assume that there exists a closed walk $W$ in~$G$ that witnesses $\ell(H,\cG)<\infty$.
    Since $W$ traverses at least one~$s^e$, the graph $H^W$ is non-empty.
    Let $h$ be an arbitrary node of~$H^W$. 
    By the definition of~$H^W$, the closed walk $W$ traverses $s^e$ for some $e \in E(H)$ incident to~$h$.
    But as the adhesion graphs of $(H,\cG)$ are disjoint and $W$ traverses $s^e$ no more than once, $W$ leaves $G_h$ by traversing another $s^f$ with $f \in E(H)$ incident to $h$.
    So $h$ has degree $\ge 2$ in~$H^W$.
    As $G$ is finite and the adhesion graphs are disjoint, $H$ and $H^W$ are also finite.
    Hence, $H^W$ contains a cycle~$O$.

    Let $h$ be a node of~$O$. Let $e,f$ be the two edges on~$O$ that are incident to~$h$.
    The walk $W$ contains a subwalk $W_h$ in $G_h$ between two distinct adhesion graphs $G_e,G_f$ of~$(H,\cG)$, as $W$ traverses $s^e, s^f$ at most once.
    We can and do choose $W_h$ so that its internal vertices are not contained in the adhesion graphs~$G_e,G_f$.
    Then $W_h$ has length at least~$d(H,\cG)$.
    Since $h$ has degree~2 in~$O$, all indices of~$W$ that appear at the edges of~$W_h$ are good indices.
    As we find $W_h$ with these properties for each node $h$ of~$O$, we find that $f(W)\ge g(H)\cdot d(H,\cG)$.
\end{proof}

\begin{example} \label{ex:BlocksCoincideGraphClass}
    Let $(H, \cG)$ be a \gd\ (with parts~$G_h$) of a finite connected graph $G$ and an integer $r \geq 0$
    such that
    \begin{enumerate}
        \item \label{item:partsconnected} every part $G_h$ is connected and $\pi_1(G_h) = \pi_1^r(G_h)$,
        \item \label{item:disjointadhesion} its adhesion graphs $G_e$ are disjoint,
        \item \label{item:connectedadhesion} every adhesion graph $G_{e}$ is connected.
    \end{enumerate}
    Then $\mindist_r(G) \geq \ell (H,\cG) \geq g(H) \cdot d(H,\cG) \geq g(H)$.
\end{example}

\begin{proof}
    The inequality $\ell(H,\cG) \geq g(H) \cdot d(H,\cG)$ was proved in \cref{cor:HW}.
    The disjointness of the adhesion graphs given by~\cref{item:disjointadhesion} ensures that
    $d(H,\cG) \geq 1$, and thus $g(H) \cdot d(H,\cG) \geq g(H)$.
    So it remains to show $\mindist_r(G)\ge\ell(H,\cG)$.

    We say that a closed walk $W$ in~$G$ is \emph{generated by short cycles} if, for some (equivalently: every) vertex $x_0$ of $G$ and for some (equivalently: every) closed walk $W'$ based at $x_0$ which stems from $W$, we have $W' \in \pi_1^r(G,x_0)$.

    To show $\mindist_r(G)\ge\ell(H,\cG)$, it suffices by \cref{lem:midistKeyLemma}~(iii)$\to$(i) to show that every closed walk $W$ in~$G$ of length $< \ell(H, \cG)$ is generated by short cycles.    
    Since $f(W)$ is no larger than the length of~$W$, we may prove the stronger statement that
    \begin{equation}\phantomsection\label{eq:NiceExample}
        \textit{every closed walk $W$ in $G$ with $f(W) < \ell(H,\cG)$ is generated by short cycles.}
    \end{equation}
    We prove \cref{eq:NiceExample} by induction on the number of subwalks of (cyclic shifts of)~$W$ that are traversals of some~$s^e$ (for $e\in E(H)$).
    
    If $W$ traverses no $s^e$ (which is the case if $f(W) = 0$), then a single part $G_h$ contains~$W$, and thus the short cycles in~$G_h$ generate $W$ by~\cref{item:partsconnected}.
    If $W$ traverses at least one~$s^e$ and $W$ traverses $s^e$ at most once for every edge $e$ of~$H$, then~$W$ is itself a candidate in the definition of~$\ell(H, \cG)$ and hence~$f(W) \ge \ell(H, \cG)$, which contradicts that $W$ has length $<\ell (H,\cG)$ by assumption.

    Now assume that $W=: v_1e_1v_2 \dots v_ne_nv_1$ traverses some $s^e$ of $(H,\cG)$ at least twice.
    Let $P_i$ and $P_j$ be two such traversals where $P_i$ meets $G_e$ in a vertex $v_i$ and $P_j$ meets $G_e$ in a vertex $v_j$ with $i<j$.
    We may pick a $v_i$--$v_j$ path $P$ in $G_e$ by~\cref{item:connectedadhesion}.
    Let $W_1 := v_1Wv_iPv_jWv_1$ and $W_2 := v_j P^- v_iWv_j$.
    As $W$ is homotopic to $W_1 (v_1W^-v_j) W_2 (v_j W v_1)$, it suffices to show that $W_1$ and $W_2$ are generated by short cycles.
    Now both~$W_1$ and~$W_2$ have less traversals than~$W$: By construction, each traversal of some~$s^f$ of~$(H, \cG)$ in~$W_1$ or~$W_2$ is also a traversal in~$W$, except possibly for the new ones using~$P$ or~$P^-$, of which we may have created at most one in~$W_1$ and in $W_2$ (but these need not be traversals at all).
    While~$W$ has the two traversals~$P_i$ and~$P_j$ of~$s^e$, $W_1$ and~$W_2$ contain neither of these two and each possibly a single new one.
    Thus, $W_1$ and $W_2$ each have at least one traversal less than~$W$, and we can hence apply the induction hypothesis to both, completing the proof.
\end{proof}

\subsection{Local separations whose separator is a clique}

Recall from the example of \cref{fig:rlocally1sheetednecessary} that an $r$-tomic separator $\hat X$ of $G_r$ can be much larger than the $r$-local separator $X$ of $G$ to which it projects. This cannot happen, however, if $\hat X$ induces a clique -- an important case applied throughout~\cite{locallychordal,localGlobalChordal,CanTDChordalGraphs}:

\begin{liftlemma}\label{lift:cliqueseparations}
    Let $G$ be a connected graph and $r \geq 3$ an integer.
    The $r$-local separations of $G_r$ whose separator is a clique are precisely the lifts of the $r$-local separations of $G$ whose separator is a clique.
\end{liftlemma}

For the proof of \cref{lift:cliqueseparations}, we need a lemma:

\begin{lemma}\label{lem:CliquesAndLocalCover}
    Let $G$ be a connected graph and $r \geq 3$ an integer.
    Then:
    \begin{enumerate}
        \item \label{itm:CliqueProjInjective} For every clique $\hat X\se V(G_r)$, $p_r$ is injective on $\hat X$, and its projection $p_r(\hat X)$ is a clique of~$G$.
        \item \label{itm:CliquesLift} For every clique $X\se V(G)$ and every lift~$\hat x$ of some vertex~$x \in X$ to~$G_r$, there exists a unique clique $\hat X$ of $G_r$ containing~$\hat x$ such that $p_r$ restricts to a bijection from $\hat X$ to $X$.
        \item \label{itm:CliquesAreRToms} $(\hat X \cup N(\hat X)) \cap (\gamma(\hat X) \cup N(\gamma(\hat X))) = \emptyset$ for every clique $\hat X\se V(G_r)$ and every $\gamma \in \Gamma(p_r) \setminus \{\id_{G_r}\}$. 
    \end{enumerate}
    Moreover, for every clique $X\se V(G)$, the $r$-toms of $p_r^{-1}(X)$ in~$G_r$ are precisely the cliques $\hat X\se V(G_r)$ for which $p_r$ restricts to a bijection from $\hat X$ to $X$.
\end{lemma}

\begin{proof}[Proof of \cref{lem:CliquesAndLocalCover}]
    \cref{itm:CliqueProjInjective} holds by the definition of covering.
    \cref{itm:CliquesLift} follows from~$r \ge 3$ and the uniqueness of path-lifting in coverings.

    For~\cref{itm:CliquesAreRToms}, suppose for a contradiction that there are $\gamma \in \Gamma(p_r) \setminus \{\id_{G_r}\}$ and a clique $\hat X\se V(G_r)$ such that $\hat X \cup N(\hat X)$ and $\gamma(\hat X) \cup N(\gamma(\hat X))$ share a vertex~$\hat y$.
    Then there are a vertex $\hat x_0 \in \hat X$ and a vertex $\hat x_1 \in \gamma(\hat X)$ both with distance $\leq 1$ to $\hat y$ in~$G_r$.
    As $\hat x_0' \coloneqq \gamma (\hat x_0)$ has distance $\leq 1$ from $\hat x_1 \in \gamma(\hat X)$, the two vertices $\hat x_0$ and $\hat x_0'$ in the same fibre of $p_r$ thus have distance $\leq 3 \leq r$.
    This contradicts \cref{mindistGeR}.

    For the `moreover'-part, note first that any lift $\hat X$ of $X$ to $G_r$ that is a clique must be contained in some $r$-tom of~$p_r^{-1}(X)$.
    For the converse inclusion, consider any $r$-tom $\hat Y$ of~$p_r^{-1}(X)$.
    By \cref{itm:CliquesLift}, $\hat Y$ contains a lift~$\hat X$ of~$X$ that is a clique.
    Since~$\Gamma(p_r)$ acts transitively on the fibres of~$p_r^{-1}$, \cref{itm:CliquesAreRToms}~implies that~$\hat X$ cannot be joined by an edge to any vertex in~$p_r^{-1}(X) \sm \hat X$.
    Now if $\hat Y \sm \hat X$ is non-empty, then there is a short cycle in~$G_r$ meeting both~$\hat X$ and a vertex~$\hat y \in \hat Y \setminus \hat X$.
    Thus, $\hat y$ has distance at most $r-1$ to some vertex in~$\hat X$, and since $\hat X$ is a clique, $\hat y$ has distance at most $r$ to every vertex in~$\hat X$.
    But $\hat X$ is a lift of~$X$, and so $\hat X$ contains a vertex $\hat y'$ that lies in the same fibre as~$\hat y$.
    This contradicts~\cref{mindistGeR}.
\end{proof}

\begin{proof}[Proof of \cref{lift:cliqueseparations}]
    By \cref{lem:CliquesAndLocalCover}~\cref{itm:CliqueProjInjective,itm:CliquesLift}, $p_r$ induces a surjection $\hat X \mapsto p_r(\hat X)$ from the cliques of $G_r$ to the cliques of $G$ which is injective up to deck transformation.
    By the `moreover'-part of~\cref{lem:CliquesAndLocalCover}, the lifts $\hat X$ of $X$ to $G_r$ that are cliques are precisely the $r$-toms of $p_r\inv(X)$ in $G_r$; in particular, these lifts $\hat X$ are precisely the $r$-locally closed lifts of $X$. 
    Thus, the statement follows immediately from \cref{ProjectionLocalSeparation} and \cref{keylemma:liftofseparations}.
\end{proof}

\section{Links and crossing}\label{sec:LinksCrossing}

As the next step regarding $r$-local separations, we develop notions of \emph{crossing} and \emph{nestedness} for $r$-local separations.
We show that they agree with the respective notions for global separations on $r$-local coverings (\cref{corresLinks,corresCross}).
We then prove, assuming $k < K(G,r) := \frac{\mindist_r(G)}{r} + 1$, that two $\lek$-separations $s$ and $t$ of $G$ are nested if and only if all their lifts are nested (\cref{uniqueRcoupleLift} and \cref{LiftingLinks}).
Moreover, if $s$ and $t$ cross, then every lift of $s$ crosses exactly one lift of $t$ (\cref{NestedRtomic}).

\subsection{From global to local, and correspondence in the local cover}

A cycle $O$ \defn{alternates} between two disjoint vertex sets $X$ and $Y$ if there are four distinct vertices $x,x',y,y'$ on $O$ such that $x,x'\in X$ and $y,y'\in Y$ and $O$ visits these four vertices in the order of $x,y,x',y'$.
We emphasise that $O$ may intersect $X\cup Y$ in more than four vertices.
This definition is inspired by~\cite{Local2sep}.

Two vertex sets $X,Y\se V(G)$ are \defn{$r$-coupled} in~$G$ if either $X$ and $Y$ share a vertex or there is a short cycle in $G$ that alternates between $X$ and~$Y$.
Further, let $X \subseteq V(G)$ and let $\{F_1,Y,F_2\}$ be any $r$-local separation of~$G$.
The \defn{$X$-link for~$F_i$ in $\{F_1,Y,F_2\}$} is the set of all vertices $x\in X\sm Y$ such that there is a walk in $\cW_r(X)$ that starts at~$x$, visits~$Y$, and its first edge in $\partial Y$ is contained in~$F_i$.

Let $\{E_1,X,E_2\}$ and $\{F_1,Y,F_2\}$ be two $r$-local separations with $r$-tomic separators.
Inspired by \cref{char:NestedViaLinks}, we say that $\{E_1,X,E_2\}$ and $\{F_1,Y,F_2\}$ \defn{cross} if
\begin{itemize}
    \item $X$ and $Y$ are $r$-coupled, and
    \item for every pair of indices $i,j\in [2]$ at least one of the following three sets is non-empty:\\
    the $X$-link for $F_j$, the $Y$-link for~$E_i$, or the edge set $E_i\cap F_j\cap\partial (X\cap Y)$.
\end{itemize}
If $\{E_1,X,E_2\}$ and $\{F_1,Y,F_2\}$ do not cross, then they are \defn{nested}.

\begin{lemma}\label{oppositeNonemptyLinks}
    If two separations of a connected graph with empty centre cross, then some two opposite links are non-empty.
\end{lemma}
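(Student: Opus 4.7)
The plan is a short contradiction argument reducing everything to the ``moreover'' part of \cref{char:NestedViaLinks}. Let $\{A_1,A_2\}$ and $\{C_1,C_2\}$ be two crossing separations with separators $X,Y$, and assume the centre $X\cap Y$ is empty. Suppose for a contradiction that no two opposite links are both non-empty: then at least one of the two $X$-links (for $C_1$ and $C_2$) is empty, and at least one of the two $Y$-links (for $A_1$ and $A_2$) is empty. Pick indices $i,j\in[2]$ so that the $Y$-link for $A_i$ and the $X$-link for $C_j$ are both empty.

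Now I would apply \cref{char:NestedViaLinks}\,\cref{item:crossviacornersedges} to this pair $(i,j)$. The three sets whose non-emptiness is asserted are the $Y$-link for $A_i$, the $X$-link for $C_j$, and $E(X\cap Y,(A_i\sm A_{3-i})\cap (C_j\sm C_{3-j}))$. The first two are empty by our choice of indices, while the third is empty because $X\cap Y=\emptyset$. By the moreover-part of \cref{char:NestedViaLinks}, this forces $(A_i,A_{3-i})\ge (C_{3-j},C_j)$, so that $\{A_1,A_2\}$ and $\{C_1,C_2\}$ are nested. This contradicts the crossing assumption, completing the proof.

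There is no real obstacle here; the only care needed is to make sure that the two index choices (one for the $X$-link side and one for the $Y$-link side) can be chosen independently to feed into \cref{char:NestedViaLinks}, and that the emptiness of the centre is exactly what kills the edge-set term in \cref{item:crossviacornersedges}. Note that connectedness of $G$ is already built into \cref{char:NestedViaLinks}, so we do not need to invoke it separately.
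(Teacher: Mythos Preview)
Your proof is correct and takes essentially the same approach as the paper. The paper's proof is a one-liner — ``Otherwise, the separator of some corner is empty, which contradicts \cref{char:NestedViaLinks}~\cref{item:crossviacornersedges}'' — but this is exactly your argument: the corner-separator for $A_i$ and $C_j$ is the union of the $Y$-link for $A_i$, the $X$-link for $C_j$, and the centre, so your choice of $i,j$ makes this separator empty, and then \cref{char:NestedViaLinks}~\cref{item:crossviacornersedges} (or equivalently its moreover-part) yields the contradiction.
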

\begin{proof}
    Otherwise, the separator of some corner is empty, which contradicts \cref{char:NestedViaLinks}~(iii).
\end{proof}

\begin{lemma}\label{AlternatingCycle}
    Let $\{A_1,A_2\}$ and $\{C_1,C_2\}$ be crossing separations of a connected graph~$G$ with $r$-tomic separators $X$ and~$Y$, respectively. 
    Then $X$ and $Y$ are $r$-coupled.
\end{lemma}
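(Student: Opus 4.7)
The plan is to reduce to the case that $X\cap Y = \emptyset$ (so that the centre of the crossing is empty, making the links on each separator disjoint from the centre), and then to exhibit the required short alternating cycle inside a single $r$-local $X$-path connecting the two $X$-links.

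First I would dispose of the case $X\cap Y\neq\emptyset$, in which $X$ and $Y$ are $r$-coupled by definition. So assume $X\cap Y = \emptyset$. Then the centre of the pair of crossing separations vanishes, so by \cref{oppositeNonemptyLinks} some two opposite links are non-empty; by symmetry between the two separations, I may assume that both $X$-links are non-empty. Since the centre is empty, the $X$-link for $C_i$ reduces to $X\cap C_i$. Pick $x_1\in X\cap C_1$ and $x_2\in X\cap C_2$.

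Now I invoke the $r$-tomicity of $X$: there is a walk $W \in \cW_r(X)$ from $x_1$ to $x_2$, written as a concatenation of $r$-local $X$-paths $P_1,\dots,P_n$. All successive endpoints of the $P_j$'s lie in $X\subseteq C_1\cup C_2$, and since $X\cap Y=\emptyset$, each such endpoint is strictly in $C_1\setminus C_2$ or strictly in $C_2\setminus C_1$. As the sequence of endpoints begins in $C_1\setminus C_2$ and ends in $C_2\setminus C_1$, some $P_j$ has its endpoints $x_1',x_2'$ on opposite strict sides of $\{C_1,C_2\}$.

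The key observation is that $P_j$ cannot consist of a single edge: such an edge would run between $C_1\setminus C_2$ and $C_2\setminus C_1$, contradicting that $\{C_1,C_2\}$ is a separation. Hence $P_j$ lies in a short cycle $O$. In $O$, the two arcs joining $x_1'\in C_1\setminus C_2$ and $x_2'\in C_2\setminus C_1$ must each meet $Y = C_1\cap C_2$ (because $\{C_1,C_2\}$ is a separation), yielding vertices $y_1,y_2\in Y$, one on each arc. The four vertices $x_1',y_1,x_2',y_2$ then appear in this cyclic order on $O$, and they are pairwise distinct since $X\cap Y = \emptyset$, $x_1'\neq x_2'$ (different sides), and $y_1\neq y_2$ (different arcs). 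So $O$ is a short cycle alternating between $X$ and $Y$, witnessing that $X$ and $Y$ are $r$-coupled.

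The only real obstacle is ruling out the edge-case in which the crucial $P_j$ is a single $X$-edge; this is where the separation axiom is used and the whole argument hinges on it. Everything else is bookkeeping with the hypothesis $X\cap Y=\emptyset$ and \cref{oppositeNonemptyLinks}.
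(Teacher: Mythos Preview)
Your proof is correct and follows essentially the same approach as the paper's: reduce to $X\cap Y=\emptyset$, use \cref{oppositeNonemptyLinks} to obtain two non-empty opposite $X$-links, use $r$-tomicity of~$X$ to find an $r$-local $X$-path (hence a short cycle or single edge) with endpoints in different $X$-links, rule out the single-edge case via the separation axiom, and observe that both arcs of the resulting short cycle must meet~$Y$. The paper's version compresses the walk argument into the single sentence ``there is a short cycle $O\subseteq G$ or an edge $e$ that meets both $X$-links'', but your more explicit use of $\cW_r(X)$ is exactly the mechanism underlying that sentence.
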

\begin{proof}
    We are done if $X$ and $Y$ intersect.
    Thus, we may assume that $X\cap Y=\emptyset$.
    Then \cref{oppositeNonemptyLinks} yields that two opposite links, say both $X$-links, are non-empty.
    Since $X$ is $r$-tomic and $X\cap Y$ is empty, there is a short cycle $O\se G$ or an edge $e$ that meets both $X$-links, one in a vertex $x$ and the other one in a vertex $x'$.
    The edge $e$ would contradict the fact that $\{C_1, C_2\}$ is a separation.
    Thus, $O$ exists.
    Then both $x$--$x'$ paths in~$O$ must meet~$Y$, and therefore $O$ alternates between $X$ and~$Y$.
\end{proof}

\begin{corlemma}[Links]\label{corresLinks}
    Let $\{A_1,A_2\}$ and $\{C_1,C_2\}$ be separations of a connected graph~$G$ with $r$-tomic separators $X$ and $Y$, respectively, such that $X$ and $Y$ are $r$-coupled.
    Let $\{E_1,X,E_2\}$ and $\{F_1,Y,F_2\}$ be the $r$-local separations induced by $\{A_1,A_2\}$ and $\{C_1,C_2\}$, respectively.
    Then:
    \begin{enumerate}
        \item\label{item:corXlink} the $X$-link for $C_i$ is equal to the $X$-link for~$F_i$, for both $i=1,2$;
        \item\label{item:corYlink} the $Y$-link for $A_i$ is equal to the $Y$-link for~$E_i$, for both $i=1,2$;
        \item\label{item:corCentre} $E\big( X\cap Y,(A_i\sm A_{3-i})\cap (C_j\sm C_{3-j})\big)= E_i\cap F_j\cap\partial (X\cap Y)$ for all $i,j\in\{1,2\}$.
    \end{enumerate}
\end{corlemma}

\begin{proof}
    Since $X$ and $Y$ are $r$-coupled, there is a vertex $v\in X\cap Y$ or there is a short cycle $O\se G$ that alternates between $X$ and~$Y$.
    To show \cref{item:corXlink} and~\cref{item:corYlink}, by symmetry it suffices to show that the $X$-link for~$C_1$ is equal to the $X$-link for~$F_1$.
    The backward inclusion is straightforward.
    To see the forward inclusion, let $x$ be a vertex in the $X$-link for~$C_1$.
    If $v \in X \cap Y$ exists, we use that $X$ is $r$-tomic to find a walk $W\in\cW_r(X)$ that starts with $x$ and visits~$v\in Y$.
    Otherwise, we use that $X$ is $r$-tomic and the existence of $O$ to find a walk $W\in\cW_r(X)$ that starts in $X$ and visits~$Y$.
    Let $y$ denote the first vertex of $W$ in~$Y$.
    Since $x$ lies in the $X$-link for~$C_1$, all vertices of $W$ that precede $y$ are contained in $C_1\sm C_2$.
    Hence, $y$ is preceded on~$W$ by an edge in~$F_1$.
    In particular, the first edge of the walk $W$ in $\partial Y$ lies in~$F_1$.
    So $x$ lies in the $X$-link for~$F_1$.
    
    \cref{item:corCentre} is immediate from the assumption that $\{A_1,A_2\}$ induces $\{E_1,X,E_2\}$ and $\{C_1,C_2\}$ induces~$\{F_1,Y,F_2\}$.
\end{proof}

\begin{corlemma}[Crossing]\label{corresCross}
    Let $s$ and $t$ be separations of a connected graph~$G$ with $r$-tomic separators.
    The separations $s$ and $t$ cross if and only if their $r$-local counterparts $s_r$ and $t_r$ cross.
\end{corlemma}
\begin{proof}
    Let $X$ and $Y$ denote the separators of $s$ and~$t$, respectively.
    If $s$ and $t$ cross, then $X$ and $Y$ are $r$-coupled by \cref{AlternatingCycle}.
    If $s_r$ and $t_r$ cross, then $X$ and $Y$ are $r$-coupled by the definition of crossing.
    In either case, we may use \cref{corresLinks} to find that $s_r$ and~$t_r$ or $s$ and~$t$ also cross.
\end{proof}

\subsection{Interplay with the covering map}

\begin{lemma}\label{randomInequality}
    Whenever $G$ is a connected graph, $(k-1)r<\mindist_r(G)$ implies $kr/2<\mindist_r(G)$, for all integers $k,r\ge 0$.
\end{lemma}
\begin{proof}
    If $k\le 2$, then $kr/2 \leq r<\mindist_r(G)$, where the last inequality holds by~\cref{mindistGeR}.
    Otherwise, $k\ge 3$ gives $(k-1)r-kr/2=(k/2-1)r > 0$, so $kr/2 < (k-1)r$ and the desired implication follows.
\end{proof}

Recall that we assume that $r \geq 2$ or $k = 1$, if we add~~\ASSUMPTION\ to a statement. 
We assume~~\ASSUMPTION\ for the rest of \cref{sec:LinksCrossing}.

The next lemma addresses lifting the notion of `$r$-coupled'.
For this, recall that for an $r$-tomic vertex set~$X$ in a connected graph~$G$, the $r$-toms~$\hat X$ of~$p_r^{-1}(X)$ are the lifts of~$X$ by~\cref{randomInequality,RtomicGives1sheeted}.

\begin{liftlemmaASS}\label{uniqueRcoupleLift}
    Let $X$ and $Y$ be $r$-coupled, $r$-tomic vertex sets of size~$\le k$ in a connected graph~$G$ such that $(k-1)r<\mindist_r(G)$.
    Let $\hat X$ be an $r$-tom of $p_r\inv (X)$.
    Let $\hat H$ denote the union of all short cycles in $G_r$ that have at least two vertices in~$\hat X$.
    Then there exists a unique $r$-tom $\hat Y$ of $p_r\inv (Y)$ such that $\hat Y$ meets $\hat X\cup V(\hat H)$.
    Any $v \in X \cap Y$ then has unique lift $\hat v$ in $\hat X \cap \hat Y$, and
    $\hat Y$ is the unique $r$-tom of $p_r\inv (Y)$ that is $r$-coupled with~$\hat X$.
    Every short cycle $O$ in $G$ that alternates between $X$ and $Y$ then has a unique lift $\hat O$ that alternates between $\hat X$ and $\hat Y$.
\end{liftlemmaASS}
\begin{proof}
    First, assume that $r \leq 1$. Then $k = 1$ by~~\ASSUMPTION , so $|X|=|Y|=1$. Since there are no cycles of length at most $1$ and as $X$ and $Y$ are $r$-coupled, we thus have $H = \emptyset$ and $X = Y$, say $X=Y=\{z\}$.
    As $G$ has no loops (at $z$), $p_r\inv(z)$ is an independent set vertex set of $G_r$, whose $r$-toms are formed by its single vertices. 
    Every $r$-tom $\hat X = \{\hat z\}$ is the unique $r$-tom $\hat Y$ of $p_r\inv(Y)$ that meets $\hat X$.
    Hence, we may assume that $r \geq 2$.
    
    \casen{Existence of $\hat Y$.}
    Let an $r$-tom $\hat X$ of $p_r\inv(X)$ be given.
    Since $X$ is $r$-locally $1$-covered, by \cref{randomInequality} and \cref{RtomicGives1sheeted}, $\hat X$ is a lift of $X$. 
    As $X$ and $Y$ are $r$-coupled, there either is a vertex $v\in X\cap Y$, or $X$ and $Y$ are disjoint and there is a short cycle $O\se G$ that alternates between $X$ and~$Y$.
    If $v \in X \cap Y$ exists, let $\hat v$ be its unique lift in~$\hat X$.
    If $O$ exists, let $\hat O$ denote any lift of $O$ that meets $\hat X$.
    As $\hat O$ is short, it meets no other $r$-tom of $p_r\inv(X)$.
    Similarly, let $\hat Y$ denote the unique $r$-tom of $p_r\inv (Y)$ that contains $\hat v$ if $v$ exists.
    if not, let $\hat Y$ be the unique $r$-tom of $p_r\inv(Y)$ that meets $\hat O$.
    As earlier, $\hat Y$ is a lift of $Y$.
    Then $\hat v\in\hat X\cap\hat Y$ or $\hat O$ is a short cycle in $G_r$ that alternates between the disjoint vertex sets $\hat X$ and~$\hat Y$.
    So $\hat X$ and $\hat Y$ are $r$-coupled, and in particular $\hat Y$ meets $\hat X\cup V(\hat H)$, as desired.
    Finally, since $\hat X$ is a lift of $X$ and also $r$-locally closed as an $r$-tom of $p_r\inv(X)$, the short cycle $\hat O$ is the unique lift of $O$ that meets $\hat X$; in parituclar, it is the unique lift of $O$ alternating between $\hat X$ and $\hat Y$.
    
    \casen{Uniqueness of $\hat Y$.}
    Assume for a contradiction that there are two distinct $r$-toms $\hat Y_1,\hat Y_2$ of $p_r\inv (Y)$ that both meet $\hat X\cup V(\hat H)$ in vertices $\hat y_1,\hat y_2$, respectively. 
    If $\hat y_i\in V(\hat H)\sm\hat X$, then let $\hat O_i$ be a short cycle in $G_r$ that contains $\hat y_i$ and intersects $\hat X$ in some two vertices $\hat x_i,\hat x'_i$.
    Further, let $\hat y'_i$ denote the unique vertex in $\hat Y_{3-i}$ with $p_r(\hat y'_i)=p_r(\hat y_i)$.

    Suppose first that both $\hat y_i$ lie in~$\hat X$.
    Since $\hat X$ and $\hat Y_2$ are $r$-tomic, there are an $\hat y_1$--$\hat y_2$ path and an $\hat y_2$--$\hat y_1'$ path in $G_r$, both of length $\le (k-1)r/2$, by \cref{pathFlipping}.
    So $\hat y_1$ and $\hat y_1'$ have distance $\le (k-1)r$, contradicting our assumption that $(k-1)r<\mindist_r(G)$.

    Suppose second that $\hat y_1$ lies in~$\hat X$ and that $\hat y_2$ lies on~$\hat O_2$.
    Since $\hat X$ is $r$-tomic, there is a $\hat y_1$--$\{\hat x_2,\hat x'_2\}$ walk $Q(\hat X)\in\cW_r(\hat X)$.
    By choosing $Q(\hat X)$ of minimum length, we may assume that $Q(\hat X)$ ends in $\hat x_2$ and avoids~$\hat x'_2$, say.
    Since $Q(\hat X)$ avoids $\hat x'_2$, there is a $\hat y_1$--$\hat x_2$ path $Q'(\hat X)$ in $G_r$ of length~$\le (k-2)r/2$ by \cref{pathFlipping}.
    Let $Q(\hat O_2)$ be a path of length~$\le r/2$ in $\hat O_2$ from $\hat x_2$ to~$\hat y_2$.
    Since $\hat Y_2$ is $r$-tomic, there is an $\hat y_2$--$\hat y'_1$ path $Q(\hat Y_2)$ in $G_r$ of length~$\le (k-1)r/2$, by \cref{pathFlipping}.
    Then the concatenation of $Q'(\hat X)$, $Q(\hat O_2)$ and $Q(\hat Y_2)$ links $\hat y_1$ to~$\hat y'_1$ and has length~$\le (k-1)r$, contradicting our assumption that $(k-1)r<\mindist_r(G)$.

    The case that $\hat y_1$ lies on $\hat O_1$ while $\hat y_2$ lies in~$\hat X$ follows from the previous case by symmetry.

    Suppose finally that both $\hat y_i$ lie on~$\hat O_i$.
    Since $\hat X$ is $r$-tomic, there is a $\{\hat x_1,\hat x'_1\}$--$\{\hat x_2,\hat x'_2\}$ walk $Q(\hat X)\in\cW_r(\hat X)$; choose this of minimum length.
    Possibly by renaming some vertices, we may assume that $Q(\hat X)$ links $\hat x_1$ to~$\hat x_2$.
    If $\{\hat x_1,\hat x'_1\}$ and $\{\hat x_2,\hat x'_2\}$ share a vertex, then $Q(\hat X)$ has length~$0$, and we let $Q'(\hat X):=Q(\hat X)$.
    Otherwise $Q(\hat X)$ avoids two vertices in~$\hat X$, so there is a $\hat x_1$--$\hat x_2$ path $Q'(\hat X)$ in $G_r$ of length~$\le (k-3)r/2$ by \cref{pathFlipping}.
    For both~$i$ let $Q(\hat O_i)$ be a $\hat y_i$--$\hat x_i$ path in $\hat O_i$ of length~$\le r/2$.
    Using that $\hat Y_2$ is $r$-tomic and \cref{pathFlipping}, let $Q(\hat Y_2)$ be a $\hat y_2$--$\hat y'_1$ path in $G_r$ of length~$\le (k-1)r/2$.
    Then the concatenation of $Q(\hat O_1)$, $Q'(\hat X)$, $Q(\hat O_2)$ and $Q(\hat Y_2)$ links $\hat y_1$ to $\hat y'_1$ and has length $\le (k-1)r$, contradicting our assumption that $(k-1)r<\mindist_r(G)$.
\end{proof}

\begin{liftlemmaASS}[Links]\label{LiftingLinks}
    Let $G$ be a connected graph.
    Let $s=\{E_1,X,E_2\}$ and $t=\{F_1,Y,F_2\}$ be $r$-local separations of~$G$ with $r$-tomic separators of size $\le k$ such that $X$ and $Y$ are $r$-coupled and $(k-1)r<\mindist_r(G)$.
    Let $\hat s=\{\hat E_1,\hat X,\hat E_2\}$ be a lift of~$s$, and let $\hat t=\{\hat F_1,\hat Y,\hat F_2\}$ be the lift of~$t$ such that $\hat X$ and $\hat Y$ are $r$-coupled, which exists and is unique by \cref{uniqueRcoupleLift}.
    Then $p_r$ restricts to bijections between
    \begin{enumerate}
        \item \label{item:LiftXlink} the $\hat X$-link for~$\hat F_i$ and the $X$-link for~$F_i$, for both $i=1,2$;
        \item \label{item:LiftYlink} the $\hat Y$-link for~$\hat E_i$ and the $Y$-link for~$E_i$, for both $i=1,2$;
        \item \label{item:LiftXcapY} $\hat X\cap\hat Y$ and $X\cap Y$;
        \item \label{item:LiftCentre} $\hat E_i\cap\hat F_j\cap\partial (\hat X\cap\hat Y)$ and $E_i\cap F_j\cap\partial (X\cap Y)$, for all $i,j\in\{1,2\}$.
    \end{enumerate}
    Moreover, $s$ and $t$ cross if and only if $\hat s$ and $\hat t$ cross.
\end{liftlemmaASS}

\noindent In the context of \cref{LiftingLinks}, we recall that $s$ and $t$ can be lifted by \cref{sufficientForLifting} and \cref{randomInequality}.

\begin{proof}
    Since $X$ and $Y$ are $r$-coupled, there either is a vertex $v\in X\cap Y$ or $X$ and $Y$ are disjoint and there is a short cycle $O\se G$ that alternates between $X$ and~$Y$.
    If $v$ exists, it has a unique lift $\hat v$ in $\hat X\cap\hat Y$ by \cref{uniqueRcoupleLift}.
    If $O$ exists, it has a unique lift $\hat O$ that alternates between $\hat X$ and~$\hat Y$, again by \cref{uniqueRcoupleLift}.

    Since $\hat s$ is a lift of $s$, its separator $\hat X$ is a lift of $X$ and $r$-locally closed.
    Further, by \cref{RtomicGives1sheeted}, $\hat X$ is an $r$-tom of $p_r\inv(X)$.
    Analogously, $\hat Y$ is an $r$-locally closed lift of $Y$ and thus an $r$-tom of $p_r\inv(Y)$.
    By \cref{uniqueRcoupleLift}, $\hat Y$ is the unique $r$-tom of $p_r\inv(Y)$ which is $r$-coupled with $\hat X$.
    Thus, \cref{item:LiftXcapY} holds, and this immediately yields
    \cref{item:LiftCentre}.
    
    We now claim that 
    \begin{equation}\label{linkLiftInclusionProof}
        \text{the lift of the $X$-link for~$F_1$ that lies in $\hat X$ is included in the $\hat X$-link for~$\hat F_1$.}
    \end{equation}
    To see this, let $x$ be a vertex in the $X$-link for~$F_1$, and let $\hat x$ denote its unique lift in~$\hat X$.
    Let $W\in\cW_r(X)$ witness that $x$ lies in the $X$-link for~$F_1$; that is, $W$ is a concatenation of $r$-local $X$-paths $P_1,\ldots, P_n$ starting at $x$, and the first edge $e$ of $W$ in $\partial Y$ exists and lies in~$F_1$.
    Let $\hat W$ denote the lift of~$W$ to~$G_r$ that starts at~$\hat x$.
    By the path-lifting property of coverings, this defines lifts $\hat P_i$ of all the~$P_i$ and a lift $\hat e$ of~$e$.
    As $\hat X$ is $r$-locally closed, the $\hat P_i$ are $r$-local $\hat X$-paths. 
    Hence, $\hat W\in \cW_r(\hat X)$.
    It is immediate that $\hat e$ is the first edge of $\hat W$ in $\partial p_r\inv(Y)$.
    Thus, it suffices to show that $\hat e$ lies in~$\partial \hat Y$.
    
    Let $\hat Z$ be the $r$-tom of $p_r\inv (Y)$ with $\hat e\in\partial \hat Z$.
    Since $\hat e$ lies on some $r$-local $\hat X$-path $\hat P_i$, it either lies on a short cycle in $G_r$ that has two vertices in~$\hat X$ or is an $\hat X$-edge.
    Thus, $\hat X$ and $\hat Z$ are $r$-coupled.
    Since $\hat Y$ is the unique $r$-tom of $p_r\inv(Y)$ that is $r$-coupled with $\hat X$, \cref{uniqueRcoupleLift} yields $\hat Z=\hat Y$.
    Therefore, $\hat e\in \partial Y$.
    Then $\hat e\in\hat F_1$ as well.
    This completes the proof of~\cref{linkLiftInclusionProof}.

    Next, we claim that
    \begin{equation}\label{linkProjectionInclusionProof}
        \text{the projection of the $\hat X$-link for~$\hat F_1$ is included in the $X$-link for~$F_1$.}
    \end{equation}
    So let $\hat x$ be a vertex in the $\hat X$-link for~$\hat F_1$, and let $\hat W\in\cW_r(\hat X)$ starting at $\hat x$ whose first edge $\hat e$ in $\partial \hat Y$ lies in $\hat F_1$.
    If $e:= p_r(\hat e)$ is also first edge $f$ of $W := p_r(\hat W)$ in $\partial Y$, then the projection $x := p_r(\hat x)$ is in the $X$-link for $F_1$ and we are done.
    
    So let $f$ be the first edge of $W$ which is in $\partial Y$.
    Let $\hat f$ be its lift in $\hat W$, and let $\hat Z$ be the $r$-tom of $p_r\inv(Y)$ whose $\partial \hat Z$ contains $\hat f$.
    Since $\hat W \in \cW_r(\hat X)$, $\hat f$ either lies on a short cycle that meets $\hat X$ in at least two vertices or is an $\hat X$-edge.
    Since $\hat Y$ is the unique $r$-tom of $p_r\inv(Y)$ that is $r$-coupled with $\hat X$, \cref{uniqueRcoupleLift} yields $\hat Z=\hat Y$.
    In particular, $\hat f = \hat e$, as desired.
    
    Both \cref{linkLiftInclusionProof,linkProjectionInclusionProof} extend to all four links by analogue arguments, so \cref{item:LiftXlink,item:LiftYlink} hold.
    The `moreover'-part is immediate from \cref{item:LiftXlink,item:LiftYlink,item:LiftCentre}.
\end{proof}

\begin{liftlemmaASS}[Crossing]\label{NestedRtomic}
    Let $s$ and $t$ be two $r$-local separations of a connected graph $G$ with $r$-tomic separators $X$ and $Y$ of size~$\le k$, respectively, such that~$(k-1)r<\mindist_r(G)$.
    The following assertions are equivalent:
    \begin{enumerate}
        \item \label{item:sCrossest} $s$ crosses $t$;
        \item \label{item:liftsCrossesLiftt}some lift of $s$ crosses some lift of $t$;
        \item \label{item:CrossingMatching} every lift of $s$ crosses exactly one lift of $t$.
    \end{enumerate}
\end{liftlemmaASS}

\noindent In the context of \cref{NestedRtomic}, we recall that $s$ and $t$ can be lifted by \cref{sufficientForLifting} and \cref{randomInequality}.

\begin{proof}
\cref{item:sCrossest}$\to$\cref{item:liftsCrossesLiftt}.
This follows directly from the `moreover'-part of \cref{LiftingLinks}. (Note that \cref{item:sCrossest} implies, by definition of 'cross', that $s$ and $t$ are $r$-coupled, which \cref{LiftingLinks} requires.)

\cref{item:liftsCrossesLiftt}$\to$\cref{item:CrossingMatching}.
As the lifts of $s$ and the lifts of $t$ each form an orbit under the action of the group of deck transformations of $p_r$ on the $r$-locally closed lifts of $X$ (equivalently by \cref{RtomicGives1sheeted}: $r$-toms of $p_r\inv(X)$), it suffices to show that some lift of $s$ is crossed by precisely one lift of $t$.
Let $\hat s$ and $\hat t$ be the respective lifts of $s$ and $t$ that cross by \cref{item:liftsCrossesLiftt}.
By the definition of crossing, their respective separators $\hat X$ and $\hat Y$ are $r$-coupled.
Since $p_r$ isomorphically maps short cycles of $G_r$ to short cycles in $G$, $X$ and $Y$ are $r$-coupled too.
Thus, \cref{uniqueRcoupleLift} yields that $\hat t$ is the only lift of $t$ that crosses~$\hat s$, as desired.

\cref{item:CrossingMatching}$\to$\cref{item:sCrossest}.
By \cref{sufficientForLifting} and \cref{randomInequality}, $s$ and $t$ have lifts. 
By \cref{item:CrossingMatching}, some of these cross, say $\hat s$ and $\hat t$ with separators $\hat X$ and $\hat Y$.
As in the proof of \cref{item:liftsCrossesLiftt}$\to$\cref{item:CrossingMatching}, $X$ and $Y$ are $r$-coupled.
By the `moreover'-part of \cref{LiftingLinks}, $s$ and $t$ cross, as desired.
\end{proof}

We will not use the following lemma, but have included it for completeness.

\begin{lemmaASS}\label{nearPartition}
    Let $s$ and $t$ be $r$-local separations of a connected graph~$G$ with $r$-coupled, $r$-tomic separators $X$ and $Y$ of size~$\le k$, respectively, and assume that $(k-1)r<\mindist_r(G)$.
    Then the four links and $X\cap Y$ together form a near-partition of $X\cup Y$, and there are no edges in $G$ between opposite links.
\end{lemmaASS}
\begin{proof}
    Recall that $s$ and $t$ can be lifted by \cref{sufficientForLifting} and \cref{randomInequality}.
    Let $\hat s$ be an arbitrary lift of $s$ with separator~$\hat X$.
    By \cref{uniqueRcoupleLift}, there is a unique lift $\hat t$ of $t$ whose separator~$\hat Y$ is $r$-coupled with $\hat X$.
    Then by \cref{LiftingLinks}, $p_r$ restricts to bijections between the respective pairs of links in $G_r$ and~$G$ and between $\hat X\cap \hat Y$ and $X\cap Y$.
    Now let $\{\hat A_1,\hat A_2\}$ and $\{\hat C_1,\hat C_2\}$ be the separations of $G_r$ that induce $\hat s$ and~$\hat t$, respectively, by \cref{correspondenceofseparationswithfixedseparator}.
    The links with regard to $\{\hat A_1,\hat A_2\}$ and $\{\hat C_1,\hat C_2\}$ together with $\hat X\cap \hat Y$ form a near-partition of $\hat X\cup\hat Y$ by definition.
    By \cref{corresLinks}, the links with regard to $\hat s$ and $\hat t$ together with $\hat X\cap\hat Y$ form the same near-partition of $\hat X\cup\hat Y$.
    Finally, since by \cref{LiftingLinks} the covering map $p_r$ restricts to bijections between the pairs of links in $G_r$ and~$G$ and between $\hat X\cap \hat Y$ and $X\cap Y$, also the links and $X\cap Y$ together form a near-partition of $X\cup Y$.

    Finally, assume for a contradiction that there is an edge $x_1 x_2$ such that the $x_i$ are contained in opposite $X$-links; the $Y$-links-case then follows by symmetry.
    Let $\hat x_i$ denote the lifts of the $x_i$ in~$\hat X$.
    Then $\hat x_1 \hat x_2$ is an edge in $G_r$ between opposite $\hat X$-links.
    But then $\hat x_1 \hat x_2$ is an edge between $\hat C_1\sm\hat C_2$ and $\hat C_2\sm\hat C_1$, contradicting the fact that $\{\hat C_1,\hat C_2\}$ is a separation.
\end{proof}

\section{Local bottlenecks}\label{sec:LocalBottlenecks}

In this section, we introduce and study one of the centrepieces of our theory of $r$-local separations: \emph{$r$-local bottlenecks}.
For all relevant concepts, we shall follow a recurring scheme: we define the local notions, show that they agree with the respective global counterpart on $r$-local covers, and prove lifting/projection lemmas under a suitable assumption on the displacement of~$p_r$.
We work along this three-step process first for \tstar s (\cref{TriStarCorrespondence}, \cref{ProjectionLocalYStar} and \cref{LiftLocalYStar}), then for bottlenecks (\cref{correspondenceBottlenecks}, \cref{ProjectingLocalBottlenecks} and \cref{LiftingLocalBottlenecks}) and finally for a construction of nested sets of $r$-local separations that represent the $r$-local bottlenecks (\cref{construction:inductiveNestedSetLocal} , \cref{CorrespondenceNestedSet}, \cref{projectionliftNestedSet} and \cref{LocalNestedToT}).

\subsection{Local and global \texorpdfstring{\tstar s}{tri-stars} in the local cover}

We need a way to localise \tstar s. 
To do this, let us first characterise \tstar s in more local terms than by their definition:

\begin{lemma}\label{CharacterisationOfTStar}
    A set $\sigma=\{\,(A_i,B_i):i\in [3]\,\}$ of three distinct oriented separations $(A_i,B_i)$ of a graph~$G$ with separators~$X_i$ is a \tstar\ if and only if 
    \begin{enumerate}[label=\rm{(Y\arabic*)}]
        \item\label{Y1} the sets $A_i \setminus B_i$ for $i \in [3]$ form a near-partition of the vertex set $V(G) \setminus (\bigcup_{i \in [3]} X_i)$, and
        \item\label{Y2} every vertex in some $X_i$ is contained in at least one other $X_j$. 
    \end{enumerate}
\end{lemma}

\begin{proof}
    This is immediate from the definition of \tstar.
\end{proof}

An \defn{$r$-local \tstar } in~$G$ is a set $\sigma=\{\,(E_i,X_i,F_i):i\in [3]\,\}$ of three oriented $r$-local separations such that 
\begin{enumerate}[label=\rm{(LY\arabic*)}]
    \item\label{LY1} the $E_i$ form a near-partition of the edge set $\partial (\bigcup_{i \in [3]} X_i)$,
    \item\label{LY2} every vertex in some $X_i$ is contained in at least one other~$X_j$, and
    \item\label{LY3} every $r$-local component at $\bigcup_{i \in [3]} X_i$ is included in some~$E_i$.
\end{enumerate} 
The unoriented $r$-local separations $\{E_i,X_i,F_i\}$ are the \defn{constituents} of~$\sigma$.
We say that $\sigma$ has \defn{order~$\le k$} if all its elements have order~$\le k$.
If an $r$-local \tstar\ $\sigma$ is introduced as $\{\,(E_i,X_i,F_i):i\in [3]\,\}$, then we denote the union of the separators $X_i$ by $X$, the \defn{centre} $\bigcap_{i \in [3]} X_i$ by $Z$ and the \defn{$ij$-link} $(X_i \cap X_j) \setminus Z$ by $X_{ij} = X \setminus X_k$ whenever $\{i,j,k\} = [3]$.
We remark that $\{E_i,X, \partial X \setminus E_i\}$ is an $r$-local separation by \cref{LY1} and \cref{LY3}.

An $r$-local \tstar\ $\sigma = \{\,(E_i,X_i,F_i) : i\in [3]\,\}$ is \defn{relevant with base} $\{E_1,X_1,F_1\}$ if $\{E_1,X_1,F_1\}$ is tight and for every $x \in X_{23}$ and for every $i \in \{2,3\}$ there exists an $x$--$X_{1i}$ edge in $G$ or there exists an $r$-local component $F \subseteq E_i$ at $X$ which contains an edge incident to $x$ and one incident to $X_{1i} \cup Z$.

\begin{lemma}\label{relevantLocalTriStarRtomic}
    Let $\sigma=\{\,(E_i,X_i,F_i) : i\in [3]\,\}$ be a relevant $r$-local \tstar\ in a connected graph~$G$ with base $\{E_1,X_1,F_1\}$.
    Then $X:=\bigcup_{i\in [3]}X_i$ is $r$-tomic.
\end{lemma}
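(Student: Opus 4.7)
The plan is to show that all of $X$ is contained in a single $r$-tom. Since the base $\{E_1,X_1,F_1\}$ is tight, its separator $X_1$ is $r$-tomic by \cref{TightLocalSeparatorAreOConnected}. Arguing as for \cref{tristarLinkPartition}, condition \cref{LY2} yields the partition $X = Z \sqcup X_{12} \sqcup X_{13} \sqcup X_{23}$ together with $X_1 = Z \sqcup X_{12} \sqcup X_{13}$, so $X \setminus X_1 = X_{23}$. It therefore suffices to show that any two vertices of $X_1$ lie in the same $r$-tom of the larger set $X$, and that every $x \in X_{23}$ lies in the same $r$-tom of $X$ as some vertex of $X_1$.

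For the first point, given $u,v \in X_1$, I will take a walk $W \in \cW_r(X_1)$ linking them and refine it to a walk in $\cW_r(X)$. Each constituent $r$-local $X_1$-path of $W$ is a single $X_1$-edge or lies in a short cycle, so splitting it at every internal vertex in $X_{23}$ produces sub-paths whose endpoints lie in $X$ and whose interiors now avoid $X_1 \cup X_{23} = X$; they are therefore $r$-local $X$-paths. The resulting sequence of $r$-local $X$-paths from $u$ to $v$ cannot revisit a vertex of $X$ in a shortest choice (else one could shortcut between the two occurrences), so it is a walk in $\cW_r(X)$.

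For the second point, I will invoke relevance. Fix $x \in X_{23}$ and take $i := 2$, say. If relevance supplies an $x$--$X_{12}$ edge, this is an $X$-edge and hence already an $r$-local $X$-path to a vertex of $X_1$. Otherwise, relevance supplies an $r$-local component $F \subseteq E_2$ at $X$ containing an edge incident to $x$ and an edge incident to some $y \in X_{12} \cup Z \subseteq X_1$, and unravelling $\sim_r$ yields a finite chain of $r$-local $X$-walks whose successive endpoints lie in $X$ and connect $x$ to $y$. Any such walk in a short cycle between distinct endpoints $a \neq b$ must traverse at least one of the two $a$--$b$ arcs of the cycle in full (removing one edge from each arc would separate $a$ from $b$ within the cycle), and that arc, having interior disjoint from $X$, is an $r$-local $X$-path linking $a$ and $b$; concatenating and shortcutting as before produces a walk in $\cW_r(X)$ from $x$ to $y$.

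Combining these two observations yields that $X$ is $r$-tomic. The main subtlety is the gap between walks in $\cW_r(X_1)$ (which the $r$-tomicity of $X_1$ supplies) and walks in $\cW_r(X)$: the former may pass through $X_{23}$-vertices internally while the latter may not, and both steps handle this by the same two-step procedure of refining to $r$-local $X$-paths inside the short cycle and shortcutting across repeated vertices of~$X$.
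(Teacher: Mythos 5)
Your proof is correct and follows essentially the same strategy as the paper's: use \cref{TightLocalSeparatorAreOConnected} to get $r$-tomicity of $X_1$, then use relevance to link each vertex of $X_{23}$ to $X_1$ within a common $r$-tom of $X$. You are more explicit than the paper about the bridge between $\cW_r(X_1)$-walks and $\cW_r(X)$-walks (splitting at internal $X_{23}$-vertices and shortcutting repeats) and about unravelling the equivalence $\sim_r$ into a chain of $r$-local $X$-paths, both of which the paper leaves implicit under ``by the definition of $r$-tom / $r$-local component''; these details are genuine and worth spelling out.
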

\begin{proof}
    Since $\{E_1,X_1,F_1\}$ is tight, the separator $X_1$ is $r$-tomic by definition (\cref{TightLocalSeparatorAreOConnected}).
    Now let $x$ be a vertex in the $23$-link.
    Since $\sigma$ is relevant with base $\{E_1,X_1,F_1\}$, there is an $x$--$X_{12}$ edge $e$ or there is an $r$-local component $F\se E_2$ at $X$ which contains an edge incident to $x$ and an edge incident to $X_{12}\cup Z$, where $Z$ is the centre $\bigcap_{i\in [3]}X_i$.
    If $e$ exists, then its two endvertices, i.e.\ $x$ and some vertex in $X_1$, lie in the same $r$-tom by the definition of $r$-tom, as desired.
    The same is ensured by the definition of $r$-local component, if $F$ exists.
\end{proof}

Given a set $\sigma$ of (possibly oriented) separations of $G$, we write $\defnMath{\sigma_r}:=\{s_r : s \in \sigma\}$ for the set of $r$-local separations of $G$ induced by the separations in $\sigma$.

\begin{corlemma}[Tri-star]\label{TriStarCorrespondence}
    Assume that the binary cycle space of $G$ is generated by cycles of length $\leq r$ and that $G$ is connected.
    Then $(\sigma,s)\mapsto (\sigma_r,s_r)$ defines a bijection between the set of all pairs $(\sigma,s)$ of relevant \tstar s $\sigma$ in~$G$ with base~$s$ and the set of all pairs $(\tau,t)$ of relevant $r$-local \tstar s $\tau$ in $G$ with base~$t$.
\end{corlemma}

\begin{proof}
    Recall that, by \cref{CharacterisationOfTStar}, \tstar s are characterised by \cref{Y1} and \cref{Y2}.

    \casen{Well-defined.}
    Let $\sigma=\{\,(A_i,B_i):i\in [3]\,\}$ be a relevant \tstar\ in~$G$ with base~$\{A_1,B_1\}$.
    For every~$i$, let $(E_i,X_i,F_i)$ denote the $r$-local separation induced by~$(A_i,B_i)$.
    Then $\{E_1,X_1,F_1\}$ is tight by \cref{keylemma:correspondenceofseparations}, and $X:=\bigcup_{i\in [3]}X_i$ is $r$-tomic by \cref{relevantTriStarRtomicInCovering}.

    Let us now show that $\sigma_r = \{\,(E_i,X_i,F_i) : i \in [3]\,\}$ is an $r$-local \tstar .
    \cref{LY1} and \cref{LY2} follows immediately from \cref{Y1} and \cref{Y2}.
    For \cref{LY3}, let $F$ be an $r$-local component at $X$.
    Since $X$ is $r$-tomic, there is a component $K$ of $G - X$ such that $F = E(K,X)$ by \cref{keylemma:correspondenceofcomponents}. 
    As the $(A_i,B_i)$ are separations of $G$ which satisfy \cref{Y1}, every component of $G - X$, in particular $K$, has its vertices in some $A_i \setminus B_i$.
    Thus, $F \subseteq E(X, A_i \setminus B_i) \subseteq E_i$ as $(E_i,X_i,F_i)$ is induced by $(A_i,B_i)$.
    Hence, \cref{LY3} holds.

    Since $\sigma$ is relevant with base $\{A_1,B_1\}$ and $X$ is $r$-tomic, it follows from \cref{keylemma:correspondenceofseparations} and \cref{keylemma:correspondenceofcomponents} (applied to the $r$-local separations $\{E_i,X, \partial X \setminus E_i\}$ for $i=2,3$) that $\sigma_r$ is relevant with base $\{E_1,X_1,F_1\}$.

    \casen{Injective.}
    Since $G$ is connected, this follows from the definition of $r$-local separations induced by separation, i.e.\ the map $s \mapsto s_r$.

    \casen{Surjective.}
    Let $\tau=\{\,(E_i,X_i,F_i):i\in [3]\,\}$ be a relevant $r$-local \tstar\ in $G$ with base~$t=\{E_1,X_1,F_1\}$.
    We have to find a pair $(\sigma,s)$ with $(\sigma_r,s_r)=(\tau,t)$.

    First, let us find separations $(A_i,B_i)$ of $G$ that induce the~$(E_i,X_i,F_i)$.
    Since $X:=\bigcup_{i\in [3]}X_i$ is $r$-tomic, by \cref{relevantLocalTriStarRtomic}, the $r$-local separations $(E_i, X, \partial X \setminus E_i)$ are induced by separations $(A_i', B_i')$ of~$G$,
    by \cref{correspondenceofseparationswithfixedseparator}, so, $E(X,A_i' \setminus B_i') = E_i$.
    Now let $A_i := (A_i' \setminus B_i') \cup X_i$ and $B_i := B_i'$.
    Then $A_i\sm B_i=A'_i\sm B'_i$ and $E_G(X_i, A_i \setminus B_i) = E_i$.
    Thus, $E(X_i, B_i \setminus A_i) = \partial X_i \setminus E_i = F_i$.
    Hence, $(A_i,B_i)$ induces $(E_i,X_i,F_i)$.

    Second, we claim that $\sigma := \{\,(A_i,B_i) : i \in [3]\,\}$ is a \tstar .
    The $(E_i, X, \partial X \setminus E_i)$ are $r$-local separations whose sets $E_i$ near-partition~$\partial X$, by \cref{LY1}.
    Hence, for the corresponding separations $(A'_i,B_i')$ the sets $A'_i\sm B'_i$ near-partition the components of $G-X$, by \cref{LY3} and \cref{keylemma:correspondenceofcomponents}.
    Since $A'_i\sm B'_i=A_i\sm B_i$, the sets $A_i\sm B_i$ near-partition $V(G)\sm X$, giving \cref{Y1}.
    As \cref{LY2} is identical to \cref{Y2}, \cref{CharacterisationOfTStar} implies that $\sigma$ is a \tstar .

    It remains to show that $\sigma$ is relevant with base $\{A_1,B_1\}$.
    Since $X$ is $r$-tomic, this follows from the fact that $\tau$ is relevant with base~$\{E_1,X_1,F_1\}$ by \cref{keylemma:correspondenceofseparations} and \cref{keylemma:correspondenceofcomponents} (applied to the $r$-local seps $\{E_i,X,\partial X \setminus E_i\}$ for $i=2,3$).
    We have $(\sigma,\{A_1,B_1\})\mapsto (\tau,\{E_1,X_1,F_1\})$ by construction.
\end{proof}

\subsection{Interplay of local \texorpdfstring{\tstar s}{tri-stars} with the covering map}

We define a function $f \colon \N \to \N$ such that $f(k) \cdot r/2 < \mindist_r(G)$ is a sufficient condition for the union of the separators of a relevant $r$-local \tstar\ to be $r$-locally $1$-covered, but which also satisfies $f(k) \leq 2(k-1)$ for every positive $k \in \N$: 
\begin{equation}
    \text{$\defnMath{f(k)} := \lfloor 3k/2 \rfloor$ for $k \geq 3$ and $f(k) := 0$ for $k \leq 2$.}
\end{equation}

\begin{lemma}\label{evenmoreRandomInequality}
    $f(k) \leq 2(k-1)$ for every positive $k \in \N$.
\end{lemma}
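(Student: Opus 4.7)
The plan is to handle the small cases $k\in\{1,2\}$ separately and then, for $k\geq 3$, to reduce the inequality to a straightforward case split on the parity of $k$.

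For $k=1$ we have $f(1)=0=2(1-1)$, and for $k=2$ we have $f(2)=0\leq 2=2(2-1)$, so both cases are immediate from the definition of $f$.

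For $k\geq 3$, the inequality to verify becomes $\lfloor 3k/2\rfloor \leq 2k-2$. I would split on parity. If $k=2m$ with $m\geq 2$, then $\lfloor 3k/2\rfloor=3m$ and $2k-2=4m-2$, so the inequality $3m\leq 4m-2$ reduces to $m\geq 2$, which holds. If $k=2m+1$ with $m\geq 1$, then $\lfloor 3k/2\rfloor=3m+1$ and $2k-2=4m$, so the inequality $3m+1\leq 4m$ reduces to $m\geq 1$, which also holds.

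The main (and only) obstacle is just bookkeeping: ensuring the base cases $k\leq 2$ are covered by the second clause of the definition, and being careful with the floor function in the odd case. No deeper structural argument is required; this is a direct arithmetic check meant to justify that $f$ is a valid strengthening of the bound $kr/2<\mindist_r(G)$ appearing in the lifting/projection lemmas of the previous section.
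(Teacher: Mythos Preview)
Your proof is correct and follows essentially the same approach as the paper: handle $k\le 2$ directly from the definition of $f$, then verify the arithmetic inequality $\lfloor 3k/2\rfloor \le 2(k-1)$ for $k\ge 3$. The only cosmetic difference is that the paper bounds $\lfloor 3k/2\rfloor\le 3k/2$ and checks $k/2-2\ge 0$ for $k\ge 4$ (with $k=3$ done separately), whereas you split on parity; both are equally valid direct computations.
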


\begin{proof}
    For $k \leq 2$, the statement is trivial.
    For $k = 3$, we have $\lfloor 3k/2 \rfloor = 4 = 2(k-1)$, and for $k \geq 4$ we have $2(k-1)-\lfloor 3k/2\rfloor\ge k/2-2\ge 0$.
\end{proof}

\begin{lemma}\label{ThirdrandomInequality}
    $f(k) \cdot r/2<\mindist_r(G)$ implies $kr/2<\mindist_r(G)$, for all $k,r\in\N$.
\end{lemma}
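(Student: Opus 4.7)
The plan is to do a straightforward case distinction on $k$, comparing $k$ against $f(k)$ and invoking \cref{mindistGeR} in the edge cases where the hypothesis becomes trivial.

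For $k \geq 3$, I would argue directly that $k \leq \lfloor 3k/2 \rfloor = f(k)$. Indeed, when $k$ is even, $\lfloor 3k/2 \rfloor = 3k/2 \geq k$; when $k$ is odd (and $\geq 3$), $\lfloor 3k/2 \rfloor = (3k-1)/2 \geq k$ since $k \geq 1$. Hence $kr/2 \leq f(k) \cdot r/2 < \mindist_r(G)$ by the hypothesis.

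For $k \leq 2$ the hypothesis $f(k) \cdot r/2 = 0 < \mindist_r(G)$ carries no real information, so the conclusion must come from \cref{mindistGeR} which gives $\mindist_r(G) > r$. Then $kr/2 \leq r < \mindist_r(G)$, as desired.

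There is no real obstacle here; the only thing to be a bit careful about is to notice that the hypothesis is vacuous for small $k$, so that in those cases one has to fall back on \cref{mindistGeR} rather than try to use the assumption. The choice of the cutoff at $k = 3$ in the definition of $f$ is precisely what makes this argument go through.
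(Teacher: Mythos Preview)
Your proof is correct and essentially identical to the paper's: both split into the cases $k\le 2$ (using \cref{mindistGeR}) and $k\ge 3$ (using $k\le \lfloor 3k/2\rfloor=f(k)$). You merely spell out the inequality $k\le\lfloor 3k/2\rfloor$ in slightly more detail than the paper does.
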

\begin{proof}
    If $k\le 2$ then $kr/2 \leq r<\mindist_r(G)$, by~\cref{mindistGeR}, while $k\ge 3$ gives $\mindist_r(G) > f(k) \cdot r/2 = \lfloor 3k/2 \rfloor \cdot r/2 \geq kr/2$.
\end{proof}

The following \cref{ShortCycleThroughComponents} is of the same flavour as \cite[Lemma~3.10]{Local2sep}.

\begin{lemma}\label{ShortCycleThroughComponents}
    Let $\{E_1,X,E_2\}$ be a tight $r$-local $2$-separation of a connected graph~$G$.
    There exists a short cycle $O\se G$ that is a union of two $X$-paths $P_1,P_2$ such that $P_1$ starts and ends in edges in~$E_1$, while $P_2$ starts and ends in edges in $E_2$ or $P_2$ is an $X$-edge.
\end{lemma}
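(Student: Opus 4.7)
The plan is to use what I will call a \emph{switch argument} to locate suitable paths inside short cycles. Since $\{E_1,X,E_2\}$ is tight, each side $E_i$ contains a tight $r$-local component $F_i$, and tightness guarantees that $F_i$ contains edges of $\partial X$ at both $x_1$ and $x_2$. The transitive closure defining $F_i$ then yields a chain of $\sim_r$-related edges inside $F_i$ going from $x_1$ to $x_2$, in which some consecutive pair must switch the vertex of $X$ to which it is incident. The witnessing $r$-local $X$-walk has its first and last edges in $\partial X$ and therefore cannot be a single $X$-edge; it must be an $x_1$-$x_2$ subpath $P_i^{\ast}$ of some short cycle $O_i^+$ through both vertices of $X$, with endpoint edges in $F_i \subseteq E_i$.

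Next I run through the easy cases. Let $Q_i^{\ast}$ denote the other side of $O_i^+$. If $Q_1^{\ast}$ is either a single $X$-edge or an $x_1$-$x_2$ path with endpoint edges in $E_2$, then $O_1^+$ itself is the required cycle; similarly if $O_2^+$ affords such a $Q_2^{\ast}$. Otherwise each $O_i^+$ is \emph{monochromatic}: both its $x_1$-$x_2$ sides have endpoint edges in $E_i$. If the $X$-edge $g = x_1x_2$ exists in $G$, then $O := P_1^{\ast} \cup \{g\}$ is a simple cycle of length $\leq |O_1^+| \leq r$ of the required form (with $P_2 := g$). The remaining hard case, which will be the main obstacle, is: no $X$-edge exists, $O_1^+$ is monochromatic in $E_1$, and $O_2^+$ is monochromatic in $E_2$.

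To handle this case, in each $O_i^+$ I take the shorter $x_1$-$x_2$ subpath $P_i^s$ (of length $\leq r/2$, endpoints still in $E_i$), and consider the edge-symmetric difference $\triangle := E(P_1^s) \triangle E(P_2^s)$, viewed as a subgraph of $G$. All vertex-degrees of $\triangle$ are even, $|\triangle| \leq r$, and $x_1,x_2$ each have degree exactly $2$ in $\triangle$—their two incident edges being the first edges of $P_1^s$ and $P_2^s$, hence one in each $E_i$. The component $C$ of $\triangle$ containing $x_1$ is connected with all even degrees, hence $2$-edge-connected, so there is a simple cycle $O$ in $C$ through $x_1$ that uses both of $x_1$'s incident edges. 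The decisive step is then a uniform contradiction argument that critically uses the fact that $r$-local components lie entirely in one side of the separation: if $O$ avoided $x_2$, the closed walk once around $O$ at $x_1$ would be an $r$-local $X$-walk in a short cycle $\sim_r$-relating its first edge (in $E_1$) to its last edge (in $E_2$), impossible; and if either of $O$'s two $x_1$-$x_2$ subpaths had endpoints in different $E_i$'s, that subpath would itself be an $X$-walk in the short cycle $O$ with the same impossible property. Hence $O$ passes through $x_2$ and its two subpaths $R_1,R_2$ have endpoints in $E_1,E_2$ respectively, yielding $O = P_1 \cup P_2$ as required.
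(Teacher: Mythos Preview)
Your proof is correct and follows the same overall strategy as the paper's: locate for each $i$ a short cycle $O_i$ containing an $x_1$--$x_2$ path with both endpoint edges in $E_i$ (the paper's ``bipartition of $F_i$ by incidence to $x$ versus $y$'' is exactly your switch argument), dispose of the easy cases where some $O_i$ already has the required mixed form or the $X$-edge is available, and in the remaining monochromatic case take a shortest $X$-path $P_i$ of length $\le r/2$ from each $O_i$. The only substantive difference is the final assembly step: the paper simply sets $O := P_1 \cup P_2$ and asserts this is the desired cycle, whereas you pass to the edge-symmetric difference $E(P_1^s)\,\triangle\,E(P_2^s)$, extract a simple cycle $O$ through $x_1$ from its (Eulerian, hence $2$-edge-connected) component, and then use the $r$-local separation property to force $O$ through $x_2$ with its two arcs correctly coloured. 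Your version is the more careful of the two, since it explicitly handles the possibility that $P_1$ and $P_2$ share internal vertices or edges---in which case $P_1 \cup P_2$ is not a simple cycle---while the paper's proof tacitly assumes this does not occur.
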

\begin{proof}
    Let $\{x,y\}:=X$.
    Let $F_i$ be a tight component in $E_i$ for $i=1,2$.
    For each $i$, pick a short cycle $O_i\se G$ that includes an $X$-path 
    starting and ending with edges in $F_i$. 
    If $O_1$ uses an edge in~$E_2$, or if the edge $xy$ exists and is used by~$O_1$ then $O:=O_1$ is the union of the desired paths, and similarly for $O_2$.
    Hence, we may assume that both $X$-paths in $O_i$ start and end with edges in~$E_i$, for both~$i$.
    Let $P_i$ be a shortest $X$-path in~$O_i$, for both~$i$, so the $P_i$ have length~$\le r/2$.
    Then $P_1,P_2$ and $O:=P_1\cup P_2$ are as desired.
\end{proof}

\begin{lemma}\label{SmallLocalYStarIs1Sheeted}
    Let $\sigma$ be a relevant $r$-local \tstar\ of order at most $2$ in a connected graph~$G$.
    Then the union $X$ of the separators of the elements of $\sigma$ has size $\leq 2$ or some short cycle in~$G$ contains~$X$. 
\end{lemma}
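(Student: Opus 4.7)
The plan is to split on the value of $k$.

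For $k = 1$, each $|X_i| \leq 1$, and \cref{LY2} forces every vertex of $X$ to appear in at least two of the $X_i$'s. Summing sizes yields $2|X| \leq |X_1|+|X_2|+|X_3| \leq 3$, hence $|X| \leq 1 \leq 2$.

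For $k = 2$, each $|X_i| \leq 2$ and the same counting gives $|X| \leq 3$. When $|X| \leq 2$ the conclusion is immediate, so assume $|X| = 3$. Then all the inequalities above are tight: each $|X_i| = 2$, every vertex of $X$ lies in exactly two $X_i$'s, and the centre $Z$ is empty. After relabelling, $X = \{a,b,c\}$ with $X_1 = \{a,b\}$ the base, $X_2 = \{a,c\}$, $X_3 = \{b,c\}$, and singleton links $X_{12} = \{a\}$, $X_{13} = \{b\}$, $X_{23} = \{c\}$. Since the base is tight, \cref{ShortCycleThroughComponents} yields a short cycle $O_1$ through $a, b$ formed by two $X_1$-paths. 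If $c \in V(O_1)$ we are done, so suppose $c \notin V(O_1)$.

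Now invoke the relevance condition at $x = c$ for each $i \in \{2, 3\}$: each instance supplies either a direct edge $c$--$X_{1i}$ or an $r$-local component $F^{(i)} \subseteq E_i$ at $X$ containing an edge at $c$ and one at $X_{1i}$. In the latter case, the $r$-tomicity of $X$ (\cref{relevantLocalTriStarRtomic}) together with \cref{keylemma:correspondenceofcomponents} realises $F^{(i)}$ as the componental cut $E(C^{(i)}, X)$ of some component $C^{(i)}$ of $G - X$; moreover, because $F^{(i)} \subseteq E_i$ consists of edges with exactly one end in $X_i$, we obtain $N(C^{(i)}) \cap X \subseteq \{c\} \cup X_{1i}$. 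Any $\sim_r$-chain inside $F^{(i)}$ from an edge at $c$ to an edge at $X_{1i}$ is thus a sequence of $r$-local $X$-walks whose $X$-endpoints all lie in $\{c\} \cup X_{1i}$, so at some step the walk must switch between $c$ and $X_{1i}$, producing a single short cycle $O^{(i)}$ through both these vertices.

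It remains to splice $O_1$ with the edges or short cycles $O^{(i)}$ into a single short cycle through $\{a, b, c\}$. In the cleanest subcase, both relevance instances give direct edges $ca$ and $cb$: then the shorter $X_1$-path of $O_1$ (of length at most $r/2$) together with $a - c - b$ forms a cycle of length at most $r/2 + 2 \leq r$ for $r \geq 4$, and the small-$r$ boundary $r = 3$ is handled by observing that a tight 2-separator forces the base triangle to contain the edge $ab$, whence $abc$ is a triangle. The main obstacle is the mixed and $r$-local-component subcases, where a naive estimate of three path pieces of length $\leq r/2$ exceeds $r$ by one; resolving this requires exploiting shared structure between $O^{(i)}$ and $O_1$ beyond their common endpoint, together with the rigidity of short cycles through tight 2-separators for small $r$.
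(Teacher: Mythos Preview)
Your setup through the structure analysis (the $k=1$ count, the $|X|=3$ case forcing $X_1=\{a,b\}$, $X_2=\{a,c\}$, $X_3=\{b,c\}$ with empty centre) is correct and matches the paper. Obtaining a short cycle $O_1$ through $a,b$ via \cref{ShortCycleThroughComponents} is also the right first move.

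But after ``suppose $c\notin V(O_1)$'' your argument breaks down in two places. First, you invoke \cref{keylemma:correspondenceofcomponents} to turn the $r$-local component $F^{(i)}$ into a componental cut; that correspondence requires the binary cycle space of $G$ to be generated by short cycles, which is \emph{not} assumed in this lemma (it is stated for an arbitrary connected $G$). Second, and more seriously, your splicing step is incomplete by your own admission: combining two or three pieces of length $\le r/2$ does not in general stay below $r$, and you offer no mechanism to recover the missing slack.

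The paper avoids both problems by never leaving the single cycle $O_1$. Write $O_1=P\cup Q$ as in \cref{ShortCycleThroughComponents}, with $P$ having its end-edges in $E_1$ (or $P=ab$) and $Q$ having its end-edges in $F_1$. Let $e$ be the edge of $P$ at $a=x_{12}$ and $f$ the edge of $Q$ at $a$. If $f$ ends in $c$ we are done. Otherwise $f\in\partial X$ is incident to $x_{12}\in X_2$ but not to $X_1$, so by \cref{LY1} we have $f\in E_2$; and $e$ is either in $E_1\subseteq F_2$ or is the edge $ab$, which lies in $\partial X_2\setminus\partial X$ and hence also in $F_2$. Thus $e\in F_2$ and $f\in E_2$ lie on opposite sides of the $r$-local separation $\{E_2,X_2,F_2\}$. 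Since every short cycle meets each side of an $r$-local separation evenly, $O_1$ must contain both vertices of $X_2=\{a,c\}$, in particular $c$. No relevance condition, no splicing, no cycle-space hypothesis needed.
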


\begin{proof} 
    Let $\{\,(E_i, X_i, F_i) : i \in [3]\,\}:=\sigma$ and let $\{E_1,X_1,F_1\}$ be its base. 
    Assume that $X$ has size~$\geq 3$.

    We claim that all $X_i$ have size two, all links have size one, and the centre is empty.
    If an $X_i$ has size one, then it is included in another $X_j$ by~\cref{LY2}, and since the third $X_k$ must also be included in $X_i \cup X_j = X_j$ by~\cref{LY2}, this gives $|X|\le 2$, a contradiction.
    So all $X_i$ have size two.
    
    If two separators are equal, then also the third separator must be equal to these two by \cref{LY2}, contradicting our assumption that $|X|\ge 3$.
    Thus, all the~$X_i$ are distinct, and by \cref{LY2} every two $X_i$ share an element.
    Hence, the centre of the \tstar\ is empty and every $ij$-link consists of a unique vertex, which we denote by~$x_{ij}$.
    Thus, $X=\{x_{12},x_{23},x_{13}\}$.

    Since $\{E_1,X_1,F_1\}$ is tight, we may use \cref{ShortCycleThroughComponents} to find a short cycle $O\se G$ that is a union of two $X_1$-paths $P,Q$ such that $P$ starts and ends in edges in~$E_1$ or is the single edge~$x_{12}x_{13}$, and such that $Q$ starts and ends in edges in~$F_1$; in particular, $O$ meets both $x_{12}$ and $x_{13}$.
    Let $e$ denote the edge on~$P$ incident to~$x_{12}$.
    Let $f$ denote the edge on~$Q$ incident to~$x_{12}$.

    If the other endvertex of $f$ is~$x_{23}$, then $O$ contains~$X$ as desired.
    If not, then $f \in \partial X \setminus \partial X_3$.
    Since $f \in F_1$, we have $f \in \partial X \setminus (E_1 \cup E_3)$ and hence $f\in E_2$ by \cref{LY1}.
    Let us show that $e \in F_2$.
    If $e = x_{12}x_{13}$, then $e \in \partial X_2 \setminus \partial X$, so $e \in F_2$ by \cref{LY1}.
    Otherwise, $e$ lies in~$E_1\se\partial X$ as well as in $\partial X_2$, so $e\in F_2$ by \cref{LY1}.
    Altogether, $e \in F_2$ and $f \in E_2$ lie in different sides of the $r$-local separation $\{E_2,X_2,F_2\}$.
    As $O$ is a short cycle containing both $e$ and~$f$, it must meet $X_2$ in both its elements, and especially in~$x_{23}$.
    Hence, $O$ contains~$X$.
\end{proof}

\begin{lemma}\label{SmallSetsAre1Sheeted}
    Let $X$ be an $r$-tomic set of vertices in a connected graph~$G$.
    If $|X| \leq 2$ or $X$ is contained in a short cycle, then $X$ is $r$-locally $1$-covered.
\end{lemma}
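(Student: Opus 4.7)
The plan is to split the proof according to the two clauses of the hypothesis.

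Assume first that $|X|\le 2$. If $r\ge 2$, then \cref{RtomicGives1sheeted} applied with $k:=2$ gives the conclusion: the required bound $kr/2=r<\mindist_r(G)$ holds by \cref{mindistGeR}, and the hypothesis~\cref{ASSUMPTION} is satisfied. If $|X|\le 1$, apply \cref{RtomicGives1sheeted} with $k:=1$, which trivially satisfies~\cref{ASSUMPTION}. The only remaining subcase is $|X|=2$ and $r\le 1$: here $G$ has no cycle of length $\le r$, so $r$-tomicity of $X=\{x,y\}$ is witnessed by a single $r$-local $X$-path, forcing $xy\in E(G)$; any lift of $xy$ to $G_r$ then yields a two-element $r$-tom of $p_r\inv(X)$ on which $p_r$ is bijective.

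Assume now that $X$ is contained in a short cycle~$O$, and (by the previous case) that $|X|\ge 3$; then $|O|\ge 3$ and $r\ge 3$. Fix $x\in V(O)$ and a lift $\hat x\in V(G_r)$ of~$x$. Since $O$ is a closed walk of length $\le r$ at~$x$, we have $O\se B_G(x,r/2)$. Because $\mindist_r(G)>r$ by \cref{mindistGeR}, \cref{lem:midistKeyLemma} tells us that $p_r$ restricts to an isomorphism from $\hat B:=B_{G_r}(\hat x,r/2)$ onto $B_G(x,r/2)$; thus $O$ lifts to a short cycle $\hat O\se \hat B$ through $\hat x$ on which $p_r$ restricts to an isomorphism. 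Set $\hat X:=V(\hat O)\cap p_r\inv(X)$. Then $p_r|_{\hat X}$ is a bijection $\hat X\to X$, so it suffices to show that $\hat X$ is an $r$-tom of~$p_r\inv(X)$.

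That $\hat X$ is $r$-tomic follows by walking along $\hat O$ through consecutive $\hat X$-vertices: each arc between two such consecutive vertices is an $r$-local $\hat X$-path, and concatenating arcs in either cyclic direction gives walks in $\cW_r(\hat X)$ between any two vertices of $\hat X$. For $\subseteq$-maximality, suppose toward contradiction that some $\hat a\in p_r\inv(X)\sm\hat X$ lies in the same $r$-tom as some vertex of~$\hat X$, and take a shortest walk $\hat W\in\cW_r(p_r\inv(X))$ from such an $\hat a$ to some $\hat b\in\hat X$. By minimality, no joint of $\hat W$ other than $\hat b$ lies in~$\hat X$; in particular, the last $r$-local $p_r\inv(X)$-path $\hat P$ of $\hat W$ goes from some $\hat v\in p_r\inv(X)\sm\hat X$ to~$\hat b$.

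It remains to show $\hat P\se \hat B':=B_{G_r}(\hat b,r/2)$: together with $\hat O\se\hat B'$ (since $|\hat O|\le r$ and $\hat b\in V(\hat O)$) and the injectivity of $p_r$ on~$\hat B'$, this forces $\hat v$ to coincide with the unique lift in $V(\hat O)$ of $p_r(\hat v)\in X\se V(O)$, giving $\hat v\in\hat X$ and contradicting the choice of~$\hat v$. If $\hat P$ lies in a short cycle $\hat O'\se G_r$, then $\hat O'\se\hat B'$ because $|\hat O'|\le r$ and $\hat b\in V(\hat O')$. If $\hat P$ is a single edge $\hat v\hat b$, then $p_r(\hat v\hat b)$ combined with a shortest path in $O$ from $p_r(\hat v)$ back to $p_r(\hat b)$ forms a closed walk at $p_r(\hat b)$ of length at most $\lfloor|O|/2\rfloor+1\le\lfloor r/2\rfloor+1\le r$, so $p_r(\hat v\hat b)\in E(B_G(p_r(\hat b),r/2))$ and hence $\hat v\hat b$ lifts into~$\hat B'$ by ball preservation. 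The main obstacle is this last ball-preservation step, where one must carefully translate between the ball centred at the original $\hat x$ (which produced the lift $\hat O$) and the ball centred at $\hat b$ (which constrains the final edge or short cycle leaving~$\hat b$).
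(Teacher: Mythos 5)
Your proof is correct, and it follows the same high-level plan as the paper: for $|X|\le 2$ invoke \cref{RtomicGives1sheeted}, and for $X$ contained in a short cycle $O$ lift $O$ to a cycle $\hat O$ in $G_r$, set $\hat X:=V(\hat O)\cap p_r\inv(X)$, and show that $\hat X$ is an $r$-tom. Where you diverge is in the details: the paper constructs $\hat O$ as a component of $p_r\inv(O)$ and settles maximality by a two-line triangle-inequality argument (a short cycle through $\hat y\in p_r\inv(X)\sm\hat X$ and $\hat x\in\hat X$ gives $d_{G_r}(\hat y,\hat y')\le r/2+r/2=r$, contradicting \cref{mindistGeR}), whereas you route everything through the ball-preservation formulation of \cref{lem:midistKeyLemma}, both for producing $\hat O\subseteq B_{G_r}(\hat x,r/2)$ and for the maximality step via injectivity of $p_r$ on $B_{G_r}(\hat b,r/2)$. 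The two are equivalent, but the paper's distance argument is shorter. On the other hand, you are more careful than the paper in two places: you spell out the subcase $|X|=2$ and $r\le 1$, which the paper's appeal to \cref{RtomicGives1sheeted} with $k=2$ strictly speaking does not cover since \cref{ASSUMPTION} would fail there; and in the maximality step you correctly unpack ``not an $r$-tom'' into taking the last $r$-local $p_r\inv(X)$-path of a shortest crossing walk, and you treat the case where that path $\hat P$ is a single $p_r\inv(X)$-edge, which the paper's ``suppose some short cycle meets $\hat y$ and $\hat x$'' silently omits. Two minor nits: in the $r$-tomicity step you write $\cW_r(\hat X)$ where $\cW_r(p_r\inv(X))$ is meant (the claim still holds because the arcs of $\hat O$ between consecutive $\hat X$-vertices have interiors in $V(\hat O)\setminus\hat X\subseteq V(G_r)\setminus p_r\inv(X)$); and your final edge case is overcomplicated, since $\hat b\hat v\hat b$ is already a closed walk at $\hat b$ of length $2\le r$ and hence $\hat v\hat b\in E(\hat B')$ directly, without projecting to $G$, walking around $O$, and lifting back. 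The closing sentence about an ``obstacle'' between the balls at $\hat x$ and $\hat b$ should be dropped: there is no translation problem, as $\hat O\subseteq B_{G_r}(\hat b,r/2)$ simply because $\hat b\in V(\hat O)$ and $|\hat O|\le r$.
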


\begin{proof}
    If $|X| \leq 2$, then $\mindist_r(G) > r \geq k \cdot r/2$ for $k\le 2$, \cref{mindistGeR}, so $X$~is $r$-locally $1$-covered by \cref{RtomicGives1sheeted}.

    Now assume that $X$ is contained in some short cycle $O\se G$.
    Let $\hat O$ be a lift of $O$.
    Then $\hat O$ is a short cycle too, so $\hat X := V(\hat O) \cap p_r\inv(X)$ is contained in an $r$-tom of $p_r\inv(X)$.
    Let us show that this consists of $\hat X$ only.
    
    If not, there exists a short cycle which joins some $\hat x \in \hat X$ to some $\hat y \in p_r\inv(X) \setminus \hat X$.
    In particular, $d_{G_r}(\hat y, \hat x) \leq r/2$.
    Let $\hat y' \in \hat X$ be in the same fibre as $\hat y$.
    Then 
    \[
        d_{G_r}(\hat y, \hat y') \leq d_{G_r}(\hat y, \hat x) + d_{G_r}(\hat x, \hat y') \leq r/2 + r/2 = r,
    \]
    which contradicts \cref{mindistGeR}.
\end{proof}

\begin{lemmaASS}\label{LocalYStarIs1SheetedLift}
    Let $\sigma$ be a relevant $r$-local \tstar\ of order $\leq k$ in a connected graph $G$ such that $f(k) \cdot r/2 < \mindist_r(G)$.
    Then the union $X$ of the separators of the elements of $\sigma$ is $r$-locally $1$-covered.
\end{lemmaASS}

\begin{proof}
    The set $X$ is $r$-tomic by \cref{relevantLocalTriStarRtomic}.
    If $k \leq 2$, then $X$ is $r$-locally $1$-covered by \cref{SmallLocalYStarIs1Sheeted} and \cref{SmallSetsAre1Sheeted}.
    So we may assume that $k \geq 3$.
    By \cref{LY2}, $X$~has size $\leq \lfloor 3k/2 \rfloor$.
    Then $X$ is $r$-locally $1$-covered by \cref{RtomicGives1sheeted}, as $\lfloor 3k/2 \rfloor\cdot r/2=f(k)\cdot r/2<\mindist_r(G)$.
\end{proof}

\begin{lemmaASS}\label{LocalYstarIs1SheetedProjection}
    Let $G$ be a connected graph such that $f(k) \cdot r/2 < \mindist_r(G)$.
    Let $\hat \sigma$ be a relevant $r$-local \tstar\ of order $\leq k$ in $G_r$.
    Then the projection $X:=p_r(\hat X)$ of the union $\hat X$ of the separators of the elements of $\hat \sigma$ is $r$-locally $1$-covered. Moreover, $\hat X$ is an $r$-tom of $p_r\inv(X)$, and thus a lift of $X$.
\end{lemmaASS}

\begin{proof}
    The set $\hat X$ is $r$-tomic in~$G_r$ by \cref{relevantLocalTriStarRtomic}.
    Hence, $X$ is $r$-tomic by \cref{ProjectionRtomic}.
    
    If $k \leq 2$ then $\hat X$ ,and thus also $X$, either has size $\leq 2$ or is contained in a short cycle, by  \cref{SmallLocalYStarIs1Sheeted}. 
    Hence $X$ is $r$-locally $1$-covered by \cref{SmallSetsAre1Sheeted}.

    So we may assume $k \geq 3$.
    By \cref{LY2}, $\hat X$ has size $\leq \lfloor 3k/2 \rfloor$, hence so does its projection $X$.
    Thus, $X$ is $r$-locally $1$-covered by \cref{RtomicGives1sheeted} and $\lfloor 3k/2 \rfloor\cdot r/2=f(k)\cdot r/2<\mindist_r(G)$.

    In both cases, $\hat X$ is an $r$-tom of $p_r\inv(X)$, since the $r$-tomic set $\hat X$ is contained in an $r$-tom of $p_r\inv(X)$ $r$-tomic and its projection $X = p_r(\hat X)$ is $r$-locally $1$-covered.
    By the definition of $r$-locally $1$-covered, $\hat X$ is a lift of $X$.
\end{proof}

\begin{projlemmaASS}[Local \tstar ]\label{ProjectionLocalYStar}
    Let $G$ be a connected graph such that $f(k) \cdot r/2 < \mindist_r(G)$.
    Let $\hat \sigma$ be a relevant $r$-local \tstar\ in $G_r$ of order $\leq k$ with base~$\hat s$. 
    Then $p_r(\hat \sigma)$ is a relevant $r$-local \tstar\ in~$G$ of order $\leq k$ with base $p_r(\hat s)$.
\end{projlemmaASS}

\begin{proof}
    Let $\{\,\hat s_i : i\in [3]\,\}:=\hat\sigma$ where $\hat s_i=:(\hat E_i,\hat X_i,\hat F_i)$ so that $\hat s=: \hat s_1$.
    Let $\hat X:=\bigcup_{i\in [3]}\hat X_i$.
    By \cref{LocalYstarIs1SheetedProjection}, $X := p_r(\hat X)$ is $r$-locally $1$-covered and $\hat X$ is an $r$-tom of~$p_r\inv (X)$, and thus a lift of $X$.
    Hence, the subsets $\hat X_i$ of $\hat X$ are $r$-locally closed lifts of their projections $X_i$.
    Thus, by \cref{ProjectionLocalSeparation}, each $\hat s_i$ projects to an $r$-local separation $s_i=(E_i,X_i,F_i)$ of~$G$, which is tight for $i =1$.
    
    Since $\hat X$ is a lift of $X$, \cref{LY1} and \cref{LY2} for $p_r(\hat \sigma) = \{\, s_i : i \in [3] \,\}$ follow immediately from \cref{LY1} and \cref{LY2} of~$\hat \sigma$.
    \cref{LY3} and the relevance of $\hat \sigma$ with base $\hat s_1$ imply the same for $\sigma$ and $s_1$ by \cref{keylemma:liftofcomponents}.
\end{proof}

\begin{liftlemmaASS}[Local \tstar ]\label{LiftLocalYStar}
    Let $G$ be a connected graph such that $f(k) \cdot r/2 < \mindist_r(G)$.
    Let $\sigma = \{\,(E_i, X_i, F_i) : i \in [3]\,\}$ be a relevant $r$-local \tstar\ in~$G$ of order $\leq k$ with base $\{E_1, X_1, F_1\}$.     
    Let $\hat X$ be any $r$-tom of $p_r\inv(X)$.
    Then there are unique lifts $(\hat E_i,\hat X_i,\hat F_i)$ of the $(E_i,X_i,F_i)$ with $\hat X_i\se \hat X$ via \cref{keylemma:liftofseparations}, and
    $\hat\sigma:=\{\,(\hat E_i,\hat X_i,\hat F_i) : i\in [3]\,\}$ is a relevant $r$-local \tstar\ in~$G_r$ of order~$\le k$ with base $\{\hat E_1,\hat X_1,\hat F_1\}$.
\end{liftlemmaASS}

\begin{proof}
    Recall that $X: = \bigcup_{i \in [3]} X_i$ is $r$-locally $1$-covered by \cref{LocalYStarIs1SheetedLift}, so $\hat X$ is a lift of $X$.
    Thus,
    each $X_i$ has a unique lift $\hat X_i\se \hat X$, which is $r$-locally closed since $\hat X$ is $r$-locally closed as an $r$-tom of $p_r\inv (X)$.
    Hence the lifts $\hat s_i = (\hat E_i,\hat X_i,\hat F_i)$ defined as in \cref{keylemma:liftofseparations} exist and are unique, and $\{\hat E_1,\hat X_1,\hat F_1\}$ is tight.
    As $\hat X$ is a lift of $X$, \cref{LY1} and \cref{LY2} transfer from $\sigma$ to $\hat \sigma := \{\, \hat s_i : i \in [3]\,\}$.
    \cref{keylemma:liftofcomponents} ensures that \cref{LY3} and relevance transfer from $\sigma$ to~$\hat \sigma$.
\end{proof}

\subsection{Local bottlenecks and nested sets of local separations}

Let $k\in\N$.
An \defn{$r$-local bottleneck of order~$k$} in~$G$, or \defn{$r$-local $k$-bottleneck}, is a non-empty set $\beta$ of tight $r$-local $k$-separations of $G$ satisfying:
\begin{enumerate}[label=(B$_r$)]
    \item\label{localEntangle} 
    Whenever $\beta$ contains the base of a relevant $r$-local \tstar\ $\sigma$ in $G$ of order~$\le k$, then $\beta$ contains at least one of the other two constituents of $\sigma$.
\end{enumerate}

\noindent We note that since the $r$-local separations in an $r$-local bottleneck are tight, their separators are $r$-tomic by \cref{TightLocalSeparatorAreOConnected}.

\begin{corlemma}[Local bottlenecks]\label{correspondenceBottlenecks}
    Assume that the binary cycle space of $G$ is generated by cycles of length~$\le r$ and that $G$ is connected.
    The map $\beta\mapsto \beta_r:=\{s_r:s\in \beta\}$ given by \cref{keylemma:correspondenceofseparations} is a bijection between the $k$-bottlenecks $\beta$ in~$G$ and the $r$-local $k$-bottlenecks in~$G$.
\end{corlemma}
\begin{proof}
    \cref{TriStarCorrespondence} ensures that $\beta_r$ satisfies \cref{localEntangle} if and only if $\beta$ satisfies \cref{Entangle1}.
\end{proof}

\begin{projlemmaASS}[Local bottlenecks]\label{ProjectingLocalBottlenecks}
    Let $G$ be a connected graph such that $f(k) \cdot r/2< \mindist_r(G)$.
    Let $\hat\beta$ be an $r$-local $k$-bottleneck in~$G_r$.
    Then $p_r(\hat\beta)$ is an $r$-local $k$-bottleneck in~$G$.
\end{projlemmaASS}
\begin{proof}
    Hence, by \cref{sufficientForProjection} and \cref{ThirdrandomInequality}, we may use \cref{ProjectionLocalSeparation} to find that $\beta:=p_r(\hat\beta)$ is defined and consists of tight $r$-local $k$-separations of~$G$.
    In particular, $\beta$ is non-empty.
    
    To show that $\beta$ exhibits~\cref{localEntangle}, let $\sigma$ be a relevant $r$-local \tstar\ of order $\leq k$ with base~$s \in \beta$.
    Take any $\hat s\in\hat\beta$ with $p_r(\hat s)=s$.
    Let $X$ denote the union of the separators of the elements of~$\sigma$, and let $\hat X$ be the $r$-tom of $p_r\inv(X)$ that includes the $r$-tomic separator of~$\hat s$.
    By \cref{LiftLocalYStar}, the elements of $\sigma$ have unique lifts to $G_r$ whose separators are included in $\hat X$, and these form a relevant $r$-local \tstar\ $\hat\sigma$ in~$G_r$ of order~$\le k$ with base~$\hat s$.
    By \cref{localEntangle} for~$\hat \beta$, some constituent $\hat t$ of $\hat\sigma$ other than~$\hat s$ is contained in~$\hat\beta$.
    So the corresponding constituent $p_r(\hat t\,)$ of $\sigma$ is in $\beta$ and distinct from~$s$.
    Hence, $\beta$ satisfies \cref{localEntangle}.
\end{proof}

\begin{liftlemmaASS}[Local bottlenecks]\label{LiftingLocalBottlenecks}
    Let $G$ be a connected graph.
    Let $\beta$ be an $r$-local $k$-bottleneck in $G$ such that $f(k) \cdot r/2 < \mindist_r(G)$.
    Then the elements of $\beta$ can be lifted to $G_r$ and the set $\hat\beta$ of all such lifts is an $r$-local $k$-bottleneck in~$G_r$.
\end{liftlemmaASS}
\begin{proof}
    By \cref{sufficientForLifting} and \cref{ThirdrandomInequality}, the elements of $\beta$ have $r$-locally $1$-covered separators and lift to~$G_r$.
    All these lifts, the elements of $\hat\beta$, are tight $r$-local $k$-separations of~$G_r$ by \cref{keylemma:liftofseparations}; in particular, $\hat \beta$ is non-empty.
    To show that $\hat\beta$ exhibits~\cref{localEntangle}, let $\hat \sigma$ be a relevant $r$-local \tstar\ in~$G_r$ of order $\leq k$ with base $\hat s \in \hat \beta$.
    Then $s := p_r(\hat s)$ is in~$\beta$, by definition of~$\hat \beta$.
    The set $\sigma$ of the projections of the elements of~$\hat\sigma$ is a relevant $r$-local \tstar\ in~$G$ of order $\leq k$ with base $s$, by \cref{ProjectionLocalYStar}.
    By \cref{localEntangle} for $\beta$, we know that $\beta$ contains a second constituent $t$ of $\sigma$ besides~$s$.
    Then $\hat\beta$ contains all lifts of $t$ to $G_r$, in particular, the one that is a constituent of~$\hat\sigma$.
    Hence, $\hat \beta$ satisfies~\cref{localEntangle}.
\end{proof}

\begin{construction}[Nested local separations from local bottlenecks]\label{construction:inductiveNestedSetLocal}
Let $G$ be a graph and $r \geq 0$ an integer, and let $\cB_r$ be any set of $r$-local bottlenecks.
For every $k \in \N$, let $\defnMath{\cB_r^k(G)}$ be the set of the $r$-local $k$-bottlenecks in $\cB$ and $S^k_r := \bigcup \cB_r^k(G)$ for every $k\in\N$.
Recursively for all $k\in\N$, define sets $N_r^k(G) \subseteq S^k_r$, as follows.
Let $\defnMath{x^k_r} \colon S^k_r \to \N$ assign to each $r$-local separation  $s \in X^k_r$ the number of $r$-local separations in $X^k_r$ that $s$ crosses assuming for now that this number is finite.
Write $\defnMath{N_r^{<k}(G)}:=\bigcup_{j<k}N_r^j(G)$.
For each $\beta\in\cB_r^k(G)$ let $\defnMath{N_r^k(G,\beta)}$ consist of the $r$-local separations $s\in\beta$ which are nested with $N_r^{<k}(G)$ and among those have minimal $x^k(s)$.
Finally, set $\defnMath{N_r^k(G)}:=\bigcup_{\beta\in\cB_r^k} N_r^k(G,\beta)$.
\end{construction}

\noindent Whenever we use the notation of $N^k_r(G)$ without reference to any set $\cB_r$ of $r$-local bottlenecks, we set $\cB_r$ to be the set of all $r$-local bottlenecks in~$G$. 

In full analogy to separations, we say that a set~$S$ of $r$-local separations of~$G$ is~\defn{$\Gamma$-canonical} for some subgroup~$\Gamma$ of~$\Aut(G)$ if $S$ is invariant under the action of~$\Gamma$ on the $r$-local separations of~$G$, as given by~$\{E_1, X, E_2\} \mapsto \{\phi(E_1), \phi(X), \phi(E_2)\}$ for~$\phi \in \Aut(G)$.
In the case of~$\Gamma = \Aut(G)$, we just say that~$S$ is \defn{canonical}.
Further, a set~$\cB_r$ of $r$-local bottlenecks in~$G$ is \defn{$\Gamma$-invariant} for a subgroup~$\Gamma$ of $\Aut(G)$ if the action of~$\Gamma$ on the $r$-local bottlenecks in~$G$ (which is inherited from the action of~$\Aut(G)$ on the $r$-local separations of~$G$) is invariant on~$\cB$.

\begin{corlemma}[Nested set]\label{CorrespondenceNestedSet}
    Let $G$ be a locally finite, connected graph, let $\cB$ be a set of bottlenecks in $G$, and let $\cB_r = \{\beta_r : \beta \in \cB\}$ be the set of $r$-local bottlenecks given by \cref{correspondenceBottlenecks}.
    Assume that the binary cycle space of $G$ is generated by cycles of length $\leq r$.
    Then the sets $N_r^k(G)$ are well-defined and consist of the $r$-local separations $s_r$ induced by the separations $s$ in $N^k(G)$ given by \cref{construction:inductiveNestedSet}.
    In particular, $N_r^k(G)$ is a nested set of $r$-local separations.
    Moreover, if $\cB_r$ is $\Gamma$-invariant for a subgroup~$\Gamma$ of $\Aut(G)$, then~$N_r^k(G)$ is $\Gamma$-canonical.
\end{corlemma}

\begin{proof}
    The map $\beta \mapsto \beta_r$ given by \cref{correspondenceBottlenecks} restricts for each $k \in \N$, to a bijection from the set $\cB^k$ of all $k$-bottlenecks in~$\cB$ to the set $\cB^k_r$ of the $r$-local $k$-bottlenecks in $\cB_r$.
    In particular, the map $s \mapsto s_r$ from \cref{keylemma:correspondenceofseparations} restricts to a bijection $\bigcup \cB^k \to \bigcup \cB^k_r$.
     
    We prove by induction on $k \in \N$ that $s \mapsto s_r$ restricts to a bijection from $N^k(G,\beta)$ to $N_r^k(G,\beta_r)$ for every $\beta \in \cB^k$.
    The base case $k=0$ is trivial as there are no $r$-local $0$-bottlenecks in a connected graph.
    Assume now that~$k>0$.
    Then \cref{keylemma:correspondenceofseparations} together with the induction hypothesis yields that the map $s \mapsto s_r$ restricts to a bijection $N^{<k}(G) \to N_r^{< k}(G)$.
    Hence, an $s \in \beta \in \cB^k$ is nested with $N^{<k}(G)$ if and only if its $r$-local counterpart $s_r \in \beta_r \in \cB_r^k$ is nested with $N_r^{<k}(G)$, by \cref{corresCross}.
    Moreover, \cref{corresCross} also yields that an $s \in \bigcup \cB^k$ crosses a $t \in \bigcup \cB^k$ if and only if $s_r \in \bigcup \cB_r^k$ crosses $t_r \in \bigcup \cB_r^k$.
    In particular, $s_r$ crosses only finitely many such $t_r$ by \cref{CrossingNumberFinite}, and thus $x^k_r$ maps only to finite numbers.
    Moreover, $s \mapsto s_r$ restricts to a bijection $N^k(G,\beta) \to N_r^k(G,\beta_r)$ for every $\beta \in \bigcup \cB^k$.

    For the `in particular'-part, recall that~$N^k(G)$ is nested by~\cref{BottleneckNestedSet}, so~\cref{corresCross} implies that~$N_r^k(G)$ is nested as well.
    The `moreover'-part is immediate from the canonicity of~\cref{construction:inductiveNestedSetLocal}.
\end{proof}

\begin{theoremASS}[Nested set]\label{projectionliftNestedSet}
    Let $G$ be a locally finite, connected graph such that $(k-1)r<\mindist_r(G)$.
    Let $\cB_r$ be a set of $r$-local bottlenecks in~$G$, and let $\hat \cB_r$ be the set of $r$-local bottlenecks in~$G_r$ given by~\cref{LiftingLocalBottlenecks} applied to~$\cB_r$.
    Then $N_r^k(G)$ and $N_r^k(G_r)$ are well-defined, and $N_r^k(G_r)$ consists precisely of the lifts to $G_r$ of the elements of~$N_r^k(G)$.
    In particular, $N_r^k(G)$ is nested.
    Moreover, if $\cB_r$ is $\Gamma$-invariant for a subgroup~$\Gamma$ of $\Aut(G)$, then~$N_r^k(G)$ is $\Gamma$-canonical.
\end{theoremASS}

\begin{proof}
    Recall that $(k-1)r<\mindist_r(G)$ implies $f(k)\cdot r/2<\mindist_r(G)$ by \cref{evenmoreRandomInequality}.
    For every $j \leq k$, $p_r$ restricts to a surjection $\hat \beta \mapsto p_r(\hat \beta)$ from the set $\hat \cB_r^j$ of all $r$-local $j$-bottlenecks in $\hat \cB_r$ to the set $\cB_r^j$ of all $r$-local $j$-bottlenecks in~$\cB_r$; the map is well-defined by \cref{ProjectingLocalBottlenecks} and is surjective by \cref{LiftingLocalBottlenecks}.
    Moreover, \cref{ProjectingLocalBottlenecks} and \cref{LiftingLocalBottlenecks} ensure that $\bigcup \hat \cB_r^j$ is the set of all the lifts of elements of $\bigcup \cB_r^j$, for every $j \leq k$.

    We prove the claim by induction on~$k$.
    The base case $k=0$ is trivial because there are no $r$-local $0$-bottlenecks in a connected graph.
    So suppose that~$k>0$.
    It follows from the induction hypothesis that $N_r^{<k}(G_r)$ is the set of all lifts of the elements of $N_r^{<k}(G)$.
    Hence, $\hat s \in \hat \beta \in \hat \cB_r^k$ is nested with $N_r^{<k}(G_r)$ if and only if $p_r(\hat s) \in p_r(\hat \beta) \in \cB_r^k$ is nested with $N_r^{<k}(G)$ by \cref{NestedRtomic}.
    Moreover, \cref{NestedRtomic} yields that $s \in \bigcup \cB_r^k$ crosses $t \in \bigcup \cB_r^k$ if and only if every lift $\hat s \in \bigcup \hat \cB_r^k$ of $s$ crosses precisely one lift $\hat t \in \bigcup \hat \cB_r^k$ of $t$.
    Thus, the numbers $x^k_r(s)$ and $x^k_r(\hat s)$ in \cref{construction:inductiveNestedSetLocal} applied to $G$ and $G_r$ coincide.
    In particular, the former is a finite number, since the latter is, by \cref{CorrespondenceNestedSet}.
    Hence, $\hat s \mapsto p_r(\hat s)$ restricts to a surjection $N_r^k(G_r,\hat \beta) \to N_r^k(G, p_r(\hat \beta))$ for every $\hat \beta \in \hat \cB_r^k$ and,
    conversely, $N_r^k(G_r, \hat \beta)$ is the set of all lifts of elements of $N_r^k(G_r, \beta)$, for every $\beta \in \cB_r^k$, where $\hat \beta \in \hat \cB_r^k$ is the set of all lifts of elements of $\beta$ (which is an $r$-local $k$-bottleneck by \cref{LiftingLocalBottlenecks}).
    All in all, $N_r^k(G_r)$ is the set of all lifts of elements of $N_r^k(G)$.

    For the `in particular'-part, recall that~$N_r^k(G_r)$ is nested by~\cref{CorrespondenceNestedSet}, so~\cref{NestedRtomic} implies that~$N_r^k(G)$ is nested as well.
    The `moreover'-part is immediate from the canonicity of~\cref{construction:inductiveNestedSetLocal}.
\end{proof}

\begin{theoremASS}\label{LocalNestedToT}
    Let $G$ be a locally finite, connected graph such that $(k-1)r<\mindist_r(G)$.
    Let $\cB_r$ be a set of $r$-local bottlenecks in $G$.
    Then $N_r^{\le k}(G)$ meets every $r$-local bottleneck in~$\cB_r$ of order~$\le k$.
\end{theoremASS}
\begin{proof}
    Let $\hat \cB_r = \{ \hat \beta_r : \beta_r \in \cB_r\}$ be the set of $r$-local bottlenecks in~$G_r$ given by~\cref{LiftingLocalBottlenecks} applied to~$\cB_r$, and let $\hat \cB = \{\hat \beta : \hat \beta_r \in \hat \cB_r\}$ be the set of bottlenecks in~$G_r$ given by \cref{correspondenceBottlenecks}.
    Let~$\beta_r$ be any $r$-local bottleneck in~$\cB_r$, and write~$\hat \beta_r \in \hat \cB_r$ and~$\hat \beta \in \hat \cB$ for the corresponding ($r$-local) bottlenecks in~$G_r$.
    By~\cref{BottleneckNestedSet} applied to~$\hat \cB$ in~$G_r$, there is~$\hat s\in\hat\beta\cap N^{\le k}(G_r)$.
    So $\hat s_r\in \hat\beta_r\cap N_r^{\le k}(G_r)$ by \cref{CorrespondenceNestedSet} for~$\hat \cB_r$.
    Since $N_r^{\le k}(G_r)$ for~$\hat \cB_r$ is equal to the set of all lifts of all elements of $N_r^{\le k}(G)$ for~$\cB_r$ by \cref{projectionliftNestedSet}, we have $p_r(\hat s_r)\in \beta_r \cap N_r^{\le k}(G)$.
\end{proof}

\section{Constructing graph-decompositions from nested sets of local separations}\label{sec:constructinggddirectly}

With the construction of nested sets of $r$-local separations representing the $r$-local bottlenecks of a graph $G$ at hand (see~\cref{sec:LocalBottlenecks}), it remains to reconstruct from just this local information the \gd s of $G$ which previously, in \cite{canonicalGraphDec}, required full knowledge of the (usually infinite) cover $G_r$ of $G$ and its (also usually infinite) tree of tangles from \cref{construction:inductiveNestedSet}.

This begins with the challenge to construct a locally defined analogue of the \td\ from \cref{construction:candidatetreedecomposition}, which was based on a (global) tree of tangles of~$G_r$.
Building the \td\ relied on the partial ordering of global separations, and it is not clear what a meaningful $r$-local version of this partial ordering could be.

To overcome this obstacle, we develop an alternative perspective on~\cref{construction:candidatetreedecomposition} by introducing the notion of \emph{cutouts} as an alternative to splitting stars (\cref{sec:SplittingStarPerspective}), which we then use to find a suitable $r$-local framework:
We give the respective $r$-local definitions in~\cref{sec:LocalCutouts}, where we also show that they agree with their global counterparts on $r$-local coverings (\cref{correspondenceCutouts});
in particular, these cutouts give rise to the same \td\ of~$G_r$ as the globally defined tree of tangles did.
We then prove the respective projection/lifting lemmas along the covering map~$p_r$ (\cref{projectionCutoutStuff}, \cref{liftCutoutStuff} and \cref{CutoutMaster}). 
Finally, we describe our construction of graph-decompositions from nested sets of $r$-local separations (\cref{construction:rlocalsepstographdecompviaG}), which we show to agree with \cref{construction:candidatetreedecomposition} for nested sets of global separations on $r$-local coverings (\cref{correspondenceofcHrandcT}).
Our construction also commutes with~$p_r$ (\cref{ProjectionGD}):
The graph-decomposition from any nested set~$N$ of $r$-local separations is isomorphic to that obtained by taking the set~$\hat N$ of all the lifts to $G_r$ of elements of~$N$, applying our construction to~$\hat N$ in $G_r$, and folding the resulting tree-decomposition back along~$p_r$ via~\cref{construction:orbittreedecompositionisgraphdecomposition}.

\subsection{Splitting stars from the perspective of a relation} \label{sec:SplittingStarPerspective}

For an oriented separation $s = (A_1,A_2)$ of $G$ we denote its separator by~$X = \defnMath{X(s)}$ and define
\[
    \defnMath{L(s)} := E_G(A_1 \setminus A_2, X) = E_G(A_1 \setminus A_2, A_2) \quad\text{and}\quad
    \defnMath{R(s)} := E_G(X, A_2 \setminus A_1) = E_G(A_1, A_2 \sm A_1).
\]
We think of $L(s)$ and $R(s)$ as the \defn{left} and \defn{right} sides of the local separations induced by the separation $s$ (whose names are purely based on the reading direction).
Note that, for elements $s > s'$ of $\orientN{N}$, any edge in $R(s) \cap R(s')$ has a vertex in $X(s) \cap X(s')$. 

Let $N$ be a nested set of separations of a graph~$G$. 
We define a relation $\Sigma_N^\vx$ on the set  $\orientN{N}$ of orientations of elements of $N$ by letting
\begin{align*}
    \text{$\defnMath{s_1 \Sigma_N^\vx s_2}$ $:\Leftrightarrow$ }&\text{$X(s_1)$ and $X(s_2)$ share a vertex $v$ and $s_1>s_2^*$,}\\
    &\text{and there is no $s\in \orientN{N}$ with $v\in X(s)$ and $s_1>s>s_2^*$.}
\end{align*}
Recall that $s^*$ denotes the inverse $(B,A)$ of a separation $s = (A,B)$.
The relation $\Sigma_N^\vx$ is symmetric.
Recall from \cref{construction:candidatetreedecomposition} the definition of~$\cT(N)$.
If $\cT(N)$ is a \td , $\orientN{N}$ contains no $(\omega +1)$-chain. 
By \cite[Corollary~3.3]{infinitetreesets}, this implies that $\orientN{N}$ is the disjoint union of its splitting stars.

\begin{lemma}\label{GlobalSigmaVXsplits}
    Let $N$ be a nested set of proper separations of a connected graph~$G$ such that $\cT(N)$ is a tree-decomposition.
    Let $s_1,s_2\in\orientN{N}$ with $s_1\Sigma^\vx_N s_2$. Then $s_1$ and $s_2$ lie in the same splitting star of~$N$.
\end{lemma}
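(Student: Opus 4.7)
The plan is to translate the statement to the decomposition tree $T$ of $\cT(N)$ via the order-isomorphism of \cref{lemma:withthehelpofTD}, and to exploit the connectedness of the subtree $T_v := \{t \in V(T) : v \in V(V_t)\}$ guaranteed by the tree-decomposition property.

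Let $e_1, e_2 \in \orientN{E(T)}$ be the oriented tree-edges corresponding to $s_1, s_2$ under \cref{lemma:withthehelpofTD}, and write $e_i = (t_i', t_i)$. Unpacking the conventions of \cref{construction:candidatetreedecomposition} (the side $A_2^{e_i}$ is the one containing $V(V_{t_i}) = \bigcap_{(A,B) \in t_i} B$), $s_i$ lies in the splitting star at $t_i$, so the goal reduces to showing $t_1 = t_2$. The two conditions in $s_1 \Sigma_N^\vx s_2$ then unpack as follows. The strict inequality $s_1 > s_2^*$ translates to $e_1 > e_2^*$, which by the definition of the tree-order means that the (unique) path $P$ in $T$ from $t_1$ to $t_2$ avoids $t_1'$ and~$t_2'$. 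Meanwhile, $v \in X(s_i) = V(V_{t_i}) \cap V(V_{t_i'})$ places $\{t_1, t_1', t_2, t_2'\}$ in the subtree $T_v$, so by connectedness the entire path $P$ also lies in $T_v$.

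Now suppose for a contradiction that $t_1 \neq t_2$, and write $P = w_0 w_1 \cdots w_k$ with $w_0 = t_1$, $w_k = t_2$ and $k \geq 1$. The oriented edge $e := (w_0, w_1)$ lies strictly between $e_1$ and $e_2^*$ in the tree-order: the witnessing paths are trivial at $t_1$ and the subpath $w_1 \cdots w_k$, respectively, and avoidance of $t_1', t_2'$ is inherited from $P$; strictness follows from $t_1' \notin P$ (forcing $e \neq e_1$) together with $t_2' \notin P$ and $w_0 \neq w_k$ (forcing $e \neq e_2^*$). Via the isomorphism, $e$ corresponds to some $s \in \orientN{N}$ with $s_1 > s > s_2^*$; and since $w_0, w_1 \in T_v$, the separator of $s$, namely $V(V_{w_0}) \cap V(V_{w_1})$, contains~$v$. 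This contradicts the minimality clause in the definition of $\Sigma_N^\vx$, forcing $k = 0$ and hence $t_1 = t_2$. The main obstacle lies purely in bookkeeping the orientation convention coming from \cref{construction:candidatetreedecomposition}---specifically, that the oriented edge $(t_i', t_i)$ points \emph{to} the node whose splitting star contains $s_i$---so that $s_1 > s_2^*$ correctly translates to the tree-order condition on the $t_1$--$t_2$ path.
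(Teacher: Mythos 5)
Your proof is correct and follows essentially the same route as the paper's: assume $s_1,s_2$ lie in distinct splitting stars, use \cref{lemma:withthehelpofTD} to produce an intermediate $s\in\orientN{N}$ with $s_1>s>s_2^*$, and show $v\in X(s)$ to contradict the minimality clause. You simply spell out the details the paper leaves implicit -- in particular, tracking the orientation conventions from \cref{construction:candidatetreedecomposition}, and invoking connectedness of $T_v$ to justify why the separator of the intermediate edge contains $v$.
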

\begin{proof}
    By assumption, there is a vertex $v\in X(s_1)\cap X(s_2)$, we have $s_1> s_2^*$, and there is no $s\in \orientN{N}$ with $v\in X(s)$ and $s_1>s>s_2^*$.
    Suppose, for a contradiction, that $s_1$ and $s_2$ lie in distinct splitting stars of~$N$.
    Then $s_1> s_2^*$ with \cref{lemma:withthehelpofTD} yields that there is $s\in\orientN{N}$ with $s_1>s>s_2^*$.
    By definition of the partial ordering~$\geq$, the vertex $v$ lies in $X(s)$, a contradiction.
\end{proof}

Let $N$ be a nested set of separations of a graph~$G$.
For $s\in\orientN{N}$ we write
\[
    \defnMath{R_N(s)}:=R(s)\sm \bigcup\,\{\,R(s'):s'\in\orientN{N}\text{ such that }X(s')\text{ meets }X(s)\text{ and }s>s'\,\}
\]

We remark that the clause of $X(s')$ meeting $X(s)$ in the definition of $R_N(s)$ is redundant, since any edge in $R(s) \cap R(s')$ has a vertex in $X(s) \cap X(s')$, as $s > s'$. 
We still include it as a reminder.

For the next lemmas, recall that the \defn{interior} of a splitting star~$\sigma$ of $N$ is defined as $\defnMath{\interior(\sigma)} := \bigcap_{(A,B)\in\sigma} B$.
Whenever $\cT(N)$ from \cref{construction:candidatetreedecomposition} is a \td\ $(T,\cV)$ of $G$, its bags $V_\sigma$ are $\interior(\sigma)$ and we choose its parts $G_\sigma$ as $G[V_\sigma]$.

\begin{lemma}\label{RNsFromPartPerspective}
    Let $N$ be a nested set of proper separations of a connected graph~$G$ such that $\cT(N)$ is a tree-decomposition $(T,\cV)$.
    Let $\sigma$ be a splitting star of~$N$ and $s\in\sigma$.
    Then $R_N(s)=E(X(s),V_\sigma \setminus X(s))$.
\end{lemma}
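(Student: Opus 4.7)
The plan is to prove the two containments separately, with the forward direction relying on the tree-decomposition correspondence (\cref{lemma:withthehelpofTD}) and the backward direction exploiting the star and splitting-star properties directly.

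For the inclusion $E(X(s),V_\sigma - X(s))\subseteq R_N(s)$, I will take $e=xy$ with $x\in X(s)$ and $y\in V_\sigma\setminus X(s)$. Writing $s=(A_1,A_2)$, the containment $V_\sigma\subseteq A_2$ (since $s\in\sigma$) immediately gives $y\in A_2\setminus A_1$, so $e\in R(s)$. To rule out $e\in R(s'')$ for any $s''=(C,D)$ with $s>s''$ and $X(s'')\cap X(s)\neq\emptyset$, I will use \cref{lemma:withthehelpofTD} to transport the inequality $s>s''$ into the tree-order on $T$. Identifying the node $\sigma$ as a splitting star and noting that the oriented edge $e_s$ of $T$ corresponding to $s$ points into $\sigma$, the tree-order condition $e_s>e_{s''}$ places the tail of $e_{s''}$ in the component of $T-e_s$ containing $\sigma$ and its head beyond that tail. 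Consequently $\sigma$ lies in the tail-component of $T-e_{s''}$, whose union of parts is exactly $C$, so $V_\sigma\subseteq C$. Combined with $x\in X(s)\subseteq A_1\subseteq C$, both endpoints of $e$ lie in $C$, so $e\notin E(X(s''),D\setminus C)=R(s'')$.

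For the reverse inclusion, take $e=xy\in R_N(s)$, so $x\in X(s)$ and $y\in A_2\setminus A_1$; I want $y\in V_\sigma$. Assume for contradiction some $(A,B)\in\sigma$ satisfies $y\notin B$. Since $y\in A_2$, this $(A,B)$ cannot be $s$ itself. Then $\{A,B\}$ being a separation forces $y\in A\setminus B$ and, as $xy\in E(G)$, also $x\in A$; the star condition $s\geq (B,A)$ gives $A_1\subseteq B$, so $x\in X(s)\subseteq B$, hence $x\in A\cap B$. Now $s'':=(B,A)$ is in $\orientN{N}$, satisfies $X(s'')\ni x\in X(s)$ and $s>s''$ strictly (strictness because $(A,B)=s^*$ would force $s^*\in\sigma$, forbidden by the definition of splitting star). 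Moreover $e\in E(A\cap B, A\setminus B)=R(s'')$, contradicting $e\in R_N(s)$. Hence $y\in V_\sigma$, and since $y\notin X(s)$ we obtain $e\in E(X(s),V_\sigma - X(s))$.

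The main obstacle is keeping the bookkeeping clean: translating the star and splitting-star conditions into concrete inclusions among the sides of the separations, and in the forward direction correctly orienting the edge $e_s$ (it points \emph{into} $\sigma$, not away, because $V_\sigma\subseteq A_2$) so that the tree-order inequality $s>s''$ yields $V_\sigma\subseteq C$. Once this orientation is pinned down, both directions reduce to the simple geometric picture that $V_\sigma$ stays on the correct side of every separation ``downstream'' from $s$, and that any vertex of $A_2\setminus A_1$ lying outside some $B$ with $(A,B)\in\sigma$ immediately produces a witnessing $s''=(B,A)$ crossing into $R(s)$.
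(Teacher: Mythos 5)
Your proof is correct and follows essentially the same strategy as the paper's: establish both inclusions, using the tree-order correspondence (\cref{lemma:withthehelpofTD}) and the splitting-star axioms. The only differences are cosmetic: you spell out the inclusion $E(X(s),V_\sigma - X(s))\subseteq R_N(s)$ in detail (the paper dismisses this as straightforward), and for the reverse inclusion you obtain the witnessing separation $(A,B)\in\sigma$ directly from $y\notin V_\sigma=\bigcap_{(A,B)\in\sigma}B$ rather than via the paper's choice of a node $\tau$ closest to $\sigma$ with $y\in V_\tau$ --- slightly more elementary (it uses only the star and splitting-star axioms, not the tree-decomposition hypothesis), but it lands on the same $s''=(B,A)$.
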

\begin{proof}   
    Let $E:=E(X(s),V_\sigma \setminus X(s))$.
    For the backward inclusion $R_N(s)\supseteq E$, note first that $R(s) \supseteq E$ by the definition of~$V_\sigma$.
    Consider $e = xy \in R(s) \sm R_N(s)$ with $x \in X(s)$; we show that $e \notin E$.
    By the choice of~$e$, there is~$s' = (A, B) \in \orientN{N}$ with $e \in R(s')$ such that $X(s)$ and $X(s')$ meet and $s > s'$; we choose $s'$ maximal with respect to $>$.
    Then there is no $s''\in\orientN{N}$ with $s > s'' > s'$.
    By \cref{lemma:withthehelpofTD}, we thus have $(s')^* \in \sigma$.
    But $x \in X(s')$, as $e\in R(s')$, and hence $y \in B \setminus A$.
    Thus, $y \notin V_\sigma$ and so $e \notin E$.
    
    For the forward inclusion $R_N(s)\se E$, let $e=xy\in R(s)\sm E$ with $x\in X(s)$; we have to show $e\notin R_N(s)$.
    Let $\tau\in V(T)$ be closest to~$\sigma$ with $y\in V_\tau$.
    As $\tau\neq\sigma$, by \cref{lemma:withthehelpofTD} there is $s'=(A,B)\in\sigma$ with $y\in A\sm B$.
    Since $x\in X(s)\se V_\sigma\se B$, the fact that $s'$ is a separation implies $x\in X(s')$, so $e\in L(s')=R((s')^*)$.
    Then $e\notin R_N(s)$ as $s,s'\in\sigma$ gives $s>(s')^*$.
\end{proof}

Let $N$ be a nested set of separations of a graph~$G$.
We denote by $\defnMath{\cX:=\cX(N)}$ the set of all separators of elements of~$N$.
We denote by $\defnMath{\cK:=\cK(N)}$ the set of components of $G - \bigcup \cX$ and define the relation $\Sigma_N^\cK$ on $\orientN{N}$ by letting
\begin{center}
    $\defnMath{s_1 \Sigma_N^\cK s_2}$ $:\Leftrightarrow$ there is a component $K \in \cK$ such that $\partial K$ meets $R_N(s_i)$ for both $i=1,2$.
\end{center}
Note that $\Sigma_N^\cK$ is symmetric.

\begin{lemma}\label{GlobalSigmaKinPart}
    Let $N$ be a nested set of proper separations of a connected graph~$G$ such that $\cT(N)$ is a tree-decomposition $(T,\cV)$ with parts~$G_\sigma = G[V_\sigma]$.
    Let $K \in \cK(N)$.
    Then there is a unique node~$\sigma$ of $T$ whose part $G_\sigma$ contains $K$.
    Moreover, every $s\in\orientN{N}$ such that $R_N(s)$ meets $\partial K$ lies in~$\sigma$.
\end{lemma}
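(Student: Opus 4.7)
The plan is to prove both assertions by exploiting the fact that the vertex set of $K$ is entirely disjoint from every separator in $\cX$. The existence half will use that any walk inside $K$ avoids all separators, so it cannot cross the separation corresponding to an edge of $T$. The uniqueness half and the statement about $R_N(s)$ will both rest on the observation that if a vertex is contained in two different parts $V_\sigma$ and $V_{\sigma'}$, then (since $T_v$ is a subtree of $T$) it must be contained in the separators of every edge on the $\sigma$--$\sigma'$ path. Combined with the absence of separator-vertices in $K$, this forces $\sigma = \sigma'$.

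More concretely, for the existence of $\sigma$, I would fix $v \in V(K)$ and pick any $\sigma \in T_v$. Given $u \in V(K)$ and a $v$--$u$ path $P$ in~$K$, I would argue that $\sigma \in T_u$ by considering the first edge $e$ on the $\sigma$--$T_u$ path in $T$: by \cref{lemma:withthehelpofTD}, the separation $\{A_1^e, A_2^e\}$ would place $v$ on one side and $u$ on the other, so $P$ would have to hit the separator $X^e \subseteq \bigcup\cX$, contradicting $V(P) \subseteq V(K) \subseteq V(G) \setminus \bigcup\cX$. Hence $K \subseteq V_\sigma$. Uniqueness then comes directly from the subtree property: if $K \subseteq V_\sigma \cap V_{\sigma'}$ for $\sigma \ne \sigma'$ and $u \in V(K)$, then $T_u$ must contain the $\sigma$--$\sigma'$ path, so $u$ lies in the separator of every edge on this path, again contradicting $V(K) \cap \bigcup\cX = \emptyset$.

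For the second half, let $s = (A, B) \in \orientN{N}$ with $R_N(s) \cap \partial K \neq \emptyset$ and fix an edge $e = xy$ in this intersection. Since $R_N(s) \subseteq R(s) = E_G(X(s), B \setminus A)$, I may take $x \in X(s)$ and $y \in B \setminus A$. Because $x \in \bigcup \cX$ while $K$ avoids $\bigcup \cX$, the vertex of $e$ that lies in $V(K)$ must be $y$. Since $\cT(N)$ is a tree-decomposition, the splitting stars of $N$ partition $\orientN{N}$ (as each oriented separation lies in at most one splitting star, and each one gets assigned to some endpoint of its associated edge of $T$), so $s$ lies in a unique splitting star $\sigma'$. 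Now I would invoke \cref{RNsFromPartPerspective} to conclude $R_N(s) = E(X(s), V_{\sigma'} \setminus X(s))$, which forces $y \in V_{\sigma'}$. Combined with $y \in V(K) \subseteq V_\sigma$, the same subtree argument as in the uniqueness step above yields $\sigma = \sigma'$, hence $s \in \sigma$.

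The main obstacle I anticipate is conceptual rather than technical: correctly packaging the ``a vertex outside $\bigcup \cX$ cannot simultaneously belong to two different parts of $(T,\cV)$'' principle and using it in both the connectedness argument (existence) and the common-refinement argument (uniqueness and part~2). Once this principle is cleanly isolated, the rest is a short application of \cref{RNsFromPartPerspective} and \cref{lemma:withthehelpofTD}; no delicate case analysis on crossings or on the order of separations is needed.
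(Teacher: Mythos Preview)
Your proof is correct. Both parts work as you describe, and there is no circularity in invoking \cref{RNsFromPartPerspective}, which is proved independently just before this lemma.

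The existence and uniqueness argument is essentially the paper's, phrased slightly differently: where you fix $v \in V(K)$, pick $\sigma\in T_v$ and walk along paths in $K$, the paper orients each edge of $T$ ``towards $K$'' and takes a sink. Both amount to the observation that $K$ avoids every separator in $\cX$, so it cannot straddle any separation in~$N$.

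For the second assertion the routes diverge. You appeal to \cref{RNsFromPartPerspective} to rewrite $R_N(s)$ as $E(X(s), V_{\sigma'} - X(s))$ for the splitting star $\sigma'$ containing~$s$, which immediately places the $K$-end $y$ of the edge into $V_{\sigma'}$; you then recycle the uniqueness step (since $y\notin\bigcup\cX$) to force $\sigma'=\sigma$. The paper instead argues directly from the definition of~$R_N$: assuming $s \notin \sigma$, it uses \cref{lemma:withthehelpofTD} to find the $t \in \sigma$ with $s > t$, observes that $X(t)$ separates $X(s)$ from $K$ and so must contain the $X(s)$-end $x$ of the edge, and concludes that the edge lies in $R(t)$ --- which means it should have been removed from $R_N(s)$, a contradiction. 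Your route is slightly cleaner once \cref{RNsFromPartPerspective} is in hand; the paper's route is more self-contained and makes visible exactly which term in the definition of $R_N(s)$ does the work.
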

\begin{proof}

    As $K$ is a component of $G-\bigcup \cX(N)$ and the edges of $T$ are the separations in~$N$, we can orient every edge of $T$ in the direction of~$K$ to find a sink $\sigma$ of~$T$ whose part $G_\sigma$ contains $K$.
    Then $\sigma$ is unique with this property.
    
    Now let $s \in \orientN{N}$ be given as stated.
    As $\partial K$ meets $R_N(s) \subseteq R(s)$, our $s$ points towards~$K$.
    So we find, by the definition of splitting star, a unique $t\in\sigma$ with $s \geq t$.
    By assumption, $R_N(s)$ and $\partial K$ share an edge $xk$ with $x \in X(s)$ and $k \in K$.
    Since $X(t)$ is disjoint from $K$ but $X(t)$ separates $X(s)$ from $K$, the edge $xk$ is in $R(t)$.
    Hence, $s \not> t$ by $xk \in R_N(s)$, giving $s=t \in \sigma$ as desired.
\end{proof}

\begin{corollary}\label{GlobalSigmaKsplits}
    Let $N$ be a nested set of proper separations of a connected graph~$G$ such that $\cT(N)$ is a tree-decomposition.
    Let $s_1,s_2\in\orientN{N}$ with $s_1\Sigma^\cK_N s_2$. 
    Then $s_1$ and $s_2$ lie in the same splitting star of~$N$.\qed
\end{corollary}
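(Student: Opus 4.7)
The plan is to derive the corollary as a direct consequence of \cref{GlobalSigmaKinPart}, since all the work has already been done there. Unfolding the definition of $\Sigma_N^\cK$, the hypothesis $s_1 \Sigma_N^\cK s_2$ furnishes a component $K \in \cK(N)$ such that $\partial K$ meets both $R_N(s_1)$ and $R_N(s_2)$. I would then apply \cref{GlobalSigmaKinPart} to this $K$: it yields a unique node $\sigma$ of the decomposition tree of $\cT(N)$ with $K \subseteq V_\sigma$ and, crucially, guarantees that every $s \in \orientN{N}$ whose $R_N(s)$ meets $\partial K$ already lies in $\sigma$.

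Applying this latter conclusion once to $s_1$ and once to $s_2$ places both oriented separations in the splitting star $\sigma$, which is exactly the assertion of the corollary.

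I do not anticipate any obstacle; the corollary is essentially a one-line reading of \cref{GlobalSigmaKinPart}, and its role in the paper seems to be to package the part of that lemma that will be reused later (for instance when comparing $\Sigma_N^\cK$ to $\Sigma_N^\vx$ via \cref{GlobalSigmaVXsplits}, which together show that both symmetric relations refine membership in a common splitting star). The only thing worth being careful about is that the witness $K$ provided by $s_1 \Sigma_N^\cK s_2$ is a single component of $G - \bigcup \cX(N)$, so that the uniqueness clause of \cref{GlobalSigmaKinPart} applies to the same $\sigma$ for both $s_1$ and $s_2$; this is immediate from the definition.
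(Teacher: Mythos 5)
Your proof is correct and is exactly the argument the paper intends — the corollary is marked \qed precisely because it follows immediately from \cref{GlobalSigmaKinPart} by reading off the definition of $\Sigma_N^\cK$ and noting that the uniqueness of $\sigma$ guarantees the same node catches both $s_1$ and $s_2$.
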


An edge $e$ of $G$ \defn{bridges} a collection $\cX$ of vertex sets of $G$ if both ends of $e$ are contained in $\bigcup \cX$ but every $X\in\cX$ contains at most one end of~$e$.
If $N$ Is a set of separations of~$G$ and $\cX=\cX(N)$, we define the relation $\Sigma_N^\cX$ on $\orientN{N}$ by letting
\begin{center}
    $\defnMath{s_1 \Sigma_N^\cX s_2}$ $:\Leftrightarrow$ there is an $\cX$-bridging edge of $G$ that lies in $R_N(s_i)$ for both $i=1,2$.
\end{center}
Note that $\Sigma_N^\cX$ is symmetric.

\begin{lemma}\label{GlobalSigmaXinPart}
    Let $N$ be a nested set of proper separations of a connected graph~$G$ such that $\cT(N)$ is a tree-decomposition $(T,\cV)$ with parts~$G_\sigma = G[V_\sigma]$.
    Let $e$ be an $\cX(N)$-bridging edge of $G$.
    Then there is a unique node $\sigma$ of $T$ whose part $G_\sigma$ contains $e$.
\end{lemma}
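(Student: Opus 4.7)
The plan is to handle existence and uniqueness separately, both by unpacking the defining property of $\cX(N)$-crossing edges and using \cref{lemma:withthehelpofTD} to translate between edges of $T$ and separations in $N$.

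For existence, I would argue directly from axiom (H1) of tree-decompositions: since $G = \bigcup_{h \in V(T)} V_h$ and each $V_h$ is an induced subgraph of $G$, every edge of $G$ — in particular $e$ — lies in some $E(V_\sigma)$. Alternatively, if one prefers a more hands-on argument, one considers the subtrees $T_u, T_v \subseteq T$ induced by the parts containing the endpoints $u,v$ of $e$; these are non-empty and connected by (H2), and if they were disjoint then the first edge on the $T_u$--$T_v$ path in $T$ would, by \cref{lemma:withthehelpofTD}, correspond to a separation $\{A_1,A_2\} \in N$ with $u \in A_1 \setminus A_2$ and $v \in A_2 \setminus A_1$, contradicting $e \in E(G)$. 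Thus $T_u \cap T_v \neq \emptyset$, and any $\sigma$ in this intersection satisfies $e \in E(V_\sigma)$.

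For uniqueness, suppose $\sigma \neq \sigma'$ are nodes of $T$ with $e \in E(V_\sigma) \cap E(V_{\sigma'})$. Pick any edge $t_1 t_2$ on the $\sigma$--$\sigma'$ path in $T$. By \cref{lemma:withthehelpofTD}, this edge corresponds to a separation $\{A_1, A_2\} \in N$ with separator $X := A_1 \cap A_2 \in \cX(N)$, where the parts $V_\sigma$ and $V_{\sigma'}$ lie on opposite sides of $\{A_1,A_2\}$. Both endpoints $u,v$ of $e$ lie in $V_\sigma \cap V_{\sigma'}$, so both lie in $X$. But then the single set $X \in \cX(N)$ contains both endpoints of $e$, which directly contradicts the definition of $e$ being $\cX(N)$-crossing. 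Hence $\sigma$ is unique.

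There is no real obstacle here; the statement is essentially a routine consequence of \cref{lemma:withthehelpofTD} together with the tree-decomposition axioms. The only point requiring mild care is to ensure that the separator $X$ on the $\sigma$--$\sigma'$ path really contains both $u$ and $v$, which is immediate because $u, v$ lie in parts on both sides of $\{A_1,A_2\}$ and thus in $A_1 \cap A_2$.
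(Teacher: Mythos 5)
Your proof is correct and takes essentially the same approach as the paper's one-line sketch; the key point in both is that a separator on the $\sigma$--$\sigma'$ path in $T$ would have to contain both endpoints of $e$, contradicting that $e$ is $\cX(N)$-crossing, with existence a routine consequence of the tree-decomposition axioms (the paper packages this as an orientation-towards-$e$/sink argument, while you invoke (H1) directly, but the difference is superficial).
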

\begin{proof}
    As $e$ bridges $\cX(N)$ and the edges of $T$ are the separations in~$N$, we can orient every edge of $T$ in the direction of~$e$ to find a sink $\sigma$ of~$T$ whose part $G_\sigma$ contains $e$.
    Then $\sigma$ is unique with this property.
\end{proof}

\begin{corollary}\label{GlobalSigmaXsplits}
    Let $N$ be a nested set of proper separations of a connected graph~$G$ such that $\cT(N)$ is a tree-decomposition.
    Let $s_1,s_2\in\orientN{N}$ with $s_1\Sigma^\cX_N s_2$. Then $s_1$ and $s_2$ lie in the same splitting star of~$N$.
\end{corollary}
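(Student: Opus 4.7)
The plan is to mimic the derivation of \cref{GlobalSigmaKsplits} from \cref{GlobalSigmaKinPart}: by the definition of $\Sigma_N^\cX$, pick an $\cX$-crossing edge $e$ of $G$ with $e \in R_N(s_1) \cap R_N(s_2)$, and let $\sigma$ be the unique node of $T$ with $e \in E(V_\sigma)$ supplied by \cref{GlobalSigmaXinPart}. It then suffices to prove that every $s \in \orientN{N}$ with $e \in R_N(s)$ lies in $\sigma$; this immediately yields $s_1, s_2 \in \sigma$.

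Fix such an $s = (A_1, A_2)$ and write $e = xy$ with $x \in X(s)$ and $y \in A_2 \setminus A_1$ (possible since $e \in R(s)$). I would assume for a contradiction that $s \notin \sigma$. Because $y \in V_\sigma$ and $y \in A_2 \setminus A_1$, the part $V_\sigma$ is not contained in $A_1$, which rules out the case $s^* \in \sigma$ (since $s^* \in \sigma$ would force $V_\sigma \subseteq A_1$). Using \cref{lemma:withthehelpofTD} this produces a unique $t = (C_1, C_2) \in \sigma$ with $s > t$; hence $X(s) \subseteq C_1$ and $V_\sigma \subseteq C_2$, which together give $x \in X(s) \cap V_\sigma \subseteq C_1 \cap C_2 = X(t)$, so in particular $X(t)$ meets $X(s)$.

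The last step is where the $\cX$-crossing property of $e$ enters. Since no $X \in \cX$ contains both endpoints of $e$, the relation $x \in X(t) \in \cX$ forces $y \notin X(t)$; combined with $y \in V_\sigma \subseteq C_2$ this yields $y \in C_2 \setminus C_1$. Therefore $e \in E(X(t), C_2 \setminus C_1) = R(t)$, which together with $s > t$ and $X(t)$ meeting $X(s)$ contradicts $e \in R_N(s)$ by the very definition of $R_N(s)$.

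The main obstacle is the bookkeeping between splitting stars and the tree order through \cref{lemma:withthehelpofTD}: one must carefully eliminate the case $s^* \in \sigma$ by locating the endpoint $y$, and extract the witness $t \in \sigma$ with $s > t$. After that, the final contradiction is a clean consequence of the $\cX$-crossing property of $e$, playing here the role that the tight component $K$ played in the proof of \cref{GlobalSigmaKsplits}.
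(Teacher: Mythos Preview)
Your proof is correct. It carefully reproduces the structure of the proof of \cref{GlobalSigmaKinPart}, replacing the component $K$ by the $\cX$-crossing edge~$e$, and the argument goes through: the key point is that once $x\in X(t)$, the $\cX$-crossing property forces $y\notin X(t)$, so $e\in R(t)$ contradicts $e\in R_N(s)$.

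The paper's own argument is different and shorter: it invokes \cref{RNsFromPartPerspective} directly. If $\sigma_i$ is the splitting star containing~$s_i$, then \cref{RNsFromPartPerspective} gives $R_N(s_i)=E(X(s_i),V_{\sigma_i}-X(s_i))$, so $e\in R_N(s_i)$ already implies $e\in E(V_{\sigma_i})$; the uniqueness in \cref{GlobalSigmaXinPart} then forces $\sigma_1=\sigma_2$. This avoids the contradiction argument entirely. Your approach has the virtue of being parallel to the $\cK$ case and not relying on \cref{RNsFromPartPerspective}, while the paper's approach shows that once \cref{RNsFromPartPerspective} is available, both \cref{GlobalSigmaKsplits} and \cref{GlobalSigmaXsplits} become one-liners.
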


\begin{proof}
    This follows immediately from \cref{RNsFromPartPerspective} and \cref{GlobalSigmaXinPart}.
\end{proof}

We denote by $\defnMath{\Sigma_N}$ the transitive reflexive closure of the union of the three symmetric relations $\Sigma^\vx_N$, $\Sigma^\cK_N$ and~$\Sigma^\cX_N$. 
Then $\defnMath{\Sigma_N}$ is an equivalence relation on~$\orientN{N}$.
The equivalence classes of $\Sigma_N$ are the \defn{cutouts} of~$N$.

\begin{proposition}\label{CutoutsAreSplittingStars}
    Let $N$ be a nested set of proper separations of a connected graph~$G$ such that $\cT(N)$ is a tree-decomposition $(T,\cV)$ with parts~$G_\sigma = G[V_\sigma]$.
    Then the cutouts of $N$ are precisely the splitting stars of~$N$.
\end{proposition}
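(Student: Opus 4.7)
The plan is to prove two inclusions. First, every cutout is contained in some splitting star; second, every splitting star is contained in some cutout. Combined with the fact that splitting stars partition $\orientN{N}$ (since they correspond bijectively to nodes of $T$ via \cref{lemma:withthehelpofTD}), these inclusions yield equality. The first inclusion follows immediately from \cref{GlobalSigmaVXsplits}, \cref{GlobalSigmaKsplits}, and \cref{GlobalSigmaXsplits}: each of the three generating relations $\Sigma_N^\vx$, $\Sigma_N^\cK$, and $\Sigma_N^\cX$ relates only elements of a common splitting star, so the reflexive transitive closure $\Sigma_N$ does too.

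For the second inclusion, I fix a splitting star $\sigma$ and two elements $s_1, s_2 \in \sigma$, and construct a chain of $\Sigma_N$-related separations linking $s_1$ to $s_2$. Using the connectedness of $G$, I pick $v_i \in X(s_i)$ and take a path $P$ from $v_1$ to $v_2$. I first decompose $P$ into its maximal subpaths $P^{(1)}, \ldots, P^{(m)}$ inside the part $V_\sigma$, separated by excursions into $V(G) \setminus V_\sigma$. Using the subtree property of $\cT(N)$, I show that every excursion lies within a single component of $T - \sigma$ and must exit and re-enter $V_\sigma$ through a common $X(s')$ for some $s' \in \sigma$. Thus $P$ induces a sequence $s_1 = s'_0, s'_1, \ldots, s'_m = s_2$ in $\sigma$, where each $P^{(j)}$ is a path in $G[V_\sigma]$ from $X(s'_{j-1})$ to $X(s'_j)$; it then suffices to prove $s'_{j-1} \Sigma_N s'_j$ for every~$j$.

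I establish this by induction on the length of a path $Q$ in $G[V_\sigma]$ from $X(s^0)$ to $X(s^n)$ with $s^0, s^n \in \sigma$. The base case hinges on the key observation that no $s \in \orientN{N}$ can satisfy $s^0 > s > (s^n)^*$ when $s^0, s^n \in \sigma$, since any such $s$ would need to lie simultaneously in two disjoint components of $T - \sigma$; this yields $s^0 \Sigma_N^\vx s^n$ whenever $X(s^0) \cap X(s^n)$ is non-empty, and \cref{RNsFromPartPerspective} identifies $R_N(s) = E(X(s), V_\sigma - X(s))$ for $s \in \sigma$ in what follows. For the inductive step, I examine the first edge $w_0 w_1$ of $Q$: if $w_1$ lies in some separator $X(s)$ with $s \in \sigma$, then $w_0 w_1 \in R_N(s^0) \cap R_N(s)$, and I use $\Sigma_N^\vx$ when $w_0$ and $w_1$ share a separator in $\sigma$, or $\Sigma_N^\cX$ otherwise (when the edge is $\cX$-crossing), then recurse on $w_1 \cdots w_n$; if $w_1$ lies in a component $K \in \cK(N)$ contained in $V_\sigma$, I follow $Q$ through $K$ to its first exit $w_k \in X(s)$, observe that $\partial K$ meets both $R_N(s^0)$ and $R_N(s)$ yielding $s^0 \Sigma_N^\cK s$, and recurse. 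The main obstacle will be the $\cX$-crossing verification in the separator-to-separator case, where I must invoke the subtree property of $\cT(N)$ to rule out that any $X(t)$ with $t \in N$ contains both endpoints of $w_0 w_1$ when no separator in $\sigma$ does.
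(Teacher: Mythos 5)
Your proof is correct, but it takes a genuinely different route from the paper's. You argue directly: for each splitting star $\sigma$ and each $s_1, s_2 \in \sigma$, you build a chain of $\Sigma_N$-related separations by taking an arbitrary $v_1$--$v_2$ path $P$ ($v_i \in X(s_i)$), decomposing $P$ into excursions outside $V_\sigma$ and maximal subpaths inside $V_\sigma$, and then running an induction on the length of each subpath inside $G[V_\sigma]$. The paper instead argues by contradiction: if $\sigma$ were not contained in a cutout, one could find a $\Sigma_N$-disconnected bipartition $\{\sigma_1,\sigma_2\}$ of $\sigma$, take a \emph{shortest} path between $X_1:=\bigcup_{s\in\sigma_1}X(s)$ and $X_2:=\bigcup_{s\in\sigma_2}X(s)$, and observe that the minimality of this path collapses your entire case analysis into exactly one application of $\Sigma_N^\vx$ (length~$0$), $\Sigma_N^\cX$ (length~$1$) or $\Sigma_N^\cK$ (length~$\geq 2$), yielding a contradiction in one step. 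Your approach is more constructive but pays for it in bookkeeping: you need (and correctly identify that you need) the observation that for $t \in N$, if $X(t)$ contains both endpoints of an edge of $G[V_\sigma]$, then so does $X(s')$ for some $s' \in \sigma$ (which one proves by choosing an orientation of $t$ that is $\geq$ some $s' \in \sigma$ and intersecting with $V_\sigma$); you also implicitly rely on the fact that $V_\sigma = \bigcup_{s\in\sigma} X(s) \cup \bigcup\{V(K) : K \in \cK(N), K \subseteq V_\sigma\}$ to make the case split on $w_1$ exhaustive, which again follows from $\sigma$ being a splitting star. Neither is a gap, but both are points where the paper's shortest-path trick saves you the work: minimality forces the internal vertices off all of $\cX$ at once, so no separator-detection lemma is needed. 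You also need to split your $\Sigma_N^\vx$ step in sub-case A1 into a two-link chain $s^0\,\Sigma_N^\vx\,s'\,\Sigma_N^\vx\,s$ through the shared separator $s'$ rather than relating $s^0$ to $s$ directly, since $X(s^0)$ and $X(s)$ need not intersect; as written this is slightly garbled but repairable.
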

\begin{proof}
    Every cutout of~$N$ is included in a splitting star of~$N$ by \cref{GlobalSigmaVXsplits}, \cref{GlobalSigmaKsplits} and \cref{GlobalSigmaXsplits}.

    For the converse, let $\sigma$ be a splitting star of~$N$, and assume for a contradiction that $\sigma$ is not included in a cutout of~$N$.
    Then, using that every cutout of~$N$ is included in a splitting star of~$N$, we find a bipartition $\{\sigma_1,\sigma_2\}$ of $\sigma$ such that the elements of $\sigma_1$ are not related to any elements of $\sigma_2$ (with regard to the relation~$\Sigma_N$).
    Let $X_i$ denote the union of all the separators of separations in~$\sigma_i$, for $i=1,2$.
    Let $P$ be a shortest $X_1$--$X_2$ path in~$G$ with ends~$x_i\in X_i$.
    For each~$i$, choose $s_i\in\sigma_i$ with $x_i\in X(s_i)$.
    We distinguish three cases.
    For all of them, let $(T,\cV):=\cT(N)$ and $\cX:=\cX(N)$.

    First, assume that $P$ has at least one internal vertex.
    Then $X_1$ and $X_2$ are disjoint and there is no $X_1$--$X_2$ edge.
    It follows from $\sigma$ being a star that all internal vertices of $P$ lie in $V_\sigma \setminus (X_1\cup X_2)$, and thus avoid, by \cref{lemma:withthehelpofTD}, all other $X \in \cX$.
    So there is a (unique) component $K$ of $G \setminus \bigcup\cX$ that contains all the internal vertices of~$P$.
    Let $e_i$ denote the edge on~$P$ that is incident to~$x_i$ for both~$i$; in particular, $e_i$ is in $\partial X_i$ and $\partial K$.
    Then $e_i\in R_N(s_i)$ by \cref{RNsFromPartPerspective}, so $K \subseteq G_\sigma$ by \cref{GlobalSigmaKinPart} and $s_i \in \sigma$ for $i=1,2$.
    Hence, the component $K$ witnesses $s_1\Sigma_N^\cK s_2$, which contradicts the choice of $\{\sigma_1,\sigma_2\}$.

    Second, assume that $P$ has a single edge~$x_1 x_2$.
    Then $X_1$ and $X_2$ are disjoint.
    Hence, $x_1 x_2$ is $\cX$-bridging and is an edge of the part~$G_\sigma$.
    Then $x_1 x_2\in R_N(s_i)$ for both~$i$ by \cref{RNsFromPartPerspective}.
    So the edge $x_1 x_2$ witnesses $s_1\Sigma_N^\cX s_2$, which contradicts the choice of $\{\sigma_1,\sigma_2\}$.

    Finally, assume that $P$ has only one vertex $x_1=x_2$.
    Then $X(s_1)$ and $X(s_2)$ share this vertex, and since $\sigma$ is a splitting star there is no $s\in\orientN{N}$ with $s_1>s>s_2^*$ by \cref{lemma:withthehelpofTD}.
    So $s_1 \Sigma_N^\vx s_2$ contradicts the choice of $\{\sigma_1,\sigma_2\}$.

    As we have derived contradictions in all cases, the proof is complete.
\end{proof}

Let $\sigma$ be a cutout of a set $N$ of separations of~$G$.
The \defn{$\sigma$-part} of~$G$, which we denote by $\defnMath{\apart(\sigma)}$ in formulas, is the subgraph of $G$ that is the union of the graphs listed under (i) and (ii) below plus the edges listed under (iii):
\begin{enumerate}
    \item the induced subgraph $G[X(s)]$ for every~$s\in\sigma$;
    \item the components $K\in\cK(N)$ such that $\partial K$ shares an edge with $R_N(s)$ for some~$s\in\sigma$; and
    \item the edges in $R_N(s)$, for every~$s\in\sigma$.
\end{enumerate}
Note that the edges listed in (iii) have both ends in graphs listed in (i) and (ii), by \cref{RNsFromPartPerspective}.

\begin{proposition}\label{TDCpartVScutoutPart}
    Let $N$ be a nested set of proper separations of a connected graph~$G$ such that $\cT(N)$ is a tree-decomposition~$(T, \cV)$ with parts~$G_\sigma = G[V_\sigma]$.
    Let $\sigma$ be a splitting star and, equivalently, a cutout of~$N$.
    Then $G_\sigma = \apart(\sigma)$.
\end{proposition}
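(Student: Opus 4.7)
My plan is to prove both $V(\interior(\sigma))=V(\apart(\sigma))$ and $E(\interior(\sigma))=E(\apart(\sigma))$ separately, using the identity $\interior(\sigma)=V_\sigma$ (where $(T,\cV):=\cT(N)$) together with two structural observations about $V_\sigma$ that I will prove first and then reuse throughout:

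\smallskip
\textbf{Two key observations.} First, I will show
\begin{equation*}
    V_\sigma\cap\bigcup\cX(N)\;=\;\bigcup_{s\in\sigma} X(s).
\end{equation*}
The inclusion $\supseteq$ is immediate. For $\subseteq$, take $u\in V_\sigma\cap\bigcup\cX(N)$, so $u\in X(s')$ for some $s'\in N$. Then the subtree of $T$ consisting of nodes whose part contains $u$ includes the two endnodes of the edge of $T$ corresponding to $s'$ and, by connectedness of that subtree together with $u\in V_\sigma$, it also contains $\sigma$; hence $\sigma$ has an incident edge of $T$ whose associated separator contains $u$, i.e.\ $u\in X(s)$ for some $s\in\sigma$. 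Second, for every component $K\in\cK(N)$ meeting $V_\sigma$ we have $K\subseteq V_\sigma$: indeed, since $K$ avoids $\bigcup\cX(N)$, every vertex of $K$ lies in exactly one part of $\cT(N)$, and by \cref{GlobalSigmaKinPart} that unique part must be the one prescribed by the uniqueness statement there, which by our hypothesis is $\sigma$.

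\smallskip
\textbf{Vertex set.} For $V(\apart(\sigma))\subseteq V_\sigma$ note: vertices from (i) are in $V_\sigma$ because $X(s)\subseteq V_\sigma$ for $s\in\sigma$; vertices from (iii) are in $V_\sigma$ by \cref{RNsFromPartPerspective}; vertices from (ii) are in $V_\sigma$ by the second observation above (the edge in $\partial K\cap R_N(s)$ forces $K$ to meet $V_\sigma$). For $V_\sigma\subseteq V(\apart(\sigma))$, take $v\in V_\sigma$. If $v\in\bigcup\cX(N)$, then by the first observation $v\in X(s)$ for some $s\in\sigma$, so $v$ is in the part via~(i). Otherwise $v$ lies in a component $K\in\cK(N)$, and by the second observation $K\subseteq V_\sigma$; picking any edge $uk\in\partial K$, the endpoint $u$ lies in $V_\sigma\cap\bigcup\cX(N)$, hence in some $X(s)$ with $s\in\sigma$ by the first observation, and therefore $uk\in E(X(s),V_\sigma\setminus X(s))=R_N(s)$ by \cref{RNsFromPartPerspective}. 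Thus $K$ satisfies~(ii) and $v\in V(\apart(\sigma))$.

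\smallskip
\textbf{Edge set.} The inclusion $E(\apart(\sigma))\subseteq E(G[V_\sigma])$ is immediate from $X(s)\subseteq V_\sigma$, $R_N(s)\subseteq E(V_\sigma)$ (by \cref{RNsFromPartPerspective}), and $K\subseteq V_\sigma$ (by the second observation) for the three types of ingredients. For the converse, let $e=uv\in E(G[V_\sigma])$. If both endpoints lie in $\bigcup\cX(N)$, then by the first observation each of $u,v$ lies in some $X(s_u),X(s_v)$ with $s_u,s_v\in\sigma$; if some single $s\in\sigma$ contains both then $e$ comes from~(i), otherwise (say $v\notin X(s_u)$) we get $e\in R_N(s_u)$ by \cref{RNsFromPartPerspective}, covered by~(iii). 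If exactly one endpoint lies in $\bigcup\cX(N)$, say $u$ does and $v$ does not, then $v$ lies in a component $K\in\cK(N)$ with $K\subseteq V_\sigma$ (second observation) and $e\in\partial K\cap R_N(s)$ for the $s\in\sigma$ provided by the first observation, so~(iii) covers $e$. If neither endpoint is in $\bigcup\cX(N)$, then $u,v$ lie in a common component $K\in\cK(N)$; here $N\neq\emptyset$ (else $\sigma$ would be empty and so would $\orientN{N}$, leaving no cutouts at all), so $\partial K$ is non-empty, and the same argument as in the previous case shows $\partial K$ meets some $R_N(s)$ with $s\in\sigma$, so $K$ enters $\apart(\sigma)$ via~(ii) and $e\in E(K)\subseteq E(\apart(\sigma))$.

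\smallskip
The main obstacle is the first observation above, establishing that every vertex of $V_\sigma$ which belongs to some separator actually belongs to a separator of $\sigma$; this is what links the tree-decomposition perspective (via \cref{lemma:withthehelpofTD}) to the raw combinatorics of $\apart(\sigma)$ and is what makes the case analysis in the edge-set argument collapse neatly.
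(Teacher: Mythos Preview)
Your proof is correct and follows essentially the same route as the paper's: both directions use \cref{RNsFromPartPerspective} and \cref{GlobalSigmaKinPart} to match the three ingredients of $\apart(\sigma)$ against $V_\sigma$. The only real difference is that you isolate and prove the ``first observation'' $V_\sigma\cap\bigcup\cX(N)=\bigcup_{s\in\sigma}X(s)$ explicitly, whereas the paper invokes it tacitly via the phrase ``with the help of $\cT(N)$''; your version is therefore slightly more self-contained.
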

\begin{proof}
    Let $I:=G_\sigma$ and $P:=\apart(\sigma)$, and recall that~$V_\sigma = \interior(\sigma)$ for all nodes~$\sigma \in T$ by the definition of~$\cT(N)$.
    
    We first show the inclusion $P\se I$.
    Since $\sigma$ is a star,
    we have $X(s) \subseteq \interior(\sigma)$ for every $s \in \sigma$, 
    and in particular $G[X(s)] \subseteq I$.
    If $K$ is a component in $\cK(N)$ such that $\partial K$ shares an edge with $R_N(s)$ for some $s\in\sigma$, then $K\se G_\sigma= I$ by \cref{GlobalSigmaKinPart}.
    The edges in $R_N(s)$ for any $s\in\sigma$ are edges in~$I$ by \cref{RNsFromPartPerspective}.
    
    We now show the converse inclusion, $I\se P$.
    The induced subgraphs $G[X(s)]$ for $s\in\sigma$ are included in~$P$ by definition.
    Let $\cK_\sigma$ denote the set of components of $I - \bigcup_{s \in \sigma} X(s)$.
    Fix any $K\in\cK_\sigma$.
    By \cref{lemma:withthehelpofTD}, $K \in \cK(N)$ and, by \cref{RNsFromPartPerspective}, $\partial K \subseteq \bigcup_{s \in \sigma} R_N(s)$.
    Since $G$ is connected, there is an edge $e\in\partial K$, so $K \subseteq P$ by (ii).
    It remains to show that every edge $e$ of $I$ that is not contained in a $G[X(s)]$ or $\partial K$ for a $K\in\cK_\sigma$ lies in~$P$.
    By \cref{lemma:withthehelpofTD}, such an edge $e$ bridges~$\cX$.
    Moreover, $e$~joins a vertex in $X(s_1)$ to a vertex in $X(s_2) \setminus X(s_1)$ for some two $s_1,s_2 \in \sigma$. 
    By  \cref{RNsFromPartPerspective}, $e$ is in $R_N(s_1)$.
    Hence, $e \in P$ by (iii).
\end{proof}

\subsection{Cutouts yield a local version of splitting stars} \label{sec:LocalCutouts}

Let $s = (E_1,X,E_2)$ and $t = (F_1,Y,F_2)$ be $r$-local separations of $G$ with $r$-tomic separators.
Inspired by \cref{char:NestedViaLinks}, we let \defn{$s \succeq t$} if
\begin{itemize}
    \item $X$ and $Y$ are $r$-coupled, and
    \item all three of the following sets are empty:
    the $X$-link for $F_2$, the $Y$-link for~$E_1$, and the edge set $E_1\cap F_2\cap\partial (X\cap Y)$.
\end{itemize}
We remark that $s \succeq t$ if and only if $s^* \preceq t^*$.

\begin{corlemma}[$\geq$]\label{correspondenceGEQ}
    Assume that the binary cycle space of $G$ is generated by cycles of length~$\le r$ and that $G$ is connected.
    Let $s$ and $t$ be oriented separations of~$G$ with $r$-tomic separators $X$ and $Y$, respectively, such that $X$ and $Y$ are $r$-coupled.
    Then $s \geq t$ if and only if $s_r \succeq t_r$.
\end{corlemma}

\begin{proof}
    This follows from \cref{correspondenceofseparationswithfixedseparator,corresLinks} and the `moreover'-part of \cref{char:NestedViaLinks}.
\end{proof}

\begin{liftlemmaASS}[$\geq$]\label{LiftGEQ}
    Let $G$ be a connected graph such that $(k-1)r<\mindist_r(G)$, and let $s$ and $t$ be oriented $r$-local separations of~$G$ with $r$-coupled $r$-tomic separators $X$ and $Y$ of size~$\le k$, respectively.
    Let $\hat s$ be any lift of~$s$ to $G_r$, with separator $\hat X$ say.
    Let $\hat t$ be the unique lift of~$t$ with separator $\hat Y$ such that $\hat X$ and $\hat Y$ are $r$-coupled, as considered in \cref{LiftingLinks}.
    Then $s \succ t$ if and only if $\hat s \succ \hat t$.
\end{liftlemmaASS}

\begin{proof}
    The equivalence $s\succeq t\Leftrightarrow \hat s\succeq\hat t$ follows from \cref{LiftingLinks}.
    Using the uniqueness of $r$-coupled lifts as in~\cref{uniqueRcoupleLift}, we derive the stronger equivalence $s\succ t\Leftrightarrow \hat s\succ\hat t$.
\end{proof}

For an $r$-local separation $s = (E_1,X,E_2)$ we define $\defnMath{X(s)} := X$, $\defnMath{L(s)} := E_1$ and $\defnMath{R(s)} := E_2$.
Let $N$ be a set of $r$-local separations of a graph~$G$.
We denote by $\defnMath{\orientN{N}}$ the set of all orientations of $r$-local separations in~$N$.
We define a relation $\Sigma_N^\vx$ on $\orientN{N}$ by letting
\begin{align*}
    \text{$\defnMath{s_1 \Sigma_N^\vx s_2}$ $:\Leftrightarrow$ }&\text{$X(s_1)$ and $X(s_2)$ share a vertex $v$ and $s_1\succ s_2^*$,}\\
    &\text{and there is no $s\in \orientN{N}$ with $v\in X(s)$ and $s_1\succ s\succ s_2^*$.}
\end{align*}
Note that $\Sigma_N^\vx$ is symmetric.
For $s\in\orientN{N}$ we write
\[
    \defnMath{R_N(s)}:=R(s)\sm \bigcup\,\{\,R(s'):s'\in\orientN{N}\text{ such that }X(s')\text{ meets }X(s)\text{ and }s\succ s'\,\}.
\]
We remark that stating that $X(s')$ meets $X(s)$ is technically redundant, since it holds for any $s \succ s'$ such that $R(s) \cap R(s') \neq \emptyset$.

For the remainder of this section~(\ref{sec:LocalCutouts}), let us assume that the binary cycle space of $G$ is generated by cycles of length~$\le r$ and that $G$ is connected.
Let $N$ be a set of oriented separations of $G$ with $r$-tomic separators and write $\defnMath{N_r}:=\{s_r:s\in N\}$.

\begin{corlemma}\label{correspondenceRN}
    For every $s\in\orientN{N}$ we have $R_N(s)=R_{N_r}(s_r)$.
\end{corlemma}
\begin{proof}
    This follows from \cref{correspondenceofseparationswithfixedseparator,correspondenceGEQ}.
\end{proof}

We denote by $\cX=\cX(N)$ the set of all separators of the $r$-local separations in~$N$.
We denote the set of components of $G - \bigcup \cX$ by $\defnMath{\cK:=\cK(N)}$ and define the relation $\Sigma_N^\cK$ on $\orientN{N}$ by letting
\begin{center}
    $\defnMath{s_1 \Sigma_N^\cK s_2}$ $:\Leftrightarrow$ there is a component $K \in \cK$ such that $\partial K$ meets $R_N(s_i)$ for both $i=1,2$.
\end{center}
Note that $\Sigma_N^\cK$ is symmetric.
We define the relation $\Sigma_N^\cX$ on $\orientN{N}$ by letting
\begin{center}
    $\defnMath{s_1 \Sigma_N^\cX s_2}$ $:\Leftrightarrow$ there is an $\cX$-bridging edge of $G$ that lies in $R_N(s_i)$ for both $i=1,2$.
\end{center}
Note that $\Sigma_N^\cX$ is symmetric.
As for ordinary separations, we denote by $\defnMath{\Sigma_N}$ the transitive reflexive closure of the union of all three symmetric relations $\Sigma^\vx_N$, $\Sigma^\cK_N$ and~$\Sigma^\cX_N$. 
Then $\defnMath{\Sigma_N}$ is an equivalence relation on $\orientN{N}$.
We refer to the equivalence classes of $\Sigma_N$ as \defn{cutouts} of~$N$.
The \defn{$\sigma$-part} of~$G$ for a cutout $\sigma$ of~$N$ is defined just like for a cutout of a set of usual separations before~\cref{TDCpartVScutoutPart}, and we denote it by~$\defnMath{\apart(\sigma)}$.

\begin{corlemma}\label{correspondenceCutouts}
    Assume that the binary cycle space of $G$ is generated by cycles of length~$\le r$ and that $G$ is connected.
    Let $N$ be a set of proper separations of $G$ with $r$-tomic separators.
    \begin{enumerate}
        \item $s\Sigma_N^* \,t\Longleftrightarrow s_r\Sigma_{N_r}^*\, t_r$ for all $s,t\in\orientN{N}$ and $*\in\{\vx,\cK,\cX\}$.
        \item\label{correspondenceCutoutsItem2} $\sigma\mapsto \sigma_r=\{s_r:s\in\sigma\}$ is a bijection between the cutouts of~$N$ and the cutouts of~$N_r$.
        \item\label{correspondenceCutoutsItem3} The $\sigma$-parts of~$G$ coincide with its~$\sigma_r$-parts.
    \end{enumerate}
\end{corlemma}
\begin{proof}
    Note that $\cX(N) = \cX(N_r)$ and $\cK(N) = \cK(N_r)$.
    
    (i).~For $*=\vx$ this is immediate from \cref{correspondenceGEQ}.
    For $*\in\{\cX,\cK\}$ it follows from \cref{correspondenceRN}.
    
    (ii) follows from (i).
    
    (iii) follows from (ii) by \cref{correspondenceRN}.
\end{proof}

\subsection{Interplay with the covering map} \label{sec:CutousAndCoveringMap}

\begin{projlemmaASS}\label{projectionRNs}
    Let $G$ be a connected graph such that $(k-1)r < \mindist_r(G)$, and let $N$ be a nested set of $r$-local separations of $G$ with $r$-tomic separators of size $\leq k$.
    Let~$\hat N$ be the set of all lifts of elements of $N$ from~\cref{keylemma:liftofseparations} and \cref{sufficientForLifting}.
    Then, for every $\hat s \in \orientN{\hat N}$ and $s:=p_r(\hat s)$, the $r$-local covering~$p_r$ restricts to a bijection $R_{\hat N}(\hat s) \to R_N(s)$.
\end{projlemmaASS}
\begin{proof}
    Let $\hat s\in\orientN{\hat N}$ be given and let $s:=p_r(\hat s)$.
    Let $\hat S$ be the set of all $\hat t\in \orientN{\hat N}$ such that $X(\hat t)$ meets $X(\hat s)$ and $\hat s\succ \hat t$.
    Define $S\se\orientN{N}$ analogously, with $N$ in place of~$\hat N$ and $s$ in place of~$\hat s$.
    So $\hat S$ and $S$ contain precisely the local separations considered in the definition of $R_N(\hat s)$ and $R_N(s)$.
    By \cref{uniqueRcoupleLift} and \cref{LiftGEQ}, the map~$p_r$ induces a bijection $\hat S\to S$, and for every $\hat t\in\hat S\cup\{\hat s\}$ it restricts to a bijection $R(\hat t)\to R(t)$.
    Hence, $p_r$ restricts to a bijection $R_{\hat N}(\hat s)\to R_N(s)$.
\end{proof}

\begin{projlemmaASS}[$\Sigma_N^*$]\label{projectionCutoutStuff}
    Let $G$ be a connected graph such that $(k-1)r < \mindist_r(G)$, let $N$ be a nested set of $r$-local separations of $G$ with $r$-tomic separators of size $\leq k$, and let $\hat N$ be the set of all lifts of elements of~$N$.
    \begin{enumerate}
        \item For all $\hat s, \hat t \in \orientN{\hat N}$ and $* \in \{\vx,\cK,\cX\}$ we have
        $\hat s \Sigma_{\hat N}^* \hat t\Longrightarrow p_r(\hat s) \Sigma_N^*\, p_r(\hat t)$.
        \item\label{projectionCutoutStuffItem2} For every cutout $\hat\sigma$ of~$\hat N$ there exists a unique cutout $\sigma$ of~$N$ such that $p_r(\hat\sigma)\se\sigma$.
        Moreover, $p_r(\apart(\hat\sigma))\se \apart(\sigma)$.
    \end{enumerate}
\end{projlemmaASS} 

\begin{proof}
    (i).~Let $\hat s,\hat t\in\orientN{\hat N}$ be as in the statement, and abbreviate $s:=p_r(\hat s)$ and $t:=p_r(\hat t)$.
    For $*=\vx$ we are done by~\cref{LiftGEQ}.
    For $*=\cX$ the assertion follows from \cref{projectionRNs} and $\bigcup\cX(\hat N)=p_r\inv (\bigcup\cX(N))$.
    
    For $*=\cK$ there is a component $\hat K$ of $G_r-\bigcup\cX(\hat N)$ such that $\partial\hat K$ meets $R_{\hat N}(\hat s)$ and $R_{\hat N}(\hat t)$.
    By path lifting, the projection $K:=p_r(\hat K)$ is an entire component of $G-\bigcup\cX(N)$.
    Then both $R_N(s)$ and $R_N(t)$ meet~$\partial K$ by \cref{projectionRNs}, giving~$s\Sigma_N^\cK t$.
    
    (ii).~The existence of $\sigma$ given $\hat\sigma$ is immediate from~(i).
    Every component $\hat K\in\cK(\hat N)$ projects to a component $p_r(\hat K)\in\cK(N)$, as observed in the proof of~(i).
    Hence, the `moreover'-part follows with \cref{projectionRNs}.
\end{proof}

\begin{liftlemmaASS}[$\Sigma_N^*$]\label{liftCutoutStuff}
    Let $G$ be a connected graph such that $(k-1)r < \mindist_r(G)$, let $N$ be a nested set of $r$-local separations of $G$ with $r$-tomic separators of size $\leq k$, and let $\hat N$ be the set of all lifts of elements of~$N$.
    \begin{enumerate}
        \item Let $s,t \in \orientN{N}$ and $* \in \{\vx,\cK,\cX\}$ be such that $s \Sigma_N^* t$.
            For every lift $\hat s$ of $s$ there exists a lift $\hat t$ of~$t$ such that $\hat s \Sigma_{\hat N}^* \hat t$.
        \item\label{liftCutoutStuffItem2} For every cutout $\sigma$ of $N$ and every lift $\hat s$ of some $s\in\sigma$, the cutout $\hat \sigma$ of~$\hat N$ with $\hat s\in\hat\sigma$ satisfies $\sigma\se p_r(\hat\sigma)$.
            Moreover, $\apart(\sigma)\se p_r(\apart(\hat\sigma))$.
    \end{enumerate}
\end{liftlemmaASS}

\noindent We remark that the lift $\hat t$ of $t$ in (i) will not normally be unique when $* = \cK$.

\begin{proof}
    (i).~Let $s,t$ be as in the statement of~(i), and let $\hat s$ be an arbitrary lift of~$s$.
    For $*=\vx$ we assume that $X(s)$ and $X(t)$ meet in a vertex~$v$, and we consider the unique lift $\hat t$ of $t$ such that~$X(\hat t)$ meets $X(\hat s)$ (in a lift $\hat v$ of $v$) from the statement of~\cref{LiftingLinks}.
    Then $\hat s \succ \hat t^*$ by \cref{LiftGEQ}.
    If there is~$\hat s' \in \orientN{\hat N}$, which is a lift of some~$s' \in \orientN{N}$, with $\hat v \in X(\hat s')$ and~$\hat s \succ \hat s' \succ \hat t^*$, then $v \in X(s')$ and~\cref{LiftGEQ} yields~$s \succ s' \succ t^*$, contradicting~$s \Sigma_{N}^\vx t$.
    Hence we have $\hat s\Sigma_{\hat N}^\vx \hat t$.
    
    For $* = \cK$ consider any $K \in \cK(N)$ such that both $R_N(s)$ and $R_N(t)$ meet~$\partial K$.
    Since $p_r$ restricts to a bijection $R_{\hat N}(\hat s) \to R_N(s)$ by \cref{projectionRNs}, there is a component $\hat K$ of $p_r\inv(K)$ such that $R_{\hat N}(\hat s)$ meets~$\partial\hat K$.
    As $K$ is connected, $p_r$ restricts to a surjection $\partial \hat K \to \partial K$.
    Choose $e\in \partial K\cap R_N(t)$ and let $\hat e\in\partial\hat K$ be a lift of~$e$.
    Let $\hat t$ be a lift of~$t$ with $\hat e\in\partial X(\hat t)$.
    Then $\hat e\in R_{\hat N}(\hat t)$ by \cref{projectionRNs}, so $\hat K$ witnesses that $\hat s\Sigma_{\hat N}^\cK\hat t$.
    
    For $* = \cX$ there is an $\cX(N)$-bridging edge $e$ of $G$ in $R_N(s) \cap R_N(t)$.
    Its lift $\hat e$ at the given $\hat s$ bridges~$\cX(\hat N)$, because $e$ bridges~$\cX(N)$, and it lies in~$R_{\hat N}(\hat s) \cap R_{\hat N}(\hat t)$ by~\cref{projectionRNs}.
    Hence, $\hat s \Sigma_{\hat N}^\cX \hat t$ as desired.

    (ii).~Let $\sigma,s,\hat\sigma,\hat s$ be as in the statement of~(ii).
    For every $t \in \sigma$, there are $n \in \N$ and $s_1, \dots ,s_{n-1} \in \sigma$ such that for every $i \in [n]$ there is some $*(i) \in \{\vx,\cK,\cX\}$ with $s_{i-1} \Sigma_N^{*(i)} s_i$, where $s_0 := s$ and $s_n := t$.
    Let $\hat s_0 := \hat s$.
    Applying~(i) iteratively for $i=1,\ldots,n$ yields lifts $\hat s_i$ of $s_i$ such that $\hat s_{i-1} \Sigma_N^{*(i)} \hat s_i$. Thus, $\hat t=\hat s_n \in \hat \sigma$, as desired.
    The `moreover'-part follows with \cref{projectionRNs}, since components in~$\cK(\hat N)$ project to components in~$\cK(N)$.
\end{proof}

\begin{propositionASS}[Cutout]\label{CutoutMaster}
    Let $G$ be a connected graph such that $(k-1)r < \mindist_r(G)$, let $N$ be a nested set of $r$-local separations of $G$ with $r$-tomic separators of size $\leq k$, and let $\hat N$ be the set of all lifts of elements of~$N$.
    Then $\hat \sigma \mapsto p_r(\hat \sigma)$ is a well-defined surjection from the cutouts of $\hat N$ to the cutouts of~$N$. Moreover, $p_r(\apart (\hat \sigma)) = \apart (p_r(\hat \sigma))$.
\end{propositionASS}

\begin{proof}
    \casen{Well-defined.}
    Let $\hat\sigma$ be a cutout of~$\hat N$.
    Then $p_r(\hat\sigma)\se\sigma$ for a unique cutout $\sigma$ of~$N$ by \cref{projectionCutoutStuff}~\cref{projectionCutoutStuffItem2}.
    The converse inclusion follows from \cref{liftCutoutStuff}~\cref{liftCutoutStuffItem2}, so~$\sigma = p_r(\hat \sigma)$.

    \casen{Surjective.} Let $\sigma$ be any cutout of~$N$. There exists a cutout $\hat\sigma$ of $\hat N$ with $\sigma\se p_r(\hat\sigma)$ by \cref{liftCutoutStuff}~\cref{liftCutoutStuffItem2}.
    As we just saw, $p_r(\hat \sigma)$ is itself a cutout of~$N$, which must be $\sigma$ since the cutouts of $N$ partition $\Sigma_N$.

    \casen{Parts.}
    We have `$\se$' by
    \cref{projectionCutoutStuff}~\cref{projectionCutoutStuffItem2} and `$\supseteq$' by \cref{liftCutoutStuff}~\cref{liftCutoutStuffItem2}.
\end{proof}

\subsection{From nested local separations to graph-decompositions} \label{sec:NestedLocalToGraphDec}

\begin{construction}[Graph-decomposition]\label{construction:rlocalsepstographdecompviaG}
    Let $N$ be a nested set of $r$-local separations of~$G$.
    We define the \defn{candidate} $\defnMath{\cH(N)} = (H, \cG)$ for a \gd\ with parts, as follows.
    If $N = \emptyset$, let $(H,\cG)$ with $\cG = (G_h : h \in V(H))$ be the trivial graph-decomposition whose decomposition graph $H$ is the graph on a single vertex $h$ and $G_h := G$.
    Assume now that $N$ is non-empty.
    The nodes of~$H$ are the cutouts of~$N$ and the edges of~$H$ are the $r$-local separations in~$N$, where a separation $\{E_1,X,E_2\} \in N$ connects the two (possibly equal) cutouts $\sigma_1,\sigma_2$ of~$N$ with $(E_{3-i},X,E_i)\in\sigma_i$.
    We define the decomposition part $G_h$ for a node $h$ of $H$ to be the $h$-part of $G$, with $h$ viewed as a cutout of~$N$.
\end{construction}

Note that $\cH(N)$ is uniquely determined by $G$ and $N$.
Thus, if $N$ is $\Gamma$-canonical for some subgroup $\Gamma$ of $\Aut(G)$, then so is $\cH(N)$.
Additional assumptions on~$N$ are required to ensure that $\cH(N)$ is a graph-decomposition.

Recall that an isomorphism $\psi\colon G\to G'$ between (multi-)graphs is a pair $\psi=(\psi_V,\psi_E)$ of bijections $\psi_V\colon V(G)\to V(G')$ and $\psi_E\colon E(G)\to E(G')$ that respect incidences of edges with vertices in both directions. 

\begin{corlemma}[Graph-decomposition]\label{correspondenceofcHrandcT}
    Assume that the binary cycle space of $G$ is generated by cycles of length $\leq r$ and that $G$ is connected.    
    Let $N$ be a nested set of proper separations of $G$ with $r$-tomic separators such that $\cT(N)$ is a \td\ $(T,\cV)$ with parts~$P_t = G[V_t]$.
    Let $(H,\cG):=\cH(N_r)$ with parts~$G_h = \apart(h)$.
    Then the map $s \mapsto s_r$ induces a canonical isomorphism $\psi\colon T\to H$ such that $G_{\psi(\sigma)}=P_\sigma$ for all nodes $\sigma$ of~$T$.
\end{corlemma}

\begin{proof}
    Let $\sigma$ be a node of~$T$.
    Recall that $\sigma$ is a splitting star of~$N$, by the definition of~$\cT(N)$.
    The splitting stars of $N$ are precisely the cutouts of~$N$ by \cref{CutoutsAreSplittingStars}, and $s\mapsto s_r$ defines a bijection between the cutouts of $N$ and the cutouts of~$N_r$ by \cref{correspondenceCutouts}~\cref{correspondenceCutoutsItem2}.
    So $\sigma\mapsto \sigma_r=\{s_r:s\in \sigma\}$ is a bijection $V(T)\to V(H)$ that combines with the $E(T) \to E(H)$ map $s \mapsto s_r$ to a graph isomorphism $\psi\colon T \to H$.
    We have $\apart(\sigma) = P_\sigma$ by \cref{TDCpartVScutoutPart}, and we have $\apart(\sigma)=\apart(\sigma_r)$ by \cref{correspondenceCutouts}~\cref{correspondenceCutoutsItem3}.
    Furthermore, $\apart(\sigma_r) = G_{\sigma_r}$ by~\cref{construction:rlocalsepstographdecompviaG}.
    With~$\psi(\sigma) = \sigma_r$ we find that $P_\sigma=G_{\psi(\sigma)}$.
\end{proof}

To apply \cref{correspondenceofcHrandcT}, we need a sufficient condition for when a nested set of proper separations of $G$ with $r$-tomic separators defines a tree-decomposition.

\begin{lemma}\label{rtomicTDCsufficient}
    Let $G$ be a locally finite, connected graph, let $k\in\N$, and let $N$ be a nested set of proper separations of~$G$ with $r$-tomic separators of size~$\le k$.
    \begin{enumerate}
        \item For every vertex $v$ of $G$, there are only finitely many separations in $N$ whose separators contain~$v$.
        \item $\cT(N)$ is a tree-decomposition of~$G$.
    \end{enumerate}
\end{lemma}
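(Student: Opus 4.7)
The plan is to establish~(i) via a diameter bound on $r$-tomic sets of bounded size combined with local finiteness, and then to derive~(ii) from~(i) by first ruling out $(\omega+1)$-chains in $\orientN{N}$, invoking the Kneip and Gollin partition result cited after \cref{construction:candidatetreedecomposition}, and finally verifying the tree-decomposition axioms in analogy to \cite[Lemma~7.5]{canonicalGraphDec} for tight separators.

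For~(i), fix a vertex~$v$ and observe that any $r$-tomic $X\se V(G)$ of size $\le k$ containing~$v$ lies in a finite ball around~$v$. For $r\ge 2$, \cref{pathFlipping} yields $d_G(v,w)\le(k-1)r/2$ for every $w\in X$ via a walk in $\cW_r(X)$ on at most~$k$ vertices of~$X$. For $r\le 1$ there are no short cycles in the simple graph~$G$, so walks in $\cW_r(X)$ traverse only $X$-edges, whence $G[X]$ is connected with diameter $\le k-1$. Local finiteness of~$G$ makes this ball finite, giving only finitely many candidate separators~$X$; local finiteness together with finiteness of~$X$ further bounds $|\partial X|$, so $G-X$ has only finitely many components (each adjacent to~$X$ by connectedness of~$G$), and thus only finitely many separations of~$G$ share separator~$X$. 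Combining these bounds yields~(i).

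For~(ii), the key step is to rule out $(\omega+1)$-chains in~$\orientN{N}$. Given such a chain $(A_n,B_n)_{n\le\omega}$, pick $v\in X(s_\omega)$, non-empty because~$s_\omega$ is proper and~$G$ connected. If $v\in A_m$ for some $m<\omega$, then $v\in A_n\cap B_n=X(s_n)$ for all $n\ge m$ (as $v\in B_\omega\se B_n$), contradicting~(i). Otherwise $v\in B_n\sm A_n$ for all $n<\omega$; using connectedness of~$G$ together with non-emptiness of $\bigcup_{n<\omega}A_n$ (which follows from properness of~$s_0$), a path from~$v$ into $\bigcup A_n$ has a last `outside' edge $xy$ with $x\notin\bigcup A_n$ and $y\in A_m$ for some minimal~$m$, and the separation axiom on~$\{A_n,B_n\}$ forces $y\in X(s_n)$ for all $n\ge m$, again contradicting~(i). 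Kneip and Gollin's result then partitions $\orientN{N}$ into splitting stars.

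The tree-decomposition axioms then follow by standard arguments. For each $v\in V(G)$, the $\le$-minimal $v$-ward orientations of separations $s\in N$ (with one orientation chosen for each $s$ with $v\in X(s)$) form a splitting star $\sigma_v$ whose star property follows from nestedness, and $v\in\interior(\sigma_v)=V_{\sigma_v}$; the subtree $\{\sigma:v\in V_\sigma\}$ is connected by up-closedness of $O_v:=\{(A,B)\in\orientN{N}:v\in B\}$, and $T$ is a tree by nestedness. The main obstacle is the second case in the no-$(\omega+1)$-chain argument, where~(i), local finiteness, and the separation axiom on each $\{A_n,B_n\}$ must be combined through the path-tracing argument above; the remainder is bookkeeping with splitting stars paralleling the tight-separator case.
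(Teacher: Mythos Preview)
Your argument for~(i) is correct and essentially matches the paper's: both bound the diameter of an $r$-tomic set of size $\le k$ through~$v$ by a function of $k$ and~$r$, then use local finiteness. The paper packages the second step slightly differently, observing that $X(s)\cup N_G(X(s))$ lies in a fixed finite ball~$U$ and that the restriction map $\{A,B\}\mapsto\{A\cap V(U),B\cap V(U)\}$ is injective on~$N_v$; your separator-counting plus component-counting achieves the same thing.

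For~(ii) there is a small but genuine gap. Ruling out $(\omega+1)$-chains and invoking Kneip--Gollin only gives that the splitting stars partition~$\orientN{N}$; it does not by itself yield that your $\sigma_v$ exists, because the existence of $\le$-minimal $v$-ward orientations requires that there is no infinite strictly decreasing $\omega$-chain $(A_0,B_0)>(A_1,B_1)>\cdots$ with $v\in\bigcap_n B_n$. No-$(\omega+1)$-chain is strictly weaker than this. The good news is that your own argument already proves the stronger statement: the only property of~$v$ you use is $v\in\bigcap_{n<\omega}B_n$, not $v\in X(s_\omega)$. So if you rephrase the claim as ``for every strictly decreasing $\omega$-chain, $\bigcap_n B_n=\emptyset$'', the two cases you give go through verbatim and the gap closes. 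This is exactly what the paper does: it establishes $\bigcap_n B_n=\emptyset$ directly (citing the proof of \cite[Lemma~7.5]{canonicalGraphDec} with~(i) in place of the tightness-based finiteness lemma there) and then appeals to \cite[Lemma~2.7]{infinitesplinter}, which packages the passage from this property to ``$\cT(N)$ is a tree-decomposition'' and spares you the hand-verification of the axioms and the connectedness of~$T$.
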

\begin{proof}
    (i).~Let $N_v$ consist of all $s\in N$ with $v\in X(s)$.
    Let $U:=B_{G}(v, (k-1)\lfloor r/2\rfloor + 1)$.
    Note that $U$ is finite.    
    Since the separators of the separations in $N$ are $r$-tomic, we have $X(s)\cup N_G(X(s))\se U$ for every $s\in N_v$.
    Hence, the map $\{A,B\}\mapsto \{A\cap V(U),B\cap V(U)\}$ that restricts each $\{A,B\}\in N_v$ to a proper separation of~$U$ is injective.
    Since $U$ is finite, there are only finitely many separations of~$U$, and so $N_v$ is finite.

    (ii).~By \cite[Lemma 2.7]{infinitesplinter} it suffices to show that $N$ has the following property: for every strictly decreasing sequence $(A_0,B_0)>(A_1,B_1)>\cdots$ of oriented separations in $\orientN{N}$, we have $\bigcap_{i\in\N}B_i=\emptyset$.
    To prove this, use (i) to find an infinite subsequence with disjoint separators as in the proof of \cite[Lemma~7.5]{canonicalGraphDec}.
\end{proof}

\begin{lemma}\label{nestedLocalInCoverGivesTDC}
    Let $G$ be a locally finite, connected graph, and assume that the binary cycle space of $G$ is generated by cycles of length $\leq r$.
    Let $N$ be a nested set of $r$-local separations of~$G$ with $r$-tomic separators all of size $\leq k$ for some $k\in\N$.
    Then $\cH(N)$ is a tree-decomposition (with parts) of~$G$.
    Moreover, if $N$ is $\Gamma$-canonical for some subgroup $\Gamma$ of $\Aut(G)$, then $\cH(N)$ is also $\Gamma$-canonical.
\end{lemma}
\begin{proof}
    By \cref{correspondenceofseparationswithfixedseparator}, there exists a unique set $M$ of separations of $G$ with $r$-tomic separators such that $M_r = N$.
    Since all separations in $M$ have $r$-tomic separators of size~$\le k$, we have that $\cT(M)$ is a tree-decomposition by \cref{rtomicTDCsufficient} (ii).
    Thus, $\cH(N)$ is as desired by~\cref{correspondenceofcHrandcT}.
\end{proof}

\begin{theoremASS}\label{ProjectionGD}
    Let $G$ be a locally finite, connected graph such that $(k-1)r < \mindist_r(G)$.
    Let $N$ be a nested set of $r$-local separations of $G$ with $r$-tomic separators of size $\leq k$, and let $(H,\cG):=\cH(N)$, with~$\cG = (G_h : h \in V(H))$ say.
    Let $\hat N$ be the set of all lifts of all elements of~$N$ to $G_r$, and recall that~$\cH(\hat N) =: (\hat H, \hat \cG)$ is a $\Gamma_r$-canonical tree-decomposition (with parts) of~$G_r$ by~\cref{nestedLocalInCoverGivesTDC}.
    Let $\bar\cH(N):=(\bar H,\bar \cG)$ with family~$\bar \cG = (\bar G_h : h \in V(\bar H))$ of parts be the graph-decomposition of $G$ that is obtained from~$\cH(\hat N)$ by folding via~$p_r$; see \cref{construction:orbittreedecompositionisgraphdecomposition}.
    
    For each cutout~$\sigma$ of~$N$, let $\tau(\sigma)$ denote the set of all cutouts $\hat\sigma$ of~$\hat N$ with $p_r(\hat\sigma)=\sigma$.
    For each~$s\in N$, let $\tau(s)$ denote the set of all lifts of~$s$.
    Then $\tau$ witnesses that $\cH(N)$ and $\bar \cH(N)$ are isomorphic, that is,
    \begin{enumerate}
        \item the pair of bijections $\tau := (\sigma\mapsto\tau(\sigma),s\mapsto \tau(s))$ is an isomorphism $H\to \bar H$, and
        \item $G_\sigma=\bar G_{\tau(\sigma)}$ for every $\sigma \in V(H)$.
    \end{enumerate}
\end{theoremASS}

\noindent In the context of~\cref{ProjectionGD}, we recall that the lifts of the $s \in N$ exist by \cref{sufficientForLifting} and \cref{randomInequality} and have $r$-tomic separators by~\cref{RtomicGives1sheeted}, and $\cH(\hat N)$ is a tree-decomposition (with parts) of~$G_r$ by \cref{nestedLocalInCoverGivesTDC}.

\begin{proof}
    (i). We first show that the map $\sigma \mapsto \tau(\sigma)$ is a bijection $V(H) \to V(\bar H)$.
    The vertices of $\bar H$ are the orbits of the vertices of $\hat H$ under the action of the group $\Gamma_r := \Gamma_r(G)$ of deck transformations of $p_r$ on $V(\hat H)$.
    The vertices of $\hat H$, in turn, are the cutouts of $\hat N$.
    The group $\Gamma_r$ acts naturally on $\hat N$, and since deck transformations preserve $\Sigma_N$, $\Gamma_r$ also acts naturally on the cutouts of $\hat N$.
    By~\cref{CutoutMaster} $p_r$ maps the cutouts of $\hat N$ onto the cutouts of $N$, and this map is well-defined on the $\Gamma_r$-orbits of cutouts of $\hat N$.
    By \cref{sufficientForLifting} the set of all lifts of a separation in~$N$ forms a $\Gamma_r$-orbit of~$\hat N$, and by \cref{liftCutoutStuff} (ii) this transfers to cutouts of~$N$ and their lifts to $\hat N$.
    Thus, the $\Gamma_r$-orbits of the set of cutouts of $\hat N$ are the sets $\tau(\sigma)$ for cutouts $\sigma$ of $N$.
    All in all, $p_r$ maps the sets~$\tau(\sigma)$, the vertices of $\bar H$, bijectively to the cutouts of $N$, the vertices of $H$.

    Similarly, the edges of $H$ are the separations in $N$, while the edges of $\bar H$ are the $\Gamma_r$-orbits of the edges of $\hat H$, the separations in $\hat N$.
    The group $\Gamma_r$ acts naturally on $\hat N$, and by \cref{sufficientForProjection} $p_r$ maps the $\Gamma_r$-orbits of $\hat N$ to $N$.
    As above the $\Gamma_r$-orbits of $\hat N$ are the sets $\tau(s)$ for separations~$s$ in $N$.
    Thus, $p_r$ maps the sets $\tau(s)$, the edges of $\bar H$, bijectively to the separations in $N$, the edges of $H$.
    
    The incidences between edges and vertices are clearly preserved.

    (ii).\
    Let $\sigma\in V(H)$ be a cutout.
    We have to show $G_\sigma=\bar G_{\tau(\sigma)}$.
    We have already seen in~(i) that $\tau(\sigma)$ is a $\Gamma_r$-orbit of the cutouts of~$\hat N$.
    As $\bar\cH(N)$ is obtained from the tree-decomposition $\cH(\hat N)$ (with parts that are induced subgraphs of~$G_r$) by folding via~$p_r$ as in \cref{construction:orbittreedecompositionisgraphdecomposition}, we therefore have for any cutout~$\hat\sigma\in\tau(\sigma)$ of~$\hat N$ that $\bar G_{\tau(\sigma)}=p_r(\hat G_{\hat\sigma})=p_r(\apart(\hat\sigma))$.
    Finally, $p_r(\apart(\hat\sigma))=\apart(\sigma)=G_\sigma$ by~\cref{CutoutMaster} using $\sigma=p_r(\hat\sigma)$.
\end{proof}

\begin{corollaryASS}\label{nestedLocalSeparationsDetermineGD}
    Let $G$ be a locally finite, connected graph such that $(k-1)r < \mindist_r(G)$.
    Let $N$ be a nested set of $r$-local separations of $G$ with $r$-tomic separators of size $\leq k$.
    Then $\cH(N)$ is a graph-decomposition with parts.
    Moreover, if $N$ is $\Gamma$-canonical for a subgroup $\Gamma$ of $\Aut(G)$, then so is~$\cH(N)$.\qed
\end{corollaryASS}

Let $\cH=(H,\cG)$ be a graph-decomposition of some graph~$G$, and let $F$ be a set of edges of~$H$.
We define $\cH/F:=(\tilde H,\tilde\cG)$ where $\tilde H$ is obtained from $H$ by contracting the edges in~$F$ (keeping loops and parallel edges), and $\tilde\cG$ contains for every vertex $\tilde h$ of~$\tilde H$ the part $\tilde G_{\tilde h}$ of $G$ that is the union of all parts $G_h$ with $h$ contained in the branch-set of~$\tilde h$.
We say that $\cH$ \defn{refines} $\cH/F$.
If $\cH=\cH(N)$ for a set of $r$-local separations and $N'\se N$, then $\cH/N'$ is well-defined, since the edge set of $H$ is, formally, the set $N$, of which $N'$ is a subset.

\begin{lemmaASS}\label{RefinementGD}
    Let $G$ be a locally finite, connected graph such that $(k-1)r < \mindist_r(G)$.
    Let $N\se M$ be two nested sets of $r$-local separations of $G$ with $r$-tomic separators of size $\leq k$.
    Then $\cH(N)=\cH(M)/(M\sm N)$.
\end{lemmaASS}
\begin{proof}
    Let $\hat N$ be the set of all lifts to $G_r$ of the elements of~$N$, which is possible by \cref{sufficientForLifting} and \cref{randomInequality}.
    We define $\hat M$ analogously.
    Then $N\se M$ implies $\hat N\se \hat M$.
    By \cref{nestedLocalInCoverGivesTDC}, both $\cH(\hat N)$ and $\cH(\hat M)$ are tree-decompositions of~$G_r$.
    Hence, $\cH(\hat N)=\cH (\hat M)/(\hat M\sm\hat N)$ by standard arguments, see e.g.~\cite{confing}.
    Since $\hat N$ is a $\Gamma_r$-invariant subset of $\hat M$, this construction is invariant under $\Gamma_r$ itself and hence $p_r$ is defined on $\Gamma_r$-orbits, to yield by \cref{construction:orbittreedecompositionisgraphdecomposition} graph-decompositions $\bar \cH(N)$ and $\bar \cH(M)$ of $G$ such that $\bar \cH(N) = \bar \cH(M) / (M \setminus N)$.
    By \cref{ProjectionGD}, we then get $\cH(N) = \bar \cH(N) = \bar \cH(M) / (M\sm N) = \cH(M) / (M\sm N).$
\end{proof}

\section{Proof of \texorpdfstring{\cref{MainIntro} and \cref{MainAlgo}}{the main results}}\label{sec:MainProof}

Recall that $N_r^{\le k}(G)$ was defined in \cref{construction:inductiveNestedSetLocal}, which for this section we take to refer to the set of \emph{all} $r$-local bottlenecks of order $\le k$ in $G$ as the set~$\cB_r^k(G)$.
We define $\cH_r^{\le k}(G):=\cH(N_r^{\le k}(G))$ with the operator $\cH$ taken from \cref{construction:rlocalsepstographdecompviaG}.
Recall that the edges of the decomposition graph of $\cH_r^{\le k}(G)$ are the $r$-local separations in~$N_r^{\le k}(G)$.
We say that these edges \defn{represent} all the $r$-local bottlenecks in~$G$ of order~$\le k$ if every $r$-local bottleneck in~$G$ of order~$\le k$ contains such an edge (viewed as the $r$-local separation that it formally is).

\begin{customthm}{\cref{MainIntro}}
    Let $G$ be a finite connected graph and $r,k \in \N$ with $k<K(G,r) = \frac{\mindist_r(G)}{r} + 1$.
    \begin{enumerate}
        \item\label{MainIntro:ToT} $\cH_r^{\le k}(G)$ is a canonical graph-decomposition of~$G$ that represents all the $r$-local bottlenecks in~$G$ of order~$\le k$.
        \item\label{MainIntro:Default} $\cH_r^{\le k}(G_r)$ is a canonical tangle tree-decomposition of scope~$k$ of the $r$-local covering~$G_r$.
        \item\label{MainIntro:Benchmark} $\cH_r^{\le k}(G)$ is canonically isomorphic to the graph-decomposition of $G$ that is obtained from the canonical tree-decomposition $\cH_r^{\le k}(G_r)$ of the $r$-local covering~$G_r$ by folding via~$p_r$.
        \item\label{MainIntro:Refinement} $\cH_r^{\le k}(G)$ refines $\cH_r^{\le \ell}(G)$ for all $\ell\in\N$ with $\ell\le k$.
    \end{enumerate}
\end{customthm}

\begin{proof}
    By assumption we have $(k-1)r<\mindist_r(G)$.
    Further, we claim that we may assume without loss of generality that \textsuperscript{\ref{ASSUMPTION}} is satisfied; that is, $r \ge 2$ or $k = 1$.
    To see this, suppose that $r\le 1$.
    Then all tight $r$-local separators of $G$ have size $1$ by \cref{lem:Tight2LocalSepAreSize1}, and also all tight separators of $G_r$ have size~$1$, as in this case $G_r$ is the universal cover of $G$ which is a tree.
    Thus, all $r$-local bottlenecks in $G$ and all bottlenecks in $G_r$ have order~$1$, and we may assume without loss of generality that $k=1$.

    \cref{MainIntro:Default}.\
    By~\cref{CorrespondenceNestedSet}, $N_r^{\le k}(G_r)$ is well-defined and nested, and by~\cref{construction:inductiveNestedSetLocal}, the $r$-local separations in $N_r^{\le k}(G_r)$ were selected from $r$-local bottlenecks.
    So they are tight and, in particular, $r$-tomic by~\cref{TightLocalSeparatorAreOConnected}.
    Hence, $\cH_r^{\le k}(G_r)$ is a tree-decomposition of~$G_r$ by \cref{nestedLocalInCoverGivesTDC}, and it is canonical because $N_r^{\le k}(G_r)$ is canonical by~\cref{CorrespondenceNestedSet}.
    Every pair of $\lek$-distinguishable tangles in~$G_r$ induces a bottleneck in~$G_r$ by \cref{TanglesInduceBottleneck}.
    The map $s\mapsto s_r$ induces a bijection between the bottlenecks in $G_r$ and the $r$-local bottlenecks in~$G_r$, by \cref{correspondenceBottlenecks}.
    Hence, $N_r^{\le k}(G_r)$ is a tree of tangles of $G_r$ of scope~$k$, and so $\cH_r^{\le k}(G_r)$ is a tangle tree-decomposition of scope $k$ of $G_r$.

    \cref{MainIntro:Benchmark}.\
    By \cref{projectionliftNestedSet}, $N_r^{\le k}(G)$ is well-defined and nested, and $N_r^{\le k}(G_r)$ equals the set of all lifts to~$G_r$ of all elements of~$N_r^{\le k}(G)$.
    Further, the elements of $N_r^{\le k}(G)$ are $r$-tomic by~\cref{TightLocalSeparatorAreOConnected} because they come from $r$-local bottlenecks by~\cref{construction:inductiveNestedSetLocal}.
    Hence, \cref{MainIntro:Benchmark} follows from \cref{MainIntro:Default} and \cref{ProjectionGD}.
    
    \cref{MainIntro:ToT}.\
    By~\cref{MainIntro:Benchmark} $\cH_r^{\le k}(G)$ is a graph-decomposition of~$G$.
    Since $N_r^{\le k}(G)$ is canonical by~\cref{projectionliftNestedSet} and $\cH_r^{\le k}(G) = \cH(N_r^{\le k}(G))$, also $\cH_r^{\le k}(G)$ is canonical by~\cref{construction:rlocalsepstographdecompviaG}.
    So it remains to show that the edges of its decomposition graph~$H$ represent all the $r$-local bottlenecks in~$G$ of order~$\le k$.
    Recall that $E(H) = N_r^{\le k}(G)$ by~\cref{construction:rlocalsepstographdecompviaG}.
    And indeed, every $r$-local bottleneck $\beta_r$ in $G$ of order~$\le k$ shares an $r$-local separation with~$N_r^{\le k}(G)$ by \cref{LocalNestedToT}.

    \cref{MainIntro:Refinement}.\
    Since $N_r^{\le\ell}(G)\se N_r^{\le k}(G)$ by definition, this follows from \cref{RefinementGD}.
\end{proof}

\begin{proof}[Proof of \cref{MainAlgo}]
    Given $G,r,k$ as in \cref{MainIntro}, the algorithm first performs \cref{construction:inductiveNestedSetLocal} to determine $N_r^{\le k}(G)$, and in the second step applies \cref{construction:rlocalsepstographdecompviaG} to $N_r^{\le k}(G)$ to determine~$\cH_r^{\le k}(G)$.
\end{proof}

\newpage
\appendix

\section{Corners of local separations}\label{secCorners}

We did not require an analogue of corners for local separations in this paper.
Corners of genuine separations are frequently used in the literature, however, and we believe that their local counterparts can be defined as follows.
The following definitions are supported by \cref{fig:LocalCorner}.
Let $\{A_1,A_2\}$ and $\{C_1,C_2\}$ be two separations of a graph~$G$ with separators $X$ and~$Y$, respectively.
Let $i,j\in [2]$.
Recall that the \defn{corner-separator for $A_i$ and $C_j$} is the union of the $Y$-link for $A_i$ with the $X$-link for $C_j$ with $X\cap Y$.
And the \defn{corner for $A_i$ and $C_j$} is the separation $\{A_i\cap C_j,A_{3-i}\cup C_{3-j}\}$.
As $i,j$ range over~$[2]$, there are four corner-separators and four corners.
Now let $\{E_1,X,E_2\}$ and $\{F_1,Y,F_2\}$ be two oriented $r$-local separations of~$G$.
The \defn{corner-separator for $E_i$ and $F_j$} is the union of the $Y$-link for~$E_i$ with the $X$-link for~$F_j$ with $X\cap Y$.
Let $L$ denote the corner-separator for $E_i$ and~$F_j$.
The \defn{corner for $E_i$ and $F_j$} is the triple 
\[\big\{\,\partial L\sm (E_{3-i}\cup F_{3-j}),\,L,\,\partial L\cap (E_{3-i}\cup F_{3-j})\,\big\}.\]

\begin{figure}[ht]
    \centering
    \includegraphics[height=6\baselineskip]{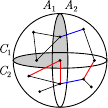}
    \caption{The edges in $E_2$ are coloured blue, while the edges in $F_2$ are coloured red. The corner-separator~$L$ is coloured grey}
    \label{fig:LocalCorner}
\end{figure}

\begin{corlemma}[Corners]\label{keylemma:correspondenceofcorner}
    Assume that the binary cycle space of $G$ is generated by cycles of length $\le r$ and that $G$ is connected.
    Let $s=\{A_1,A_2\}$ and $s'=\{C_1,C_2\}$ be crossing separations of $G$ with $r$-tomic separators.
    Let $\{E_1,X,E_2\}:=s_r$ and $\{F_1,Y,F_2\}:=s'_r$, as well as $i,j\in [2]$.
    The corner $d$ of $s_r,s'_r$ for $E_i$ and $F_j$ is equal to $c_r$ where $c$ is the corner of $s,s'$ for $A_i$ and~$C_j$.
    In particular, the four corners of $s_r,s'_r$ are $r$-local separations of~$G$.
\end{corlemma}
\begin{proof}
Without loss of generality $i,j=1$.
Then $c=\{A_1\cap C_1,A_2\cup C_2\}$.
The separator $L$ of $c$ is equal to the separator of~$d$ by \cref{corresLinks}.
It therefore remains to show the following two equalities:
\begin{align*}
    \partial L\sm (E_2\cup F_2)&=E(L,(A_1\cap C_2)\sm L)\\
    \partial L\cap (E_2\cup F_2)&=E(L,(A_2\cup C_2)\sm L)
\end{align*}
These are straightforward to check, for example by using \cref{fig:LocalCorner}.
\end{proof}

\newpage
\printbibliography
\end{document}